\documentclass[10pt,letterpaper,twoside]{article}

\usepackage[letterpaper,left=1in,right=1in,top=1.5in,bottom=1.5in]{geometry}

\title{Cyclotomic synthetic spectra}
\author{Benjamin Antieau and Noah Riggenbach}
\date{\today}

%%%%%%%%%%%%%%%%%%            
%%% Dictionary %%%
%%%%%%%%%%%%%%%%%%
% base change
% nonzerodivisor
% nonempty
% torsion free
% quasi-isomorphism
% quasi-coherent
% Nygaard-complete
% Nygaard filtration
% Nygaard-completion
% Breuil--Kisin
% Hodge--Tate
% i.e.,
%%%%%%%%%%%%%%%%%%

\usepackage[pdfstartview=FitH,
            pdfauthor={},
            pdftitle={Cyclotomic synthetic spectra},
            colorlinks,
            linkcolor=reference,
            citecolor=citation,
            urlcolor=e-mail,
            backref]{hyperref}

% General includes
\usepackage{amsmath}
\usepackage{amscd}
\usepackage{amsbsy}
\usepackage{amssymb}
\usepackage{verbatim}
\usepackage{eufrak}
\usepackage{eucal}
\usepackage{microtype}
\usepackage{hyperref}
\usepackage{mathrsfs}
\usepackage{amsthm}
\usepackage{stmaryrd}
\pagestyle{headings}
\usepackage{xy}
\input xy
\xyoption{all}
\usepackage{tikz}
\usetikzlibrary{matrix,arrows}
\usepackage{tikz-cd}
\usepackage{dsfont}
\usepackage[T1]{fontenc}
\usepackage{enumitem}
\setlist{noitemsep}

% Headers
\usepackage{fancyhdr}
\pagestyle{fancy}

\fancyhead{}
\fancyhead[LO,R]{\bfseries\footnotesize\thepage}
\fancyhead[LE]{\bfseries\footnotesize\rightmark}
\fancyhead[RO]{\bfseries\footnotesize\rightmark}
\chead[]{}
\cfoot[]{}
\setlength{\headheight}{1cm}

% Cosimplicial tools
\newcommand{\stackspace}{2.5}
\newcommand{\stack}[2][1cm]{\;\tikz[baseline, yshift=.65ex]%
    {\foreach \k [evaluate=\k as \r using (.5*#2+.5-\k)*\stackspace] in {1,...,#2}{%
    \ifodd\k{\draw[->](0,\r pt)--(#1,\r pt);}%
    \else{\draw[<-](0,\r pt)--(#1,\r pt);}\fi
    }}\;}

% Prisms
\usepackage[bbgreekl]{mathbbol}
\usepackage{amsfonts}
\DeclareSymbolFontAlphabet{\mathbb}{AMSb} % to ensure \mathbb does not change
\DeclareSymbolFontAlphabet{\mathbbl}{bbold}

\usepackage{yhmath}

\newcommand{\prism}{{ \mathbbl{\Delta}}}

% Color definitions
\usepackage{color}
\definecolor{todo}{rgb}{1,0,0}
\definecolor{conditional}{rgb}{0,1,0}
\definecolor{e-mail}{rgb}{0,.40,.80}
\definecolor{reference}{rgb}{.20,.60,.22}
\definecolor{mrnumber}{rgb}{.80,.40,0}
\definecolor{citation}{rgb}{0,.40,.80}

% Font commands
\renewcommand{\bf}{\bfseries}

% Some commenting features

\setlength{\marginparwidth}{1.2in}
\let\oldmarginpar\marginpar
\renewcommand\marginpar[1]{\-\oldmarginpar[\raggedleft\footnotesize #1]%
{\raggedright\footnotesize #1}}

% Script letters

\newcommand{\Cscr}{\mathcal{C}}

% Roman letters

\newcommand{\B}{\mathrm{B}}
\newcommand{\C}{\mathrm{C}}
\renewcommand{\d}{\mathrm{d}}
\newcommand{\D}{\mathrm{D}}
\newcommand{\E}{\mathrm{E}}
\newcommand{\F}{\mathrm{F}}
\newcommand{\G}{\mathrm{G}}
\renewcommand{\H}{\mathrm{H}}
\newcommand{\h}{\mathrm{h}}

\newcommand{\K}{\mathrm{K}}
\renewcommand{\L}{\mathrm{L}}

\newcommand{\N}{\mathrm{N}}
\renewcommand{\P}{\mathrm{P}}

\newcommand{\T}{\mathrm{T}}
\renewcommand{\t}{\mathrm{t}}

\newcommand{\W}{\mathrm{W}}

% Bold letters

\newcommand{\bC}{\mathbf{C}}
\newcommand{\bD}{\mathbf{D}}
\newcommand{\bE}{\mathbf{E}}
\newcommand{\bF}{\mathbf{F}}
\newcommand{\bG}{\mathbf{G}}

\newcommand{\bN}{\mathbf{N}}

\newcommand{\bQ}{\mathbf{Q}}

\newcommand{\bS}{\mathbf{S}}
\newcommand{\bT}{\mathbf{T}}

\newcommand{\bZ}{\mathbf{Z}}

% Blackboard letters

% Frakture letters

% oo-categories

\newcommand{\Sp}{\mathrm{Sp}}

% Big categories

% oo-category notation

\newcommand{\op}{\mathrm{op}}
\newcommand{\cofib}{\mathrm{cofib}}
\newcommand{\fib}{\mathrm{fib}}

% Additive and stable oo-category notation

\newcommand{\proj}{\mathrm{proj}}

% Algebraic categories
\newcommand{\GrMod}{\mathrm{GrMod}}

\newcommand{\Mod}{\mathrm{Mod}}
\newcommand{\coMod}{\mathrm{coMod}}

\newcommand{\CAlg}{\mathrm{CAlg}}

% Gradings and filtrations
\newcommand{\Gr}{\mathrm{Gr}}
\newcommand{\gr}{\mathrm{gr}}
\newcommand{\ins}{\mathrm{ins}}
\newcommand{\fil}{\mathrm{fil}}
\newcommand{\Fil}{\mathrm{Fil}}
\newcommand{\FD}{\mathrm{FD}}

% Derived commutative rings

% Miscellaneous
\newcommand{\cyc}{\mathrm{cyc}}
\newcommand{\Nm}{\mathrm{Nm}}
\newcommand{\CQSyn}{\mathrm{CQSyn}}
\newcommand{\Fr}{\mathrm{Fr}}
\newcommand{\Eq}{\mathrm{Eq}}
\newcommand{\triv}{\mathrm{triv}}
\newcommand{\lev}{\mathrm{lev}}
\renewcommand{\part}{\mathrm{part}}
\newcommand{\mot}{\mathrm{mot}}
\newcommand{\Fgeqs}{\F^{\geq\star}}

\newcommand{\Syn}{\mathrm{Syn}}
\newcommand{\SynSp}{\mathrm{SynSp}}
\newcommand{\Pstragowski}{Pstr\k{a}gowski}
\newcommand{\CycSyn}{\mathrm{CycSyn}}
\newcommand{\syncart}[1][]{\mathrm{CartSyn}_{{#1}}}
\newcommand{\CycSp}{\mathrm{CycSp}}
\newcommand{\syn}{\mathrm{syn}}
\newcommand{\ev}{\mathrm{ev}}

\newcommand{\can}{\mathrm{can}}

\newcommand{\heart}{\heartsuit}

\newcommand{\id}{\mathrm{id}}

\renewcommand{\geq}{\geqslant}
\renewcommand{\leq}{\leqslant}

% Structure sheaves

% Homological functors

\newcommand{\HC}{\mathrm{HC}}
\newcommand{\THH}{\mathrm{THH}}

\newcommand{\HH}{\mathrm{HH}}

\newcommand{\TP}{\mathrm{TP}}
\newcommand{\TC}{\mathrm{TC}}

\newcommand{\TR}{\mathrm{TR}}

% de Rham cohomology
\newcommand{\BMS}{\mathrm{BMS}}
\newcommand{\BLM}{\mathrm{BLM}}
\newcommand{\HRW}{\mathrm{HRW}}

\newcommand{\QSyn}{\mathrm{QSyn}}

% Cohomology theories

\newcommand{\MU}{\mathrm{MU}}
 
% Mapping objects
\newcommand{\Map}{\mathrm{Map}}
\newcommand{\bMap}{\mathbf{Map}}

\newcommand{\Fun}{\mathrm{Fun}}

% Sheaves

% Algebraic and compact Lie groups
\newcommand{\Gm}{\bG_{m}}

% Classifying spaces

% Limits and colimits

\DeclareMathOperator*{\colim}{colim}

\DeclareMathOperator*{\Tot}{Tot}
\newcommand{\LEq}{\mathrm{LEq}}

% Topologies on schemes

% Schemes

\DeclareMathOperator{\Spf}{Spf}

% Special arrows

\newcommand{\we}{\simeq}
\newcommand{\iso}{\cong}

% Theorems
\theoremstyle{plain}
\newtheorem{theorem}{Theorem}[section]
\newtheorem*{theorem*}{Theorem}
\newtheorem{lemma}[theorem]{Lemma}

\newtheorem{proposition}[theorem]{Proposition}

\newtheorem{conjecture}[theorem]{Conjecture}
\newtheorem{corollary}[theorem]{Corollary}
\newtheorem*{corollary*}{Corollary}

\theoremstyle{plain}
\newcounter{zaehler}

\newtheorem{introthm}[zaehler]{Theorem}
\newtheorem{introcor}[zaehler]{Corollary}

\theoremstyle{plain}

\theoremstyle{definition}

\newtheoremstyle{named}{}{}{\itshape}{}{\bfseries}{.}{.5em}{#1 \thmnote{#3}}
\theoremstyle{named}

\theoremstyle{definition}
\newtheorem{definition}[theorem]{Definition}
\newtheorem{warning}[theorem]{Warning}
\newtheorem{variant}[theorem]{Variant}

\newtheorem{notation}[theorem]{Notation}

\newtheorem{example}[theorem]{Example}
\newtheorem*{example*}{Example}

\newtheorem{question}[theorem]{Question}
\newtheorem*{question*}{Question}
\newtheorem{construction}[theorem]{Construction}

\newtheorem{remark}[theorem]{Remark}

\begin{document}

\maketitle
\begin{abstract}
    \noindent
    We define an $\infty$-category $\CycSyn$ of $p$-typical cyclotomic synthetic spectra and prove
    that the motivic filtration on $\THH(R;\bZ_p)$, defined by Bhatt, Morrow,
    and Scholze when $R$ is quasisyntomic and by Hahn, Raksit, and Wilson in the chromatically
    quasisyntomic case, naturally admits the structure of a
    $p$-typical cyclotomic synthetic spectrum. As a consequence, we obtain new bounds on the
    syntomic cohomology of connective chromatically quasisyntomic $\bE_\infty$-ring spectra.
\end{abstract}
\tableofcontents

\section{Introduction}\label{sec:intro}

If $R$ is a ring or a ring spectrum, then its topological Hochschild homology
$\THH(R)$ naturally admits the structure of a cyclotomic spectrum~\cite{nikolaus-scholze}. This
structure allows one to define $\TC(R)$, the topological cyclic homology of
$R$, which is the target of the trace map $\K(R)\rightarrow\TC(R)$ forming the
basis of many recent computations in algebraic $K$-theory.

If $R$ is commutative and $p$-quasisyntomic, then Bhatt, Morrow, and Scholze
introduced a motivic filtration $\F^{\geq\star}_\mot\THH(R;\bZ_p)$ on the $p$-completed
topological Hochschild homology of $R$ as well as motivic filtrations on
$\TC^-(R;\bZ_p)$, $\TP(R;\bZ_p)$, and $\TC(R;\bZ_p)$. The associated graded
pieces of these filtrations can be used to define prismatic and syntomic
cohomology theories. The existence of these filtrations and their connection to
$\K$-theory provides additional techniques for computing $\TC(R;\bZ_p)$ and
$\K$-groups. However, this motivic filtration is not a filtration in cyclotomic spectra.

A related motivic filtration has been constructed in~\cite{hrw} by Hahn, Raksit, and Wilson on $\THH(R;\bZ_p)$
for certain $\bE_\infty$-ring spectra, which they call chromatically $p$-quasisyntomic.
For the purpose of this introduction, we will also write $\F^{\geq\star}_\mot\THH(R;\bZ_p)$.
Integral versions of these filtrations exist thanks to~\cite{bhatt-lurie-apc,hrw,morin}.

This paper gives an explanation of what is $\F^{\geq\star}_\mot\THH(R;\bZ_p)$.
To explain our answer, we recall that an algebraic analogue of
motivically-filtered $\THH$ is given by $\HH_\fil(R/\bZ)$, which is the
Hochschild homology of the commutative ring $R$ together with its
HKR-filtration. This is a filtered spectrum with $S^1$-action, but in fact
admits more structure: it is a filtered spectrum with $\bT_\fil$-action, where
$\bT_\fil=\tau_{\geq\star}\bZ[S^1]$. The filtered circle was introduced in this
form in Raksit~\cite{raksit} and in a closely related form in work of
Moulinos--Robalo--To\"en~\cite{moulinos-robalo-toen}.

A minimum requirement for $\F^{\geq\star}_\mot\THH(R;\bZ_p)$ is that it should
be a filtered spectrum with filtered circle action and where the cyclotomic
Frobenius $\varphi_p\colon\THH(R;\bZ_p)\rightarrow\THH(R;\bZ_p)^{\t C_p}$
respects the filtrations and filtered circle-actions.
It is well-known that there cannot be a lift of $\bT_\fil$ to the sphere
spectrum (see Remark~\ref{rem:no_spherical_Tfil}). However,
$\F^{\geq\star}_\mot\THH(R;\bZ_p)$ is naturally a synthetic spectrum and,
following a suggestion of Raksit, there is a lift of $\bT_\fil$ to a synthetic
spectrum $\bT_\ev$. This lift is also studied by Hedenlund and Moulinos in~\cite{hedenlund-moulinos}.

We introduce the stable $\infty$-category $\SynSp_{\bT_\ev}$ of synthetic
spectra with $\bT_\ev$-action and construct synthetic analogues of homotopy orbits, fixed
points, and Tate for $C_n$ and $S^1$. This allows us to introduce the
$\infty$-category $\CycSyn$ of $p$-typical cyclotomic synthetic spectra as the $\infty$-category of
pairs $(M,\varphi_p)$ where $M$ is a synthetic spectrum with $\bT_\ev$-action
and $\varphi_p\colon M\rightarrow M^{\t C_{p,\ev}}$ is a $\bT_\ev$-equivariant map,
where $M^{\t C_{p,\ev}}$ is viewed as a synthetic spectrum with $\bT_\ev$-action via
restriction of scalars along the multiplication-by-$p$ map
$\bT_\ev\rightarrow\bT_\ev$. We do not discuss the more general notion of cyclotomic synthetic
spectrum with Frobenii for all primes, so we will call $p$-typical cyclotomic synthetic spectra simply
cyclotomic synthetic spectra.

With this definition of cyclotomic synthetic spectra, we establish the following result.

\begin{introthm}
    If $R$ is a $p$-quasisyntomic commutative ring, then
    the motivic filtration $\F^{\geq\star}_\mot\THH(R;\bZ_p)$ naturally admits
    the structure of an $\bE_\infty$-algebra in $\CycSyn$. If $R$
    is chromatically $p$-quasisyntomic, then $\F^{\geq\star}_\mot\THH(R;\bZ_p)$ naturally admits
    the structure of an $\bE_\infty$-algebra in $\CycSyn$.
\end{introthm}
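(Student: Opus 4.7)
The plan is to reduce the construction of the cyclotomic synthetic structure on $\Fgeqs_\mot\THH(R;\bZ_p)$ to an "even" base case via descent, matching the strategies of Bhatt--Morrow--Scholze and Hahn--Raksit--Wilson. In the BMS setting, the base is the site $\QRSPerfdscr$ of quasiregular semiperfectoid rings, where $\pi_*\THH(R;\bZ_p)$ is concentrated in even degrees, and $\Fgeqs_\mot\THH(R;\bZ_p)$ is by definition the double-speed Postnikov filtration $\tau_{\geq 2\star}\THH(R;\bZ_p)$. In the HRW setting, the base is the chromatic analogue (roughly, rings admitting a suitable $\MU$- or form-of-$\MU$-module structure making $\THH$ even in the appropriate synthetic sense). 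My first step is to verify that on these base sites, $\THH(R;\bZ_p)$ together with its $\bT$-action and cyclotomic Frobenius naturally refines to an $\bE_\infty$-algebra in $\CycSyn$. My second step is to descend.

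For the base case, the key observation is that even $\bT$-spectra automatically carry the structure of synthetic spectra with $\bT_\ev$-action: the synthetic lift is the double-speed Whitehead filtration viewed as a synthetic spectrum, and the action of $\bT$ on an even object factors through $\bT_\ev$ by the universal property of $\bT_\ev$ as the synthetic refinement of $\bT_\fil$. To promote the cyclotomic Frobenius, I would use that on quasiregular semiperfectoid $R$, the target $\THH(R;\bZ_p)^{\t C_p}$ is also even (this is precisely what underlies the Nygaard filtration in BMS); therefore the BMS Frobenius $\varphi_p\colon\THH(R;\bZ_p)\to\THH(R;\bZ_p)^{\t C_p}$, which is already a map of filtered $\bE_\infty$-rings with filtered $\bT$-action, automatically refines to a $\bT_\ev$-equivariant map of $\bE_\infty$-algebras in $\SynSp$ with the target regarded as $M^{\t C_{p,\ev}}$ under multiplication-by-$p$ restriction on $\bT_\ev$. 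This step uses that the synthetic Tate construction $(-)^{\t C_{p,\ev}}$ agrees, on even inputs, with the usual Tate construction equipped with its natural even filtration. The same arguments apply in the chromatically quasisyntomic base case, replacing evenness by its $\MU$-synthetic analogue.

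For the descent step, both $\QSyn$ and its chromatic analogue admit covers by the base sites, and $\CycSyn$ is presentable with all limits computed underlyingly in $\SynSp$ together with compatible $\bT_\ev$-action and Frobenius data. The BMS and HRW constructions of $\Fgeqs_\mot\THH(-;\bZ_p)$ are right Kan extensions from the base, so one defines the cyclotomic synthetic refinement as the right Kan extension of the base-case functor landing in $\CAlg(\CycSyn)$. Compatibility of this Kan extension with the forgetful functor to filtered cyclotomic spectra recovers the original BMS/HRW motivic filtration with its cyclotomic Frobenius, verifying that the new structure is indeed a lift.

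The main obstacle I expect is the simultaneous compatibility in the base case: one must check that the $\bE_\infty$-structure, the refined $\bT_\ev$-action, and the $\varphi_p$ all live together in $\CAlg(\CycSyn)$ as a single coherent object, not just that each piece lifts individually. Concretely, the hard point is verifying that the map $M\to M^{\t C_{p,\ev}}$ induced by the BMS Frobenius is $\bT_\ev$-equivariant in the synthetic sense (with the multiplication-by-$p$ twist), which requires comparing the synthetic Tate construction with the even filtration on the usual Tate and tracking the twist carefully through the synthetic deformation. Once this is established on the base, the descent step is essentially formal, and the chromatically quasisyntomic case then reduces to the same kind of base calculation, but with the HRW even-filtration machinery substituted for plain evenness.
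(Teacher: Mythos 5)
Your strategy — reduce to an even base case (quasiregular semiperfectoid in the BMS setting, chromatically even in the HRW setting), lift the synthetic cyclotomic structure there, and descend — is exactly the route the paper takes (via Corollary~\ref{cor:cyclotomic_functor} and quasisyntomic/eff descent). Your identification of the real work, namely making the $\bE_\infty$-structure, the $\bT_\ev$-action, and the Frobenius coherent in the base case, is also on target.

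One imprecision worth flagging: you say the $\bT$-action on an even object ``factors through $\bT_\ev$ by the universal property of $\bT_\ev$,'' but $\bT_\ev$ has no such universal property invoked in the paper, and ``factoring through'' is the wrong direction for an action. What the paper actually proves (Lemma~\ref{lem: underling is equivalence on even} and Corollary~\ref{cor:contractible}, both resting on the synthetic unipotence Lemma~\ref{lem:unipotence} in the style of Mathew--Naumann--Noel) is that the space of $\bT_\ev$-actions on $\tau_{\geq 2\star}E$ compatible with a given $\bT$-action on the even ring $E$ is contractible, i.e., there is a \emph{unique lift}, not a factorization. The same unipotence argument is what then carries the Frobenius coherently, by showing $\F^{\geq-\infty}\colon\CAlg(\CycSyn)^\ev\to\CAlg(\CycSp)^\ev$ is an equivalence. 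With that replacement, your outline matches the paper.
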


There is a synthetic analogue of $\TC$, which is an exact functor
$\CycSyn\rightarrow\SynSp$ and we find that
$\TC(\F^{\geq\star}_\mot\THH(R;\bZ_p))\we\F^{\geq\star}_\mot\TC(R;\bZ_p)$.
Thus, having constructed the cyclotomic synthetic structure on
$\F^{\geq\star}_\mot\THH(R;\bZ_p)$, we recover in addition the motivic
filtration on $\TC(R;\bZ_p)$. Similar results hold for $\TC^-(R;\bZ_p)$ and
$\TP(R;\bZ_p)$ and there are non-$p$-completed results as well in the case that $R$ is integrally
quasisyntomic. See Theorems~\ref{thm:even_comparison} and~\ref{thm:bms_comparison} for details.

Synthetic spectra admit many natural $t$-structures $\C$. To each one which satisfies a reasonable
list of conditions we associate a $t$-structure on $\CycSyn$, generalizing the work of~\cite{an1}.
We will write $\pi_*^{\cyc,\C}$ for the homotopy objects with respect to the $t$-structure induced
by $\C$. Perhaps the most interesting is the
Postnikov $t$-structure $\CycSyn_{\geq 0}^\P$, in which case we will write $\pi_*^{\cyc,\P}$. To analyze this $t$-structure, we give a theory of
Cartier modules in synthetic spectra.

In~\cite[Def.~3.34]{an1}, a notion of Cartier complex was given, consisting of graded abelian group $M^*$
together with operations $\d,\eta\colon M^*\rightarrow M^{*+1}$ and $F,V\colon M^*\rightarrow
M^{*}$ satisfying various relations which hold in de Rham--Witt complexes. Below, we introduce
the notion of an $\eta$-deformed ($p$-typical) Cartier complex, which agrees with the notion of a Cartier complex
except that we do not require that $\eta^4=0$. (These have also been studied by Krause and Nikolaus in
unpublished work.) There is an abelian category $\mathrm{DCart}^\eta$
of $\eta$-deformed Cartier complexes as well as a full subcategory $\mathrm{DCart}^{\eta,\wedge}$
of derived $V$-complete $\eta$-deformed Cartier complexes.

\begin{introthm}\label{thm:intro_heart}
    There is an equivalence $\CycSyn^{\P\heart}\we\mathrm{DCart}^{\eta,\wedge}$.
\end{introthm}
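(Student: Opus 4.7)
The plan is to extend the strategy of Antieau--Nikolaus~\cite{an1} --- who proved the analogous equivalence $\CycSp^{\P\heart}\we\mathrm{DCart}^\wedge$ for ordinary cyclotomic spectra --- to the synthetic setting, being careful to track how the non-nilpotent Hopf class $\eta$ deforms the picture. The argument factors into three pieces: identify the Postnikov heart of $\SynSp_{\bT_\ev}$; compute the synthetic Tate construction on the heart; and match the resulting structure with $\eta$-deformed Cartier data.

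For the first piece, a discrete synthetic spectrum is determined by its bigraded homotopy together with the action of $\eta$; adding $\bT_\ev$-action then introduces a Connes-type derivation $d$ of the appropriate bidegree. Because the filtered/synthetic circle is not a split square-zero extension on the nose, the classical relation $d^2=0$ is deformed by an $\eta$-correction, matching the corresponding relation in $\mathrm{DCart}^\eta$, and restriction of scalars along multiplication by $p$ on $\bT_\ev$ provides the structure that will eventually produce the Frobenius twist of $d$. This step should identify $\SynSp_{\bT_\ev}^{\P\heart}$ with the category of ``$\eta$-deformed graded complexes with derivation,'' i.e. the part of $\mathrm{DCart}^\eta$ not involving $F$ and $V$.

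The core of the argument is then the computation of the synthetic Tate construction $(-)^{\t C_{p,\ev}}$ on a discrete object, together with the extraction of Frobenius and Verschiebung from the datum $\varphi_p$: the Frobenius itself gives an operator $F$ on the Postnikov $\pi_0$; the canonical norm map produces $V$; and the $\bT_\ev$-equivariance of $\varphi_p$ forces the interactions $FdV=d$ and $Fd=pdF$. Together with $FV=p$, $VF=p$, and the $\eta$-twisted commutation relations, these are exactly the axioms of an $\eta$-deformed Cartier complex. Derived $V$-completeness is automatic from the pullback presentation of $\CycSyn$, since homotopy groups of Tate constructions are $V$-complete by construction; conversely, given a derived $V$-complete $\eta$-deformed Cartier complex, one reconstructs a cyclotomic synthetic spectrum by the same Breuil--Kisin-style gluing as in~\cite{an1}, using the equalizer definition of $\CycSyn$ to assemble the data into a coherent object.

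The main obstacle will be the synthetic Tate computation itself: working out the homotopy of $\bT_\ev^{\t C_{p,\ev}}$ with all $\eta$-extensions, and verifying that the operators produced satisfy exactly the $\eta$-deformed Cartier relations --- no more, no less. Classically, such computations are done modulo $\eta$-nilpotence, and that hypothesis is precisely what fails in the synthetic world, so every relation must be recomputed and any higher $\eta$-terms either shown to vanish or absorbed into the axioms of $\mathrm{DCart}^\eta$. Since Krause and Nikolaus have apparently arrived at the same abelian category from the other direction, the axioms are presumably already tuned to accommodate exactly the $\eta$-terms that appear; what remains is the synthetic bookkeeping to confirm this.
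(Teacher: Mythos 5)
There is a genuine gap in the second of your three pieces. You propose to ``extract Frobenius and Verschiebung from the datum $\varphi_p$,'' claiming that ``the canonical norm map produces $V$.'' But a cyclotomic synthetic spectrum $(M,\varphi_p)$ does not carry a $V$ operator: $\varphi_p$ is a map $M\to M^{\t C_{p,\ev}}$ to the \emph{cofiber} of the norm $\Nm_p\colon M_{C_{p,\ev}}\to M^{C_{p,\ev}}$, and a Verschiebung would require a factorization of $\Nm_p$ through $M$ itself, which is extra data the cyclotomic structure simply does not provide. This is not a bookkeeping issue you can patch by tracking $\eta$-corrections; it means that the heart of $\CycSyn$ does not directly present itself as objects with operators $F,V,d$ satisfying relations --- the object you read $F$ and $V$ off of is a different one.

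What the paper does instead (following the architecture of Antieau--Nikolaus) is to introduce an auxiliary $\infty$-category $\syncart$ of \emph{synthetic Cartier modules}: synthetic spectra with $\bT_\ev$-action equipped with a factorization $M_{C_{p,\ev}}\xrightarrow{V}M\xrightarrow{F}M^{C_{p,\ev}}$ of the norm. Here $F$ and $V$ are baked into the definition. The crucial input is a functor $\TR\colon\CycSyn\to\syncart$, right adjoint to a functor $(-)/V$, constructed as a certain equalizer over all $C_{p^n,\ev}$-fixed points; the paper shows $\TR$ is $t$-exact and fully faithful on bounded-below objects, with essential image the $V$-complete synthetic Cartier modules, and that this $V$-completeness agrees with derived $V$-completeness in $\mathrm{DCart}^\eta$ when restricted to the heart. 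So the equivalence $\CycSyn^{\P\heart}\we\mathrm{DCart}^{\eta,\wedge}$ is \emph{not} the identity on underlying objects: a cyclotomic object $X$ in the heart corresponds to the Cartier module $\TR(X)$, which is a genuinely different (typically much larger) graded module. Your reconstruction direction (``Breuil--Kisin-style gluing'') is indeed the $(-)/V$ functor from~\cite{an1}, so you have the right mechanism for that half; what is missing is the $\TR$-mediated forward direction and the accompanying analysis (the Tate-orbit lemma, continuity of Tate on bounded-below towers, and the $t$-exactness of $\TR$) that makes the whole adjunction an equivalence on bounded-below $V$-complete objects. Your first and third pieces --- identifying $\SynSp_{\bT_\ev}^{\P\heart}\simeq\GrMod_{\bZ[\eta,d]/(2\eta,d^2-\eta d)}$, and recognizing the $\eta$-deformed relations --- match the paper's computations and are on solid ground.
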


\begin{introcor}\label{cor:intro_drw}
    Suppose that $R$ is a smooth commutative $\bF_p$-algebra.
    Under the equivalence of Theorem~\ref{thm:intro_heart},
    $\pi_0^{\cyc,\P}(\F^{\geq\star}_\mot\THH(R))\iso\W\Omega^{\bullet}_R$, the de Rham--Witt
    complex of $R$.
\end{introcor}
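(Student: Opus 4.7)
The plan is to invoke Theorem~\ref{thm:intro_heart} to recast the statement as a comparison of $\eta$-deformed Cartier complexes, then construct a comparison map from $\W\Omega^\bullet_R$ via its universal property, and finally verify this map is an isomorphism weight-by-weight using known computations of motivic graded pieces.

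First, I would apply Theorem~\ref{thm:intro_heart} so that $\pi_0^{\cyc,\P}(\F^{\geq\star}_\mot\THH(R))$ becomes an object of $\mathrm{DCart}^{\eta,\wedge}$, equipped with its structure maps $F, V, d, \eta$. On the other side, the classical de Rham--Witt complex $\W\Omega^\bullet_R$ carries its standard Cartier complex structure in the sense of~\cite[Def.~3.34]{an1}, which makes it automatically an object of $\mathrm{DCart}^{\eta,\wedge}$ (every Cartier complex is in particular an $\eta$-deformed Cartier complex). Derived $V$-completeness on the right-hand side is classical, and on the left-hand side follows from the $p$-completeness of the motivic filtration together with the identification $V \sim p$ in the appropriate range.

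Second, I would construct a comparison map $\W\Omega^\bullet_R \to \pi_0^{\cyc,\P}(\F^{\geq\star}_\mot\THH(R))$. At weight 0, the unit map $R \to \pi_0\THH(R;\bZ_p)$ together with the cyclotomic Frobenius and Verschiebung structure provides a ring map $\W(R) \to $ (weight-0 part of $\pi_0^{\cyc,\P}$) compatible with $F$ and $V$. The universal property of the de Rham--Witt complex as the initial Cartier complex receiving such a compatible map from $R$ in weight 0 then produces the desired map of Cartier complexes $\W\Omega^\bullet_R \to \pi_0^{\cyc,\P}(\F^{\geq\star}_\mot\THH(R))$.

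Third, I would check that this comparison map is an isomorphism in each weight. For $R$ a smooth $\bF_p$-algebra, the motivic graded pieces $\gr^i_\mot\THH(R;\bZ_p) \simeq \bZ_p(i)(R)[2i]$ are identified via BMS with syntomic cohomology, which by the classical work of Illusie and Geisser--Levine is computed in terms of the Hodge--Witt forms $\W\Omega^i_R$ in the relevant degree. Keeping track of the cyclotomic structure on this filtered object yields precisely the $F$, $V$, $d$-structure on $\W\Omega^\bullet_R$, giving an isomorphism in each weight.

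The principal difficulty will be matching the $\eta$ operator from the synthetic $\tau$-class to the classical $\eta$ on $\W\Omega^\bullet_R$, and verifying that the $\eta^4 = 0$ relation (which holds on the target since it is an honest Cartier complex) is inherited by the source. This should follow from the fact that $\tau$ descends from the synthetic sphere via the $\bT_\ev$-action and acts on the motivic filtration through multiplication by a canonical class in weight $1$; for smooth $\bF_p$-algebras, this class is the image of a specific element of $\W\Omega^1_{\bZ_p}$ and the desired vanishing degenerates the $\eta$-deformation to the classical Cartier complex structure on $\W\Omega^\bullet_R$.
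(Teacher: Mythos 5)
Your proposed route is genuinely different from the paper's, and it contains two concrete gaps that would need to be fixed before it could work.

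The paper's proof is much shorter than what you propose: it exploits the fact that the equivalence $\CycSyn^{\P\heart}\we\mathrm{DCart}^{\eta,\wedge}_V$ is \emph{constructed} via the $t$-exact fully faithful functor $\mathrm{TR}\colon\CycSyn^-\to\syncart$, so that $\pi_0^{\cyc,\P}(\F^{\geq\star}_\mot\THH(R))\iso\pi_0^\P(\mathrm{TR}(\F^{\geq\star}_\mot\THH(R)))$. Then the identification $\mathrm{TR}(\F^{\geq\star}_\ev\THH(R;\bZ_p))\simeq\tau_{\geq\star}\mathrm{TR}(R)$ (cited from work of Darrell--Riggenbach) reduces the claim to the classical computation $\pi_i\mathrm{TR}(R)\iso\W\Omega^i_R$ for $R$ smooth over a perfect field, and the $F$, $V$, $d$ operators match up tautologically. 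Your proposal never mentions $\mathrm{TR}$, which means you have no handle on what the abstractly-defined object $\pi_0^{\cyc,\P}$ actually is; the universal-property-and-check-weight-by-weight strategy requires some concrete model of the source, and the $\mathrm{TR}$ identification is precisely what supplies it.

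Two specific errors. First, the claim that $\gr^i_\mot\THH(R;\bZ_p)\simeq\bZ_p(i)(R)[2i]$ is wrong: that formula is the graded piece of $\TC$, not $\THH$. For $\THH$ the motivic graded pieces are (conjugate/Nygaard-graded) Hodge--Tate cohomology, not syntomic cohomology. More to the point, $\pi_0^{\cyc,\P}$ in weight $i$ is \emph{not} $\gr^i$ of anything applied to $\THH$; it is $\pi_i\F^{\geq i}\mathrm{TR}(R)=\pi_i\mathrm{TR}(R)$, which is what gives $\W\Omega^i_R$. Second, the discussion of $\eta^4=0$ is a red herring: the point for a smooth $\bF_p$-algebra is not that the $\eta$-deformation degenerates by some relation, but that $\eta=0$ outright, because the whole filtered object is a $\bZ_p$-module and $\eta\in\pi_0^\P\bS_\ev$ maps to zero in $\pi_0^\P\bZ_p$. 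The paper's lemma states $\eta=0$ explicitly. If you reroute your argument through $\mathrm{TR}$ and the Darrell--Riggenbach result (in place of the incorrect graded-piece identification), the universal-property approach could in principle be made to work, but at that point you would just be re-deriving the paper's proof with extra steps.
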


The corollary itself has some interesting consequences, such as the fact that
$$\W\Omega^\bullet_R\widehat{\boxtimes}_{\bZ_p}\W\Omega^\bullet_S\iso\W\Omega^\bullet_{R\otimes_{\bF_p}S},$$
where the tensor product is the $V$-completed tensor product on $\mathrm{DCart}^{\eta,\wedge}$
induced from the compatibility of the Postnikov cyclotomic synthetic $t$-structure with the
symmetric monoidal structure. This is a generalization of the
isomorphism $W(R)\widehat{\boxtimes}W(S)\iso W(R\otimes S)$ proved in~\cite{an1}.

\begin{introcor}
    If $S$ is a smooth commutative $\bF_p$-algebra, then
    $\tau_{\geq\star}\TR(S)$ admits the structure of an $\bE_\infty$-algebra
    with $\bT_\fil$-action.
\end{introcor}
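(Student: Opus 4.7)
The plan is to obtain $\tau_{\geq\star}\TR(S)$ as the realization of a synthetic $\TR$ functor applied to the cyclotomic synthetic $\bE_\infty$-algebra $\F^{\geq\star}_\mot\THH(S;\bZ_p)$ produced by the main theorem, and then identify the resulting filtered spectrum with the Postnikov filtration on $\TR(S)$.

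First, I would define a lax symmetric monoidal exact functor $\TR^\syn\colon \CycSyn \to \SynSp_{\bT_\ev}$ mimicking the Nikolaus--Scholze formula
$$\TR^\syn(X) \simeq \lim\bigl(\cdots \xto{F} X^{\h C_{p^{n+1},\ev}} \xto{F} X^{\h C_{p^n,\ev}} \xto{F} \cdots\bigr),$$
where the $F$-maps are built from the cyclotomic Frobenius $\varphi_p\colon X \to X^{\t C_{p,\ev}}$ together with the synthetic homotopy fixed-point and Tate constructions already supplied by the paper. Applying $\TR^\syn$ to $\F^{\geq\star}_\mot\THH(S;\bZ_p)$ produces an $\bE_\infty$-algebra in $\SynSp_{\bT_\ev}$; since realization is symmetric monoidal and sends $\bT_\ev$ to $\bT_\fil$, its image in filtered spectra is an $\bE_\infty$-algebra with $\bT_\fil$-action.

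It remains to identify this filtered spectrum with $\tau_{\geq\star}\TR(S)$. For smooth $S$ over $\bF_p$, the classical isomorphism $\pi_n\TR(S) \iso \W\Omega^n_S$ shows that the Postnikov filtration of $\TR(S)$ has associated graded $\W\Omega^\bullet_S$, concentrated in one degree per weight. On the other hand, Theorem~\ref{thm:intro_heart} and Corollary~\ref{cor:intro_drw} compute the zeroth cyclotomic synthetic Postnikov homotopy object of $\F^{\geq\star}_\mot\THH(S;\bZ_p)$ as the same $\W\Omega^\bullet_S$, now viewed as an $\eta$-deformed Cartier complex. The $F$, $V$, and $d$ operations of the Cartier structure are precisely what $\TR^\syn$ sees on the heart, so a Postnikov spectral sequence argument matches the two filtrations once compatibility of $\TR^\syn$ with the Postnikov $t$-structures is established.

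The main obstacle is this last compatibility: verifying that $\TR^\syn$ is appropriately $t$-exact (or at least bounded) with respect to the Postnikov $t$-structures on $\CycSyn$ and $\SynSp_{\bT_\ev}$, and that its action on the Cartier heart recovers, after realization, the classical Postnikov filtration of $\TR$ on a smooth $\bF_p$-algebra. This should ultimately reduce to the known vanishing of higher de Rham--Witt terms for smooth $\bF_p$-algebras and the formal identification of Cartier operations with the synthetic structure maps, but executing the comparison carefully will be the bulk of the work.
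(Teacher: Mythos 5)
Your overall plan---apply a lax symmetric monoidal synthetic $\TR$ functor to $\F^{\geq\star}_\mot\THH(S;\bZ_p)\in\CAlg(\CycSyn)$, then identify the output filtered spectrum with $\tau_{\geq\star}\TR(S)$---is essentially the paper's route. The paper defines $\TR$ on $\CycSyn$ via the Nikolaus--Scholze equalizer formula, shows it is a right adjoint (hence lax symmetric monoidal) and $t$-exact for the Postnikov $t$-structure, and then cites \cite[Cor.~1.2]{Darrell_Riggenbach} for the identification $\TR(\F^{\geq\star}_\ev\THH(S;\bZ_p))\we\tau_{\geq\star}\TR(S)$; the factorization of the $\bT_\ev$-action through $\bT_\fil$ is automatic because $\TR(S)$ is a $\bZ_p$-algebra.

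Two issues with your write-up. First, the assertion that ``realization is symmetric monoidal and sends $\bT_\ev$ to $\bT_\fil$'' is false: the colimit functor $\F^{\geq -\infty}\colon\SynSp\to\Sp$ sends $\bT_\ev$ to $\bS[S^1]$, not to $\bT_\fil$. The functor that produces $\bT_\fil$ is base change along $\bS_\ev\to\ins^0\bZ$, and applying that would alter your filtered spectrum. The correct observation is different: $\TR(S)$ is a $\bZ_p$-algebra, so $\TR(\F^{\geq\star}_\mot\THH(S;\bZ_p))$ is an $\ins^0\bZ_p$-module in $\SynSp_{\bT_\ev}$, and for such modules the $\bT_\ev$-action factors through $\ins^0\bZ_p\otimes_{\bS_\ev}\bT_\ev$, which by the lemma $\ins^0\bZ\otimes_{\bS_\ev}\bT_\ev\we\bT_\fil$ is a ($p$-complete) $\bT_\fil$-action. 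No base change or realization is needed.

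Second, and more seriously, your identification step is a genuine gap and, as written, is circular. You propose to prove $\TR^\syn(\F^{\geq\star}_\mot\THH(S;\bZ_p))\we\tau_{\geq\star}\TR(S)$ by matching $\pi_0^{\cyc,\P}$ with $\W\Omega^\bullet_S$ via Theorem~\ref{thm:intro_heart} and Corollary~\ref{cor:intro_drw} and running a Postnikov spectral sequence. But agreement of heart objects (i.e., associated graded data) does not by itself yield an equivalence of filtered spectra; you need a comparison map together with connectivity and coconnectivity bounds for the synthetic $\TR$, which is precisely the content of the external result the paper invokes. Worse, Corollary~\ref{cor:intro_drw} in this paper is itself \emph{deduced} from the $t$-exactness of $\TR$ together with \cite[Cor.~1.2]{Darrell_Riggenbach}, so using it as an input to re-derive the $\TR$ identification runs the argument backwards. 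To close the gap you would need to either cite the Riggenbach comparison result directly, or independently establish that $\F^{\geq i}\TR^\syn(\F^{\geq\star}_\mot\THH(S;\bZ_p))\to\TR(S)$ is $i$-truncated (along with the identification of the underlying spectrum), which is the actual substance you are eliding.
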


It is natural to wonder if $\TR(S)$ in fact admits the structure of an
filtered animated commutative $\bZ_p$-algebra with $\bT_\fil$-action in the
sense of~\cite{raksit}. It has been announced by Nikolaus that
$\TR(S)$ is an animated commutative ring, so we are left to wonder about
the compatibility of this structure with the filtration and the circle
action. This will be clarified by forthcoming work of Bachmann--Burklund
who will show that $\F^{\geq\star}_\mot\THH(R;\bZ_p)$ admits a universal
property in the context of $\bG_m$-equivariant normed motivic algebras. The
connection here is that the $\infty$-category of (even) synthetic spectra
is equivalent (after $p$-completion) to the $\infty$-category of
$p$-complete cellular motivic spectra over $\bC$. Under this equivalence,
our synthetic circle $\bT_\ev$ is corresponds to $\Gm$. There is
a notion of normed motivic algebra due to
Bachmann--Hoyois~\cite{bachmann-hoyois} which
deforms the notion of derived commutative ring and
$\F^{\geq\star}_\mot\THH(R;\bZ_p)$ is the universal $p$-complete
normed motivic algebra with a $\Gm$-action attached to $R$.

Using the connection between crystals and de Rham--Witt connections established
by Bloch~\cite{Bloch_Crystals}, we use Corollary~\ref{cor:intro_drw} to prove the following
result.

\begin{introthm}~\label{thm: thmA}
    Let $k$ be a perfect $\bF_p$-algebra and let $A$ be a smooth $k$-algebra.
    There is a fully faithful embedding
    \[\mathrm{FBT}_A^{\op}\hookrightarrow{} \CycSyn_{\F^{\geq
    *}_{\ev}\THH(A;\bZ_p)}\] from the opposite category of formal $p$-divisible
    groups over $A$ to the $\infty$-category of $\F^{\geq *}_{\ev}\THH(A;\bZ_p)$-modules in $\CycSyn$.
\end{introthm}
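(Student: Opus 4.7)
The plan is to construct the embedding at the level of the heart of the Postnikov cyclotomic synthetic $t$-structure on modules over $\F^{\geq *}_{\ev}\THH(A;\bZ_p)$, and then compose with the fully faithful inclusion of the heart into the full stable $\infty$-category. The guiding idea is that formal $p$-divisible groups can be recognized through their Dieudonn\'e crystals, which by Bloch's theorem are naturally modules over the de Rham--Witt complex with an integrable connection and Frobenius; such data are precisely what the heart of $\CycSyn$ records, by Theorem~\ref{thm:intro_heart} and Corollary~\ref{cor:intro_drw}.

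First, I would identify the relevant heart. By the compatibility of the Postnikov cyclotomic synthetic $t$-structure with the symmetric monoidal structure, the heart of $\CycSyn_{\F^{\geq *}_{\ev}\THH(A;\bZ_p)}$ is the abelian category of $\pi_0^{\cyc,\P}(\F^{\geq *}_{\ev}\THH(A;\bZ_p))$-modules in $\CycSyn^{\P\heart}$. Combining Theorem~\ref{thm:intro_heart} with Corollary~\ref{cor:intro_drw}, this is the category of $\W\Omega^\bullet_A$-modules in derived $V$-complete $\eta$-deformed Cartier modules.

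Next, I would invoke Bloch~\cite{Bloch_Crystals}: the contravariant Dieudonn\'e functor embeds $\mathrm{FBT}_A$ fully faithfully into the category of modules over $\W\Omega^\bullet_A$ equipped with $F$, $V$, and the de Rham--Witt connection, with essential image consisting of finitely generated projective $W(A)$-modules of locally constant rank on which $V$ acts pro-nilpotently (the formality condition). Such modules are in particular derived $V$-complete. Promoting this data to a $\W\Omega^\bullet_A$-module in $\mathrm{DCart}^{\eta,\wedge}$ and composing with the inclusion of the heart into $\CycSyn_{\F^{\geq *}_{\ev}\THH(A;\bZ_p)}$ would then yield the desired fully faithful embedding.

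The main obstacle is reconciling Bloch's (classical, $\eta=0$) Cartier-module data with the $\eta$-deformation present in $\CycSyn^{\P\heart}$. The Dieudonn\'e crystal a priori carries only the usual $F$, $V$, and connection; one must construct and verify the $\eta$-action inherited from $\W\Omega^\bullet_A$ on Bloch's image and check that this promotion does not destroy fully faithfulness on the level of morphism groups in the heart. I expect both to follow from the topological nilpotence of Frobenius on formal $p$-divisible groups, together with naturality of Bloch's functor under base change and the fact that $\eta$ is divisible by $V$ up to a unit in the relevant range. Once this $\eta$-matching is arranged, the inclusion of the heart into the stable $\infty$-category is automatically fully faithful, so the composite gives the claimed embedding.
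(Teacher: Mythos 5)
Your overall strategy matches the paper's: pass to the Postnikov cyclotomic synthetic heart of $\F^{\geq *}_{\ev}\THH(A;\bZ_p)$-modules via the fully faithful inclusion $\Mod_{\pi_0^{\cyc,\P}(\F^{\geq *}_\ev\THH(A;\bZ_p))}(\CycSyn^\heartsuit)\hookrightarrow\CycSyn_{\F^{\geq *}_\ev\THH(A;\bZ_p)}$, identify the target with $\W\Omega^\bullet_A$-modules in derived $V$-complete Cartier complexes, and then compose the de Jong--Messing embedding $\mathrm{FBT}_A^{op}\hookrightarrow C_A$ with Bloch's embedding $C_A\hookrightarrow\Mod_{\W\Omega^\bullet_A}(\mathrm{DCart}^\eta)$.

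However, there is a genuine gap in the handling of the $\eta$-deformation, which you identify as the ``main obstacle'' but do not actually close. You propose that the mismatch between Bloch's classical Cartier-module data and the $\eta$-deformed heart should follow from ``topological nilpotence of Frobenius'' and ``the fact that $\eta$ is divisible by $V$ up to a unit in the relevant range.'' Neither of these is an argument; the second assertion in particular has no basis in the paper. The correct resolution is much simpler and is part of the computation the paper supplies: when one identifies $\pi_i^{\cyc,\P}(\F^{\geq *}_\ev\THH(A;\bZ_p))\cong\W\Omega^*_A(-i)$ via the $t$-exactness of $\TR$ and the result of \cite{Darrell_Riggenbach}, one finds that $\eta$ acts by \emph{zero} on these homotopy objects --- consistent with $d^2=0$ in $\W\Omega^\bullet_A$. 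Hence the ``$\eta$-deformed'' structure on Bloch's image is the trivial deformation, and no promotion or matching is needed: Bloch's functor already lands in $\eta$-deformed Cartier complexes (with $\eta=0$), and one need only check that $V$-adically complete crystals are sent to derived $V$-complete objects, which the paper records. Without the observation that $\eta=0$, your proof is incomplete; with it, your argument collapses to essentially the paper's.
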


We were motivated by the following question for which we have currently no more
than ad hoc constructions.

\begin{question}
    Is there a deformation of the functor of Theorem~\ref{thm: thmA} to flat
    $\F^{\geq\star}_\mot\THH(R;\bZ_p)$-modules in $\CycSyn$?
\end{question}

There is also a functor defined on a certain subcategory of
$\F^{\geq\star}_\mot\THH(R;\bZ_p)$-modules in cyclotomic synthetic spectra to
quasicoherent sheaves on $(\Spf R)^{\syn}$, the syntomification of $R$ in the sense of Bhatt, Lurie, and
Drinfeld~\cite{bhatt-fgauges,drinfeld-prismatization}. This source of prismatic $F$-gauges was one of the original
motivations for this work and was discussed at length with Jacob Lurie who has
discovered a compatible $t$-structure on $F$-gauges.

We conclude the paper by giving a synthetic analogue of the Beilinson fiber
square as established in~\cite{ammn} as well as a synthetic analogue of a result connecting the
image of $j$ spectrum to $\THH(\bZ_p)$ due to Devalapurkar and Raksit~\cite{devalapurkar-raksit}.
The former allows for an extension of some of the main results
of~\cite{ammn} to the case of chromatically $p$-quasisyntomic ring spectra.

\begin{introthm}\label{introthm:syntomic_bounds}
    Let $R$ be a connective chromatically $p$-quasisyntomic $\bE_\infty$-ring spectrum. Then,
    $\gr^i_\mot\TC(R;\bZ_p)\in\D(\bZ_p)_{[i-1,2i]}$ for all $i\geq 0$.
\end{introthm}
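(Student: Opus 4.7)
The plan is to deduce the theorem from the synthetic analogue of the Beilinson fiber square constructed in the concluding section of the paper, together with the known connectivity estimates on the associated graded pieces of the motivic filtration on $\HC^-$. This is a direct extension of the argument used in \cite{ammn} to establish the analogous bound for $p$-quasisyntomic commutative rings; the new ingredients are (a) the synthetic Beilinson fiber square, which is available for chromatically $p$-quasisyntomic $\bE_\infty$-rings, and (b) the Hahn--Raksit--Wilson description of $\gr^i_\mot\HC^-(R;\bZ_p)$ in this generality.

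First, I would apply the synthetic Beilinson fiber square to $R$, then take underlying filtered objects and pass to $i$-th graded pieces. This produces a cofiber sequence of the form
\[
\gr^i_\mot \TC(R;\bZ_p) \to \gr^i_\mot \HC^-(R;\bZ_p) \to C_i,
\]
where $C_i$ denotes the contribution from the rationalized/chromatically localized correction term in the square. It then suffices to bound each of the two outer terms in $\D(\bZ_p)$.

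Next, I would bound the middle term using the Hahn--Raksit--Wilson identification of $\gr^i_\mot \HC^-(R;\bZ_p)$ with a $p$-completed chromatic analogue of $\L\Omega_R^{\geq i}[2i]$; for connective $R$ this sits in cohomological degrees $[i, 2i]$, matching the classical quasisyntomic case. For the correction $C_i$, I would transport the analysis of \cite{ammn}: after rationalization the Beilinson fiber is controlled by the discrepancy between the Hodge and HKR filtrations, so $C_i$ lies in degrees $[i-1, 2i-1]$. Combining these via the long exact sequence produced by the cofiber sequence above gives $\gr^i_\mot \TC(R;\bZ_p) \in \D(\bZ_p)_{[i-1,2i]}$, as claimed.

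The main obstacle is to verify that the connectivity estimate on $C_i$ really does survive the passage from classical $p$-quasisyntomic rings to chromatically $p$-quasisyntomic $\bE_\infty$-rings. In the classical case the estimate rests on a clean comparison between the Hodge and HKR filtrations rationally, and in the chromatic case one must replace $\L\Omega_R^{\geq i}$ by its chromatic deformation. The synthetic Beilinson fiber square should package the comparison in a form that preserves the crucial connectivity, so that the AMMN argument carries through unchanged; checking that this synthetic fiber square genuinely gives the right rational correction term is where the real work lies.
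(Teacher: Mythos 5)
Your proposal does not match the paper's argument. The paper establishes the crucial lower bound (that $\gr^i_\mot\TC(R;\bZ_p)$ is $(i-1)$-connective, equivalently that $\bZ_p(i)^{\syn}(R)$ lies in cohomological degrees $\leq i+1$) without any appeal to the Beilinson fiber square. Instead, the proof in the paper (the Proposition appearing just before Construction~\ref{const: geometric realization of finite type synthetic spectra}) runs as follows: by Remark~\ref{rem:bounds}, $\F^{\geq\star}_\ev\THH(R)$ is connective in the Postnikov $t$-structure; by Lemma~\ref{lem: TR is t-exact}, $\TR$ is $t$-exact for the Postnikov $t$-structure, so $\TR(\F^{\geq\star}_\ev\THH(R))$ is again Postnikov-connective; and by Remark~\ref{rem: tc=trf=1}, $\TC$ is the fiber of $F-1$ on $\TR$, hence $(-1)$-connective. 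The upper bound $\gr^i_\mot\TC(R;\bZ_p)\in\D(\bZ_p)_{\leq 2i}$ is essentially automatic from the definition of the even filtration (the $\gr^i$ of the even filtration on any $\bE_\infty$-ring lies in $\D(\bS)_{\leq 2i}$ by Remark~\ref{rem:bounds}), applied to $\TC^-$ and $\TP$ and the equalizer formula for $\TC$.

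There is also a genuine gap in your proposed route. The synthetic Beilinson fiber square produced in the paper, Corollary~\ref{cor: filtered ammn}, is explicitly stated to be a pullback only \emph{upon rationalization}, not $p$-adically. Consequently, passing to graded pieces of that square cannot directly yield a $p$-adic bound on $\gr^i_\mot\TC(R;\bZ_p)$: any control it gives over $\gr^i_\mot\TC$ is control over $\gr^i_\mot\TC(R;\bQ_p)$, and the $p$-power torsion in $\bZ_p(i)^{\syn}(R)$ is exactly what the claimed bound is supposed to constrain. Moreover, the cofiber sequence you write down, $\gr^i_\mot\TC\to\gr^i_\mot\HC^-\to C_i$, does not arise naturally from the square in Corollary~\ref{cor: filtered ammn}, which is a commuting square of four terms involving $\TC(R)$, $\TC(R\otimes\bF_p)$, $\HC^-(R\otimes\bZ_p/\bZ_p)$, and $\TP(R\otimes\bF_p)$; the relevant cofiber sequence compares horizontal (or vertical) fibers of the square, which is a more intricate object. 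Finally, as a side remark, the argument in \cite{ammn} for their Theorem~5.1(1) also ultimately rests on connectivity estimates for $\TR$-type objects rather than a direct deduction from the Beilinson fiber square, so the analogy you are drawing with that paper is less direct than you suggest. The moral is that the synthetic $t$-structure and the $t$-exactness of $\TR$ do the real work here, and they are available in exactly the generality of chromatically quasisyntomic $\bE_\infty$-rings precisely because of the connectivity bounds on the even filtration.
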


As a special case, when $R$ is discrete, Theorem~\ref{introthm:syntomic_bounds} recovers
Theorem~5.1(1) of~\cite{ammn}.

\subsection{Acknowledgments}
We thank Sanath Devalapurkar, Paul Georss, Jeremy Hahn, Alice Hedenlund, Achim Krause, Jacob Lurie, Deven
Manam, Tasos Moulinos, Thomas Nikolaus, Arpon Raksit, and Dylan Wilson for many useful
conversations. In particular, Raksit originally suggested to us the definition
of the synthetic circle and the definition of filtered
$C_n$-fixed points of a synthetic spectrum with $\bT_\ev$-action was suggested to BA by
Krause and Nikolaus at Oberwolfach in 2022. Special thanks go to Alice Hedenlund and Tasos Moulinos
for comments on a draft and for correcting an important misconception.

We were supported by NSF grants DMS-2120005,
DMS-2102010, and DMS-2152235, by Simons Fellowships 666565 and 00005925, and by the Simons
Collaboration on Perfection.
BA additionally thanks the Institute for Advanced Study for its hospitality
and support via NSF grant DMS-1926686 as well as the Max Planck Institute for Mathematics in Bonn,
where this work was completed.

\section{The synthetic circle}\label{sec:even_circle}

In this section,
we recall some facts about synthetic spectra~\cite{gikr,pstragowski-synthetic} and introduce the synthetic analogue of the circle, $\bT_\ev$, which is
constructed after a suggestion of Raksit using the even filtration~\cite{hrw,pstragowski-even} and
is a deformation to synthetic spectra of the filtered circle of~\cite{moulinos-robalo-toen,raksit}.
See also the work of Hedenlund--Moulinos~\cite{hedenlund-moulinos} which has substantial overlap with the material in
this section.

\subsection{Synthetic spectra and $t$-structures}

Synthetic spectra were introduced in the context of $K(n)$-local homotopy
theory by Hopkins and Lurie in~\cite{hopkins-lurie-brauer} and generalized by
\Pstragowski{} in~\cite{pstragowski-synthetic}. We take an alternative approach via filtered spectra due
to Gheorghe, Isaksen, Krause, and Ricka~\cite{gikr}. We touch upon the
equivalence of these approaches below in Remark~\ref{rem:history}.

\begin{definition}[Filtered spectra]
    Let $\bZ^\op$ be the poset of decreasing integers.
    We let $\Sp=\D(\bS)$ denote the $\infty$-category of spectra, and we let
    $\F\Sp=\FD(\bS)=\Fun(\bZ^{\op},\D(\bS))$ be the $\infty$-category of (decreasing) filtered spectra.
    Both $\infty$-categories are symmetric monoidal, $\D(\bS)$ with the usual tensor (or smash) product
    $(-)\otimes_{\bS}(-)$ of spectra and $\FD(\bS)$ with the Day convolution symmetric monoidal
    structure, induced from the symmetric monoidal structure on $\D(\bS)$ and the symmetric monoidal
    structure on $\bZ^\op$ arising from addition.

    If $\Fgeqs M$ and $\Fgeqs N$ are two filtered
    spectra, then the tensor product $\Fgeqs O=(\Fgeqs M)\otimes_\bS(\Fgeqs N)$
    is given by $\F^{\geq n} O\we\colim_{i+j\geq
    n}\F^{\geq i}M\otimes_\bS\F^{\geq j}N$. For more details, see~\cite{bms2,
    gwilliam-pavlov,moulinos,raksit}.

    Given $i\in\bZ$, we let $\ins^i\colon\D(\bS)\rightarrow\FD(\bS)$ denote the
    left Kan extension along the inclusion $\{i\}\hookrightarrow\bZ^\op$. As
    the Yoneda functor $\bZ^\op\rightarrow\FD(\bS)$ is symmetric monoidal,
    $\ins^0\bS$ is the unit for the symmetric monoidal structure on $\FD(\bS)$.
    We will typically write $\bS$ for $\ins^0\bS$.

    There is also the constant filtration functor
    $c\colon\D(\bZ)\rightarrow\FD(\bS)$ obtained by pullback along
    $\bZ^\op\rightarrow\ast$. Finally, there is the colimit, or underlying
    spectrum functor, $$\F^{-\infty}\colon\FD(\bS)\rightarrow\D(\bS)$$ given by
    $\F^{\geq -\infty}(\F^{\geq\star}M)=\colim_{i\rightarrow-\infty}\F^{\geq i}M$.
    We will write this more succinctly as $\F^{\geq -\infty}M$ or $|\F^{\geq\star}M|$.

    Finally, say that $\F^{\geq\star}M$ is complete if $\lim_i\F^{\geq i}M\we
    0$. The full subcategory of complete filtered spectra is denoted by
    $\widehat{\FD}(\bS)\subseteq\FD(\bS)$. It is a symmetric monoidal
    localization of the $\infty$-category of filtered spectra and we denote the
    completed tensor product by $(-)\widehat{\otimes}(-)$.
\end{definition}

There are several $t$-structures on the $\infty$-category of filtered spectra
of interest to us.

\begin{construction}[The neutral $t$-structure]
    As $\FD(\bS)=\Fun(\bZ^\op,\D(\bS))$ is a functor category, it inherits a
    $t$-structure from the standard $t$-structure on $\D(\bS)$, called the
    neutral $t$-structure, in which a
    filtered spectrum $\F^{\geq\star} M$ is connective (or coconnective) in the canonical $t$-structure
    if and only if $\F^{\geq i} M\in\D(\bS)_{\geq 0}$ (or in $\D(\bS)_{\leq
    0}$) for all $i$. This $t$-structure is accessible and compatible with
    filtered colimits and is moreover compatible with the symmetric
    monoidal structure on $\FD(\bS)$ and the functor taking $\Fgeqs M$ to the filtered abelian
    group $i\mapsto \pi_0\F^{\geq i}M$ induces a symmetric monoidal equivalence
    $$\pi_0\F^{\geq\star}\colon\FD(\bS)^{\N\heart}\we\F\Mod_{\bZ}^{\D\otimes},$$
    where $\F\Mod_\bZ^{\D\otimes}$ is the abelian category of filtered abelian
    groups with the Day convolution symmetric monoidal
    structure.\footnote{We do not require
    the transition maps $\F^{i+1}M\rightarrow\F^iM$ in a filtered abelian group
    to be injective.}
    We write $(\FD(\bS)_{\geq 0}^\N,\FD(\bS)_{\leq 0}^\N)$ for this
    $t$-structure.
\end{construction}

\begin{construction}[The Postnikov $t$-structure]
    Let $\FD(\bS)_{\geq 0}^\P\subseteq\FD(\bS)$ be the full subcategory consisting of filtered spectra
    $\F^\star M$ such that $\F^iM\in\D(\bS)_{\geq i}$ for all $i\in\bZ$. Similarly, let
    $\FD(\bS)_{\leq 0}^\P\subseteq\FD(\bS)$ consist of those $\F^\star M$ where $\F^iM\in\D(\bS)_{\leq
    i}$ for all $i\in\bZ$. The pair $(\FD(\bS)^\P_{\geq 0},\FD(\bS)^\P_{\leq 0})$ defines a $t$-structure
    on $\FD(\bS)$, the {\bf Postnikov $t$-structure}. Again, this $t$-structure
    is accessible, compatible with filtered colimits, and compatible with the
    symmetric monoidal structure on $\FD(\bS)$. The functor
    $\pi_*\F^*$, which takes a filtered spectrum $\F^\star M$ to the graded abelian group
    $i\mapsto\pi_i\F^iM$, induces a symmetric monoidal equivalence
    \begin{equation}\label{eq:postnikov_heart}
        \pi_*\F^*\colon\FD(\bS)^{\P\heart}\we\Gr\Mod_\bZ^{\K\otimes},
    \end{equation}
    where $\Gr\Mod_\bZ^{\K\heart}$ denotes the abelian category of graded
    abelian groups with the Koszul symmetric monoidal structure.
\end{construction}

In order to define synthetic spectra, we first define the synthetic sphere
spectrum as a filtered $\bE_\infty$-ring. To do, we give a way of building some
non-trivial filtered $\bE_\infty$-rings.

\begin{lemma}\label{lem:lax_monoidal}
    The functor $\tau_{\geq \star}\colon\D(\bS)\rightarrow\FD(\bS)$ admits a
    natural lax symmetric monoidal structure such that the induced symmetric
    monoidal structure on the composition
    $\id_{\D(\bS)}\we\F^{-\infty}\circ\tau_{\geq\star}$ is the canonical one.
\end{lemma}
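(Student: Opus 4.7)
The plan is to realize $\tau_{\geq\star}$ as the composition $\tau^\P_{\geq 0}\circ c$, where $c\colon\D(\bS)\to\FD(\bS)$ is the constant filtration functor (so $c(X)^{\geq i}=X$ for every $i$, with identity transitions) and $\tau^\P_{\geq 0}$ is the connective cover for the Postnikov $t$-structure introduced above. Since $(\tau^\P_{\geq 0}(cX))^{\geq i}=\tau_{\geq i}X$, this exhibits the desired factorization.

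Each factor carries a canonical (lax) symmetric monoidal structure. The functor $c$ is strong symmetric monoidal for Day convolution: writing $c(X)\we\colim_i\ins^i X$ and using that the Yoneda functor $\bZ^\op\to\FD(\bS)$ is symmetric monoidal, so $\ins^i X\otimes\ins^j Y\we\ins^{i+j}(X\otimes Y)$, cofinality of $(i,j)\mapsto i+j$ yields $c(X)\otimes c(Y)\we\colim_n\ins^n(X\otimes Y)\we c(X\otimes Y)$. The connective cover $\tau^\P_{\geq 0}$ is lax symmetric monoidal because the Postnikov $t$-structure is compatible with the symmetric monoidal structure on $\FD(\bS)$: the inclusion $\FD(\bS)^\P_{\geq 0}\hookrightarrow\FD(\bS)$ is symmetric monoidal, and its right adjoint inherits a canonical lax symmetric monoidal structure. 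Composing these endows $\tau_{\geq\star}$ with a lax symmetric monoidal structure.

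For the compatibility with $\F^{-\infty}$, observe that $\F^{-\infty}$ is itself strong symmetric monoidal (Day convolution commutes with its defining colimit), and $\F^{-\infty}\circ c\we\id_{\D(\bS)}$ as symmetric monoidal functors since $\colim_i X\we X$. Applied to $\tau^\P_{\geq 0}(cX)$, one computes $\F^{-\infty}\tau^\P_{\geq 0}(cX)=\colim_i\tau_{\geq i}X\we X$, so $\F^{-\infty}\circ\tau_{\geq\star}\we\id_{\D(\bS)}$ on objects. The main step I expect to require care is the identification of the induced lax structure with the canonical strong structure on the identity. This reduces to showing that the counit $\tau^\P_{\geq 0}(M)\to M$ becomes an equivalence after applying $\F^{-\infty}$ in the relevant cases, which follows from the natural isomorphism $\colim_i\tau_{\geq i}(-)\we\colim_i(-)$: the only a priori non-invertible component of the composite monoidal structure map collapses to an equivalence. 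This verification, while essentially formal, is the main bookkeeping obstacle.
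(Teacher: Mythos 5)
Your decomposition $\tau_{\geq\star}=\iota\circ\tau^\P_{\geq 0}\circ c$ and the overall strategy coincide with the paper's. There is, however, one error in the middle: the constant filtration functor $c\colon\D(\bS)\to\FD(\bS)$ is \emph{not} strong symmetric monoidal. Your cofinality computation does correctly show that the binary structure map $c(X)\otimes c(Y)\to c(X\otimes Y)$ is an equivalence, but strong symmetric monoidality would also require the unit map $\ins^0\bS\to c(\bS)$ to be an equivalence, and it is not: $\ins^0\bS$ vanishes in positive filtration weights, while $c(\bS)$ is constant with value $\bS$. So $c$ is only lax symmetric monoidal (as a right adjoint to the symmetric monoidal functor $\F^{-\infty}$); the paper phrases this carefully, noting that $c$ becomes strong symmetric monoidal only after corestriction to $\Mod_{c(\bS)}(\FD(\bS))$. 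Since lax monoidality of $c$ is all the composition requires, this error does not invalidate your conclusion, but the claim as stated is false and the argument you give for it establishes strictly less than what you assert.

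For the identification of the induced lax structure on $\F^{-\infty}\circ\tau_{\geq\star}$ with the canonical one on $\id_{\D(\bS)}$, your sketch is on the right track and is in fact more explicit than the paper's proof, which does not address this point at all. A clean way to organize it: the counit $\F^{-\infty}c\to\id$ of the monoidal adjunction $\F^{-\infty}\dashv c$ is a morphism of lax symmetric monoidal functors, as is the counit $\iota\tau^\P_{\geq 0}\to\id$ of $\iota\dashv\tau^\P_{\geq 0}$; whiskering the latter by $\F^{-\infty}(-)c$ gives a morphism of lax symmetric monoidal functors $\F^{-\infty}\iota\tau^\P_{\geq 0}c\to\F^{-\infty}c\to\id$, and both maps are natural equivalences because $\colim_i\tau_{\geq i}X\we\colim_i X\we X$. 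This exhibits the lax structure on $\F^{-\infty}\circ\tau_{\geq\star}$ as equivalent to the tautological one, which is what your ``collapses to an equivalence'' remark is gesturing at.
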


\begin{proof}
    The functor $\tau_{\geq\star}\colon\D(\bS)\rightarrow\FD(\bS)$ is obtained as a composition
    $$\D(\bS)\xrightarrow{c}\FD(\bS)\xrightarrow{\tau_{\geq 0}^\P}\FD(\bS)^{\P}_{\geq
    0}\hookrightarrow\FD(\bS),$$
    where $c\colon\D(\bS)\rightarrow\FD(\bZ)$ is the constant filtration functor.
    The truncation $\tau_{\geq 0}^\P$ and the inclusion $\FD(\bS)^{\P}_{\geq
    0}\hookrightarrow\FD(\bS)$ are lax symmetric monoidal and symmetric monoidal, respectively, as the Postnikov $t$-structure is compatible
    with the symmetric monoidal structure. Finally, the constant filtration
    functor is lax symmetric monoidal; in fact it is a fully faithful symmetric
    monoidal functor onto the category of $c(\bS)$-modules in $\FD(\bS)$.
\end{proof}

\begin{corollary}\label{cor:double_lax}
    The functor $\tau_{\geq 2\star}\colon\D(\bS)\rightarrow\FD(\bS)$ admits a lax symmetric
    monoidal structure such that the induced symmetric
    monoidal structure on the composition
    $\id_{\D(\bS)}\we\F^{-\infty}\circ\tau_{\geq 2\star}$ is the canonical one.
\end{corollary}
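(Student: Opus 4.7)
The plan is to realize $\tau_{\geq 2\star}$ as the composition of $\tau_{\geq\star}$ with restriction of scalars along the doubling endofunctor of $\bZ^\op$, and to inherit the lax symmetric monoidal structure from Lemma~\ref{lem:lax_monoidal} via Day convolution functoriality.

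First I would observe that the multiplication-by-two map $d\colon\bZ^\op\to\bZ^\op$, $n\mapsto 2n$, is a strict symmetric monoidal functor for the additive symmetric monoidal structure, since $d(n+m)=2n+2m=d(n)+d(m)$. By the general formalism of Day convolution, restriction of scalars along a symmetric monoidal functor of indexing $\infty$-categories is lax symmetric monoidal, as the right adjoint to the symmetric monoidal left Kan extension $d_!$. Applied here, $d^*\colon\FD(\bS)\to\FD(\bS)$, which sends a filtered spectrum $\F^{\geq\star}M$ to the filtered spectrum $n\mapsto\F^{\geq 2n}M$, is canonically lax symmetric monoidal. Composing $d^*$ with the lax symmetric monoidal $\tau_{\geq\star}$ of Lemma~\ref{lem:lax_monoidal} therefore endows $\tau_{\geq 2\star}\simeq d^*\circ\tau_{\geq\star}$ with a lax symmetric monoidal structure.

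It remains to check the underlying spectrum compatibility. Because the even integers form a cofinal subposet of $\bZ^\op$ in the direction $n\to-\infty$, for any filtered spectrum $\F^{\geq\star}M$ the natural map
\[
\colim_{n\to-\infty}\F^{\geq 2n}M\;\we\;\colim_{n\to-\infty}\F^{\geq n}M
\]
is an equivalence, and this cofinality identification upgrades to an equivalence of lax symmetric monoidal functors $\F^{-\infty}\circ d^*\we\F^{-\infty}$. Chaining with Lemma~\ref{lem:lax_monoidal} gives $\F^{-\infty}\circ\tau_{\geq 2\star}\we\id_{\D(\bS)}$ with the canonical symmetric monoidal structure.

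There is no substantive obstacle here; the argument is a purely formal consequence of Lemma~\ref{lem:lax_monoidal} together with the observations that doubling is a symmetric monoidal endofunctor of $(\bZ^\op,+)$ and that it is cofinal in the limit to $-\infty$. The only point meriting attention is verifying that the Day convolution lax structure on $d^*$ is compatible with the cofinal identification of the underlying spectrum; this is automatic because the colimit along a cofinal symmetric monoidal subfunctor respects the lax constraints tautologically.
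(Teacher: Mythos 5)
Your proof is correct and takes essentially the same approach as the paper: factor $\tau_{\geq 2\star}$ as $d^*\circ\tau_{\geq\star}$, with $d^*$ the restriction along the doubling endomorphism of $(\bZ^\op,+)$, and compose lax symmetric monoidal structures. One small point worth flagging: the paper states that restriction along the multiplication-by-$2$ functor is ``symmetric monoidal,'' but (as your right-adjoint argument correctly shows) $d^*$ is only \emph{lax} symmetric monoidal, not strong --- indeed $d^*(\ins^1\bS\otimes\ins^1\bS)=\ins^1\bS$ while $d^*\ins^1\bS\otimes d^*\ins^1\bS=\ins^0\bS$. Your formulation is the more precise one, and lax is all the corollary needs.
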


\begin{proof}
    This follows from Lemma~\ref{lem:lax_monoidal} and the fact that the multiplication-by-$2$
    functor $\bZ^\op\xrightarrow{2}\bZ^\op$ is symmetric monoidal, so
    restriction along it induces a symmetric monoidal functor $\FD(\bS)\rightarrow\FD(\bS)$ sending
    $\F^\star M$ to $\F^{2\star}M$.
\end{proof}

\begin{corollary}\label{cor:double_speed_ring}
    If $A$ is an $\bE_\infty$-ring spectrum, then $\tau_{\geq 2\star}A$ is naturally an $\bE_\infty$-algebra
    in filtered spectra.
\end{corollary}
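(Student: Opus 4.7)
The plan is to deduce this as an immediate consequence of Corollary~\ref{cor:double_lax}. That corollary exhibits $\tau_{\geq 2\star}\colon\D(\bS)\rightarrow\FD(\bS)$ as a lax symmetric monoidal functor between symmetric monoidal $\infty$-categories. The assignment $\Cscr\mapsto\CAlg(\Cscr)$ is functorial with respect to lax symmetric monoidal functors, so $\tau_{\geq 2\star}$ promotes to a functor $\CAlg(\D(\bS))\rightarrow\CAlg(\FD(\bS))$. Applying this to $A\in\CAlg(\D(\bS))$ produces the desired $\bE_\infty$-algebra structure on $\tau_{\geq 2\star}A$ in filtered spectra.

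There is essentially no obstacle: everything is formal once Corollary~\ref{cor:double_lax} is in hand. As a consistency check one can note that the symmetric monoidal identification $\F^{-\infty}\circ\tau_{\geq 2\star}\we\id_{\D(\bS)}$ from that corollary guarantees that the underlying $\bE_\infty$-ring of $\tau_{\geq 2\star}A$ is naturally equivalent to $A$, so the resulting filtered $\bE_\infty$-structure refines the given multiplication on $A$ rather than introducing a new one.
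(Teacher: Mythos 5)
Your argument is exactly the intended one: the paper gives no proof for this corollary because it follows immediately from Corollary~\ref{cor:double_lax} by the standard fact that lax symmetric monoidal functors induce functors on $\bE_\infty$-algebras. Your consistency check about the underlying ring is a nice touch, though not strictly needed.
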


\begin{definition}[The synthetic sphere spectrum]
    Let $\MU^\bullet$ be the \v{C}ech complex associated to $\bS\rightarrow\MU$, viewed as a
    cosimplicial $\bE_\infty$-ring, so that $\MU^0\we\MU$,
    $\MU^1\we\MU\otimes_{\bS}\MU$, and so on. We can apply $\tau_{\geq2\star}$ pointwise to obtain a
    cosimplicial $\bE_\infty$-algebra $\tau_{\geq 2\star}\MU^\bullet$ in filtered spectra using
    Corollary~\ref{cor:double_speed_ring}. We let
    $\bS_\ev$ denote the limit in $\CAlg(\FD(\bS))$, the $\infty$-category of
    $\bE_\infty$-algebras in filtered spectra. We call $\bS_\ev$ the {\bf synthetic sphere
    spectrum}, or the {\bf even sphere spectrum}. It is complete with
    underlying object given by $\bS$.
\end{definition}

\begin{remark}
    The $\E^1$-page of the spectral sequence associated to $\bS_\ev$ is the
    $\E^2$-page of the Adams--Novikov spectral sequence.
\end{remark}

To explain the terminology, recall that a spectrum $X$ is {\bf even} if $\pi_{2i-1}X=0$ for all $i\in\bZ$.
An $\bE_\infty$-ring is even if it is even as a spectrum.

\begin{variant}[Alternative constructions]\label{var:even}
    If $A$ is any $\bE_\infty$-ring, Hahn--Raksit--Wilson~\cite{hrw} introduce the even filtration
    $$\F^{\geq\star}_\ev A=\lim_{\text{$A\to B$ even}}\tau_{\geq 2\star}B,$$
    where the limit is over all maps of $\bE_\infty$-rings $A\rightarrow B$ where $B$ is even and is
    taken in $\bE_\infty$-algebras in filtered spectra using Corollary~\ref{cor:double_speed_ring}.
    They prove in~\cite[Cor.~2.2.17]{hrw} that $\F^{\geq\star}_\ev A$ can be computed using {\em Novikov
    descent}, which shows in particular that $\F^{\geq\star}_\ev\bS\we\bS_\ev$. In~\cite{pstragowski-even},
    \Pstragowski{} introduced another even filtration construction and shows that it agrees with that of~\cite{hrw}
    in many cases, including for the sphere spectrum.
\end{variant}

\begin{notation}
    We will write $\F^{\geq\star}_\ev A$ and $A_\ev$ interchangeably for the even
    filtration on an $\bE_\infty$-ring spectrum $A$.
\end{notation}

\begin{definition}[Synthetic spectra]
    We define $\SynSp$ to be the $\infty$-category $\Mod_{\bS_\ev}\FD(\bS)=\FD(\bS_\ev)$ of $\bS_\ev$-modules in
    filtered spectra. Objects of $\SynSp$ are called {\bf synthetic spectra} in this paper.
\end{definition}

\begin{definition}
    The evaluation at zero functor $\FD(\bS)\to \mathrm{Sp}$ from synthetic spectra to spectra
    admits a symmetric monoidal left adjoint $\ins^0\colon\mathrm{Sp}\to\FD(\bS)$. Concretely this
    functor is given by \[\F^n\ins^0M\we\begin{cases}
        0 & \textrm{ if }n>0\\
        M &\textrm{ if }n\leq 0,
    \end{cases}\] with all transition maps $\F^n\ins^0M\rightarrow\F^{n-1}\ins^0M$ being
    equivalent to the
    identity on $M$ for $n\leq 0$. If $M$ is in $\D(\bZ)$, then $\ins^0 M$ naturally admits an
    $\ins^0\bZ$-module structure and hence a
    $\bS_{\ev}$-module structure via restriction of scalars along $\bS_\ev\rightarrow\ins^0\bZ$.
\end{definition}

\begin{remark}[Bounds]\label{rem:bounds}
    If $A$ is an $\bE_\infty$-ring, then $\gr^i_\ev A$ is in $\D(\bS)_{\leq 2i}$ by construction. If
    $A$ is connective, then $\gr^i_\ev A$ is in fact $i$-connective so that it is in
    $\D(\bS)_{[i,2i]}$. This is a result of Burklund--Krause (private communication). However, when
    the even filtration coincides with the filtration of~\cite{pstragowski-even}, as is the case for
    the sphere spectrum and the other connective $\bE_\infty$-ring spectra studied in~\cite{hrw}, it also follows
    from~\cite[Thm.~1.7]{pstragowski-even}.
\end{remark}

% \begin{remark}
%     By construction, there is a map $\tau_{\geq 2\star}\bS\rightarrow\bS_\ev$ of
%     $\bE_\infty$-algebras in filtered spectra. The associated
%     graded pieces of the left-hand side are in $\D(\bS)_{[2i,2i+1]}$ by definition while the associated graded
%     pieces of the right-hand side are in $\D(\bS)_{[i,2i]}$ by Remark~\ref{rem:bounds}.
%     In fact, for $i\geq 1$, $\gr^i\bS_\ev\in\D(\bS)_{[i,2i-1]}$, and thus the map on associated
%     graded pieces is nullhomotopic in positive weights.
% \end{remark}

\begin{remark}[History]\label{rem:history}
    The terminology of synthetic spectra originates in~\cite{hopkins-lurie-brauer} where it is
    introduced by Hopkins--Lurie in a special case. The idea and terminology is
    generalized by \Pstragowski{}
    in~\cite{pstragowski-synthetic} who introduces a stable $\infty$-category $\mathrm{Syn}_E$ for
    any $\bE_\infty$-ring spectrum $E$ and proves that a certain full subcategory
    $\mathrm{Syn}_\MU^\ev\subseteq\mathrm{Syn}_\MU$ is equivalent to the stable $\infty$-category of
    cellular motivic spectra
    over $\bC$ after $p$-completion at any prime $p$. The connection to filtered spectra is
    due to Gheorghe--Isaksen--Krause--Ricka~\cite{gikr} who study $\bS_\ev$ as defined above and show that $\FD(\bS_\ev)$ is
    equivalent to the stable $\infty$-category of cellular motivic spectra over $\bC$ after
    $2$-completion. They remark in~\cite[Rem.~6.13]{gikr} that it is possible to directly compare
    $\FD(\bS_\ev)$ and \Pstragowski's $\mathrm{Syn}_E$. Gregoric
    in~\cite{gregoric-moduli} compares $\FD(\bS_\ev)$ to so-called ind-coherent sheaves on
    the connective cover of the moduli stack of oriented formal groups.

    In more detail,
    \Pstragowski's $\infty$-category of even $\MU$-based synthetic spectra is
    defined as follows. One begins with the $\infty$-category
    $\Sp_\MU^{\mathrm{fpe}}$ of finite spectra $X$ such that $\MU_*X$ is even
    and projective as a graded $\MU_*$-module spectrum, which is given the topology where the coverings
    are maps $X\rightarrow Y$ such that $\MU_*X\rightarrow\MU_*Y$ is
    surjective.
    Then, $\Syn_\MU^\ev$ is
    defined to be the full subcategory of sheaves of spectra
    $(\Sp_\MU^{\mathrm{fpe}})^\op\rightarrow\Sp$ which preserve finite products.
    The symmetric monoidal structure on $\Syn_\MU^\ev$ is given by Day
    convolution arising from the smash product of spectra, which restricts to a
    symmetric monoidal structure on $\Sp_\MU^{\mathrm{fpe}}$. There is also a
    $t$-structure on $\Syn_\MU^\ev$ whose connective part consists of the full
    subcategory of sheaves of anima which preserve finite products.
    \Pstragowski{} defines the synthetic analogue functor
    $\nu\colon\Sp\rightarrow\Syn_\MU^\ev$, which is additive but not exact, as the composition of the
    (restricted) Yoneda
    functor $h\colon\Sp\rightarrow(\Syn_\MU^\ev)_{\geq 0}$ with the fully
    faithful suspension
    spectrum functor $\Sigma^\infty_+\colon(\Syn_\MU^\ev)_{\geq
    0}\rightarrow\Syn_\MU^\ev$.
    \Pstragowski{} shows that $\Syn_\MU^\ev$ is generated under colimits by
    the bigraded spheres $\bS^{t,2w}=\nu(\bS[2w])[t-2w]$.

    There is an equivalence $\Syn_\MU^\ev\we\SynSp$ discovered
    in~\cite{gikr}. Under this equivalence $\bS^{t,2w}$ corresponds to
    $\bS_\ev(w)[2w][t-2w]\we\bS_\ev(w)[t]$.

    Under the equivalence $\Syn_\MU^\ev\we\SynSp$, the canonical sheaf
    $t$-structure on $\Syn_\MU^\ev$ is given in terms of filtered
    $\bS_\ev$-modules as follows.
    We say that
    $\F^{\geq\star}M\in\SynSp$ is $\MU$-connective if
    $$\F^{\geq
    i}\left(\F^{\geq\star}
    M\otimes_{\bS_\ev}\MU_\ev\right)$$ is $(2i)$-connective for all $i\in\bZ$.
    Write $\SynSp_{\geq 0}^\MU$ for the connective objects, which are the
    connective part of a $t$-structure on $\SynSp$, which we will call the
    synthetic $t$-structure. \Pstragowski{} proves that $\SynSp^{\MU\heart}$ is
    equivalent to the symmetric monoidal abelian category of
    $\MU_*\MU$-comodules.
%     This comes down to checking that
%     $\gr^\star\bS_\ev\rightarrow\gr^\star\MU_\ev$ satisfies a form of flat
%     descent studied by Mathew and Mondal.
%     It is induced by the functor
%     $\Sp_\MU^{\mathrm{fpe}}\rightarrow\SynSp$ obtained by sending $X$ to
%     $$X_\MU=\Tot\left(\tau_{\geq 2\star}(X\otimes_{\bS}\MU^\bullet)\right).$$
%     The $t$-structure Pstragowski studies is the canonical one on the
%     $\infty$-category of sheaves of spectra: $\Fscr$ is connective (or
%     connective) if it is conncective (or coconnective) locally. From this
%     perspective, a synthetic spectrum $\F^{\geq\star}M$ in our sense is
%     connective (or coconnective) if and only if the mapping spectrum
%     $\Map_{\SynSp}(X_\MU,\F^{\geq\star} M)$
%     is connective. As $\Sp_\MU^{\mathrm{fpe}}$ is additively generated by
%     $\bS[2n]$ for varying $n$, we see that $\F^{\geq\star}M$ is connective (or
%     coconnective) if and only if
%     $$\Map_\SynSp(\bS[2n]_\MU,\F^{\geq\star}M)\we\Map_\SynSp(\bS_\ev(n)[2n],\F^{\geq\star}M)\we\F^{\geq
%     n}M[-2n]$$
%     is. This happens if and only if $\F^{\geq n}M$ is $(2n)$-connective (or
%     coconnective) for all $n\in\bZ$.
\end{remark}

\begin{construction}[The Postnikov $t$-structure on synthetic spectra]~\label{const: Postnikov t-structure on synthetic spectra}
    It follows from Remark~\ref{rem:bounds} that $\bS_\ev$ is connective in the Postnikov
    $t$-structure on $\FD(\bS)$. Thus, $\SynSp=\FD(\bS_\ev)$ inherits a Postnikov
    $t$-structure as well, where a synthetic spectrum is connective or coconnective if its
    underlying filtered spectrum is. This $t$-structure is accessible,
    compatible with filtered colimits, and compatible with the symmetric
    monoidal structure in $\SynSp$. It follows that
    $\SynSp^{\P\heart}\we\Mod_{\pi_0^\P\bS_\ev}(\FD(\bS)^{\P\heart})$. But,
    $\pi_*\F^*(\pi_0^\P\bS_\ev)\iso\pi_*\F^*\bS_\ev\iso\pi_*\gr^*\bS_\ev\iso\bZ[\eta]/2\eta$, where
    $\eta$ denotes the Hopf map, of weight $1$, and the second isomorphism
    follows from the connectivity bounds of Remark~\ref{rem:bounds}. Therefore, taking $\pi_*\F^*$ induces a symmetric monoidal equivalence
    $$\pi_*\F^*\colon\SynSp^\heart\we\Gr\Mod_{\bZ[\eta]/2\eta}$$
    using~\eqref{eq:postnikov_heart}, where
    $\Gr\Mod_{\bZ[\eta]/2\eta}^{\K\otimes}$ is the abelian category of graded
    $\bZ[\eta]/2\eta$-modules with the Koszul symmetric monoidal structure.
    The action of $\eta$ induces the slope $1$ lines visible in Adams--Novikov charts, such as can
    be found in~\cite{isaksen-wang-xu-charts}.
\end{construction}

\begin{remark}
    If we restrict our attention to synthetic spectra over
    $\bS[1/2]_\ev$, then the heart of the Postnikov $t$-structure is equivalent
    to graded modules over $\bZ[1/2]$, since $\pi_0^\P\bS[1/2]_\ev\iso\bZ[1/2]$.
\end{remark}

\begin{lemma}~\label{lem:uniqueness of ring structure on whitehead tower.}
    Let $R$ be an $\bE_\infty$-ring. Then the space of $\bE_\infty$-ring structures on $\tau_{\geq 2\star}R$ compatible with the one on $R$ is contractible.
\end{lemma}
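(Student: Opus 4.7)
The plan is to exhibit $\tau_{\geq 2\star}\colon\D(\bS)\to\FD(\bS)$ as the fully faithful right adjoint of a symmetric monoidal Bousfield localization onto a suitable subcategory of filtered spectra. Passing to commutative algebras will preserve the Bousfield localization and the full faithfulness of the right adjoint, from which the desired contractibility will follow formally.

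I would first introduce $\Cscr\subseteq\FD(\bS)$, the full subcategory of filtered spectra $\F^{\geq\star}M$ with $\F^{\geq i}M\in\D(\bS)_{\geq 2i}$ for all $i$. Using the connectivity bound $\D(\bS)_{\geq a}\otimes\D(\bS)_{\geq b}\subseteq\D(\bS)_{\geq a+b}$ and the Day convolution formula for the tensor product, one sees that $\Cscr$ is a symmetric monoidal subcategory of $\FD(\bS)$; the unit $\ins^0\bS$ belongs to $\Cscr$, and $\tau_{\geq 2\star}$ factors through $\Cscr$. Next, I would verify that $\tau_{\geq 2\star}$ is right adjoint to $|\cdot|\colon\Cscr\to\D(\bS)$ via the computation
\[\Map_{\FD(\bS)}(\F^{\geq\star}M,\tau_{\geq 2\star}N)\simeq\lim_i\Map(\F^{\geq i}M,\tau_{\geq 2i}N)\simeq\lim_i\Map(\F^{\geq i}M,N)\simeq\Map(|\F^{\geq\star}M|,N),\]
where the second equivalence uses the $2i$-connectivity of $\F^{\geq i}M$. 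Since the counit $|\tau_{\geq 2\star}N|=\colim_i\tau_{\geq 2i}N\to N$ is an equivalence, $\tau_{\geq 2\star}$ is fully faithful and $|\cdot|$ is a Bousfield localization, which is moreover symmetric monoidal as $|\cdot|$ is.

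Passing to $\CAlg$ will then yield a fully faithful functor $\tau_{\geq 2\star}\colon\CAlg(\D(\bS))\hookrightarrow\CAlg(\Cscr)\subseteq\CAlg(\FD(\bS))$, with essential image those $\bE_\infty$-algebras whose underlying filtered spectrum is in the essential image of $\tau_{\geq 2\star}\colon\D(\bS)\hookrightarrow\Cscr$. Any $\bE_\infty$-ring structure on the filtered spectrum $\tau_{\geq 2\star}R$ satisfies this, so belongs to the essential image, and full faithfulness on $\CAlg$ will identify the space of such structures compatible with the given $\bE_\infty$-structure on $R=|\tau_{\geq 2\star}R|$ with the contractible space of pairs $(R',R'\simeq R)$ in $\CAlg(\D(\bS))$. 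I do not anticipate a serious obstacle; the only point requiring care is the setup of the adjunction on $\Cscr$, after which the argument is formal.
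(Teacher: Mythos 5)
Your proposal is correct and takes essentially the same approach as the paper: your $\Cscr$ is exactly $\FD(\bS)_{\geq 0}^{2\P}$, and the key point in both arguments is that $\tau_{\geq 2\star}$ is a fully faithful lax symmetric monoidal right adjoint to $\F^{\geq -\infty}$ on this subcategory, which persists upon passing to $\CAlg$. The paper packages this as a composite of the constant/colimit adjunction with the inclusion/truncation adjunction for the double-speed Postnikov $t$-structure rather than building the adjunction directly as you do, but the content is identical.
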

\begin{proof}
    Consider the diagram of adjoints
    \[
    \begin{tikzcd}
\CAlg(\FD(\bS)_{\geq 0}^{2\P}) \arrow[rr, "i", shift left] \arrow[rrdd, "\F^{\geq -\infty}", shift
        left] &  & \CAlg(\FD(\bS)) \arrow[ll, "\tau_{\geq 0}^{2\P}", shift left] \arrow[dd, "\F^{\geq -\infty}"', shift right] \\
                                                                                                     &  &                                                                                                        \\
                                                                                                     &
                                                                                                     &
                                                                                                     \CAlg(\Sp)
                                                                                                     \arrow[uu,
                                                                                                     "\mathrm{const}"',
                                                                                                     shift
                                                                                                     right] \arrow[lluu, "\tau_{\geq 2\star}", shift left=2]  
\end{tikzcd}
    \]
    and note that the counit of the diagonal adjunction is an equivalence. Thus, the functor
    $\tau_{\geq 2\star}:\CAlg(\Sp)\to \CAlg(\FD(\bS)_{\geq 0}^{2\P})$ is fully faithful. The inclusion $i$ is fully faithful as well, whence the result.
\end{proof}

In the remainder of this section, we include some complements on the even
filtration from~\cite{hrw}.

\begin{lemma}[The even filtration is lax symmetric monoidal]\label{lem:even_monoidal}
    The functor $\F^{\geq\star}_\ev\colon\CAlg(\D(\bS))\rightarrow\CAlg(\FD(\bS))$ is lax symmetric
    monoidal.
\end{lemma}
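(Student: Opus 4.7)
The plan is to exhibit $\F^{\geq\star}_\ev$ as an operadic right Kan extension of the lax symmetric monoidal functor $\tau_{\geq 2\star}$ of Corollary~\ref{cor:double_speed_ring} and invoke the general relative Kan extension machinery for $\infty$-operads. Since limits in $\CAlg(\FD(\bS))$ are computed on underlying filtered spectra, once the indexing is packaged as an $\infty$-operad the lax monoidal structure on $\F^{\geq\star}_\ev$ will follow formally.

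Concretely, I would introduce the $\infty$-category $\Escr$ whose objects are pairs $(A, A\to B)$ with $A, B\in\CAlg(\Sp)$ and $B$ even, with morphisms the evident commutative squares. The source projection $\pi\colon\Escr\to\CAlg(\Sp)$, $(A,B)\mapsto A$, is a cocartesian fibration with fiber $\CAlg_{A/}(\Sp)^\ev$ over $A$. The composition $\Phi\colon\Escr\to\CAlg(\FD(\bS))$ defined by $(A,B)\mapsto\tau_{\geq 2\star}B$ then presents $\F^{\geq\star}_\ev$ as the right Kan extension $\pi_*\Phi$ by the formula in Variant~\ref{var:even}.

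To promote this to a lax symmetric monoidal statement, I would endow $\pi$ with the structure of a map of $\infty$-operads whose multi-morphism spaces from $(A_i, B_i)_{i=1}^{n}$ to $(A, B)$ parametrize diagrams $A_1\otimes\cdots\otimes A_n\to A\to B$ factoring through the given even overrings. The lax monoidality of $\tau_{\geq 2\star}$ supplies structure maps $\tau_{\geq 2\star}B_1\otimes\cdots\otimes\tau_{\geq 2\star}B_n\to\tau_{\geq 2\star}(B_1\otimes\cdots\otimes B_n)$. Although $B_1\otimes\cdots\otimes B_n$ itself need not be even, any even cover $C$ of it over $A_1\otimes\cdots\otimes A_n$ provides a vertex in the Kan extension diagram defining $\F^{\geq\star}_\ev(A_1\otimes\cdots\otimes A_n)$, so the lax structure maps land there. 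This upgrades $\Phi$ to a lax symmetric monoidal functor of $\infty$-operads over $\CAlg(\Sp)$, and the relative right Kan extension $\pi_*\Phi$ inherits lax monoidality.

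The main obstacle is the operadic bookkeeping: arranging the structure on $\Escr$ so that $\pi$ is genuinely a map of $\infty$-operads and verifying that the operadic right Kan extension agrees pointwise with the limit in Variant~\ref{var:even}. The essential geometric input is a cofinality statement, namely that for any even $B_1,\ldots,B_n$ under the $A_i$ the tensor $B_1\otimes\cdots\otimes B_n$ admits a cofinal system of even refinements over $A_1\otimes\cdots\otimes A_n$, so that the failure of the even site to be closed under tensor products does not obstruct the extraction of the lax monoidal structure on $\F^{\geq\star}_\ev$.
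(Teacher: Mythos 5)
Your proposal takes a genuinely different and much heavier route than the paper. The paper's actual proof is one sentence: both $\CAlg(\D(\bS))$ and $\CAlg(\FD(\bS))$ carry the \emph{cocartesian} symmetric monoidal structure (the tensor product of $\bE_\infty$-rings is the coproduct in $\CAlg$), and any functor between $\infty$-categories with finite coproducts is automatically lax symmetric monoidal for the cocartesian structures by \cite[Prop.~2.4.3.8]{ha}. That single observation disposes of the lemma with no operadic Kan extension machinery at all.

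Your operadic right Kan extension strategy is not wrong in principle, but it trades a trivial argument for a genuinely hard one and then leaves the hard part undone. Two concrete issues: (1) you never actually construct the $\infty$-operad structure on $\Escr$ or verify that $\pi$ is a map of $\infty$-operads — you say ``I would'' and then list these as ``the main obstacle''; operadic right Kan extensions along non-trivial maps of $\infty$-operads require verifying a pointwise existence criterion (cf.\ \cite[Sec.~3.1.2--3.1.3]{ha}) that you do not address. (2) Your closing ``cofinality statement'' — that even covers of $B_1\otimes\cdots\otimes B_n$ over $A_1\otimes\cdots\otimes A_n$ are cofinal in the relevant diagram — is asserted rather than proved, and it is exactly the point where the evenness of tensor products fails and the argument needs real content. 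By contrast, working at the level of $\CAlg$ rather than underlying spectra means the monoidal structure is already cocartesian and the lax structure maps come for free from the universal property of coproducts, so none of these issues arise. I would recommend looking for cocartesian monoidal structures whenever the source and target of a functor are $\infty$-categories of commutative algebra objects; it often makes lax monoidality automatic.
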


\begin{proof}
    This follows from the construction of Variant~\ref{var:even}, which shows
    that $\F^{\geq\star}_\ev$ is
    indeed a functor from $\CAlg(\D(\bS))$ to $\CAlg(\FD(\bS))$ together with the fact that the
    symmetric monoidal structure on each side is cocartesian; any functor between
    $\infty$-categories with finite coproducts is lax symmetric
    mononidal with respect to cocartesian symmetric monoidal structures (see~\cite[Prop.~2.4.3.8]{ha}).
\end{proof}

\begin{corollary}
    The functor $\F^{\geq\star}_\ev$ takes values in $\CAlg(\SynSp)$, the $\infty$-category of
    $\bE_\infty$-algebras in synthetic spectra, and the induced functor
    $\F^{\geq\star}_\ev\colon\CAlg(\Sp)\rightarrow\CAlg(\SynSp)$ is lax symmetric monoidal.
\end{corollary}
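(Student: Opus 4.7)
The plan is to use Lemma~\ref{lem:even_monoidal} directly and then factor the target through $\CAlg(\SynSp)$. Lemma~\ref{lem:even_monoidal} already gives a lax symmetric monoidal functor $\F^{\geq\star}_\ev\colon\CAlg(\Sp)\to\CAlg(\FD(\bS))$, so the only thing missing is to show that its image lands in the full subcategory $\CAlg(\SynSp)\subseteq\CAlg(\FD(\bS))$. For this I use the standard identification
\[
    \CAlg(\Mod_{\bS_\ev}\FD(\bS))\we\CAlg(\FD(\bS))_{\bS_\ev/},
\]
which follows from, e.g.,~\cite[Prop.~3.4.1.7]{ha}. Since $\bS$ is initial in $\CAlg(\Sp)$ and $\F^{\geq\star}_\ev\bS\we\bS_\ev$ by Variant~\ref{var:even}, functoriality of the lax symmetric monoidal lift from Lemma~\ref{lem:even_monoidal} supplies, for each $A\in\CAlg(\Sp)$, a canonical map $\bS_\ev\to\F^{\geq\star}_\ev A$ of $\bE_\infty$-algebras in filtered spectra; this naturally presents $\F^{\geq\star}_\ev A$ as an object of $\CAlg(\FD(\bS))_{\bS_\ev/}\we\CAlg(\SynSp)$.

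For the lax symmetric monoidality of the lifted functor, the argument is identical in spirit to the proof of Lemma~\ref{lem:even_monoidal}: both $\CAlg(\Sp)$ and $\CAlg(\SynSp)$ carry cocartesian symmetric monoidal structures, since tensor products of $\bE_\infty$-algebras in any symmetric monoidal $\infty$-category are coproducts in $\CAlg$. Hence by~\cite[Prop.~2.4.3.8]{ha} any functor between $\CAlg(\Sp)$ and $\CAlg(\SynSp)$ is canonically lax symmetric monoidal. Compatibility with the lax symmetric monoidal structure downstairs is automatic because the forgetful functor $\CAlg(\SynSp)\to\CAlg(\FD(\bS))$ preserves coproducts.

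There is no real obstacle here; the statement is a formal consequence of Lemma~\ref{lem:even_monoidal} once one invokes the equivalence $\CAlg(\Mod_R\Cscr)\we\CAlg(\Cscr)_{R/}$ applied to $R=\bS_\ev$. The only point that warrants articulation is precisely this identification, which packages the naturality of the unit map in a form that makes the factorization through synthetic spectra transparent and coherent.
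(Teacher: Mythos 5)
Your argument is correct and is essentially the same as the paper's. The paper's proof also observes that Lemma~\ref{lem:even_monoidal} makes each $\F^{\geq\star}_\ev A$ into an $\bE_\infty$-algebra under $\F^{\geq\star}_\ev\bS\we\bS_\ev$, hence into an $\bE_\infty$-algebra in $\SynSp$, and deduces lax symmetric monoidality of the lifted functor by the same cocartesian argument; you simply unpack the step via the identification $\CAlg(\Mod_R\Cscr)\we\CAlg(\Cscr)_{R/}$, which is exactly the standard fact underlying the paper's one-line claim. One small remark: the canonical map $\bS_\ev\to\F^{\geq\star}_\ev A$ comes already from the functoriality of $\F^{\geq\star}_\ev\colon\CAlg(\Sp)\to\CAlg(\FD(\bS))$ together with initiality of $\bS$; you do not need to invoke the lax symmetric monoidal enhancement for that part.
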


\begin{proof}
    By Lemma~\ref{lem:even_monoidal}, each $\F^{\geq\star}_\ev A$ is naturally an
    $\bE_\infty$-algebra over
    $\F^{\geq\star}_\ev\bS\we\bS_\ev$ and hence naturally an $\bE_\infty$-algebra object in $\SynSp$.
    The proof of lax symmetric monoidality is the same as for Lemma~\ref{lem:even_monoidal}.
\end{proof}

\begin{notation}
    If $C$ is an $\bE_\infty$-ring, we will sometimes write $C_\ev$ for
    $\F^{\geq\star}_\ev C$, the
    associated $\bE_\infty$-algebra in synthetic spectra.
\end{notation}

\begin{remark}[Adams resolutions are synthetic]
    If $E$ is an $\bE_\infty$-algebra over $\MU$, then the \v{C}ech complex $E^\bullet$ of
    $\bS\rightarrow E$ admits a natural map $\MU^\bullet\rightarrow E^\bullet$. It follows that
    $\Tot(\tau_{\geq 2\star}E^\bullet)$ is an $\bE_\infty$-algebra over $\Tot(\tau_{\geq
    2\star}\MU^\bullet)\we\bS_\ev$, i.e., an $\bE_\infty$-algebra in synthetic spectra.
\end{remark}

To analyze the even circle, we will need the notion of even faithfully flat maps from~\cite{hrw}.

\begin{definition}[Even faithfully flat]\label{def:eff}
    A map $A\rightarrow B$ of $\bE_\infty$-rings is {\bf even faithfully flat} or {\bf eff} 
    if, for every map $A\rightarrow C$ of $\bE_\infty$-rings where $C$ is even, the pushout
    $C\otimes_AB$ is even and $\oplus_{*\in\bZ}\pi_*(C\otimes_AB)$ is a faithfully flat module over
    the commutative ring $\oplus_{*\in\bZ}\pi_*C$.
\end{definition}

\begin{notation}\label{not:graded}
    In the situation of Definition~\ref{def:eff}, we will say that $\pi_*(C\otimes_AB)$ is
    faithfully flat over $\pi_*C$ if $\oplus_{*\in\bZ}\pi_*(C\otimes_AB)$ is faithfully flat over
    $\oplus_{*\in\bZ}\pi_*C$, in conflict with the usual notion of flatness for graded modules,
    which we will not use.
\end{notation}

\begin{remark}[Base change for eff maps]
    If $A\rightarrow B$ is eff and $A\rightarrow C$ is an arbitrary map of $\bE_\infty$-rings, then
    the induced map $C\rightarrow C\otimes_AB$ is eff.
\end{remark}

\begin{example}
    The unit map $\bS\rightarrow\MU$ is eff.
\end{example}

\begin{definition}[Locally even]
    An $\bE_\infty$-ring $A$ is eff-locally even if it admits an eff map $A\rightarrow B$ where $B$
    is even.
\end{definition}

The following lemma is~\cite[Cor.~2.2.14(1)]{hrw}.

\begin{lemma}\label{lem:local_computation}
    Suppose that $A\rightarrow B$ is an eff map of $\bE_\infty$-rings with \v{C}ech complex $B^\bullet$.
    \begin{enumerate}
        \item[{\em (a)}] The natural map $\F^{\geq\star}_\ev
            A\rightarrow\Tot(\F^{\geq\star}_\ev B^\bullet)$ is
            an equivalence.
        \item[{\em (b)}] If $B$ is even, so that $A$ is eff-locally even, then the natural map
            $\F^{\geq\star}_\ev A\rightarrow\Tot(\tau_{\geq 2\star}B^\bullet)$ is an equivalence.
    \end{enumerate}
\end{lemma}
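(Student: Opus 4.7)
The plan is to derive (b) from (a) and to prove (a) as a descent statement for the even filtration along the eff cover $A \to B$.

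For (b), eff stability under base change implies each term $B^n$ of the \v{C}ech nerve is even, by induction from $B^1 = B$. For any even $\bE_\infty$-ring $D$, the identity $D \to D$ is an initial object of the indexing category for $\F^{\geq\star}_\ev D$, so the projection induces an equivalence $\F^{\geq\star}_\ev D \simeq \tau_{\geq 2\star}D$. Applying this pointwise, the cosimplicial objects $\F^{\geq\star}_\ev B^\bullet$ and $\tau_{\geq 2\star}B^\bullet$ agree, and (b) follows from (a).

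For (a), I unfold $\F^{\geq\star}_\ev A = \lim_{A \to C, \, C \text{ even}} \tau_{\geq 2\star}C$. For each such $A \to C$, the base changes $C \otimes_A B^n$ are even by the eff hypothesis, and on homotopy $\pi_*(C \otimes_A B^\bullet)$ is the \v{C}ech complex of the faithfully flat extension $\pi_*C \to \pi_*(C \otimes_A B)$. Faithfully flat descent forces this \v{C}ech complex to have cohomology concentrated in degree zero with value $\pi_*C$, and combined with evenness of each term a Postnikov-tower argument upgrades this to a filtered equivalence $\tau_{\geq 2\star}C \simeq \Tot(\tau_{\geq 2\star}(C \otimes_A B^\bullet))$. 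Commuting the limit over $C$ with $\Tot$ then reduces (a) to the identification, for each fixed $n$, of $\lim_{A \to C, \, C \text{ even}} \tau_{\geq 2\star}(C \otimes_A B^n)$ with $\F^{\geq\star}_\ev B^n$. This is a cofinality statement: base change along $A \to B^n$ produces even $B^n$-algebras from even $A$-algebras in a sufficiently cofinal way, using the eff hypothesis and the universal property of base change.

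The main obstacle is lifting faithfully flat descent from underlying spectra and graded homotopy to Whitehead-filtered spectra. Evenness of the cosimplicial terms $C \otimes_A B^\bullet$ forces the descent spectral sequence for $\pi_*\Tot(\tau_{\geq 2\star}(C \otimes_A B^\bullet))$ to degenerate at $\E_2$, but promoting this degeneration to an actual equivalence of filtered spectra requires a Postnikov-style induction together with care about strong convergence of the filtered totalization. The cofinality step in the final reduction is a secondary subtlety, but it is handled smoothly by the stability of eff under base change and the universal property of $C \mapsto C \otimes_A B^n$.
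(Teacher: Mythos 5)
The paper gives no proof of this lemma; it simply cites \cite[Cor.~2.2.14(1)]{hrw}. Your proposal is a reasonable reconstruction of the Hahn--Raksit--Wilson argument, and the skeleton you describe is the right one: reduce (b) to (a) via the observation that for even $D$ the identity is initial among even rings under $D$, then prove (a) by descent for each fixed even $C$ under $A$ followed by a cofinality step. Two small comments. First, your flagged worry about ``strong convergence of the filtered totalization'' is real but dispatches cleanly here: for fixed $C$ and filtration degree $i$, the $\Tot$-spectral sequence for $\Tot(\tau_{\geq 2i}(C\otimes_A B^\bullet))$ has $E_2$-page concentrated on the $0$-line because the faithfully flat Amitsur complex $\pi_*(C\otimes_A B^\bullet)$ is exact away from degree $0$ (using Tor-vanishing from flatness to identify $\pi_*(C\otimes_A B^n)$ with the iterated tensor product over $\pi_*C$, and evenness to keep the rows sparse); a half-plane spectral sequence concentrated on a single line converges strongly, so the coaugmentation $\tau_{\geq 2i}C\to\Tot(\tau_{\geq 2i}(C\otimes_A B^\bullet))$ is an equivalence. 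Second, the cofinality step you leave vague is actually painless and does not really use eff beyond preservation of evenness: for any even $D$ under $B^n$, the comma category of even $A$-algebras $C$ equipped with a $B^n$-algebra map $C\otimes_A B^n\to D$ is equivalent (by adjunction) to even $A$-algebras $C$ under $A$ with a map $C\to D$, and this has the terminal object $(D,\mathrm{id}_D)$, so $(-)\otimes_A B^n\colon\mathcal{I}_A\to\mathcal{I}_{B^n}$ is initial and restriction along it identifies $\lim_{C}\tau_{\geq 2\star}(C\otimes_A B^n)$ with $\F^{\geq\star}_\ev B^n$. Spelling out these two points would turn your outline into a complete proof.
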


\begin{variant}[Discretely $p$-completely eff]\label{var:pcompletely}
    There are $p$-complete variants of the notions above. The one studied in~\cite{hrw} is the
    following. Say that a map $A\rightarrow B$ of commutative rings is discretely $p$-completely
    faithfully flat if for every commutative ring $C$ the $p$-completed (derived) pushout
    $(C\otimes_AB)_p^\wedge$ is discrete and $p$-completely faithfully flat over $C$, which means
    that $C/p\otimes_AB$ is discrete and is a faithfully flat $C/p$-module. Say that a map
    $A\rightarrow B$ of $\bE_\infty$-rings is discretely $p$-completely eff if for every even
    $\bE_\infty$-ring $C$ the $p$-completed pushout $(C\otimes_AB)_p^\wedge$ is even and
    $\pi_*(C\otimes_AB)_p^\wedge$ is discretely $p$-completely faithfully flat over $\pi_*C$ in the
    spirit of Notation~\ref{not:graded}.
\end{variant}

\subsection{The synthetic circle}

If $B$ is an $\bE_\infty$-ring, we let $B[S^1]\we B\otimes_\bS\bS[S^1]\we
B\otimes_{\bS}\Sigma^\infty_+S^1$ be the group ring of the anima $S^1$ over $B$, which is itself an
$\bE_\infty$-ring and in fact is a bicommutative bialgebra, meaning an
$\bE_\infty$-coalgebra in $\CAlg(\D(C))$. Let $B[S^1]^\vee$ be the $B$-linear dual of $B[S^1]$, which is equivalent to
$B^{S^1_+}$ by Atiyah duality and is again a bicommutative bialgebra.
Raksit introduced a filtered version of the group ring of $S^1$ circle
in~\cite{raksit} when $C=\bZ$; it appears in a related
way in~\cite{moulinos-robalo-toen}.

\begin{definition}[The filtered circle]
    Let $\bT=\bZ[S^1]$ and define $\bT_{\fil}=\tau_{\geq\star}\bT$. By Lemma~\ref{lem:lax_monoidal},
    the $\bE_\infty$-algebra structure on $\bT$ induces an $\bE_\infty$-algebra structure on
    $\bT_{\fil}$. However, $\bT$ admits the structure of a bicommutative bialgebra, i.e., it admits an
    $\bE_\infty$-coalgebra structure in $\CAlg(\D(\bZ))$. This too is inherited by $\bT_{\fil}$ as
    $\tau_{\geq\star}$ is symmetric monoidal on the full subcategory of
    $\D(\bZ)$ consisting of
    $\bT^{\otimes n}$ for $n\in\bZ$. The bicommutative bialgebra $\bT_{\fil}$ is called the {\bf
    filtered circle}. Its dual is $\bT_\fil^\vee$, which is also a bicommutative bialgebra.
\end{definition}

\begin{remark}[No $\bT_{\fil}$ over $\bS$]\label{rem:no_spherical_Tfil}
    It is well-known that $\bT_{\fil}$ does not lift to the sphere spectrum. This means that there is no 
    way to put a filtration on $\bS[S^1]$ with the property that $\bS$ has weight $0$ and the
    fundamental class $d$ of $S^1$ has weight $1$. Indeed, $d^2=\eta d$ in $\pi_*\bS[S^1]$, see \cite[Section 1.1.3]{Nikolaus_Hesselholt}, which
    leads to a contradiction by considering weights.
\end{remark}

We will see below that $\bT_{\fil}$ can also be constructed as the even filtration on $\bT$.
This motivates the following definition, suggested to us by Raksit.

\begin{definition}[Raksit]
    Let $\bT_{\ev}\in \CAlg(\SynSp)$ denote the $\bE_\infty$-algebra
    $\F^{\geq\star}_\ev\bS[S^1]$ in
    synthetic spectra. This is the {\bf synthetic circle}, or the {\bf even circle}.
    Let $\bT_\ev^\vee$ denote the internal dual $\bMap_{\bS_\ev}(\bT_\ev, \bS_\ev)$.
\end{definition}

\begin{warning}
    The dual $\bT_\ev^\vee$ is not $(\bS^{S^1_+})_\ev$. In fact, this fails $\bZ$-linearly.
\end{warning}

By lax symmetric monoidality, $\bT_\ev$ is an $\bE_\infty$-algebra in $\SynSp$.
Our goal below is to construct a bicommutative bialgebra structure on $\bT_\ev$ extending the
$\bE_\infty$-algebra structure
which, together with dualizability of $\bT_\ev$, will produce such a structure on $\bT_\ev^\vee$ as well.
To do so, we need some preliminary results.

\begin{lemma}\label{lem:even_s1}
    If $B$ is an even $\bE_\infty$-ring, then the augmentation map $B[S^1]\rightarrow B$ is eff.
\end{lemma}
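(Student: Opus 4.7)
My plan is to compute $\pi_*(C\otimes_{B[S^1]}B)$ directly via a Koszul resolution, exploiting evenness throughout. Since $B$ is even, $\eta=0$ in $\pi_* B$, so the relation $d^2=\eta d$ in $\pi_*\bS[S^1]$ (cf.\ Remark~\ref{rem:no_spherical_Tfil}) gives $d^2=0$ in $\pi_* B[S^1]$. Hence $\pi_* B[S^1]\iso\pi_* B[d]/d^2$ as a square-zero extension of $\pi_* B$ with $|d|=1$ and augmentation $d\mapsto 0$. For any $\bE_\infty$-ring map $g\colon B[S^1]\to C$ with $C$ even, the class $g_*(d)\in\pi_1 C=0$, so $d$ acts as zero on the $\pi_* B[S^1]$-module $\pi_* C$.

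There is a Koszul resolution of $\pi_* B$ by free $\pi_* B[S^1]$-modules,
\[
\cdots\xrightarrow{d}\pi_* B[S^1]\cdot e_2\xrightarrow{d}\pi_* B[S^1]\cdot e_1\xrightarrow{d}\pi_* B[S^1]\to\pi_* B\to 0,
\]
where $e_n$ is a free generator in internal degree $n$ and the differential sends $e_n\mapsto d\cdot e_{n-1}$. Tensoring with $\pi_* C$ over $\pi_* B[S^1]$ kills all differentials, so $\Tor^{\pi_* B[S^1]}_n(\pi_* C,\pi_* B)\iso\pi_* C$ placed in internal degree $n$. In the Künneth spectral sequence $E^2=\Tor_*^{\pi_* B[S^1]}(\pi_* C,\pi_* B)\Rightarrow\pi_*(C\otimes_{B[S^1]}B)$, each class has total degree equal to the homological degree $n$ plus an internal degree of the form $n+2k$ (using that $\pi_*C$ is even), giving total degree $2n+2k$. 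Thus $E^2$ is concentrated in even total degrees, all differentials vanish, and there are no nontrivial extensions between even and odd parts.

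Therefore $\pi_*(C\otimes_{B[S^1]}B)\iso\pi_* C[u]$ with $|u|=2$: it is even, and as a $\pi_* C$-module it is free, hence faithfully flat in the graded sense. This establishes the eff property of $B[S^1]\to B$. The main obstacle is verifying convergence of the Tor spectral sequence, which is standard given the regularity of the Koszul resolution. Conceptually, this matches the identification $C\otimes_{B[S^1]}B\we C[\bC\mathrm{P}^\infty]$ coming from the bar construction for the $\bE_\infty$-group $S^1$ with classifying space $BS^1\we\bC\mathrm{P}^\infty$.
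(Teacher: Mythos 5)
Your argument is correct and takes essentially the same route as the paper, which follows~\cite[Lem.~4.2.5]{hrw}. Both proofs run the Tor spectral sequence for $\pi_*(C\otimes_{B[S^1]}B)$, use evenness of $C$ to see that $d\in\pi_1B[S^1]$ acts by zero on $\pi_*C$, and conclude from the resulting parity constraint that the spectral sequence degenerates and the target is even and free over $\pi_*C$. The only stylistic difference is that the paper expresses the $E^2$-page as $\pi_*C\otimes_{\pi_*B}\Gamma_{\pi_*B}(\sigma d)$ after first collapsing $\pi_*B\otimes^{\L}_{\pi_*B[S^1]}\pi_*B$ to the divided power algebra, while you write the same free periodic resolution of $\pi_*B$ over $\pi_*B[d]/d^2$ explicitly and tensor down; these are the same computation.

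Two small imprecisions worth flagging, though neither affects the argument. First, $\pi_*(C\otimes_{B[S^1]}B)$ is the divided power algebra $\pi_*C\otimes_{\pi_*B}\Gamma_{\pi_*B}(\sigma d)$, not a polynomial ring $\pi_*C[u]$ (the distinction matters integrally); you only use the underlying graded module, which is indeed free on one generator in each even degree, so the conclusion is unaffected. Second, the conceptual identification $C\otimes_{B[S^1]}B\we C[BS^1]$ is only literally correct when the $B[S^1]$-algebra structure on $C$ factors through the augmentation $B[S^1]\to B$, i.e.\ when the $S^1$-action on $C$ is trivial, and this is not assumed; what you actually establish is that the Tor groups behave as if it did, because $d$ dies on $\pi_*$. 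It is fine as a guiding heuristic but should not be read as a spectrum-level equivalence.

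Finally, your remark that ``there are no nontrivial extensions between even and odd parts'' is a slightly oblique way to say what is needed: the abutment is concentrated in even degrees (no odd contributions at all), and since the associated graded is free over $\pi_*C$ the filtered module is itself free, hence faithfully flat. You may want to phrase it that way to avoid giving the impression that you are invoking a splitting that you haven't justified.
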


\begin{proof}
    We follow the proof of~\cite[Lem.~4.2.5]{hrw}.
    Suppose that $R$ is an even $\bE_\infty$-ring under $B[S^1]$. Let $T=R\otimes_{B[S^1]}B$. The
    Tor spectral sequence computing $\pi_*T$ has $\E^1$-page given by the graded homotopy groups of
    $$\pi_*R\otimes^\L_{\pi_*B[S^1]}\pi_*B\iso\pi_*R\otimes_{\pi_*B}\pi_*B\otimes^\L_{\pi_*B[S^1]}\pi_*B\iso\pi_*R\otimes_{\pi_*B}\Gamma_{\pi_*B}(\sigma
    d),$$
    since the generator $d$ of $\pi_1B[S^1]$ must map to zero in $\pi_*R$ by evenness.
    As the divided power algebra $\Gamma_{\pi_*B}(\sigma d)$ is free as a $\pi_*B$-module on even
    classes, the spectral sequence degenerates and $T$ is even with $\pi_* T$ faithfully flat
    over $\pi_*R$, as desired.
\end{proof}

%\begin{lemma}\label{lem:even_co_s1}
%    If $B$ is an even $\bE_\infty$-ring, then the map $B^{S^1_+}\rightarrow B$ induced by the choice
%    of a point on $S^1$ is eff.
%\end{lemma}

%\begin{proof}
%    The proof here is the same as that of Lemma~\ref{lem:even_s1} except that
    %$\pi_*\otimes_{\pi_*B[S^1]}\pi_*B$ is a polynomial ring on a degree $0$
   % generator instead of a divided power ring.
%\end{proof}

\begin{corollary}
    If $B$ is an eff-locally even $\bE_\infty$-ring, then $B[S^1]$ is eff-locally even.
\end{corollary}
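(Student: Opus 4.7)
The plan is to combine three facts: the base change property for eff maps, Lemma~\ref{lem:even_s1}, and the (easily verified) stability of eff maps under composition. First, since $B$ is eff-locally even, choose an eff map $B\to C$ with $C$ even. Base changing along the unit $B\to B[S^1]$, the remark on base change for eff maps gives that
\[
B[S^1]\;\longrightarrow\; B[S^1]\otimes_B C \;\simeq\; C[S^1]
\]
is eff.

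Next, since $C$ is even, Lemma~\ref{lem:even_s1} says that the augmentation $C[S^1]\to C$ is eff. The composite $B[S^1]\to C[S^1]\to C$ therefore lands in an even $\bE_\infty$-ring, so it suffices to know that the composition of two eff maps is eff. This is a direct check from Definition~\ref{def:eff}: if $A\to B$ and $B\to C$ are eff and $A\to D$ is any map to an even ring, then $D\otimes_A B$ is even, hence $(D\otimes_A B)\otimes_B C \simeq D\otimes_A C$ is even, and $\pi_*$ is a composite of faithfully flat maps of graded rings, hence faithfully flat over $\pi_*D$.

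I do not expect any real obstacle here; the substance of the corollary is already contained in Lemma~\ref{lem:even_s1}, and the rest is formal manipulation of the definition of eff. If one wanted to avoid invoking composability of eff maps explicitly, an alternative is to verify directly that $B[S^1]\to C$ is eff: given any map $B[S^1]\to E$ with $E$ even, its restriction along $B\to B[S^1]$ gives $B\to E$, so $E\otimes_B C$ is even and faithfully flat over $\pi_*E$, and one identifies $E\otimes_{B[S^1]}C \simeq E\otimes_{C[S^1]}(C[S^1]\otimes_{B[S^1]}C) \simeq E\otimes_{C[S^1]}(C\otimes_B C[S^1])$, then applies Lemma~\ref{lem:even_s1} to the even ring $E\otimes_B C$ over $C[S^1]$. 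Either route is short.
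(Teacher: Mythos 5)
Your proof is correct and matches the paper's, which likewise takes the eff cover $B\to C$, base changes to $B[S^1]\to C[S^1]$, and composes with the eff augmentation $C[S^1]\to C$ of Lemma~\ref{lem:even_s1}. You usefully spell out that eff maps are closed under composition, a point the paper leaves implicit.
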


\begin{proof}
    For example, if $B\rightarrow C$ is eff where $C$ is even, use the
    composition $B[S^1]\rightarrow C[S^1]\rightarrow C$ to see that $B[S^1]$ is
    locally eff.
\end{proof}

\begin{lemma}\label{lem:eff_even_descent}
    Let $A\rightarrow B\rightarrow C$ be maps of $\bE_\infty$-rings. Suppose that for every even
    $\bE_\infty$-ring $R$ under $A$ the induced map $R\otimes_AB\rightarrow R\otimes_AC$ is eff.
    Then, $B\rightarrow C$ is eff.
\end{lemma}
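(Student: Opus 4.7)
The plan is to directly unravel the definition of eff applied to $B\to C$ and reduce to the hypothesis by a standard pushout manipulation. Specifically, fix an even $\bE_\infty$-ring $R$ under $B$; to prove the lemma it suffices to show that $R\otimes_B C$ is even and that $\pi_*(R\otimes_B C)$ is faithfully flat over $\pi_*R$ in the sense of Notation~\ref{not:graded}.

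First I would view $R$ as an even $\bE_\infty$-ring under $A$ via the composition $A\to B\to R$. By the hypothesis, the induced map $R\otimes_A B\to R\otimes_A C$ is then eff. Next, I would observe that the multiplication map $R\otimes_A R\to R$ restricts along $R\otimes_A B\to R\otimes_A R$ to give a natural $\bE_\infty$-ring map $R\otimes_A B\to R$, and that $R$ is of course even. Applying the eff property of $R\otimes_A B\to R\otimes_A C$ to this even $\bE_\infty$-ring under $R\otimes_A B$ yields that the pushout
\[
R\otimes_{R\otimes_A B}(R\otimes_A C)
\]
is even with $\pi_*$ faithfully flat over $\pi_*R$.

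The key identification is then the equivalence
\[
R\otimes_{R\otimes_A B}(R\otimes_A C)\we R\otimes_B C,
\]
which follows from writing $R\otimes_A C\we(R\otimes_A B)\otimes_B C$ and canceling. This completes the argument. There is no real obstacle here: the proof is a bookkeeping exercise once one notices the trick of regarding $R$ as an algebra over $R\otimes_A B$ via multiplication, which is precisely the mechanism that converts the relative eff hypothesis into an absolute statement about $B\to C$.
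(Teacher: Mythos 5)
Your proof is correct and takes essentially the same approach as the paper: the paper organizes the argument as a commutative ladder of pushout squares from $B\to C$ through $R\otimes_A B\to R\otimes_A C$ down to $R\to R\otimes_B C$, whereas you carry out the equivalent tensor-product cancellation $R\otimes_{R\otimes_A B}(R\otimes_A C)\we R\otimes_B C$ directly. The key observation — regarding $R$ as an even algebra over $R\otimes_A B$ via the multiplication map so that the hypothesized eff property can be applied — is identical in both.
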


\begin{proof}
    Given $B\rightarrow R$ where $R$ is even, we can consider $R$ as an $\bE_\infty$-$A$-algebra and
    the use the induced map $R\otimes_AB\rightarrow R$ to construct a commutative diagram
    $$\xymatrix{
        B\ar[r]\ar[d]&C\ar[d]\\
        R\otimes_AB\ar[r]\ar[d]&R\otimes_AC\ar[d]\\
        R\ar[r]&R\otimes_BC.
    }$$
    All three squares are pushout squares. By hypothesis, the middle horizontal map
    is eff. Since $R$ is even, so is $R\otimes_BC$.
\end{proof}

\begin{corollary}\label{cor:augmentation_eff}
    For any $\bE_\infty$-ring $B$, the natural map $B[S^1]\rightarrow B$ is eff.
\end{corollary}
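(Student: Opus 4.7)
The plan is to deduce Corollary~\ref{cor:augmentation_eff} as a direct consequence of the two preceding lemmas, using descent from the even case. Specifically, I would apply Lemma~\ref{lem:eff_even_descent} to the composite
\[
B \xrightarrow{\mathrm{unit}} B[S^1] \xrightarrow{\mathrm{aug}} B,
\]
where the first arrow is the unit of the group ring adjunction and the second is the augmentation. Then the hypothesis to verify is that for every even $\bE_\infty$-ring $R$ equipped with a map $B \to R$, the base-changed map $R \otimes_B B[S^1] \to R \otimes_B B$ is eff.

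The key observation is that this base-changed map is naturally identified with the augmentation $R[S^1] \to R$ of the group ring over the even $\bE_\infty$-ring $R$. At this point Lemma~\ref{lem:even_s1} applies directly and gives that $R[S^1] \to R$ is eff. Invoking Lemma~\ref{lem:eff_even_descent} then yields that $B[S^1] \to B$ is eff, as claimed.

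There is no real obstacle here: the corollary is essentially the combination of the even case (Lemma~\ref{lem:even_s1}) with the descent principle (Lemma~\ref{lem:eff_even_descent}), and the only mildly nontrivial point is to set up the three-term composition so that the middle term is $B[S^1]$ and the base change along an even $R$-algebra recovers exactly the situation handled by Lemma~\ref{lem:even_s1}. Once that setup is fixed, the proof is a one-line verification.
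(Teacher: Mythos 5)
Your proof is correct and is essentially the paper's argument. The paper first reduces to $B=\bS$ via base change for eff maps and then applies Lemma~\ref{lem:eff_even_descent} to $\bS\to\bS[S^1]\to\bS$ together with Lemma~\ref{lem:even_s1}; you skip the (unnecessary) reduction and apply the descent lemma directly to $B\to B[S^1]\to B$, identifying $R\otimes_B B[S^1]\simeq R[S^1]$ for any even $R$ under $B$, which is an equally valid and marginally more direct route.
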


\begin{proof}
    By base change for eff maps, it is enough to prove the lemma when $B=\bS$. This case follows from
    Lemma~\ref{lem:eff_even_descent} applied to $\bS\rightarrow\bS[S^1]\rightarrow\bS$ using
    Lemma~\ref{lem:even_s1}.
\end{proof}

Now, we analyze the even filtration of $\bZ[S^1]$, showing that it agrees with Raksit's
$\bT_{\fil}=\tau_{\geq\star}(\bZ[S^1])$.

\begin{lemma}\label{lem:tfil}
    There is a natural equivalence $\F^{\geq\star}_\ev(\bZ[S^1])\we\bT_{\fil}$.
\end{lemma}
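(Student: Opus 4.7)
The plan is to apply Lemma~\ref{lem:local_computation}(b) to the augmentation map $\bZ[S^1]\to\bZ$, which is eff by Corollary~\ref{cor:augmentation_eff} and whose target is even. This identifies $\F^{\geq\star}_\ev\bZ[S^1]\simeq\Tot(\tau_{\geq 2\star}\bZ^\bullet)$, where $\bZ^\bullet$ denotes the Čech nerve. I would construct the natural comparison to $\bT_\fil=\tau_{\geq\star}\bZ[S^1]$ as follows: since each $\bZ^n=\bZ^{\otimes_{\bZ[S^1]}(n+1)}$ is even, its odd homotopy groups vanish and the structure maps $\bZ[S^1]\to\bZ^n$ canonically lift through $\tau_{\geq 2w}\bZ^n$ in each weight $w$; these lifts assemble into a cosimplicial map of filtered $\bE_\infty$-$\bZ$-algebras $c(\tau_{\geq\star}\bZ[S^1])\to\tau_{\geq 2\star}\bZ^\bullet$ whose totalization is the map $\bT_\fil\to\F^{\geq\star}_\ev\bZ[S^1]$.

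To show this is an equivalence I would check on graded pieces, as both filtrations are complete. The key geometric input is the identification $\bZ\otimes_{\bZ[S^1]}\bZ\simeq C_*(BS^1;\bZ)=H_*(\bC P^\infty;\bZ)$, the Pontryagin ring of $\bC P^\infty$, which as an $\bE_\infty$-$\bZ$-algebra is the divided-power algebra $\Gamma_\bZ(u)$ on a degree-$2$ generator. Iterating, $\bZ^n\simeq\Gamma_\bZ(u_1)\otimes_\bZ\cdots\otimes_\bZ\Gamma_\bZ(u_n)$, so $\pi_{2w}\bZ^n\simeq\Gamma^w(\bZ^{\oplus n})$. The cosimplicial abelian group $\pi_2\bZ^\bullet$ is the reduced cobar of $\bZ$ as an abelian group, hence quasi-isomorphic to $\bZ$ placed in cohomological degree one.

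The final step would invoke the Dold--Puppe formula $\mathrm{L}\Gamma^w(V[1])\simeq\Lambda^w(V)[w]$ for flat $V$; applied with $V=\bZ$ this gives $\bZ[1]$ for $w=1$ and $0$ for $w\geq 2$ (since $\Lambda^w(\bZ)$ vanishes for $w\geq 2$). Because $\pi_2\bZ^\bullet$ is termwise free, $\Gamma^w$ applied levelwise computes the derived functor, so $\gr^w_\ev\bZ[S^1]=\Tot(\pi_{2w}\bZ^\bullet)[2w]$ equals $\bZ$ for $w=0$, $\bZ[1]$ for $w=1$, and $0$ for $w\geq 2$, matching the graded pieces of $\bT_\fil$.

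The hardest part will be the correct identification of $\bZ\otimes_{\bZ[S^1]}\bZ$ as a divided-power (not polynomial) algebra: the divided-power identity $(u_1+u_2)^{[2]}=u_1^{[2]}+u_1u_2+u_2^{[2]}$ gives the cancellation needed for cohomology to vanish in weights $\geq 2$, while the polynomial expansion $(u_1+u_2)^2=u_1^2+2u_1u_2+u_2^2$ would leave spurious $2$-torsion. Once this geometric input is in hand, the rest is essentially a Dold--Puppe bookkeeping exercise on a termwise-free cosimplicial abelian group.
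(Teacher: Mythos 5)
Your proposal is a genuinely different route from the paper's.  The paper first constructs the comparison map in the \emph{other} direction, $\F^{\geq\star}_\ev(\bZ[S^1])\to\bT_\fil$, using Postnikov-connectivity of the even filtration, and then identifies both sides as totalizations: $\F^{\geq\star}_\ev(\bZ[S^1])\we\Tot(\tau_{\geq 2\star}\bZ^\bullet)$ by eff descent, and $\bT_\fil\we\Tot(\bZ^\bullet_F)$ by descendability of the filtered augmentation $\bT_\fil\to\bZ$.  The whole proof then reduces to observing that the two cosimplicial filtered $\bE_\infty$-algebras $\tau_{\geq 2\star}\bZ^\bullet$ and $\bZ^\bullet_F$ agree, so that no graded-piece computation is ever needed.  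You instead build the map $\bT_\fil\to\F^{\geq\star}_\ev(\bZ[S^1])$ cosimplicially (which works, but the uniqueness of the levelwise lifts $\tau_{\geq w}\bZ[S^1]\to\tau_{\geq 2w}\bZ^n$ does need the specific fact that $\tau_{\geq w}\bZ[S^1]=0$ for $w\geq 2$, so the argument is special to $\bZ[S^1]$) and then check the equivalence on associated graded via the identification $\pi_{2w}\bZ^n\iso\Gamma^w(\bZ^{\oplus n})$ and Dold--Puppe d\'ecalage.  The trade-off: the paper's proof is more formal and requires no homological algebra; yours is more explicit and makes visible exactly why the higher graded pieces vanish.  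Note also that the paper's Remark~\ref{rem:z_bullet} obtains the exactness of $\pi_{2i}\bZ^\bullet$ for $i\geq 2$ as a \emph{consequence} of the lemma, whereas you use it as an \emph{input} — so your argument must (and does) derive it independently.

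Two caveats you should tighten.  First, the d\'ecalage formula $\L\Gamma^w(V[1])\simeq\Lambda^w(V)[w]$ that you quote is the one for \emph{simplicial} resolutions of connective complexes.  The cosimplicial abelian group $\pi_2\bZ^\bullet$ represents $\bZ$ in \emph{cohomological} degree one, i.e.\ $\bZ[-1]$ in the paper's (homotopical) shift convention, so what you actually need is the \emph{dual} (cosimplicial, coconnective) Dold--Puppe statement, computing the right derived functor at a $(-1)$-shift.  For $V$ free and concentrated in a single degree this is true and can be checked directly on normalized cochain complexes, but it is not literally the formula you wrote, and for a paper one should either cite a reference for the cosimplicial d\'ecalage or verify it by hand here.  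Second, the degree bookkeeping is off by a sign in the middle of the argument: you plug $V=\bZ$ into $\L\Gamma^w(V[1])$ and get $\bZ[1]$, but then shift by $[2w]$ to get $\gr^1\simeq\bZ[1]$; if $\Tot(\pi_2\bZ^\bullet)$ were really $\bZ[1]$ the shift by $[2]$ would give $\bZ[3]$.  The resolution is that $\Tot(\pi_2\bZ^\bullet)\simeq\bZ[-1]$, the d\'ecalage gives $\bZ[-w]$ for $w=1$ and $0$ for $w\geq 2$, and then $[2w]$ produces $\bZ[w]$ for $w\leq 1$, which matches $\gr^w\bT_\fil$.  The final answer is right, but the intermediate shifts in your write-up should be corrected.
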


\begin{proof}
    By the result of Burklund and Krause quoted above in
    Remark~\ref{rem:bounds} or by~\cite[Thm.~1.7]{pstragowski-even}, $\F^{\geq\star}_\ev(\bZ[S^1])$ is connective in the Postnikov
    $t$-structure, so there is a natural map
    $$\F^{\geq\star}_\ev(\bZ[S^1])\rightarrow\bT_{\fil}=\tau_{\geq\star}\bZ[S^1].$$
    Consider the augmentation $\bT_{\fil}\rightarrow\bZ$ in filtered $\bE_\infty$-rings. Let
    $\bZ^\bullet_F$ be the \v{C}ech complex of the augmentation and let $\bZ^\bullet$ be the
    \v{C}ech complex of $\bZ[S^1]\rightarrow\bZ$. Note that these augmentations
    are descendable in the sense of~\cite{mathew-galois}; in particular, the natural map $\bT_{\fil}\rightarrow\Tot(\bZ^\bullet_F)$ is
    an equivalence. On the other hand, by Corollary~\ref{cor:augmentation_eff},
    $\F^{\geq\star}_\ev(\bZ[S^1])\we\Tot(\tau_{\geq 2\star}\bZ^\bullet_F)$.
    Note now that $\tau_{\geq 2\star}\bZ^\bullet$ is naturally equivalent, as
    a cosimplicial filtered $\bE_\infty$-algebra over $\bZ$, to
    $\bZ_F^\bullet$. This completes the proof.
\end{proof}

\begin{remark}\label{rem:z_bullet}
    Consider the \v{C}ech complex of $\bZ[S^1]\rightarrow\bZ$ denoted by $\bZ^\bullet$ in the proof
    of Lemma~\ref{lem:tfil}. In cosimplicial degree $n$, it is given by $\bZ[(BS^1)^n]$. Thus, the
    cosimplicial object $\bZ^\bullet$ of $\D(\bZ)$ takes the form
    \[
        \bZ  \stack{3} \bZ[BS^1]  \stack{5} \bZ[BS^1\times BS^1]\stack{7}\ldots.
    \]
    Applying $\pi_{2i}$ pointwise we obtain a cosimplicial abelian group which computes
    $\gr^i\bT_{\fil}\we(\pi_i\bZ[S^1])[i]$. For $i\geq 2$, this object is equivalent to zero, so it
    follows that the cochain complex associated to the cosimplicial abelian group $\pi_{2i}\bZ^\bullet$
    is exact. When $i=0$, $\pi_{0}\bZ^\bullet$ is the constant cosimplicial
    diagram on $\bZ$ and,
    when $i=1$, $\pi_2\bZ^\bullet$ is equivalent to $\bZ[1]$.
    It follows that the (double-speed) $\Tot$ spectral sequence
    $$\E_2^{s,t}=\H^{s-t}(\pi_{-2t}\bZ^\bullet)\Rightarrow\pi_{-s-t}\bZ[S^1]$$
    collapses at the $\E_2$-page, where it is zero except for $(s,t)=(0,0)$ and $(s,t)=(0,1)$.
\end{remark}

\begin{lemma}\label{lem:even_chains}
    If $C$ is an even $\bE_\infty$-ring, then there is a canonical equivalence
    $\F^{\geq\star}_\ev(C[S^1])\we\tau_{\geq 2\star-1}C[S^1]$ of filtered $C_\ev$-module spectra.
    In particular, $\F^{\geq\star}_\ev(C[S^1])$ is an exhaustive filtration on $C[S^1]$.
\end{lemma}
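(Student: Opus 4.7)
The plan is to apply Lemma~\ref{lem:local_computation}(b) to the eff map $C[S^1]\to C$ (eff by Corollary~\ref{cor:augmentation_eff}, with $C$ even by hypothesis), obtaining
$$\F^{\geq\star}_\ev C[S^1]\simeq \Tot(\tau_{\geq 2\star}C^\bullet),$$
where $C^\bullet$ is the \v{C}ech complex of $C[S^1]\to C$. I will then compute associated graded pieces using K\"unneth and the $\bZ$-case of Remark~\ref{rem:z_bullet}, establish connectivity of each $\F^{\geq i}_\ev C[S^1]$, and deduce the filtered equivalence.

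Because $C[S^1]\to C$ is base-changed from $\bS[S^1]\to\bS$ along $\bS\to C$, we have $C^n\simeq C[(BS^1)^n]$ in cosimplicial degree $n$. Since $\H_*(BS^1;\bZ)$ is polynomial on a degree-$2$ class, K\"unneth shows each $C^n$ is even. Computing $\gr^i$ gives
$$\gr^i\F^{\geq\star}_\ev C[S^1]\simeq \Tot(\pi_{2i}(C^\bullet))[2i],$$
and K\"unneth yields $\pi_{2i}(C^\bullet)\simeq\bigoplus_{j+k=i}\pi_{2j}(C)\otimes_\bZ\H_{2k}((BS^1)^\bullet;\bZ)$ as cosimplicial abelian groups. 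Remark~\ref{rem:z_bullet} identifies $\Tot(\H_{2k}((BS^1)^\bullet;\bZ))\simeq\pi_k(\bZ[S^1])[-k]$, vanishing for $k\geq 2$ and equal to $\bZ$ (resp.\ $\bZ[-1]$) for $k=0$ (resp.\ $k=1$). Hence $\gr^i\F^{\geq\star}_\ev C[S^1]\simeq \pi_{2i}(C)[2i]\oplus\pi_{2i-2}(C)[2i-1]$. On the other side, the splitting $C[S^1]\simeq C\oplus\Sigma C$ of $C$-modules (coming from $\bS[S^1]\simeq\bS\oplus\Sigma\bS$) together with the evenness of $C$ gives the identical formula $\gr^i\tau_{\geq 2\star-1}C[S^1]\simeq \pi_{2i}(C)[2i]\oplus\pi_{2i-2}(C)[2i-1]$ by direct Postnikov calculation.

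The graded calculation furthermore shows that the \v{C}ech spectral sequence for $\Tot(\tau_{\geq 2\star}C^\bullet)$ has nonzero $\E_2^{s,t}$-entries only for $s\in\{0,1\}$ and $t\geq 2i$ even, so $\F^{\geq i}_\ev C[S^1]$ is $(2i-1)$-connective for every $i$. The canonical map $\F^{\geq i}_\ev C[S^1]\to|\F^{\geq-\infty}|\simeq C[S^1]$ therefore factors through $\tau_{\geq 2i-1}C[S^1]$; these factorizations assemble into a filtered $C_\ev$-linear map $\F^{\geq\star}_\ev C[S^1]\to\tau_{\geq 2\star-1}C[S^1]$ that restricts to the identity on the underlying spectrum. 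It is an equivalence on graded pieces (both realize the same Postnikov section of $C[S^1]$), and the completeness of both sides then yields the filtered equivalence; exhaustivity is immediate since $\colim_i\tau_{\geq 2i-1}C[S^1]\simeq C[S^1]$. The main subtlety is correctly accounting for the $[-1]$-shift in $\tau_{\geq 2\star-1}$, which emerges precisely from the $k=1$ contribution in Remark~\ref{rem:z_bullet} and reflects the odd-degree fundamental class $d\in\pi_1\bS[S^1]$.
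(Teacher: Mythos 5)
Your proof is correct and uses essentially the same strategy as the paper: eff descent along $C[S^1]\to C$ via Lemma~\ref{lem:local_computation}(b) (and Corollary~\ref{cor:augmentation_eff}), computation of the associated graded pieces using the $\bZ$-linear analysis of Remark~\ref{rem:z_bullet}, and then a connectivity and completeness argument to assemble the filtered equivalence. The one mechanical difference worth noting is in how the graded pieces are computed: the paper introduces a secondary filtration $\F^{\geq\circledast}C^\bullet=(\tau_{\geq 2\circledast}C)[G^\bullet]$ on the \v{C}ech complex coming from the double-speed Whitehead tower of $C$, computes the bigraded pieces $\gr^j_\circledast\gr^i_\ev(C[S^1])$, and verifies that the resulting bifiltration is complete and exhaustive before extracting the fiber sequence $\pi_{2i-2}C[2i-1]\to\gr^i_\ev(C[S^1])\to\pi_{2i}C[2i]$; you replace this bifiltration with the explicit K\"unneth decomposition $\pi_{2i}(C^\bullet)\cong\bigoplus_{j+k=i}\pi_{2j}(C)\otimes_\bZ\H_{2k}((BS^1)^\bullet;\bZ)$ of cosimplicial abelian groups and then invoke Remark~\ref{rem:z_bullet} termwise. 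These are dual packagings of the same computation. The paper's bifiltration sidesteps a point your version should acknowledge: the sum over $j$ can be infinite when $C$ is non-connective, so one must observe that $\Tot$ of a cosimplicial discrete spectrum distributes over this direct sum because the cohomology of the normalized cochain complex does (after which only the $k=0,1$ summands survive). One last nitpick: where you write ``$|\F^{\geq-\infty}|\simeq C[S^1]$'' to produce the comparison map, be careful not to presuppose exhaustivity, which is part of what the lemma asserts; the map $\F^{\geq i}_\ev(C[S^1])\to C[S^1]$ should instead be produced directly as the totalization of $\tau_{\geq 2i}C^\bullet\to C^\bullet$ together with $\Tot(C^\bullet)\simeq C[S^1]$ by descendability of the augmentation.
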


\begin{proof}
    Let $G^\bullet$ be the \v{C}ech complex of $S^1\rightarrow\ast$ in grouplike
    $\bE_\infty$-spaces, so that the \v{C}ech complex
    $$C^\bullet\colon C\stack{3}C[BS^1]\stack{5}\cdots$$ of $C[S^1]\rightarrow C$ is equivalent to
    $C[G^\bullet]$.
    The filtration $\tau_{\geq 2\circledast}C$ on $C$ induces a filtration $(\tau_{\geq
    2\circledast}C)[G^\bullet]$, which we will write as $\F^{\geq\circledast} C^\bullet$.
    Taking double-speed Whitehead towers pointwise yields a bifiltered
    cosimplicial spectrum $\F^{\geq\star}\F^{\geq\circledast}
    C^\bullet$ with $$\F^{\geq i}\F^{\geq j} C^\bullet\colon \tau_{\geq 2i}(\tau_{\geq 2j}C)\stack{3}\tau_{\geq
    2i}(\tau_{\geq 2j}(C[BS^1]))\stack{5}\cdots.$$
    By Lemma~\ref{lem:even_s1}, the object $\Tot(\F^{\geq\star}\F^{\geq -\infty}C^\bullet)$ is
    equivalent to $\F^{\geq\star}_\ev(C[S^1])$.
    We have $$\F^{\geq\star}\gr^j C^\bullet\we\tau_{\geq 2\star}(\bZ^\bullet\otimes_{\bZ}\pi_{2j}C[2j]),$$
    where $\bZ^\bullet$ is the \v{C}ech complex of $\bZ[S^1]\rightarrow\bZ$ analyzed in
    Lemma~\ref{lem:tfil} and Remark~\ref{rem:z_bullet}. Thus,
    $$\Tot(\F^{\geq\star}\gr^j
    C^\bullet)\we(\bT_{\fil}\otimes_\bZ\pi_{2j}C)(j)[2j].$$
    Let $\F^{\geq\circledast}\F^{\geq\star}_\ev(C[S^1])$ be the filtration on
    $\F^{\geq\star}_\ev(C[S^1])$ induced
    by $\F^{\geq\circledast} C^\bullet$. On $\gr^i_\ev(C[S^1])$, the induced filtration
    $\F^{\geq \circledast}\gr^i_\ev(C[S^1])$ is complete and exhaustive, as follows by considering the
    filtration on the complex associated to the cosimplicial abelian group $$\pi_{2i}C\stack{3}\pi_{2i}(C[BS^1])\stack{5}\cdots$$ induced by
    $\tau_{\geq 2\circledast}C$. As
    $$\gr^j_\circledast\gr^i_\ev(C[S^1])\we\gr^i\left(\bT_{\fil}\otimes_\bZ\pi_{2j}C)(j)[2j]\right)\we\begin{cases}
        \pi_{2j}C[2j]&\text{if $i=j$,}\\
        \pi_{2j}C[2j+1]&\text{if $i=j+1$,}\\
        0&\text{otherwise,}
    \end{cases}$$
    it follows that $\gr^i_\ev(C[S^1])$ fits into a fiber sequence
    $$\pi_{2i-2}C[2i-1]\rightarrow\gr^i_\ev(C[S^1])\rightarrow\pi_{2i}C[2i].$$
    It follows from an argument using the sparsity of the homotopy groups that
    $\F^{\geq\star}_\ev
    (C[S^1])\we\tau_{\geq 2\star-1}\F^{\geq-\infty}_\ev C[S^1]$ is an equivalence, from which we conclude
    that the natural map $C[S^1]\rightarrow\F^{\geq-\infty}_\ev C[S^1]$ is an equivalence, which
    completes the proof.
\end{proof}

\begin{lemma}~\label{lem: fiber sequence for Tsyn}
    If $B$ is an eff-locally even $\bE_\infty$-ring,
    then there is a split fiber sequence \[B_{\ev}(1)[1]\to
    \F^{\geq\star}_{\ev}(B[S^1])\to B_\ev\] of synthetic spectra where the
    second map is the augmentation map.
\end{lemma}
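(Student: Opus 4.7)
The plan is to derive the splitting from the fact that the augmentation $B[S^1]\to B$ admits an $\bE_\infty$-ring section coming from the basepoint inclusion $\mathrm{pt}\to S^1$, and then identify the fiber term by eff-descent from the already-handled case where $B$ is even.

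Applying the lax symmetric monoidal functor $\F^{\geq\star}_\ev$ to the composite $B\to B[S^1]\to B$ yields a composite
\[
B_\ev \to \F^{\geq\star}_\ev(B[S^1]) \to B_\ev
\]
of $\bE_\infty$-algebras in synthetic spectra that is the identity. Writing $F$ for the fiber of the second map, the fiber sequence $F\to \F^{\geq\star}_\ev(B[S^1])\to B_\ev$ therefore splits as $B_\ev$-modules in synthetic spectra, and the only remaining task is to identify $F\simeq B_\ev(1)[1]$.

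To do this, choose an eff map $B\to C$ with $C$ even and let $C^\bullet$ denote its \v{C}ech complex. The base change $B[S^1]\to C[S^1]$ is eff, and its \v{C}ech complex is $C^\bullet[S^1]$ (using the computation $C[S^1]\otimes_{B[S^1]}C[S^1]\simeq (C\otimes_B C)[S^1]$, extended inductively). Lemma~\ref{lem:local_computation}(a) then gives
\[
\F^{\geq\star}_\ev(B[S^1]) \simeq \Tot(\F^{\geq\star}_\ev(C^\bullet[S^1])).
\]
Each $C^n$ is even, so Lemma~\ref{lem:even_chains} identifies $\F^{\geq\star}_\ev(C^n[S^1])\simeq \tau_{\geq 2\star-1}(C^n[S^1])$. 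The basepoint section splits $C^n[S^1]\simeq C^n\oplus \Sigma C^n$ as $C^n$-modules, naturally in $n$. Evenness of $C^n$ then gives $\tau_{\geq 2\star-1}C^n \simeq \tau_{\geq 2\star}C^n \simeq (C^n)_\ev$ and $\tau_{\geq 2\star-1}\Sigma C^n \simeq \Sigma\tau_{\geq 2\star-2}C^n \simeq (C^n)_\ev(1)[1]$, whence
\[
\F^{\geq\star}_\ev(C^n[S^1]) \simeq (C^n)_\ev\oplus (C^n)_\ev(1)[1].
\]
Totalizing this natural splitting and applying Lemma~\ref{lem:local_computation}(b) together with the fact that Tate twists and suspensions commute with limits identifies $F\simeq B_\ev(1)[1]$, completing the proof.

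The main technical obstacle is verifying that the \v{C}ech complex of $B[S^1]\to C[S^1]$ in $\bE_\infty$-rings over $B[S^1]$ is $C^\bullet[S^1]$; once this is in hand the result reduces to the naturality of the augmentation/section splitting together with Lemmas~\ref{lem:even_chains} and~\ref{lem:local_computation}.
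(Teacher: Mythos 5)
Your proof is correct and matches the approach the paper's (very terse) two-sentence proof sketches: reduce to the even case, where Lemma~\ref{lem:even_chains} gives $\F^{\geq\star}_\ev(C[S^1])\simeq\tau_{\geq 2\star-1}C[S^1]\simeq C_\ev\oplus C_\ev(1)[1]$, and then apply eff descent along $B[S^1]\to C[S^1]$ (eff by base change), together with Lemma~\ref{lem:local_computation}, to pass from $C$ to $B$. You have filled in the details accurately, including the identification $C[S^1]\otimes_{B[S^1]}C[S^1]\simeq(C\otimes_B C)[S^1]$ and the natural splitting coming from the basepoint section.
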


\begin{proof}
    We have already shown that this is true for $B$ replaced with an even eff cover $C$.
    Then, by Lemma~\ref{lem:even_s1} and descent, the result follows for $B$.
\end{proof}

\begin{remark}
    Lemma~\ref{lem: fiber sequence for Tsyn} makes no claim about the structure
    of the fiber $B_\ev(1)[1]$ as a synthetic $B[S^1]_\ev$-module. In fact, if
    $B$ is even, then the fiber is equivalent to $B_\ev(1)[1]$ as a
    $B[S^1]_\ev$-module; but, if $B=\bS$, then it is not.
\end{remark}

\begin{corollary}\label{cor:dualizable}
    The synthetic spectrum $\bT_\ev$ is dualizable in $\SynSp$.
\end{corollary}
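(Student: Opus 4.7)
The plan is to reduce the claim to the split fiber sequence established in Lemma~\ref{lem: fiber sequence for Tsyn}. First I would note that $\bS$ is eff-locally even, since the unit map $\bS \to \MU$ is eff, so the lemma applies with $B = \bS$ and yields a split fiber sequence
$$\bS_\ev(1)[1] \to \bT_\ev \to \bS_\ev$$
in $\SynSp$. Since $\SynSp$ is stable, every fiber sequence is a cofiber sequence, so a splitting of the right-hand map produces a direct sum decomposition $\bT_\ev \simeq \bS_\ev \oplus \bS_\ev(1)[1]$.

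Next, I would observe that each summand is invertible, hence in particular dualizable. The object $\bS_\ev$ is the unit for the symmetric monoidal structure on $\SynSp$, and $\bS_\ev(1)[1]$ is the image of $\bS_\ev$ under the auto-equivalence $(1)[1]\colon \SynSp \to \SynSp$ given by filtration twist composed with suspension, both of which are invertible symmetric monoidal endofunctors. Thus $\bS_\ev(1)[1]$ is $\otimes$-invertible and therefore dualizable.

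Finally, I would invoke the general fact that in any symmetric monoidal stable $\infty$-category the full subcategory of dualizable objects is closed under finite direct sums (because taking duals commutes with finite coproducts, using the biproduct structure). Applying this to the decomposition $\bT_\ev \simeq \bS_\ev \oplus \bS_\ev(1)[1]$ finishes the proof. There is no real obstacle here: all of the content has already been absorbed into Lemma~\ref{lem: fiber sequence for Tsyn}, and the present corollary is a formal consequence of the splitting together with invertibility of filtration twists.
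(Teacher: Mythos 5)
Your argument is correct and matches the paper's own proof: both use the split fiber sequence of Lemma~\ref{lem: fiber sequence for Tsyn} to exhibit $\bT_\ev$ as an extension (in fact a direct sum) of the dualizable objects $\bS_\ev$ and $\bS_\ev(1)[1]$, and then invoke closure of dualizable objects under finite (co)products. The paper states this more tersely but the underlying reasoning is identical.
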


\begin{proof}
    Dualizablility follows immediately from Lemma~\ref{lem: fiber sequence for
    Tsyn} as $\bS_\ev$, $\bS_\ev(1)[1]$, and $\bS_\ev(-1)[-1]$ are dualizable.
\end{proof}

\begin{proposition}\label{Prop: symmetric monoidal}
    If $B$ is an eff-locally even $\bE_\infty$-ring, then the natural map
    $\bT_{\ev}\otimes_{\bS_\ev}\F^{\geq\star}_\ev
    B\rightarrow\F^{\geq\star}_\ev(B[S^1])$ %and
    %$\bT_{\ev}^\vee\otimes_{\bS_\ev}\F^{\geq\star}_\ev
    %B\rightarrow\F^{\geq\star}_\ev(B^{S^1_+})$ are equivalences.
    is an equivalence.
\end{proposition}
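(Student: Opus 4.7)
The plan is to reduce to the case where $B$ is even by eff descent, then deduce the equivalence from the split fiber sequence of Lemma~\ref{lem: fiber sequence for Tsyn}.

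For the descent step, fix an eff cover $B\to C$ with $C$ even, and write $C_B^\bullet$ for the associated \v{C}ech complex. The key observation for the target is that $A \mapsto A[S^1] = A \otimes_\bS \bS[S^1]$ preserves pushouts, so $C_B^\bullet[S^1]$ is the \v{C}ech complex of the eff map $B[S^1] \to C[S^1]$; since each $C_B^n[S^1]$ is eff-locally even (using Lemma~\ref{lem:even_s1} and base change), Lemma~\ref{lem:local_computation} gives $\F^{\geq\star}_\ev(B[S^1]) \simeq \Tot\F^{\geq\star}_\ev(C_B^\bullet[S^1])$. For the source, Lemma~\ref{lem:local_computation} gives $\F^{\geq\star}_\ev B \simeq \Tot\F^{\geq\star}_\ev C_B^\bullet$, and tensoring over $\bS_\ev$ with $\bT_\ev$ preserves this totalization because $\bT_\ev$ is dualizable by Corollary~\ref{cor:dualizable}. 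The natural comparison map is compatible with both descent equivalences, so it suffices to treat the case where $B$ itself is even.

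For such $B$, both sides fit into natural fiber sequences over $B_\ev$: Lemma~\ref{lem: fiber sequence for Tsyn} applied to $B$ gives $B_\ev(1)[1] \to \F^{\geq\star}_\ev(B[S^1]) \to B_\ev$, while tensoring the analogous sequence for $\bS$ with $B_\ev$ gives $B_\ev(1)[1] \to \bT_\ev \otimes_{\bS_\ev} B_\ev \to B_\ev$. The comparison map induces a map of fiber sequences, and the map on cofibers is the identity of $B_\ev$ because both augmentations originate from the group-ring augmentation $B[S^1] \to B$.

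It remains to show the induced map on fibers $B_\ev(1)[1] \to B_\ev(1)[1]$ is an equivalence. As a $B_\ev$-linear self-map of a rank-one free module, it is multiplication by a scalar $u_B \in \pi_0 B_\ev \simeq \pi_0 B$ depending naturally on $B$. By naturality of the whole construction under the unit map $\bS \to B$ (both $\bS$ and $B$ being eff-locally even), $u_B$ is the image of $u_\bS$. For $B = \bS$, the comparison map is literally the identity $\bT_\ev \to \bT_\ev$, so $u_\bS = 1$, and hence $u_B$ equals the unit of $\pi_0 B$ and is in particular invertible, completing the proof. The most delicate point in executing this plan is verifying that the fiber sequences and comparison map are natural enough in $B$ to support the reduction to $B = \bS$; all other steps follow formally from the earlier lemmas.
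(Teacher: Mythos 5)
Your proof is correct, but takes a somewhat different route than the paper's. Both begin by passing to the \v{C}ech complex of an eff cover $B\to C$, but the paper does not isolate a reduction to the even case: it expands $\F^{\geq\star}_\ev(B[S^1])\simeq\Tot(\tau_{\geq 2\star-1}(C^{\otimes_B n}[S^1]))$ directly using Lemma~\ref{lem:even_chains}, splits each cosimplicial level via $C^{\otimes_B n}[S^1]\simeq C^{\otimes_B n}\oplus C^{\otimes_B n}[1]$, and reads off $\F^{\geq\star}_\ev(B)\oplus\F^{\geq\star-1}_\ev(B)[1]$, which matches the abstract form of $\bT_\ev\otimes_{\bS_\ev}B_\ev$. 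You instead cleanly reduce to $B$ even by commuting $\bT_\ev\otimes_{\bS_\ev}(-)$ past the totalization using dualizability of $\bT_\ev$ (Corollary~\ref{cor:dualizable}), and then analyze the natural comparison map itself as a map of the split fiber sequences supplied by Lemma~\ref{lem: fiber sequence for Tsyn}, reducing everything to the invertibility of a scalar $u_B\in\pi_0 B$. This is a nice modularization, and directly tracking the natural map is arguably sharper than the paper's argument, whose closing ``as desired'' leaves implicit that the displayed chain of equivalences is realized by the lax-monoidality comparison. The naturality point you flag is genuine but manageable, since Lemma~\ref{lem:even_chains} produces a canonical equivalence that descent propagates. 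Alternatively, for $B$ even you could sidestep the reduction to $\bS$ entirely by noting the comparison map is an equivalence on underlying spectra ($\F^{-\infty}$ is symmetric monoidal and $\F^{\geq\star}_\ev(B[S^1])$ is an exhaustive filtration on $B[S^1]$ by Lemma~\ref{lem:even_chains}), and since the map restricts to the identity on the $B_\ev$ retract it must also be an equivalence on the complementary $B_\ev(1)[1]$ summand, forcing $u_B$ to be a unit.
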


\begin{proof}
    Let $B\to C$ be an eff cover of $B$ by an even ring $C$.
    Then $B[S^1]\to C[S^1]$ and $B^{S^1_+}\to C^{S^1_+}$ is an eff cover.
    By definition of eff it follows that $C^{\otimes_B n}$
    is even for all $n$ and so we have that
    \[\F^{\geq\star}_{\ev}(C^{\otimes_{B}n}[S^1])\simeq \tau_{\geq
    2\star-1}(C^{\otimes_{B}n}[S^1])\] %and \[\F^{\geq\star}_{\ev}((C^{\otimes_B
    %n})^{S^1_+})\simeq (\tau_{\geq 2\star}C^{\otimes_B n})^{S^1_+}\] 
    for all $n$ naturally.

    Applying eff descent we have that %\[\F^{\geq\star}_{\ev}(B^{S^1_+})\simeq
    %\mathrm{Tot}((\tau_{\geq 2\star}C^{\otimes_B n})^{S^1_+})\simeq
    %\left(\mathrm{Tot}(\tau_{\geq 2\star}C^{\otimes_{B}n})\right)^{S^1_+}\simeq
    %(\F^{\geq\star}_{\ev} B)^{S^1_+}\] as desired. On the other hand we have that 
    \begin{align*}
        \F^{\geq\star}_{\ev}(B[S^1]) &\simeq \mathrm{Tot}(\tau_{\geq 2\star-1}(C^{\otimes_B n}[S^1]))\\
        &\simeq \mathrm{Tot}((\tau_{\geq 2\star}C^{\otimes_{B}n})\oplus
        \tau_{\geq 2\star-2}(C^{\otimes_B n})[1])\\
        &\simeq \mathrm{Tot}(\tau_{\geq 2\star}C^{\otimes_B n})\oplus
        \mathrm{Tot}(\tau_{\geq 2\star-2}C^{\otimes_B n})[1]\\
        &\simeq \F^{\geq\star}_{\ev}(B)\oplus \F^{\geq\star-1}_{\ev}(B)[1] 
    \end{align*}
    as desired.
\end{proof}

\begin{corollary}\label{cor:bialgebra}
    There is a natural bicommutative bialgebra structure on $\bT_{\ev}$ in $\SynSp$ compatible
    with the $\bE_\infty$-algebra  structure on $\bT_\ev$ and with the property that
    $\bZ\otimes_{\bS_\ev}\bT_\ev\we\bT_\fil$ as bicommutative bialgebras over $\bZ$.
\end{corollary}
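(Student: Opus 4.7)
The plan is to transport the bicommutative bialgebra structure on $\bS[S^1]$---which comes from the group structure on $S^1$ together with the diagonal $S^1\to S^1\times S^1$---to $\bT_\ev$ using the even filtration. Although $\F^{\geq\star}_\ev$ is only lax symmetric monoidal in general (which a priori transports only algebra structure, not coalgebra structure), the key observation I would exploit is that on the full subcategory of $\Sp$ spanned by tensor powers of $\bS[S^1]$, the lax comparison maps are equivalences, making the restricted functor strong symmetric monoidal and hence capable of transporting the full bialgebra structure.

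First I would iterate Proposition~\ref{Prop: symmetric monoidal} to show that the comparison map $\bT_\ev^{\otimes_{\bS_\ev} n}\to\F^{\geq\star}_\ev(\bS[(S^1)^n])$ is an equivalence for every $n\geq 0$. This uses that $\bS$ is eff-locally even (via $\bS\to\MU$) together with the corollary following Lemma~\ref{lem:even_s1}, which asserts that $B\mapsto B[S^1]$ preserves being eff-locally even; applying Proposition~\ref{Prop: symmetric monoidal} inductively with $B=\bS[(S^1)^{n-1}]$ then gives the claim. Consequently, letting $\mathcal{C}\subseteq\Sp$ denote the full symmetric monoidal subcategory generated by $\bS[S^1]$ (whose objects are the $\bS[(S^1)^n]$), the restriction $\F^{\geq\star}_\ev|_\mathcal{C}\colon\mathcal{C}\to\SynSp$ is strong symmetric monoidal.

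Next I would view the bicommutative bialgebra structure on $\bS[S^1]$ as the image under the symmetric monoidal functor $\bS[-]\colon\Spc\to\Sp$ of the bicommutative bialgebra $S^1$ in $\Spc$; all structure maps and coherence data then live inside $\mathcal{C}$. Transporting along $\F^{\geq\star}_\ev|_\mathcal{C}$ produces the desired bicommutative bialgebra structure on $\bT_\ev$ in $\SynSp$. Compatibility with the $\bE_\infty$-algebra structure from Lemma~\ref{lem:even_monoidal} is immediate since both come from the same multiplication $\bS[S^1]\otimes\bS[S^1]\to\bS[S^1]$ via the same lax monoidal comparison.

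Finally, for the base change assertion, $\bZ$ is even and eff-locally even, so $\bZ_\ev\simeq\ins^0\bZ$. I would apply the symmetric monoidal base change $\ins^0\bZ\otimes_{\bS_\ev}(-)\colon\SynSp\to\Mod_{\ins^0\bZ}(\SynSp)$ to the bialgebra $\bT_\ev$. By Proposition~\ref{Prop: symmetric monoidal} with $B=\bZ$ and Lemma~\ref{lem:tfil}, the underlying synthetic spectrum is identified with $\F^{\geq\star}_\ev(\bZ[S^1])\simeq\bT_\fil$, and symmetric monoidality of the base change transports the bialgebra structure, yielding the standard one on $\bT_\fil$ since the entire construction is natural in the coefficient ring. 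The main technical hurdle will be making the transport of bialgebra structure along a strong symmetric monoidal functor on a full subcategory fully precise at the level of $\infty$-operads; this is a matter of careful bookkeeping with symmetric monoidal envelopes rather than genuinely new content.
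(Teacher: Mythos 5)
Your argument is essentially the paper's argument, and it works. The one point that needs tightening is where you live: you take $\mathcal{C}$ to be the full symmetric monoidal subcategory of $\Sp$ spanned by the $\bS[(S^1)^n]$, but $\F^{\geq\star}_\ev$ is only a functor on $\CAlg(\Sp)$, not on $\Sp$, so restricting it to a full subcategory of spectra is not literally defined. The paper instead takes $\mathcal{C}\subseteq\CAlg(\Sp)$ to be the full subcategory spanned by $\bS[S^1]^{\otimes m}$, where the symmetric monoidal structure is the cocartesian one on $\bE_\infty$-rings (which coincides with the underlying smash product), and then Proposition~\ref{Prop: symmetric monoidal} upgrades the lax comparison maps of Lemma~\ref{lem:even_monoidal} to equivalences on $\mathcal{C}$, exactly as you argue. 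This change is harmless precisely because the bicommutative bialgebra structure on $\bS[S^1]$ is the data of an $\bE_\infty$-coalgebra in $\CAlg(\Sp)$: all structure maps of $S^1$ as a bicommutative bialgebra in anima (diagonal, multiplication, unit, counit) are maps of $\bE_\infty$-monoids because $S^1$ is abelian, so $\bS[-]$ sends them into $\CAlg(\Sp)$ and all the required data lives inside the paper's $\mathcal{C}$. With that correction made, your inductive appeal to Proposition~\ref{Prop: symmetric monoidal} to show the comparison maps are equivalences, and your base-change argument along $\bS_\ev\to\ins^0\bZ$ using Lemma~\ref{lem:tfil}, are exactly what the paper does.
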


\begin{proof}
    Let $\mathcal{C}\subseteq \CAlg(\Sp)$
    be the full subcategory spanned by objects of the form $\bS[S^1]^{\otimes
    m}$ for $m\in \bN$.
    The $\infty$-category $\mathcal{C}$ admits a symmetric monoidal structure given by the
    restriction of the usual one on $\CAlg(\Sp)$
    and for this symmetric monoidal
    structure $\bS[S^1]$ is a bicommutative bialgebra.
    It follows from Proposition~\ref{Prop: symmetric monoidal} that
    $\F_\ev^{\geq\star}(-)\colon\Cscr\rightarrow\CAlg(\SynSp)$
    is symmetric monoidal, whence the bicommutative bialgebra structure on
    $\bT_\ev$.
\end{proof}

\begin{corollary}
    There is a natural bicommutative bialgebra structure on $\bT_\ev^\vee$ in $\SynSp$ with the
    property that $\bZ\otimes_{\bS_\ev}\bT_\ev^\vee\we\bT_\fil^\vee$ as bicommutative bialgebras
    over $\bZ$.
\end{corollary}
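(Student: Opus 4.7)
The plan is to transport the bicommutative bialgebra structure on $\bT_\ev$ established in Corollary~\ref{cor:bialgebra} to $\bT_\ev^\vee$ using duality, and then to verify the base change identity. This strategy is available because Corollary~\ref{cor:dualizable} asserts that $\bT_\ev$ is dualizable in $\SynSp$.

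The first step is to invoke the general principle that on the full subcategory $\SynSp^{\dual}\subseteq\SynSp$ of dualizable objects, the dualization functor $(-)^\vee=\bMap_{\bS_\ev}(-,\bS_\ev)$ restricts to a contravariant symmetric monoidal self-equivalence. Any such contravariant symmetric monoidal equivalence exchanges $\bE_\infty$-algebra and $\bE_\infty$-coalgebra structures along with the compatibility data making a bicommutative bialgebra. Applied to the bicommutative bialgebra $\bT_\ev$, this yields a bicommutative bialgebra structure on $\bT_\ev^\vee$ whose $\bE_\infty$-algebra structure is the $\bS_\ev$-linear dual of the $\bE_\infty$-coalgebra structure on $\bT_\ev$, and whose $\bE_\infty$-coalgebra structure is the dual of the algebra structure on $\bT_\ev$.

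Next, I would check that $\bZ\otimes_{\bS_\ev}\bT_\ev^\vee\we\bT_\fil^\vee$ as bicommutative bialgebras over $\bZ$. The base change functor $\bZ\otimes_{\bS_\ev}(-)\colon\SynSp\rightarrow\D(\bZ)$ is symmetric monoidal and colimit-preserving, so it preserves dualizable objects and commutes with the dualization functor on them; this is a standard property of symmetric monoidal functors between closed symmetric monoidal $\infty$-categories. Consequently, $\bZ\otimes_{\bS_\ev}\bT_\ev^\vee\we(\bZ\otimes_{\bS_\ev}\bT_\ev)^\vee$ naturally as bicommutative bialgebras. By Corollary~\ref{cor:bialgebra} we have $\bZ\otimes_{\bS_\ev}\bT_\ev\we\bT_\fil$ as bicommutative bialgebras over $\bZ$, and dualizing gives the desired identification with $\bT_\fil^\vee$.

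The main obstacle is not computational but rather the formalism required to package dualization as a contravariant symmetric monoidal equivalence sending algebras to coalgebras and bialgebras to bialgebras in a homotopy-coherent way, and to verify that symmetric monoidal base change commutes with this dualization on dualizable objects. This formalism is by now standard, appearing for instance in~\cite{ha}, so once it is invoked the result follows immediately from the preceding corollaries.
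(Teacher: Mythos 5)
Your proposal is correct and is essentially an expansion of the paper's own proof, which simply states that the result follows from Corollaries~\ref{cor:bialgebra} and~\ref{cor:dualizable}. You have correctly filled in the implicit content: dualization on dualizable objects is a contravariant symmetric monoidal self-equivalence exchanging algebra and coalgebra structures, and the symmetric monoidal base change $\bZ\otimes_{\bS_\ev}(-)$ commutes with duals, so the identification $\bZ\otimes_{\bS_\ev}\bT_\ev\we\bT_\fil$ of bicommutative bialgebras dualizes to the desired one.
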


\begin{proof}
    This follows from Corollaries~\ref{cor:bialgebra} and~\ref{cor:dualizable}.
\end{proof}

\begin{notation}
    Let $\rho(n)\colon\bS[S^1]\to \bS[S^1]$ denote the map of bicommutative
    bialgebras induced by the multiplication-by-$n$ map $S^1\to S^1$ of
    $\bE_\infty$-spaces.
    By functoriality, there is then an induced map $\rho(n)\colon\bT_{\ev}\to \bT_{\ev}$
    of bicommutative bialgebras in $\SynSp$. There are similarly bicommutative
    bialgebra maps $\rho(n)\colon\bT_\ev^\vee\to\bT_\ev^\vee$.
\end{notation}

\begin{lemma}\label{lem:dualizing}
    If $n\geq 1$, there is an equivalence \[\rho(n)_*\bT_{\ev}\simeq
    (\rho(n)_*\bT_{\ev}^\vee)[1](1)\] of $\bT_{\ev}$-modules. 
\end{lemma}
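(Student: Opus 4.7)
The plan is to establish the stronger $\bT_{\ev}$-linear equivalence
\[
\phi\colon \bT_{\ev} \xrightarrow{\sim} \bT_{\ev}^\vee[1](1),
\]
a synthetic Poincar\'e duality for the circle, and then obtain the lemma by applying $\rho(n)_*$, which commutes with the shift $[1]$ and the Tate twist $(1)$.

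To construct $\phi$, I use the splitting $\bT_{\ev} \simeq \bS_{\ev} \oplus \bS_{\ev}(1)[1]$ from Lemma~\ref{lem: fiber sequence for Tsyn} to obtain the projection $\pi\colon \bT_{\ev} \to \bS_{\ev}(1)[1]$ onto the top-weight summand, which plays the role of integration against the fundamental class of $S^1$. Composing with the multiplication $\mu$ on $\bT_{\ev}$ produces an $\bS_{\ev}$-bilinear pairing $\pi\circ\mu\colon \bT_{\ev} \otimes_{\bS_{\ev}} \bT_{\ev} \to \bS_{\ev}(1)[1]$, and I take $\phi$ to be its adjoint $\bT_{\ev} \to \bMap_{\bS_{\ev}}(\bT_{\ev}, \bS_{\ev}(1)[1]) = \bT_{\ev}^\vee[1](1)$. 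Because $\bT_{\ev}$ is commutative, $\phi(ax)$ and $a\cdot\phi(x)$ both correspond to the functional $y \mapsto \pi(axy)$, so $\phi$ is $\bT_{\ev}$-linear.

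To verify that $\phi$ is an equivalence, I compute it in bases adapted to the splittings: let $e_0 \in \bS_{\ev} \subset \bT_{\ev}$ be the unit and $e_1 \in \bS_{\ev}(1)[1] \subset \bT_{\ev}$ a generator, with corresponding dual basis $(e_0^\vee(1)[1], e_1^\vee(1)[1])$ of $\bT_{\ev}^\vee[1](1) \simeq \bS_{\ev}(1)[1] \oplus \bS_{\ev}$. Then $\phi$ is represented by the matrix $\bigl(\begin{smallmatrix} 0 & 1 \\ 1 & \eta \end{smallmatrix}\bigr)$: the off-diagonal $1$'s come from $\pi(e_0 e_1) = \pi(e_1 e_0) = e_1$, while the lower-right entry $\eta$ comes from $\pi(e_1^2) = \pi(\eta e_1) = \eta e_1$, using the relation $d^2 = \eta d$ in $\bS[S^1]$ recalled in Remark~\ref{rem:no_spherical_Tfil} together with the fact that $\eta$ has weight $1$. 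This matrix has determinant $-1$, a unit in $\pi_0 \bS_{\ev} = \bZ$, so $\phi$ is an equivalence. Applying $\rho(n)_*$ then yields the desired equivalence of $\bT_{\ev}$-modules. The main subtlety is the computation of the "quadratic" $\eta$-entry, which is non-zero (unlike in the $\bZ$-linear analogue $\bT_{\fil}$, where $d^2 = 0$) but does not obstruct the invertibility of $\phi$.
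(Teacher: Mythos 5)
Your construction is essentially the same as the paper's: in both cases one reduces to $n=1$ by applying $\rho(n)_*$, and one builds the Poincar\'e duality map $\bT_\ev \to \bT_\ev^\vee(1)[1]$ from the choice of a fundamental/trace class. (Your adjoint-of-the-pairing $x \mapsto \pi(x \cdot -)$ and the paper's ``pick $1 \in \pi_0\bT \iso \pi_0\F^0(\bT_\ev^\vee(1)[1])$ and extend by $\bT_\ev$-linearity'' produce the same map: evaluating yours at the unit $e_0$ recovers $\pi$, which corresponds to the generator of $\pi_0\bT^\vee[1] \iso \bZ$.) Where you diverge is in the verification that this map is an equivalence.

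The paper disposes of this in one line by ``arguing even-locally,'' i.e.\ base-changing to an even $\bE_\infty$-ring, where $\bT_\ev$ becomes the double-speed Whitehead filtration on $B[S^1]$ and the duality is the classical self-duality of the chain complex of $S^1$. You instead carry out an explicit check in $\SynSp$ using the splitting $\bT_\ev \simeq \bS_\ev \oplus \bS_\ev(1)[1]$ and the ring structure of $\pi_0^\P\bT_\ev = \bZ[\eta,d]/(2\eta, d^2-\eta d)$ from Lemma~\ref{lem:tev_pi0}. This is a legitimate alternative and has the merit of making the role of $\eta$ visible. Two small points of precision: (1) the entries of your $2\times 2$ matrix have different (degree, weight) types---the two ``$1$''s are identity maps of $\bS_\ev$ and $\bS_\ev(1)[1]$ respectively, while ``$\eta$'' is a map $\bS_\ev(1)[1] \to \bS_\ev$, so ``determinant $-1$'' is not literally a computation in a commutative ring; the cleanest phrasing is simply to exhibit the inverse $\begin{pmatrix} -\eta & 1 \\ 1 & 0\end{pmatrix}$, or to factor as $\begin{pmatrix} 1 & 0 \\ \eta & 1\end{pmatrix}\begin{pmatrix} 0 & 1 \\ 1 & 0\end{pmatrix}$. (2) Passing from $d^2 = \eta d$ in $\pi_*\bS[S^1]$ to $\pi(e_1^2) = \eta e_1$ in $\bT_\ev$ uses that the relevant group $\pi_2\F^2(\bS_\ev(1)[1]) \we \pi_1\F^1\bS_\ev$ is exactly $\bZ/2\cdot\eta$ (so no higher-filtration ambiguity); this follows from the connectivity bounds of Remark~\ref{rem:bounds}, but is worth flagging since the paper's even-local argument sidesteps it entirely.
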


\begin{proof}
    It is enough to show this statement for $n=1$,
    in other words that $\bT_{\ev}\simeq \bT_{\ev}^{\vee}(1)[1]$ as
    $\bT_\ev$-modules, where the $\bT_\ev$-module structure on $\bT_\ev^\vee$
    is induced via the symmetric monoidality of the forgetful functor
    $\SynSp_{\bT_\ev}\rightarrow\SynSp$.
    To construct a map $\bT_\ev\rightarrow\bT_\ev^\vee(1)[1]$, we can evaluate
    $$\pi_0\left(\F^{\geq
    0}\left(\bT_\ev^\vee(1)[1]\right)\right)\we\pi_0\left(\bT^\vee[1]\right)\we\pi_0\bT.$$
    We pick the element corresponding to $1\in\pi_0\bT$. By adjunction, there
    is an induced map $\bT_\ev\rightarrow\bT_\ev^\vee(1)[1]$ of
    $\bT_\ev$-module spectra, which one sees is an equivalence by arguing
    even-locally.
\end{proof}

\begin{lemma}
    If $B$ is an even $\bE_{\infty}$-ring spectrum with trivial $\bT$-action, then the
    natural map
    \[B_\ev\otimes_{\bS_{\ev}}\rho(n)_*\bT_{\ev}\to
    B[S^1/C_n]_\ev\] is an equivalence of $B[S^1]_\ev$-modules
    for each $n\geq 1$.
\end{lemma}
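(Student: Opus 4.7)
The plan is to reduce the claim to Proposition~\ref{Prop: symmetric monoidal} by identifying the $\bS[S^1]$-algebra $\bS[S^1/C_n]$ with $\rho(n)_*\bS[S^1]$. The short exact sequence $C_n\to S^1\xrightarrow{n}S^1$ of grouplike $\bE_\infty$-spaces yields an equivalence $S^1/C_n\simeq S^1$ of $\bE_\infty$-spaces under which the quotient map $S^1\to S^1/C_n$ corresponds to $\rho(n)\colon S^1\to S^1$. Applying $\Sigma^\infty_+(-)$ and then tensoring with $B$, the canonical $\bE_\infty$-ring map $B[S^1]\to B[S^1/C_n]$ is identified with $\id_B\otimes\rho(n)\colon B[S^1]\to B[S^1]$, so that $B[S^1/C_n]$ is equivalent as a $B[S^1]$-algebra to $(\id_B\otimes\rho(n))_*B[S^1]$.

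Since $B$ is even, $B[S^1]$ is eff-locally even by Lemma~\ref{lem:even_s1}, and hence so is $B[S^1/C_n]$ (which is $\bE_\infty$-ring-equivalent to $B[S^1]$). Proposition~\ref{Prop: symmetric monoidal} then provides an equivalence $(B[S^1])_\ev\simeq B_\ev\otimes_{\bS_\ev}\bT_\ev$ of $\bE_\infty$-algebras in synthetic spectra. I would next apply the lax symmetric monoidal functor $\F^{\geq\star}_\ev$ (Lemma~\ref{lem:even_monoidal}) to the $\bE_\infty$-endomorphism $\id_B\otimes\rho(n)$ of $B\otimes_\bS\bS[S^1]$; by naturality of Proposition~\ref{Prop: symmetric monoidal} in its second tensor factor, the induced endomorphism of $(B[S^1])_\ev\simeq B_\ev\otimes_{\bS_\ev}\bT_\ev$ corresponds to $\id_{B_\ev}\otimes\rho(n)$.

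Combining these, the $B[S^1]_\ev$-module $(B[S^1/C_n])_\ev$ is equivalent to the restriction $(\id_{B_\ev}\otimes\rho(n))_*(B_\ev\otimes_{\bS_\ev}\bT_\ev)\simeq B_\ev\otimes_{\bS_\ev}\rho(n)_*\bT_\ev$, which matches the claim, and one checks that this equivalence is induced by the natural map in the statement. The main obstacle is the bookkeeping: verifying that the endomorphism induced on the even filtration matches $\id_{B_\ev}\otimes\rho(n)$ under the equivalence of Proposition~\ref{Prop: symmetric monoidal}. This reduces to the naturality of the lax monoidal structure on $\F^{\geq\star}_\ev$ together with the fact, from Proposition~\ref{Prop: symmetric monoidal}, that the natural transformation $\F^{\geq\star}_\ev(-)\otimes_{\bS_\ev}\F^{\geq\star}_\ev(-)\to\F^{\geq\star}_\ev(-\otimes_\bS-)$ is an equivalence when evaluated at pairs of eff-locally even rings, so applies equally to $(B,\bS[S^1])$ and restricts well along the endomorphism $\rho(n)$ of the second factor.
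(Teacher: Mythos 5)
Your proof is correct and takes essentially the same approach as the paper's. The paper's own proof is much terser—invoking only conservativity of the forgetful functor $\SynSp_{\bT_\ev}\to\SynSp$ together with Proposition~\ref{Prop: symmetric monoidal}—but your unpacking, via the identification $B[S^1/C_n]\simeq(\id_B\otimes\rho(n))_*B[S^1]$ and the naturality of the lax monoidal structure on $\F^{\geq\star}_\ev$, is precisely the reasoning that terse proof relies on.
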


\begin{proof}
    The forgetful functor $\SynSp_{\bT_\ev}\rightarrow\SynSp$ is conservative,
    so the claim follows from the symmetric monidality of the even filtration
    in this situation, Proposition~\ref{Prop: symmetric monoidal}.
%     Note that this map is the linearization of the map $\F^{\geq\star}_{\ev}(E)\to F^{\geq\star}_{\ev}(E[\bT/C_n])$ and therefore will be a $\bT_{\ev}$-module map. In particular it is enough to show that this map is an equivalence on the underlying synthetic spectra. To this end note that we have a map of fiber sequences
%     \[
% \begin{tikzcd}
% {F^{\geq\star}_{\ev}(E)[1](1)} \arrow[d] \arrow[r] & F^{\geq\star}_{\ev}(E)\otimes_{\bS_{\ev}}\rho(n)_*\bT_{\ev} \arrow[d] \arrow[r] & F^{\geq\star}_{\ev}(E) \arrow[d, "id"] \\
% {F^{\geq\star}_{\ev}(E)[1](1)} \arrow[r]           & {F^{\geq\star}_{\ev}(E[\bT/C_n])} \arrow[r]                                          & F^{\geq\star}_{\ev}(E)                
% \end{tikzcd}\] where the fiber of the bottom sequence is identified using the
% eff map $E[\bT/C_n]\to E$ and looking at the associated descent data for the
%     even filtration. The filtrations on the fiber are then the (shifted as
%     indicated) double-speed postnikov filtrations, and so to show that the map
%     on the fibers is an equivalence is the same as showing that the map
%     $\F^{\geq -\infty}_{\ev}(E)[1](1)\to F^{\geq -\infty}_{\ev}(E)[1](1)$ is
%     an equivalence. This then is equivalent to the statement that the forgetful
%     functor $\SynSp\to \Sp$ is symmetric monoidal, which it is.
\end{proof}

\begin{lemma}\label{lem:no_nontrivial}
    If $B$ is an even $\bE_{\infty}$-ring spectrum, then the fiber of the
    augmentation map $B[S^1]_\ev\rightarrow B_\ev$ is equivalent to $B_\ev(1)[1]$ as a
    $B[S^1]_\ev$-module with trivial action.
\end{lemma}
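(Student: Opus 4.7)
The plan is to combine the splitting of Lemma~\ref{lem: fiber sequence for Tsyn} with a connectivity computation: I will show that the internal multiplication $\mu_F\colon F \otimes_{B_\ev} F \to F$ on $F = \fib(B[S^1]_\ev \to B_\ev)$ is nullhomotopic, which forces the $B[S^1]_\ev$-action on $F$ to factor through the augmentation $\epsilon\colon B[S^1]_\ev \to B_\ev$ and thereby exhibits $F \simeq B_\ev(1)[1]$ as a $B[S^1]_\ev$-module with trivial action.

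Since $B$ is even and hence eff-locally even, Lemma~\ref{lem: fiber sequence for Tsyn} yields a split fiber sequence $F \to B[S^1]_\ev \to B_\ev$ of synthetic spectra with $F \simeq B_\ev(1)[1]$. The augmentation $\epsilon$ admits a natural $\bE_\infty$-ring section $\sigma\colon B_\ev \to B[S^1]_\ev$ induced by the base point $\ast\hookrightarrow S^1$; since $\sigma$ is $B_\ev$-linear, the splitting upgrades to a $B_\ev$-linear one, yielding $B[S^1]_\ev \simeq B_\ev \oplus F$ and $F \simeq B_\ev(1)[1]$ as $B_\ev$-modules. (The $B_\ev$-module identification of $F$ is immediate for even $B$ from Lemma~\ref{lem:even_chains} and extends to general eff-locally even $B$ by eff descent.) Under this decomposition, the action map $B[S^1]_\ev \otimes_{B_\ev} F \to F$ splits as the identity on the $B_\ev \otimes_{B_\ev} F \simeq F$ summand together with a map $\mu_F\colon F \otimes_{B_\ev} F \to F$ coming from the internal multiplication of $B[S^1]_\ev$.

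Since $\epsilon \circ \sigma = \id_{B_\ev}$, the $B[S^1]_\ev$-action factors through $\epsilon$ precisely when $\mu_F$ is nullhomotopic, and in that case the resulting $B_\ev$-module structure coincides with the one from $\sigma$. The map $\mu_F$ is a $B_\ev$-linear morphism $B_\ev(2)[2] \to B_\ev(1)[1]$ and so is classified by an element of $\pi_{1,1}B_\ev = \pi_1 \F^{\geq 1}_\ev B$. For even $B$, $\F^{\geq 1}_\ev B \simeq \tau_{\geq 2}B$, so this group is $\pi_1 \tau_{\geq 2}B = 0$, proving $\mu_F$ is null. The main subtlety is the decomposition of the $B[S^1]_\ev$-action under the $B_\ev$-linear splitting; given the $\bE_\infty$-ring section $\sigma$, this is a formal computation.
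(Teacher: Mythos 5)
Your setup is reasonable and the single obstruction computation is correct: Lemma~\ref{lem: fiber sequence for Tsyn} plus the $\bE_\infty$-ring section $\sigma$ identifies $F = \fib(B[S^1]_\ev \to B_\ev)$ with $B_\ev(1)[1]$ as a $B_\ev$-module, and the ``nonlinear'' piece $\mu_F\colon F \otimes_{B_\ev} F \to F$ of the binary action map indeed lives in $\pi_0\Map_{B_\ev}(B_\ev(2)[2], B_\ev(1)[1]) \cong \pi_1\F^{\geq 1}_\ev B$, which vanishes when $B$ is even since $\F^{\geq 1}_\ev B \simeq \tau_{\geq 2}B$.

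The gap is the step ``the $B[S^1]_\ev$-action factors through $\epsilon$ precisely when $\mu_F$ is nullhomotopic.'' Only one direction is formal. A $B[S^1]_\ev$-module structure on $F$ compatible with the $B_\ev$-module structure is the data of an $\bE_1$-$B_\ev$-algebra map $B[S^1]_\ev \to \End_{B_\ev}(F) \simeq B_\ev$, which carries an infinite hierarchy of coherence data beyond its binary multiplication. A nullhomotopy of $\mu_F$ pins down the underlying $B_\ev$-module map of this $\bE_1$-algebra map, but it does not by itself identify the $\bE_1$-algebra map with $\epsilon$: two $\bE_1$-ring maps with homotopic underlying module maps need not be homotopic as $\bE_1$-ring maps. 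You cannot conclude the module structure is trivial from the vanishing of the one group $\pi_1\F^{\geq 1}_\ev B$.

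The paper avoids this by reducing to the unfiltered level: the map $B[S^1]_\ev \to B_\ev$ is $\F^{\geq\star}_{\ev/B[S^1]}$ applied to $B[S^1] \to B$, so it suffices to show that the $B[S^1]$-module structure on $\fib(B[S^1] \to B)\simeq B[1]$ is trivial. For that, the paper shows the \emph{entire space} of compatible $B[S^1]$-module structures---identified with $\Map_*(\B S^1, \B\Omega^\infty B)$---is path connected, using the even cell structure of $\bC\PP^\infty$ together with the vanishing of $\pi_{2n-1}B$ for every $n \geq 1$. That argument kills a whole tower of obstructions; your computation addresses only the first one. To make your direct filtered approach go through, you would need path-connectedness of the full space of $B_\ev$-linear $\bE_1$-algebra maps $B[S^1]_\ev \to B_\ev$, not vanishing of a single homotopy group, and it is cleaner to follow the paper and transport the problem to the unfiltered world where the obstruction theory is classical.
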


\begin{proof}
    We prove that the fiber
    of $B[S^1]_\ev\rightarrow B_\ev$ is equivalent to $B_\ev(1)[1]$ as a
    synthetic $B[S^1]_\ev$-module. For this, note that because
    $B[S^1]\rightarrow B$ is eff, we can compute $B_\ev$ as the even filtration
    on the $B[S^1]$-module corresponding to $B$. That is, we have that
    $B_\ev\we B_{\ev/B[S^1]}$, where the latter is defined as the limit
    $\lim_{B[S^1]\rightarrow C\text{ even}}\tau_{\geq 2\star} B\otimes_{B[S^1]}
    C$. By~\cite[Cor.~2.2.14(1)]{hrw}, this limit can be computed as $\Tot\tau_{\geq
    2\star}B\otimes_{B[S^1]}B^\bullet$, where $B^\bullet$ is the \v{C}ech
    complex of $B[S^1]\rightarrow B$. But, this \v{C}ech complex is equivalent
    to the \v{C}ech complex of the base changed morphism $B\rightarrow
    B\otimes_{B[S^1]}B$; the latter is eff as eff maps are closed under base
    change. Thus, the totalization computes $B_\ev$ by eff-local descent for the even
    filtration (see~\cite[Prop.~2.2.12(1)]{hrw}). It follows that
    $B[S^1]_\ev\rightarrow B_\ev$ is obtained as the functor
    $\F^{\geq\star}_{\ev/B[S^1]}$ applied to the augmentation map
    $B[S^1]\rightarrow B$ of $B$-modules. By functoriality, there is thus a map
    $\F^{\geq\star}_{\ev/B[S^1]}(\fib(B[S^1]\rightarrow
    B))\rightarrow\fib(B[S^1]_\ev\rightarrow B_\ev)$, and it is straightforward
    to see that this map is an equivalence of synthetic spectra. As the
    forgetful functor from $B[S^1]_\ev$-modules to synthetic spectra is
    conservative, this map is an equivalence. Finally, we note that
    $\fib(B[S^1]\rightarrow B)$ is equivalent to $B[1]$ as a $B[S^1]$-module.
    To see this, note that the space of $B[S^1]$-module structures on $B$ (or
    equivalently $B[1]$) is equivalent to the space of $B$-algebra maps
    $B[S^1]\rightarrow B$, or equivalent the space of pointed maps
    $\B S^1\rightarrow\B\Omega^\infty B$.\footnote{We apologize for the use of
    $B$ as the classifying space and as the even $\bE_\infty$-ring.} As the cells of $\B S^1$ are in even
    degrees and the homotopy groups of the infinite loopspace $\B\Omega^\infty B$ are in odd degrees,
    we see that this space is path connected.
    This completes the proof as we have $\F^{\geq\star}_{\ev/B[S^1]}(B[1])\we B(1)[1]$.
\end{proof}

\begin{corollary}\label{cor:nseries}
    Let $B$ be an even $\bE_\infty$-ring spectrum.
    For each $n\geq 1$, there is a pullback square
    $$\xymatrix{
        B_\ev(1)[1]\ar[r]\ar[d]&B_\ev(1)[1]\ar[d]\\
        B[S^1]_\ev\ar[r]&\rho(n)_*B[S^1]_\ev
    }$$ in $\SynSp_{B[S^1]_\ev}$.
\end{corollary}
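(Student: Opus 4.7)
The plan is to realize the claimed pullback square as the top square of a $3\times 2$ commutative diagram of fiber sequences in $\SynSp_{B[S^1]_\ev}$ whose bottom terms are identified via the identity on $B_\ev$. The key input is Lemma~\ref{lem:no_nontrivial}, together with the observation that $\rho(n)$ respects augmentations.

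First I would verify that the $\bE_\infty$-ring map $\rho(n)\colon B[S^1]_\ev\to B[S^1]_\ev$ commutes with the augmentation to $B_\ev$. This reduces via the lax symmetric monoidal even filtration and base change from $\bS$ to $B$ to noting that the multiplication-by-$n$ map $S^1\to S^1$ fixes the identity element, so $\rho(n)\colon\bS[S^1]\to\bS[S^1]$ is a map of augmented $\bE_\infty$-algebras. Equivalently, the canonical $B[S^1]_\ev$-module map $B[S^1]_\ev\to\rho(n)_*B[S^1]_\ev$ sits over $\rho(n)_*B_\ev\we B_\ev$; the last equivalence uses that the $B[S^1]_\ev$-action on $B_\ev$ factors through the augmentation, so restriction along $\rho(n)$ leaves it unchanged.

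Next I would apply Lemma~\ref{lem:no_nontrivial} to obtain the fiber sequence $B_\ev(1)[1]\to B[S^1]_\ev\to B_\ev$ in $\SynSp_{B[S^1]_\ev}$, whose outer terms carry the trivial action. Since restriction along $\rho(n)$ is exact and preserves the trivial action, it gives a second fiber sequence $B_\ev(1)[1]\to\rho(n)_*B[S^1]_\ev\to B_\ev$ with the same outer terms. Combining this with the compatibility from the previous step produces a commutative diagram
\[
\xymatrix{
    B_\ev(1)[1]\ar[r]\ar[d]&B_\ev(1)[1]\ar[d]\\
    B[S^1]_\ev\ar[r]^{\rho(n)}\ar[d]&\rho(n)_*B[S^1]_\ev\ar[d]\\
    B_\ev\ar@{=}[r]&B_\ev
}
\]
of fiber sequences in $\SynSp_{B[S^1]_\ev}$, whose top square is the candidate pullback.

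Finally, the top square is a pullback by the standard stable $\infty$-categorical fact that a morphism of fiber sequences which is an equivalence on the cofiber terms induces a pullback square on the source and middle terms; concretely, the induced map on fibers of the two vertical arrows is $\Omega$ of the identity on $B_\ev$, hence an equivalence. I do not anticipate a genuine obstacle beyond assembling these fiber sequences correctly: all the real content is packaged in Lemma~\ref{lem:no_nontrivial}, and compatibility of $\rho(n)$ with augmentations is immediate from its origin as a pointed map of anima.
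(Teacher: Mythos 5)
Your proposal is correct and follows essentially the same route as the paper: both start from the commutative square $B[S^1]_\ev\to\rho(n)_*B[S^1]_\ev$ over $B_\ev$, identify the vertical fibers as $B_\ev(1)[1]$ via Lemma~\ref{lem:no_nontrivial}, and conclude the top square is a pullback. The only difference is expository — you spell out the compatibility of $\rho(n)$ with the augmentation and the stability of the trivial module structure under $\rho(n)_*$, which the paper leaves implicit.
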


\begin{proof}
    We have a commutative square
    $$\xymatrix{
        B[S^1]_\ev\ar[r]\ar[d]&\rho(n)_* B[S^1]_\ev\ar[d]\\
        B_\ev\ar@{=}[r]&B_\ev
    }$$ in $\SynSp_{B[S^1]_\ev}$. Taking vertical fibers yields the square
    claimed in the corollary in light of Lemma~\ref{lem:no_nontrivial}, which
    guarantees that each fiber is equivalent to $B_\ev(1)[1]$.
\end{proof}

\begin{remark}
    The horizontal map $B_\ev(1)[1]\rightarrow B_\ev(1)[1]$ is some deformation
    of the multiplication-by-$n$ map. We will see in
    Section~\ref{sec:comparison} that
    $$\Map_{B[S^1]_\ev}(B_\ev(1)[1],B_\ev(1)[1])\we\Map_{B[S^1]_\ev}(B_\ev,B_\ev)\we\F^0((B_\ev)^{\bT_\ev})\we\tau_{\geq
    0}(B^{\h S^1}).$$ If $B$ is even periodic, then in terms of a formal group law $\bG$ structure on $B^{\h S^1}$
    induced by the choice of a complex orientation,
    this map is given by the $n$-series $[n]_\bG(t)$.
\end{remark}

The following construction will be helpful in some arguments in the rest of the
paper.

\begin{construction}[The CW filtration]\label{const:cw}
    Let $B$ be an even $\bE_\infty$-ring spectrum. The fiber sequence
    $B_\ev(1)[1]\rightarrow B[S^1]_\ev\rightarrow B_\ev$ of
    Lemma~\ref{lem:no_nontrivial} can be iterated to give a
    ``periodic'' resolution of $B_\ev$ in $\SynSp_{B[S^1]_\ev}$.
    This gives an exhaustive decreasing filtration
    $\F^{\geq\boxtimes}_{\mathrm{CW}}B_\ev$ on
    $B_\ev\in\SynSp_{B[S^1]_\ev}$ with $\F^{\geq s}_{\mathrm{CW}}B_\ev\we 0$
    for $s>0$ and $\gr^s_{\mathrm{CW}}B_\ev\we B[S^1]_\ev(s)[2s]$.
\end{construction}

We come to our notion of filtered circle-equivariant synthetic spectra.

\begin{definition}[Synthetic spectra with synthetic circle action]
    A $\bT_\ev$-module in $\SynSp$ is called a synthetic spectrum with synthetic circle action. The
    stable $\infty$-category of such synthetic spectra is denoted by
    $\SynSp_{\bT_\ev}=\Mod_{\bT_\ev}(\SynSp)\we\FD(\bT_\ev)$. It is a presentably symmetric monoidal
    stable $\infty$-category where the symmetric monoidal structure arises from
    the $\bE_\infty$-coalgebra structure on $\bT_\ev$. 
\end{definition}

% The $\infty$-category
% $\SynSp_{\bT_\ev}$ will be the basis for our development of cyclotomic
% synthetic spectra. As $\bT_\ev$ is a dualizable bicommutative bialgebra in
% synthetic spectra, there
% is a symmetric monoidal equivalence
% $\SynSp_{\bT_\ev}=\Mod_{\bT_\ev}(\SynSp)\we\coMod_{\bT_\ev^\vee}(\SynSp)$.
% This equivalence will play a crucial role in the notion of $\bE_\infty$-algebra in
% synthetic spectra with $\bT_\ev$-action and in the construction of such a
% structure on $\F^{\geq\star}_\ev\THH(R;\bZ_p)$ for quasisyntomic $R$.
% In the meantime,
We note the following comparison to~\cite{raksit}.

\begin{proposition}
    There is a natural equivalence $\bZ\otimes_{\bS_\ev}\bT_\ev\we\bT_\fil$. In
    particular, base change to $\bZ$ induces a colimit-preserving symmetric
    monoidal functor $\SynSp_{\bT_\ev}\rightarrow\FD(\bT_\fil)$.
\end{proposition}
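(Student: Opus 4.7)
The plan is to deduce this proposition as an immediate consequence of Proposition~\ref{Prop: symmetric monoidal} applied to the $\bE_\infty$-ring $\bZ$. First I would verify that $\bZ$ is eff-locally even: since $\bZ$ is an even $\bE_\infty$-ring, the identity map $\bZ\rightarrow\bZ$ is trivially eff (for any even $\bE_\infty$-ring $R$ under $\bZ$, the pushout $R\otimes_\bZ\bZ\we R$ is even and faithfully flat over $R$ in the graded sense of Notation~\ref{not:graded}). Consequently $\bZ$ satisfies the hypotheses of Proposition~\ref{Prop: symmetric monoidal}, which then produces a natural equivalence
\[\bT_\ev\otimes_{\bS_\ev}\F^{\geq\star}_\ev\bZ\we\F^{\geq\star}_\ev(\bZ[S^1]).\]
Since $\bZ$ is even, the discussion in Variant~\ref{var:even} combined with Lemma~\ref{lem:local_computation}(b) applied to the identity cover gives $\F^{\geq\star}_\ev\bZ\we\tau_{\geq 2\star}\bZ$, and this filtered spectrum is precisely $\ins^0\bZ$, which is the synthetic spectrum we denote by $\bZ$. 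On the other side, Lemma~\ref{lem:tfil} identifies $\F^{\geq\star}_\ev(\bZ[S^1])$ with $\bT_\fil$. Chaining the two identifications yields the claimed equivalence $\bZ\otimes_{\bS_\ev}\bT_\ev\we\bT_\fil$, and it is an equivalence of $\bE_\infty$-algebras (indeed of bicommutative bialgebras) because Proposition~\ref{Prop: symmetric monoidal} and its proof via eff descent are compatible with algebra structures, as reflected in Corollary~\ref{cor:bialgebra}.

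For the second assertion I would argue formally. Base change along the map $\bS_\ev\rightarrow\ins^0\bZ$ of $\bE_\infty$-algebras in filtered spectra produces a symmetric monoidal, colimit-preserving functor
\[-\otimes_{\bS_\ev}\ins^0\bZ\colon\SynSp=\Mod_{\bS_\ev}(\FD(\bS))\longrightarrow\Mod_{\ins^0\bZ}(\FD(\bS))\we\FD(\bZ),\]
where the final identification is that a module in $\FD(\bS)$ over the unit $\ins^0\bS$ base-changed to $\bZ$ is the same data as a filtered spectrum whose terms are $\bZ$-modules with $\bZ$-linear transition maps. Passing to modules over the algebra $\bT_\ev$ on the source and over its base change $\bT_\ev\otimes_{\bS_\ev}\ins^0\bZ\we\bT_\fil$ on the target produces the desired symmetric monoidal colimit-preserving functor
\[\SynSp_{\bT_\ev}=\Mod_{\bT_\ev}(\SynSp)\longrightarrow\Mod_{\bT_\fil}(\FD(\bZ))=\FD(\bT_\fil).\]

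There is no serious obstacle: the entire content is already contained in Proposition~\ref{Prop: symmetric monoidal} and Lemma~\ref{lem:tfil}, and the mild point to check is that $\bZ$ qualifies as eff-locally even, which is immediate from its evenness as a spectrum. The most delicate bookkeeping is tracking the algebra-level compatibility, so that the identification of underlying objects upgrades to an identification of bicommutative bialgebras; this is automatic from the symmetric monoidality of the constructions involved.
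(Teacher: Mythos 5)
Your proof is correct and takes essentially the same route as the paper: both deduce the equivalence by applying Proposition~\ref{Prop: symmetric monoidal} with $B=\bZ$ (which is even, hence eff-locally even via the identity) and then invoking Lemma~\ref{lem:tfil} to identify $\F^{\geq\star}_\ev(\bZ[S^1])$ with $\bT_\fil$. You spell out a few more details than the paper does — notably the check that $\F^{\geq\star}_\ev\bZ=\tau_{\geq 2\star}\bZ=\ins^0\bZ$, which is needed to square the two a priori different meanings of ``$\bZ$'' as a synthetic spectrum, and the formal unwinding of the base-change functor — but these are the same argument.
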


\begin{proof}
    It follows from Proposition~\ref{Prop: symmetric monoidal} that
    $\bZ\otimes_{\bS_\ev}\bT_\ev\we(\bZ\otimes_{\bS}\bT)_\ev\we(\bZ[S^1])_\ev$.
    The latter is equivalent to $\bT_\fil$ by Lemma~\ref{lem:tfil}.
\end{proof}

It follows that $\SynSp_{\bT_\ev}$ satisfies the requirement of being a lift to
the synthetic sphere spectrum of the notion of a filtered $\bZ$-module spectrum
with $\bT_\fil$-action.

\subsection{Synthetic orbits, fixed points, and Tate}

Let $C$ be an $\bE_\infty$-ring and let $G$ be a grouplike $\bE_1$-space, with group ring $C[G]$.
Given a $C$-module spectrum $M$ with $G$-action, i.e., a module over $C[G]$, one defines $$M_{\h
G}=C\otimes_{C[G]}M\quad\text{and}\quad M^{\h
G}=\Map_{C[G]}(C,M).$$ These are the homotopy orbits
and homotopy fixed points respectively, and each admits the structure of a $C$-module spectrum.
The homotopy orbits functor is the left adjoint to the ``trivial action'' functor
$\D(C[G])\leftarrow\D(C)$ induced by restriction of scalars along the counit
map $C[G]\rightarrow C$ and the homotopy fixed points functor is the right
adjoint.

\begin{definition}[$\bT_\ev$-orbits and fixed points]
    Given $M\in\SynSp_{\bT_\ev}$, we let
    $$M_{\bT_\ev}=\bS_\ev\otimes_{\bT_\ev}M\quad\text{and}\quad M^{\bT_\ev}=\bMap_{\bT_\ev}(\bS_\ev,M),$$
    where we view $\bS_\ev$ as a synthetic spectrum with synthetic circle action via the ``trivial''
    action, i.e., via the augmentation map $\bT_\ev\rightarrow\bS_\ev$. Again
    these are the the left and right adjoints of the trivial action functor
    $\SynSp_{\bT_\ev}\leftarrow\SynSp$ obtained by restriction of scalars along
    $\bT_\ev\rightarrow\bS_\ev$.
\end{definition}

\begin{remark}
    More generally, if $A$ is eff-locally even, then we define $\bT_\ev$-orbits
    and fixed points for objects of $\SynSp_{A[S^1]_\ev}$. The underlying
    synthetic spectra can be first computed by applying the forgetful functor
    $\SynSp_{\bT_\ev}\leftarrow\SynSp_{A[S^1]_\ev}$.
\end{remark}

\begin{remark}
    If $B$ is an even $\bE_\infty$-ring spectrum and
    $\F^{\geq\star}M\in\SynSp_{B[S^1]_\ev}$, then the CW filtration of
    Construction~\ref{const:cw} can be used to construct complete filtrations
    on $(\F^{\geq\star}M)_{\bT_\ev}$ and $(\F^{\geq\star}M)^{\bT_\ev}$
    analogous to those used to define the homotopy orbits and fixed points
    spectral sequences. See the proof of Lemma~\ref{lem:complete_underlying} for more details.
\end{remark}

We follow~\cite[Sec.~2.4]{raksit} to construct the norm for the synthetic circle and hence the synthetic
analogue of the $S^1$-Tate construction. We repeat Raksit's construction here to prepare for a parallel
construction in the case of the synthetic analogue of $C_n$.

\begin{construction}[Synthetic $S^1$-Tate]\label{const:syn_tate}
    By Lemma~\ref{lem:dualizing}, we can choose a specific equivalence
    $\alpha\colon\bT_\ev^\vee\rightarrow\bT_\ev(-1)[-1]\we\bT_\ev\otimes_{\bS_\ev}\bS_\ev(-1)[-1]$
    of $\bT_\ev$-modules. There is also a $\bT_\ev$-module map
    $\eta\colon\bS_\ev\rightarrow\bT_\ev^\vee$ which corresponds to the identity on $\bS_\ev$ under
    the equivalence
    $$\Map_\SynSp(\bS_\ev,\bS_\ev)\we\Map_{\bT_\ev}(\bT_\ev,\bS_\ev)\we\Map_{\bT_\ev}(\bS_\ev,\bT_\ev^\vee).$$
    Given $M\in\SynSp_{\bT_\ev}$, we then have a map
    \begin{align*}
        \bS_\ev(1)[1]\otimes_{\bS_\ev}M_{\bT_\ev}&=\bS_\ev(1)[1]\otimes_{\bS_\ev}(M\otimes_{\bT_\ev}\bS_\ev)\\
        &\xrightarrow{\eta}\bS_\ev(1)[1]\otimes_{\bS_\ev}(M\otimes_{\bT_\ev}\bT_\ev^\vee)\\
        &\we^\alpha\bS_\ev(1)[1]\otimes_{\bS_\ev}(M\otimes_{\bT_\ev}\bT_\ev\otimes_{\bS_\ev}\bS_\ev(-1)[-1])\\
        &\we M
    \end{align*}
    of synthetic spectra.
    The left-hand side, which we denote by $M_{\bT_\ev}(1)[1]$, is a synthetic spectrum with trivial synthetic circle
    action, so the map above factors canonically through a map $M_{\bT_\ev}(1)[1]\rightarrow M^{\bT_\ev}$,
    called the synthetic $S^1$-norm and denoted by $\mathrm{Nm}_{\bT_\ev}$.
    We define $M^{\t\bT_\ev}$ to be the cofiber, in synthetic spectra, of
    $\mathrm{Nm}_{\bT_\ev}$.
    This is the synthetic $S^1$-Tate construction and is functorial in $M$.
\end{construction}

\begin{example}\label{ex:z_filtered_tate}
    We view $\bZ=\ins^0\bZ$ with its trivial $\bT_\ev$-action. Then,
    the fiber sequence
    $\bZ^{\bT_\ev}\rightarrow\bZ^{\t\bT_\ev}\rightarrow\bZ_{\bT_\ev}(1)[2]$
    is equivalent to $$\tau_{\geq
    2\star}(\bZ^{\h S^1})\rightarrow\tau_{\geq
    2\star}(\bZ^{\t S^1})\rightarrow\tau_{\geq 2\star}(\bZ_{\h S^1}[2]).$$
    See~\cite[Sec.~6.3]{raksit}.
\end{example}

Taking $\bT_\ev$-fixed points defines a functor $\SynSp_{\bT_\ev}\rightarrow\SynSp$ which is
naturally lax symmetric monoidal as it is the right adjoint of the symmetric monoidal functor
$\SynSp\rightarrow\SynSp_{\bT_\ev}$ obtained by restriction of scalars along
$\bT_\ev\rightarrow\bS_\ev$.

\begin{lemma}
    The functor $(-)^{\t\bT_\ev}\colon\SynSp_{\bT_\ev}\rightarrow\SynSp$ admits a natural lax symmetric monoidal structure
    and there is a natural lax symmetric monoidal structure on the natural transformation
    $(-)^{\bT_\ev}\rightarrow(-)^{\t\bT_\ev}$.
\end{lemma}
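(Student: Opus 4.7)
My plan is to follow the strategy of Nikolaus--Scholze~\cite{nikolaus-scholze} for finite groups and its adaptation by Raksit~\cite[Sec.~2.4]{raksit} to the filtered circle, realizing $(-)^{\t\bT_\ev}$ as a symmetric monoidal localization composed with a lax symmetric monoidal functor.

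First, the lax symmetric monoidal structure on $(-)^{\bT_\ev}$ is automatic. The trivial-action functor $\SynSp \to \SynSp_{\bT_\ev}$ is symmetric monoidal, being restriction of scalars along the $\bE_\infty$-algebra map $\bT_\ev \to \bS_\ev$ (the counit of the bialgebra structure from Corollary~\ref{cor:bialgebra}), and $(-)^{\bT_\ev}$ is its right adjoint.

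For the Tate construction, I would let $\Iscr \subset \SynSp_{\bT_\ev}$ denote the localizing subcategory generated by the induced objects $\bT_\ev \otimes_{\bS_\ev} N$ for $N \in \SynSp$. Since $\bT_\ev$ is a bicommutative bialgebra by Corollary~\ref{cor:bialgebra}, one verifies that $\Iscr$ is a $\otimes$-ideal, so the Verdier quotient $\SynSp_{\bT_\ev}/\Iscr$ inherits a symmetric monoidal structure for which the localization $q$ is symmetric monoidal. Using the dualizability of $\bT_\ev$ (Corollary~\ref{cor:dualizable}) together with the specific equivalence $\bT_\ev^\vee \we \bT_\ev(-1)[-1]$ from Lemma~\ref{lem:dualizing}, one shows that the norm $\mathrm{Nm}_{\bT_\ev}$ of Construction~\ref{const:syn_tate} is an equivalence on each induced object, so that $(-)^{\t\bT_\ev}$ vanishes on $\Iscr$ and factors uniquely as $\bar{T}\circ q$ for some exact functor $\bar{T}\colon \SynSp_{\bT_\ev}/\Iscr \to \SynSp$.

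Finally, to equip $\bar{T}$ with a lax symmetric monoidal structure, I would identify $\bar{T}$ with the restriction of $(-)^{\bT_\ev}$ along the fully faithful right adjoint to $q$: on a $q$-local object (one lying in the essential image of this right adjoint), the norm is already zero in $\SynSp_{\bT_\ev}/\Iscr$, so fixed points and Tate coincide there. Since $(-)^{\bT_\ev}$ is lax symmetric monoidal and $q$ is symmetric monoidal, this transports a lax symmetric monoidal structure to $(-)^{\t\bT_\ev} = \bar{T}\circ q$, and the natural transformation $(-)^{\bT_\ev} \to (-)^{\t\bT_\ev}$ inherits a lax symmetric monoidal structure by construction. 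The main obstacle is verifying that $\mathrm{Nm}_{\bT_\ev}$ is an equivalence on induced objects; by eff-local descent and Lemma~\ref{lem:no_nontrivial}, together with the CW filtration of Construction~\ref{const:cw}, this reduces to an explicit classical calculation with $\bT$-equivariant spectra where the norm on a free $\bT$-module is readily seen to be an equivalence after the appropriate shift.
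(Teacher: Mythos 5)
The paper's proof of this lemma is a one-line citation to~\cite[Prop.~2.4.10]{raksit}, which establishes lax symmetric monoidality of the Tate construction for any dualizable bicommutative bialgebra $H$ satisfying $H^\vee\simeq H\otimes L$ with $L$ invertible. Your strategy --- realize $(-)^{\t\bT_\ev}$ as the universal lax symmetric monoidal approximation of $(-)^{\bT_\ev}$ vanishing on induced objects, via the machinery of~\cite[Thm.~I.3.6]{nikolaus-scholze} --- is precisely what lies under the hood of Raksit's proposition and what the paper itself does in the $C_{n,\ev}$ case (Proposition~\ref{prop:monoidal}). So the plan is sound, but two steps in your execution are genuinely wrong.

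First, you take $\Iscr$ to be the \emph{localizing} subcategory generated by induced objects $\bT_\ev\otimes N$. This is all of $\SynSp_{\bT_\ev}$: the CW filtration of Construction~\ref{const:cw} exhibits $\bS_\ev$ (with trivial action) as an exhaustive filtered colimit of finite extensions of objects $\bT_\ev(s)[2s]$, so $\bS_\ev$ already lies in $\Iscr$, and the induced objects generate $\SynSp_{\bT_\ev}=\Mod_{\bT_\ev}(\SynSp)$ under colimits in any case. The Tate construction, being exact but not colimit-preserving, does not vanish on $\Iscr$, so it does not factor through the Verdier quotient you are describing (which is in fact trivial). The correct $\Iscr$ is the \emph{thick} subcategory of induced objects, exactly as in Remark~\ref{rem: tensor subcat}. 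Second, because the thick subcategory is not localizing, the quotient $q\colon\SynSp_{\bT_\ev}\to\SynSp_{\bT_\ev}/\Iscr$ does not admit a fully faithful right adjoint identifying the target with a subcategory of "$q$-local" objects, so your proposed formula $\bar{T}=(-)^{\bT_\ev}\circ q^R$ has no meaning. (Relatedly, your phrase "the norm is already zero in $\SynSp_{\bT_\ev}/\Iscr$" conflates categories: the norm is a map in $\SynSp$, not in $\SynSp_{\bT_\ev}$.) The lax symmetric monoidal structure on the descended functor instead comes from~\cite[Thm.~I.3.6]{nikolaus-scholze} applied to the \emph{left Kan extension} of $(-)^{\bT_\ev}$ along $q$, which one then identifies with $(-)^{\t\bT_\ev}$ via the explicit colimit formula of~\cite[Lem.~I.3.3(ii)]{nikolaus-scholze} and~\cite[Lem.~2.4.8]{raksit}; this is the step you need to replace your right-adjoint argument with.
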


\begin{proof}
    This is a special case of~\cite[Prop.~2.4.10]{raksit}.
\end{proof}

Now, we turn our attention to the synthetic analogue of $C_n$-orbits and fixed points.
Given a spectrum $M$ with $S^1$-action, we can compute its homotopy $C_n$-orbits
and fixed points as $$M_{\h C_n}\we(\rho(n)_*\bZ[S^1])\otimes_{\bZ[S^1]}M\quad\text{and}\quad M^{\h
C_n}\we\Map_{\bZ[C_n]}(\rho(n)_*\bZ[C_n],M).$$ These observations motivate the following approach to
$C_n$-orbits, fixed points, and Tate in the synthetic setting.

\begin{definition}[Synthetic $C_n$-orbits and fixed points]
    Given $M\in\SynSp_{\bT_\ev}$ we let
    $$M_{C_{n,\ev}}=\rho(n)_*\bT_\ev\otimes_{\bT_\ev}M\quad\text{and}\quad
    M^{C_{n,\ev}}=\bMap_{\bT_\ev}(\rho(n)_*\bT_\ev,M),$$ where the latter is
    the internal mapping spectrum and where $C_{n,\ev}$ is our notation for the
    synthetic analogue of $C_n$, manifest here only via its orbit and fixed
    point constructions on filtered $\bT_\ev$-modules.
    These functors are the left and right adjoints, respectively, of
    $\SynSp_{\bT_\ev}\leftarrow\SynSp_{\bT_\ev}{\colon\rho(n)_*}$.
    As $\bMap_{\bT_\ev}(\rho(n)_*\bT_\ev,-)$ is the right adjoint to a
    symmetric monoidal functor, it acquires a lax symmetric monoidal
    structure.
\end{definition}

\begin{remark}[A $p$-complete synthetic group algebra for $C_n$]
    Without a filtration the group ring $\bZ[C_n]$
    is a dualizable bicommutative bialgebra and $\bZ[C_n]^\vee\we\bZ[C_n]$ as $\bZ[C_n]$-modules. Thus,
    the formalism developed in~\cite[Sec.~2.4]{raksit} applies to produce a $C_n$-Tate construction. 
    In the filtered case, it is not immediately clear which filtration to put on $\bZ[C_n]$. The
    Postnikov filtration would be trivial. One could use filtrations by powers of the maximal ideal.
    But, what arises naturally is the following construction using Eilenberg--Moore in the $p$-complete
    case. Of course, as a
    bicommutative bialgebra, $\bZ_p[C_p]$ is dual to $\bZ_p[C_p]^\vee$. But, as $C_p$ fits into a pullback
    square
    $$\xymatrix{
        C_p\ar[r]\ar[d]&S^1\ar[d]^{\times p}\\
        \ast\ar[r]&S^1,
    }$$ we can also compute $\bZ_p[C_p]^\vee$ by Eilenberg--Moore as
    $$\rho_*\bZ_p[S^1]^\vee\otimes_{\bZ_p[S^1]^\vee}\bZ,$$ where we write $\rho_*\bZ_p[S^1]$ for the restriction of
    scalars of $\bZ_p[S^1]^\vee$ along the multiplication-by-$p$ map
    $\bZ_p[S^1]^\vee\rightarrow\bZ_p[S^1]^\vee$.
    We can similarly $\F^{\geq\star}\bS_p^{C_{p,+}}$ as the $p$-completion of
    $\rho_*\bT_\ev^\vee\otimes_{\bT_\ev^\vee}\bS_\ev$.
    However, this bicommutative bialgebra is not dualizable. Indeed, this is visible already at the filtered level, where we invite the reader to
    contemplate the $p$-completion of $\rho_*\bT_{\fil}\otimes_{\bT_{\fil}}\bZ_p$.
    In particular, while comodule spectra over
    this tensor product correspond to some kind of synthetic spectra with $C_p$-action, the
    approach above to the Tate construction does not apply.
\end{remark}

\begin{construction}[Synthetic $C_n$-Tate]~\label{const: Tate Construction finite groups.}
    As discussed in Construction~\ref{const:syn_tate}, we have an equivalence
    $\alpha\colon\bT_\ev^\vee\rightarrow\bT_\ev\otimes\bS_\ev(-1)[-1]$ of synthetic $\bT_\ev$-module
    spectra. As $\rho(n)_*$ is symmetric monoidal, we also obtain an equivalence of $\bT_\ev$-modules
    $\rho(n)_*(\alpha)\colon\rho(n)_*(\bT_\ev^\vee)\rightarrow\rho(n)_*(\bT_\ev\otimes\bS_\ev(-1)[-1])\we\rho(n)_*(\bT_\ev)\otimes\bS_\ev(-1)[-1]$
    and hence an equivalence of $\bT_\ev$-modules
    $\beta=\rho(n)_*(\alpha)^{-1}\otimes\bS_\ev(1)[1]\colon\rho(n)_*(\bT_\ev)\rightarrow\rho(n)_*(\bT_\ev^\vee)\otimes\bS_\ev(1)[1]$.
    Moreover, applying $\bS_\ev$-linear duals to the map
    $\bT_\ev\rightarrow\rho(n)_*\bT_\ev$, we obtain a map $\eta\colon\rho(n)_*\bT_\ev^\vee\rightarrow\bT_\ev^\vee$
    of synthetic $\bT_\ev$-module spectra. Now, consider the natural map
    \begin{align*}
        M_{C_{n,\ev}}&=M\otimes_{\bT_\ev}\rho(n)_*\bT_\ev\\
        &\we^\beta M\otimes_{\bT_\ev}\rho(n)_*\bT_\ev^\vee\otimes\bS_\ev(1)[1]\\
        &\xrightarrow{\eta} M\otimes_{\bT_\ev}\bT_\ev^\vee\otimes\bS_\ev(1)[1]\\
        &\we^{\alpha} M\otimes_{\bT_\ev}\bT_\ev\otimes\bS_\ev(-1)[-1]\otimes\bS_\ev(1)[1]\\
        &\we M
    \end{align*}
    of $\bT_\ev$-modules. As the left-hand side $M_{C_{n,\ev}}$ is naturally a
    $\rho(n)_*\bT_\ev$-module, this map canonically induces a map $$\Nm_p\colon M_{C_{n,\ev}}\rightarrow
    M^{C_{n,\ev}}$$ of synthetic $\rho(n)_*\bT_\ev$-module spectra,
    which we will call the synthetic $C_{n,\ev}$-norm. Its cofiber will be
    written as $$M^{\t C_{n,\ev}},$$ and referred to as the $C_{n,\ev}$-Tate construction of $M$.
    It is naturally a synthetic $\rho(n)_*\bT_\ev\we\bT_\ev$-module spectrum.
\end{construction}

We will now show the lax symmetric monoidality of the synthetic
$C_{n,\ev}$-Tate construction following a variant of the arguments
in~\cite{nikolaus-scholze} and~\cite{raksit}.

\begin{definition}
    Let $\SynSp_{\bT_{\ev}}^{ind}\subseteq \SynSp_{\bT_{\ev}}$ be the thick
    subcategory generated by objects of the form $X\otimes \bT_{\ev}$ where
    $X\in \SynSp$.
\end{definition}

\begin{remark}\label{rem: tensor subcat}
    Following~\cite[Remark 2.4.7]{raksit}, $\SynSp_{\bT_\ev}^{ind}$ is a tensor ideal.
\end{remark}

\begin{lemma}~\label{lem: Tate vanishes on induced things}
    The functor $(-)^{\t C_{n,\ev}}$ vanishes on $\SynSp_{\bT_{\ev}}^{ind}$.
\end{lemma}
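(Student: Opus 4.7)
The plan is a standard devissage to the single case $M = \bT_\ev$, in the spirit of the Nikolaus--Scholze argument that Tate constructions vanish on induced objects. Since $(-)^{\t C_{n,\ev}}$ is exact (it is the cofiber of $\Nm_p$ in the stable $\infty$-category $\SynSp$), the full subcategory of $\SynSp_{\bT_\ev}$ on which it vanishes is thick. It therefore suffices to show that $(X \otimes \bT_\ev)^{\t C_{n,\ev}}\simeq 0$ for every $X\in\SynSp$.

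Next, I would establish $\SynSp$-linearity of the three functors $(-)_{C_{n,\ev}}$, $(-)^{C_{n,\ev}}$, and the norm $\Nm_p$ between them. The key input is that $\rho(n)_*\bT_\ev$ is dualizable as a $\bT_\ev$-module: since $\rho(n)\colon \bT_\ev\to\bT_\ev$ is a map of bicommutative bialgebras (Corollary~\ref{cor:bialgebra}), the restriction of scalars $\rho(n)_*\colon \SynSp_{\bT_\ev}\to\SynSp_{\bT_\ev}$ is symmetric monoidal, and hence sends the dualizable unit $\bT_\ev$ to the dualizable object $\rho(n)_*\bT_\ev$. Using this dualizability, one obtains natural equivalences
\[
(X\otimes M)_{C_{n,\ev}}\simeq X\otimes M_{C_{n,\ev}}\quad\text{and}\quad (X\otimes M)^{C_{n,\ev}}\simeq X\otimes M^{C_{n,\ev}}
\]
for $X\in\SynSp$ and $M\in\SynSp_{\bT_\ev}$, compatibly with the norm. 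Consequently $(X\otimes\bT_\ev)^{\t C_{n,\ev}}\simeq X\otimes\bT_\ev^{\t C_{n,\ev}}$, so the problem reduces to showing $\bT_\ev^{\t C_{n,\ev}}\simeq 0$.

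For this last step, one unwinds Construction~\ref{const: Tate Construction finite groups.} for $M = \bT_\ev$: the norm $\Nm_p\colon\rho(n)_*\bT_\ev\to\bMap_{\bT_\ev}(\rho(n)_*\bT_\ev,\bT_\ev)$ is the adjoint of a composite of the equivalences $\alpha$ and $\beta$ with $\eta\colon\rho(n)_*\bT_\ev^\vee\to\bT_\ev^\vee$, tensored over $\bT_\ev$ with the unit $\bT_\ev$. Since tensoring with the unit is the identity, this reduces to the assembly of $\alpha$, $\beta$, and $\eta$. The key observation is that $\eta$ is the $\bS_\ev$-linear dual of the unit map $\bT_\ev\to \rho(n)_*\bT_\ev$; combined with the self-duality of $\rho(n)_*\bT_\ev$ up to twist from Lemma~\ref{lem:dualizing}, the composite recovers the canonical adjunction equivalence between $\rho(n)_*\bT_\ev$ and its $\bT_\ev$-linear dual, so $\Nm_p$ is an equivalence and $\bT_\ev^{\t C_{n,\ev}}\simeq 0$.

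The main obstacle is the bookkeeping in this last step: tracing the twists and shifts through Construction~\ref{const: Tate Construction finite groups.} to verify that the assembly of $\alpha$, $\beta$, $\eta$ really is the duality equivalence, rather than some other map differing by a nonzero unit or twist. This parallels the filtered argument of~\cite[\S 2.4]{raksit} and is essentially formal once the dualizability of $\rho(n)_*\bT_\ev$ is in hand.
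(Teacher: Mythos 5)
Your overall strategy --- reduce by thickness to induced objects and then show the $C_{n,\ev}$-norm is an equivalence on them --- is the same as the paper's, and your final paragraph identifies the right ingredients ($\alpha$, $\beta$, $\eta$ from Construction~\ref{const: Tate Construction finite groups.} and Lemma~\ref{lem:dualizing}). However, the middle step contains a real confusion that you should fix.

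You claim $\rho(n)_*\bT_\ev$ is ``dualizable as a $\bT_\ev$-module'' and deduce this from the fact that $\rho(n)_*$ is symmetric monoidal and preserves ``the dualizable unit $\bT_\ev$.'' This conflates two different monoidal structures on $\SynSp_{\bT_\ev}$. The structure for which $\rho(n)_*$ is symmetric monoidal (coming from the coalgebra structure on $\bT_\ev$) has $\bS_\ev^{\triv}$ as its unit, not $\bT_\ev$; and what is being transported under $\rho(n)_*$ is dualizability \emph{over $\bS_\ev$}, which is just Corollary~\ref{cor:dualizable} again and does not say anything about $\rho(n)_*\bT_\ev$ being a perfect $\bT_\ev$-module. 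That stronger perfectness is exactly what you would need for the asserted $\SynSp$-linearity of $(-)^{C_{n,\ev}}$ in full generality, and it does not hold: already $\bZ$-linearly, $\rho(n)_*\bT_{\fil}$ sits in an extension of $\bT_{\fil}$ by the trivial module $\bZ/n[1](1)$, which has unbounded $\Ext$ against $\bT_{\fil}$ (this is just the non-perfectness of the trivial module over chains on $S^1$). So the general claim $(X\otimes M)^{C_{n,\ev}}\simeq X\otimes M^{C_{n,\ev}}$ is not available, and the reduction to $M=\bT_\ev$ is not justified by this route.

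The paper avoids the issue by never leaving the induced case: it works directly with $X_0\otimes\bT_\ev$ and uses only $\bS_\ev$-linear dualizability (Corollary~\ref{cor:dualizable}) together with the fact that $X_0\otimes\bT_\ev\simeq X_0\otimes\bT_\ev^\vee[1](1)$ is a \emph{coinduced} $\bT_\ev$-module via $\alpha$, so that $\bMap_{\bT_\ev}(\rho(n)_*\bT_\ev,\,X_0\otimes\bT_\ev)\simeq\bMap_{\bS_\ev}(\rho(n)_*\bT_\ev,X_0)[1](1)\simeq X_0\otimes\rho(n)_*\bT_\ev^\vee[1](1)\simeq X_0\otimes\rho(n)_*\bT_\ev$ by $\beta$. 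You should either carry $X_0$ through in the same way, or, if you insist on the extra reduction to $M=\bT_\ev$, verify the $\SynSp$-linearity of $(-)^{C_{n,\ev}}$ specifically on free $\bT_\ev$-modules by the coinduction argument rather than by appealing to a nonexistent $\bT_\ev$-perfectness. Your final step --- that the assembly of $\alpha$, $\beta$, $\eta$ on the induced module is an equivalence --- is then correct, though ``its $\bT_\ev$-linear dual'' should be ``its $\bS_\ev$-linear dual $\rho(n)_*\bT_\ev^\vee$ up to the $[1](1)$ twist.''
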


\begin{proof}
    By exactness of the $C_{n,\ev}$-Tate construction,
    it is enough to show that $X^{\t C_{n,\ev}}=0$ for $X\simeq X_0\otimes
    \bT_{\ev}$ for some $X_0\in \SynSp$. To this end, note that the map in
    Construction~\ref{const: Tate Construction finite groups.} is given in this
    case by the map \[X_0\otimes \rho(n)_*\bT_{\ev}\to X_0\otimes \bT_{\ev}\] given
    by tensoring the canonical map $\rho(n)_*\bT_{\ev}\simeq^{\beta^{-1}}
    \rho(n)_*\bT_{\ev}^\vee [1](1)\to\bT_{\ev}^\vee[1](1) \simeq^{\alpha^{-1}}
    \bT_{\ev}$ with $X_0$. The norm map in this case is then given by
    \begin{align*}
        X_0\otimes \rho(n)_*\bT_{\ev} &\simeq
        \bMap_{\rho(n)_*\bT_{\ev}}(\rho(n)_*\bT_{\ev}, X_0\otimes
        \rho(n)_{*}\bT_{\ev})\\
        &\to \bMap_{\bT_{\ev}}(\rho(n)_*\bT_{\ev}, X_0\otimes \rho_*\bT_{\ev})\\
        &\to \bMap_{\bT_{\ev}}(\rho(n)_*\bT_{\ev}, X_0\otimes \bT_{\ev}).
    \end{align*}
    Note that the target of this map is $\rho(n)_*\bT_{\ev}$-equivariantly
    equivalent to $X_0\otimes \rho(n)_*\bT_{\ev}$. In fact, the map is given by
    \begin{align*}
        \bMap_{\bT_{\ev}}(\rho(n)_*\bT_{\ev}, X_0\otimes \bT_{\ev}) &\simeq^{\alpha}
        \bMap_{\bT_{\ev}}(\rho(n)_*\bT_{\ev}, X_0\otimes \bT_{\ev}^\vee [1](1))\\
        &\simeq \bMap_{\bT_{\ev}}(\rho(n)_*\bT_{\ev}\otimes \bT_{\ev}, X_0)[1](1)\\
        &\simeq \bMap(\rho(n)_*\bT_{\ev},X_0)[1](1)\\
        &\simeq X_0\otimes \rho(n)_{*}\bT_{\ev}^\vee [1](1)\\
        &\simeq^{\beta} X_0\otimes \rho(n)_*\bT_{\ev},
    \end{align*} which gives an inverse of the norm map.
\end{proof}

\begin{proposition}\label{prop:monoidal}
    There is a natural transformation
    $$(-)^{C_{n,\ev}}\rightarrow(-)^{\t C_{n,\ev}}\colon\SynSp_{\bT_\ev}\rightarrow\SynSp_{\bT_\ev}$$
    of functors which makes $(-)^{\t C_{n,\ev}}$ the universal approximation to $(-)^{C_{n,\ev}}$
    which vanishes on $\SynSp_{\bT_{\ev}}^{ind}$.
    Moreover, $(-)^{\t C_{n,\ev}}$ admits a natural symmetric monoidal structure and the natural transformations
    $(-)^{C_{n,\ev}}\rightarrow(-)^{\t C_{n,\ev}}$
    and $(-)^{\t\bT_\ev}\rightarrow (-)^{\t C_{n,\ev}}$ do as well.
\end{proposition}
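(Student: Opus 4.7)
The plan is to adapt the Nikolaus--Scholze/Raksit strategy: identify $(-)^{\t C_{n,\ev}}$ as a symmetric monoidal Bousfield (or Verdier) localization of $(-)^{C_{n,\ev}}$ at the class of maps whose fiber lies in $\SynSp_{\bT_\ev}^{ind}$. Because Remark~\ref{rem: tensor subcat} asserts that $\SynSp_{\bT_\ev}^{ind}$ is a tensor ideal, the resulting quotient acquires a unique symmetric monoidal structure for which the localization functor is symmetric monoidal, and this structure descends to all three functors and the two natural transformations.

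For the universal property, I first recall from Construction~\ref{const: Tate Construction finite groups.} the cofiber sequence $M_{C_{n,\ev}}\to M^{C_{n,\ev}}\to M^{\t C_{n,\ev}}$, which is natural in $M\in\SynSp_{\bT_\ev}$ and provides the required transformation. By Lemma~\ref{lem: Tate vanishes on induced things}, $(-)^{\t C_{n,\ev}}$ annihilates $\SynSp_{\bT_\ev}^{ind}$, and by the proof of that lemma the norm map $\Nm_p$ is already an equivalence on induced objects. To see that this is universal, observe that given any exact $F\colon\SynSp_{\bT_\ev}\to\Dscr$ equipped with a transformation $(-)^{C_{n,\ev}}\to F$ that vanishes on $\SynSp_{\bT_\ev}^{ind}$, naturality of the norm together with the fact that $M\otimes_{\bT_\ev}\rho(n)_*\bT_\ev$ receives a natural map from the induced object $M\otimes\bT_\ev$ (via the counit of restriction--extension) forces the composite $M_{C_{n,\ev}}\to F(M)$ to be null, producing the desired factorization through $M^{\t C_{n,\ev}}$.

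Next I would invoke the general theory of symmetric monoidal Verdier quotients (as packaged in~\cite[App.~A]{nikolaus-scholze} and~\cite[Prop.~2.4.10]{raksit}). Since $\SynSp_{\bT_\ev}^{ind}\subseteq \SynSp_{\bT_\ev}$ is a presentable tensor ideal, the Dwyer--Kan localization $L\colon\SynSp_{\bT_\ev}\to \SynSp_{\bT_\ev}/\SynSp_{\bT_\ev}^{ind}$ is canonically symmetric monoidal. The lax symmetric monoidal structure on $(-)^{C_{n,\ev}}$ (right adjoint to a symmetric monoidal functor) together with the universal property of the previous step identifies $(-)^{\t C_{n,\ev}}$ with the composite of $(-)^{C_{n,\ev}}$ and a factorization through $L$; this composite inherits a lax symmetric monoidal structure, and the natural transformation $(-)^{C_{n,\ev}}\to(-)^{\t C_{n,\ev}}$ is the unit of the localization and hence lax symmetric monoidal.

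For the comparison $(-)^{\t\bT_\ev}\to(-)^{\t C_{n,\ev}}$, I would observe that the same localization kills $\SynSp_{\bT_\ev}^{ind}$, which also contains the induced objects relevant for the $\bT_\ev$-Tate construction (via the factorization $\bT_\ev\to \rho(n)_*\bT_\ev$), so that $(-)^{\t\bT_\ev}$ likewise factors through $L$. The induced comparison is then realized inside the symmetric monoidal localized category, whence lax symmetric monoidal. The main obstacle I anticipate is the careful bookkeeping in the universal property step: one must verify the factorization lives canonically in a contractible space of choices, since the norm map involves the chain of equivalences $\beta,\eta,\alpha$ from Construction~\ref{const: Tate Construction finite groups.}, and one should check that the identification with the localization respects these. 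This is essentially the content of~\cite[Prop.~2.4.10]{raksit} translated from $\bT_\ev$ to $\rho(n)_*\bT_\ev$, and we expect the verbatim argument to go through because all of Raksit's hypotheses---dualizability of the bialgebra and the compatible choice of trivialization of the dualizing object---are available to us from Corollaries~\ref{cor:dualizable},~\ref{cor:bialgebra} and Lemma~\ref{lem:dualizing}.
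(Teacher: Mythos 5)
Your overall strategy is the same as the paper's: treat $(-)^{\t C_{n,\ev}}$ as the universal approximation to $(-)^{C_{n,\ev}}$ that vanishes on $\SynSp_{\bT_\ev}^{\mathrm{ind}}$, exploit the tensor-ideal property from Remark~\ref{rem: tensor subcat}, and then obtain lax symmetric monoidality from the abstract Verdier-quotient machinery. The paper, like you, reduces the whole thing to \cite[Thm.~I.3.6]{nikolaus-scholze} together with the colimit formula of \cite[Lem.~I.3.3]{nikolaus-scholze} and the identification of that colimit with $X^{\t C_{n,\ev}}$ via \cite[Lem.~2.4.8]{raksit}; the monoidality of $(-)^{\t\bT_\ev}\to(-)^{\t C_{n,\ev}}$ then comes for free from Thm.~I.3.6(i). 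So you correctly identified the right toolkit.

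The genuine problem is in your second paragraph, where you try to argue for the universal property directly. You assert that naturality of the norm together with the natural map $M\otimes\bT_\ev\to M_{C_{n,\ev}}$ from an induced object ``forces the composite $M_{C_{n,\ev}}\to F(M)$ to be null.'' This does not follow. What the naturality squares give you is that the \emph{precomposition} of $M_{C_{n,\ev}}\to F(M)$ with $(M\otimes\bT_\ev)_{C_{n,\ev}}\to M_{C_{n,\ev}}$ is null (because $F(M\otimes\bT_\ev)=0$), which is a much weaker statement: the map out of the induced object is not an epimorphism in any relevant sense, so you cannot conclude nullity of the composite itself. Moreover, $M_{C_{n,\ev}}=M\otimes_{\bT_\ev}\rho(n)_*\bT_\ev$ is \emph{not} itself in $\SynSp_{\bT_\ev}^{\mathrm{ind}}$ in general, so the cheap route is unavailable. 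And even if you managed to produce pointwise nullhomotopies, upgrading them to a natural factorization through the cofiber requires a coherence argument; the contractible space of such factorizations is precisely what \cite[Thm.~I.3.6]{nikolaus-scholze} establishes, by a colimit argument rather than a pointwise one. You acknowledge in your last paragraph that one should ``essentially translate Prop.~2.4.10 of Raksit,'' which is the correct resolution, but the intermediate hand-wave should be replaced by a direct appeal to Thm.~I.3.6(ii) (to produce the universal functor $H$ with its unique lax-monoidal structure) followed by the identification $H\we(-)^{\t C_{n,\ev}}$ via the colimit formula — which is exactly what the paper does.
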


\begin{proof}
    Recall that by Remark~\ref{rem: tensor subcat} the thick subcategory
    $\SynSp_{\bT_{\ev}}^{ind}\subseteq \SynSp_{\bT_{\ev}}$ is a tensor ideal.
    Take $\mathcal{C}:= \SynSp_{\bT_{\ev}}$, $\mathcal{D}:=
    \SynSp_{\bT_{\ev}}^{ind}$, and $\mathcal{E}:=
    \SynSp_{\rho_*\bT_{\ev}}\we\Cscr$. Then,
    by~\cite[Thm.~I.3.6(ii)]{nikolaus-scholze}, there is an initial functor
    $H:\mathcal{C}/\mathcal{D}\to \mathcal{E}$ such that the composition
    $H\circ p\colon \mathcal{C}\to \mathcal{E}$ receives a natural transformation
    from $(-)^{C_{n,\ev}}$, and both $H$ and the natural transformation admit
    unique lax-monoidal enhancements. By Lemma~\ref{lem: Tate vanishes on
    induced things} there is then an induced natural transformation $H\to
    (-)^{\t C_{n,\ev}}$. This natural transformation is then an equivalence by
    \cite[Lem.~I.3.3(ii)]{nikolaus-scholze} and the corresponding equivalences
    \[\colim_{Y\in \mathcal{D}_{/X}}\cofib(Y\to X)^{C_{n,\ev}}\simeq
    \colim_{Y\in \mathcal{D}_{/X}}\cofib(Y\to X)^{\t C_{n,\ev}}\simeq
    X^{\t C_{n,\ev}},\] which follows from the first part of \cite[Lemma
    2.4.8]{raksit} and the same argument as the second part of {\em ibid.}
    Symmetric monoidality of the induced functor
    $(-)^{\t\bT_\ev}\rightarrow(-)^{\t C_{n,\ev}}$ follows
    from~\cite[Thm.~I.3.6(i)]{nikolaus-scholze}.
\end{proof}

\subsection{The Postnikov $t$-structure}

We show that $\bT_\ev$ is connective in the Postnikov $t$-structure on $\SynSp$
and compute $\pi_0^\P\bT_\ev$.

\begin{lemma}
    The synthetic spectrum $\bT_\ev$ is connective in the Postnikov
    $t$-structure on $\SynSp$: $\F^{\geq i}(\bT_\ev)$ is $i$-connective for all
    $i\in\bZ$.
\end{lemma}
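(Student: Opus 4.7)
The plan is to leverage the splitting already established in Lemma~\ref{lem: fiber sequence for Tsyn}, which applies to any eff-locally even $\bE_\infty$-ring. Since $\bS\to\MU$ is eff and $\MU$ is even, $\bS$ is eff-locally even, so the lemma gives a split fiber sequence
\[
\bS_\ev(1)[1]\to\bT_\ev\to\bS_\ev
\]
of synthetic spectra. Thus $\bT_\ev\we\bS_\ev\oplus\bS_\ev(1)[1]$ as synthetic spectra, and it suffices to check Postnikov-connectivity of each summand separately.

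For the summand $\bS_\ev$, Remark~\ref{rem:bounds} applied to the connective $\bE_\infty$-ring $\bS$ tells us that $\gr^i_\ev\bS\in\D(\bS)_{[i,2i]}$ for all $i\geq 0$ and is zero for $i<0$, so in particular each graded piece is $i$-connective. To promote this to the filtration itself, I would argue by completeness: $\bS_\ev$ is a complete filtration, so $\F^{\geq i}\bS_\ev\we\lim_N\F^{\geq i}\bS_\ev/\F^{\geq i+N}\bS_\ev$. An induction using the fiber sequences $\F^{\geq j+1}/\F^{\geq i+N}\to\F^{\geq j}/\F^{\geq i+N}\to\gr^j_\ev\bS$ shows each finite quotient $\F^{\geq i}\bS_\ev/\F^{\geq i+N}\bS_\ev$ is $i$-connective, and the Milnor $\lim^1$ sequence combined with the observation that $\pi_i\gr^{i+N}_\ev\bS=0=\pi_{i-1}\gr^{i+N}_\ev\bS$ for $N\geq 1$ (by the upper connectivity bound) forces the transition maps on $\pi_i$ to be eventually isomorphisms, killing the Mittag-Leffler obstruction. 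Hence $\F^{\geq i}\bS_\ev\in\D(\bS)_{\geq i}$.

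For the summand $\bS_\ev(1)[1]$, note that $\F^{\geq i}(\bS_\ev(1)[1])\we(\F^{\geq i-1}\bS_\ev)[1]$, which is $i$-connective by the previous paragraph. Adding the two summands gives $\F^{\geq i}\bT_\ev\in\D(\bS)_{\geq i}$ for all $i\in\bZ$, as desired.

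The main conceptual point is that the whole argument reduces to the connectivity bounds on graded pieces of the even filtration (Remark~\ref{rem:bounds}, i.e., the Burklund--Krause bounds or the \Pstragowski{} bounds), together with completeness. The only mildly subtle step is the passage from connectivity of the graded pieces to connectivity of the filtration itself, where one must verify the Mittag-Leffler condition; but this is immediate from the range $[j,2j]$ since $\pi_i$ of a piece in degrees $[j,2j]$ with $j>i$ vanishes.
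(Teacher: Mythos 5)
Your proof is correct and essentially matches the paper's argument, which offers exactly these two routes: invoke the Burklund--Krause / Pstr\k{a}gowski bound of Remark~\ref{rem:bounds} (applied directly to the connective ring $\bS[S^1]$), or use the split fiber sequence $\bS_\ev(1)[1]\to\bT_\ev\to\bS_\ev$ of Lemma~\ref{lem: fiber sequence for Tsyn}. You combine the two: the splitting reduces everything to $\bS_\ev$, for which you then deduce Postnikov-connectivity from the graded-piece bound via completeness. The one place you add genuine content is the passage from $\gr^j_\ev\bS\in\D(\bS)_{\geq j}$ to $\F^{\geq i}_\ev\bS\in\D(\bS)_{\geq i}$, which the paper treats as implicit in Remark~\ref{rem:bounds} and Construction~\ref{const: Postnikov t-structure on synthetic spectra}; your Milnor $\lim^1$ argument is the right way to fill this in (a very minor quibble: for the Mittag--Leffler condition you only need eventual surjectivity of the transition maps on $\pi_i$, which already follows from $\pi_i\gr^{i+N}=0$ for $N\geq 1$; the vanishing you cite, of $\pi_{i-1}\gr^{i+N}$, handles $\lim$ rather than $\lim^1$, and eventual isomorphism is more than is needed).
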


\begin{proof}
    This is a feature the even filtration on any connective $\bE_\infty$-ring
    spectrum by the result of Burklund and Krause of Remark~\ref{rem:bounds} or
    by~\cite[Thm.~1.7]{pstragowski-even}. Alternatively, in this case, it
    follows from the fiber sequence
    $\bS_\ev(1)[1]\rightarrow\bT_\ev\rightarrow\bS_\ev$ of synthetic spectra
    established in Lemma~\ref{lem: fiber sequence for Tsyn}.
\end{proof}

\begin{lemma}\label{lem:tev_pi0}
    There is an isomorphism $\pi_0^\P\bT_\ev\iso\bZ[\eta,d]/(2\eta,d^2-\eta d)$
    of graded-commutative $\bZ[\eta]/(2\eta)$-algebras, where $|\eta|=|d|=1$.
\end{lemma}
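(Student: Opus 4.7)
The plan is to first compute $\pi_0^\P\bT_\ev$ additively as a graded $\bZ[\eta]/(2\eta)$-module via the splitting from Lemma~\ref{lem: fiber sequence for Tsyn}, and then to pin down the ring structure by comparison with the underlying spectrum $\bT=\bS[S^1]$, for which the relation $d^2=\eta d$ is classical.

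For the additive structure, applying Lemma~\ref{lem: fiber sequence for Tsyn} to $B=\bS$ (which is eff-locally even via $\bS\to\MU$) yields a split fiber sequence $\bS_\ev(1)[1]\to\bT_\ev\to\bS_\ev$, hence an equivalence $\bT_\ev\we\bS_\ev\oplus\bS_\ev(1)[1]$ of $\bS_\ev$-modules. Under the symmetric monoidal equivalence $\pi_*\F^*\colon\SynSp^{\P\heart}\we\Gr\Mod_{\bZ[\eta]/(2\eta)}^{\K\otimes}$ of Construction~\ref{const: Postnikov t-structure on synthetic spectra}, this splitting produces
$$\pi_0^\P\bT_\ev\iso\bZ[\eta]/(2\eta)\cdot 1\oplus\bZ[\eta]/(2\eta)\cdot d$$
as a graded $\bZ[\eta]/(2\eta)$-module, where $d$ in weight $1$ arises from the unit $1\in\pi_0^\P\bS_\ev$ after the Tate twist and suspension; in particular, the weight-$2$ summand is $\bZ/2\cdot\eta^2\oplus\bZ/2\cdot\eta d$.

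To identify $d^2$, I would use the symmetric monoidal underlying-spectrum functor $\F^{-\infty}\colon\SynSp\to\Sp$, which sends $\bT_\ev$ to $\bS[S^1]$ and thus induces a graded ring homomorphism $\pi_0^\P\bT_\ev\to\pi_*\bS[S^1]$. Applying $\F^{-\infty}$ to the split fiber sequence yields the standard splitting $\bS[1]\to\bS[S^1]\to\bS$ of the augmentation, so the underlying map of $\bS_\ev(1)[1]\to\bT_\ev$ is $\bS[1]\to\bS[S^1]$ classifying the fundamental class $d\in\pi_1\bS[S^1]$. Tracking this through the splitting on both sides, the induced map in weight $2$ sends
\begin{align*}
\eta^2\in\pi_2\F^2\bS_\ev&\mapsto\eta^2\in\pi_2\bS\subset\pi_2\bS[S^1],\\
\eta\in\pi_2\F^2(\bS_\ev(1)[1])=\pi_1\F^1\bS_\ev&\mapsto\eta\cdot d\in\pi_2\bS[S^1],
\end{align*}
exhibiting $\pi_2\F^2\bT_\ev\to\pi_2\bS[S^1]$ as an isomorphism $\bZ/2\oplus\bZ/2\we\bZ/2\oplus\bZ/2$. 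Since $d^2=\eta d$ in $\pi_*\bS[S^1]$ by Remark~\ref{rem:no_spherical_Tfil}, this injectivity forces the same relation $d^2=\eta d$ in $\pi_0^\P\bT_\ev$.

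Combining the above, the graded ring map $\bZ[\eta,d]/(2\eta,d^2-\eta d)\to\pi_0^\P\bT_\ev$ sending $\eta,d$ to their namesakes is well defined. It is surjective since the relation $d^2=\eta d$ reduces every monomial in $\eta,d$ to the form $\eta^i$ or $\eta^{i-1}d$, and injective by a weight-by-weight rank comparison ($\bZ$ in weight $0$; $\bZ/2\oplus\bZ$ in weight $1$; $\bZ/2\oplus\bZ/2$ in weights $\geq 2$). The main subtlety is tracing the split fiber sequence through $\F^{-\infty}$ and verifying that the induced map on weight $2$ is the claimed isomorphism; once this is done, the classical relation $d^2=\eta d$ in $\pi_*\bS[S^1]$ immediately lifts to give the desired relation in $\pi_0^\P\bT_\ev$.
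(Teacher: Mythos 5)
Your proof is correct and follows essentially the same route as the paper: extract the additive structure from the split fiber sequence $\bS_\ev(1)[1]\to\bT_\ev\to\bS_\ev$, transport the relation $d^2=\eta d$ from $\pi_*\bS[S^1]$, and finish by a weight-by-weight rank count. The only cosmetic difference is that the paper phrases the key step as ``no room for a differential in the spectral sequence,'' whereas you directly verify that $\pi_2\F^{\geq 2}\bT_\ev\to\pi_2\bS[S^1]$ is injective; these are equivalent, both resting on the fact that $\eta$ and $\eta^2$ are detected on the Adams--Novikov $E_2$-page in filtrations $1$ and $2$ respectively.
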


\begin{proof}
    The unit map $\bS_\ev\rightarrow\bT_\ev$ makes $\pi_0^\P\bT_\ev$ naturally
    into a graded-commutative $\pi_0^\P\bS_\ev\iso\bZ[\eta]/(2\eta)$-algebra.
    We also have that $\bT_\ev$ fits into a split fiber sequence
    $$\bS_\ev(1)[1]\rightarrow\bT_\ev\rightarrow\bS_\ev$$ from which we obtain
    an exact sequence
    $$0\rightarrow\pi_0^\P(\bS_\ev(1)[1])\rightarrow\pi_0^\P(\bT_\ev)\rightarrow\pi_0^\P(\bS_\ev)\rightarrow
    0$$ in $\SynSp^{\P\heart}$. This is equivalently an exact sequence
    $$0\rightarrow\bZ[\eta]/(2\eta)(1)\rightarrow\pi_0^\P(\bT_\ev)\rightarrow\bZ[\eta]/(2\eta)\rightarrow
    0$$
    in graded $\bZ[\eta]/(2\eta)$-modules. Writing $d$ for the image of $1(1)$
    in $\pi_0^\P(\bT_\ev)$, we obtain a surjective map
    $\bZ[\eta,d]/(2\eta)\rightarrow\pi_0^\P(\bT_\ev)$. As $d^2=\eta d$ in
    $\bS[S^1]$ (see Remark~\ref{rem:no_spherical_Tfil}), we see that in fact we
    must already have $d^2=\eta d$ in
    $\pi_0^\P(\bT_\ev)$ as there is no room for a differential which could
    create this relation in the associated spectral sequence. Now, the induced
    map $\bZ[\eta,d]/(2\eta,d^2-\eta d)\rightarrow\pi_0^\P(\bT_\ev)$ is
    surjective and must then be injective by counting ranks.
\end{proof}

\begin{construction}
    The Postnikov $t$-structure on $\SynSp_{\bT_\ev}$ is the one obtained from
    $\SynSp$ using that $\bT_\ev$ is connective. In other words, an object of
    $\SynSp_{\bT_\ev}$ is
    (co)connective if and only if its underlying synthetic spectrum is
    (co)connective. This $t$-structure is accessible, compatible with filtered
    colimits, and compatible with the symmetric monoidal structure on
    $\SynSp_{\bT_\ev}$. By Lemma~\ref{lem:tev_pi0}, there is an equivalence
    \begin{equation}\label{eq:heart_equivalence}
        \pi_0^\P\colon\SynSp_{\bT_\ev}^{\P\heart}\we\GrMod_{\bZ[\eta,d]/(2\eta,d^2-\eta d)}
    \end{equation}
    of abelian categories.

    In general, if $\Cscr$ is a symmetric monoidal stable $\infty$-category whose tensor product is
    exact in each variable and if $\Cscr$ is equipped with a
    $t$-structure $(\Cscr_{\geq 0},\Cscr_{\leq 0})$ which is compatible with
    the symmetric monoidal structure, then $\pi_0\colon\Cscr_{\geq
    0}\rightarrow\Cscr^\heart$ is symmetric monoidal. In particular,
    $\pi_0^\P\bT_\ev$ is naturally a bicommutative bialgebra in
    $\bZ[\eta]/(2\eta)$-modules. We view
    $\GrMod_{\bZ[\eta,d]/(2\eta,d^2-\eta d)}$ as being symmetric monoidal via
    the coalgebra structure on $\pi_0^\P\bT_\ev$.
    Then, the equivalence~\eqref{eq:heart_equivalence} is symmetric monoidal.
\end{construction}

\begin{lemma}
    \begin{enumerate}\label{lem:t_exact}
        \item[{\em (a)}] Restriction of scalars along $\bT_\ev\rightarrow\bS_\ev$ is $t$-exact with
            respect to the Postnikov $t$-structures. As a consequence,
            $(-)_{\bT_\ev}\colon\SynSp_{\bT_\ev}\rightarrow\SynSp$ is right $t$-exact
            and $(-)^{\bT_\ev}\colon\SynSp_{\bT_\ev}\rightarrow\SynSp$ is left
            $t$-exact.
        \item[{\em (b)}] Restriction of scalars along
            $\rho(n)\colon\bT_\ev\rightarrow\bT_\ev$ is $t$-exact with respect
            to the Postnikov $t$-structures. Thus,
            $(-)_{C_{n,\ev}}\colon\SynSp_{\bT_\ev}\rightarrow\SynSp_{\rho(n)_*\bT_\ev}\we\SynSp_{\bT_\ev}$
            is right $t$-exact and
            $(-)^{C_{n,\ev}}\colon\SynSp_{\bT_\ev}\rightarrow\SynSp_{\rho(n)_*\bT_\ev}\we\SynSp_{\bT_\ev}$
            is left $t$-exact.
    \end{enumerate}
\end{lemma}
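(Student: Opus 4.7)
The plan is to observe that both restriction-of-scalars functors in question preserve the underlying synthetic spectrum, and are therefore $t$-exact by the very definition of the Postnikov $t$-structure on $\SynSp_{\bT_\ev}$; the statements about orbits and fixed points then follow formally from the standard principle that if $F\dashv G\dashv H$ and $G$ is $t$-exact, then $F$ is right $t$-exact and $H$ is left $t$-exact.

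For part (a), I would first unpack the definition of the Postnikov $t$-structure on $\SynSp_{\bT_\ev}$: it was declared so that an object is (co)connective if and only if its image under the forgetful functor $U\colon\SynSp_{\bT_\ev}\rightarrow\SynSp$ is (co)connective. In other words, $U$ is $t$-exact and jointly reflects connectivity and coconnectivity. Now the trivial-action functor $\SynSp\rightarrow\SynSp_{\bT_\ev}$, obtained as restriction of scalars along $\bT_\ev\rightarrow\bS_\ev$, does not alter the underlying $\bS_\ev$-module: the composite $U\circ(\text{trivial action})$ is the identity on $\SynSp$. Since $U$ reflects (co)connectivity, this identifies the trivial-action functor as $t$-exact. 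The functors $(-)_{\bT_\ev}$ and $(-)^{\bT_\ev}$ are by construction its left and right adjoints, and the adjunction principle above immediately gives that $(-)_{\bT_\ev}$ is right $t$-exact and $(-)^{\bT_\ev}$ is left $t$-exact.

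For part (b), the identical argument applies to $\rho(n)_*\colon\SynSp_{\bT_\ev}\rightarrow\SynSp_{\bT_\ev}$. Restriction of scalars along a ring map changes only the action and not the underlying module; in particular $U\circ\rho(n)_*=U$, so $\rho(n)_*$ is $t$-exact. The adjunctions $(-)_{C_{n,\ev}}\dashv\rho(n)_*\dashv(-)^{C_{n,\ev}}$ recorded in the construction of the synthetic $C_n$-orbits and fixed points then yield the claimed right and left $t$-exactness by the same abstract nonsense.

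There is no substantive obstacle here: the lemma is a direct consequence of how the Postnikov $t$-structure on $\SynSp_{\bT_\ev}$ was set up, together with one standard adjunction fact. The only point to keep straight is which of $(-)_{C_{n,\ev}}$ and $(-)^{C_{n,\ev}}$ is the left (respectively right) adjoint of $\rho(n)_*$, but this has already been recorded in the paper when these functors were defined.
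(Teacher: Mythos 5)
Your proof is correct and takes essentially the same approach as the paper: identify the restriction-of-scalars functors as $t$-exact and then invoke the standard adjunction principle. The paper's own proof only records the adjunction step, leaving the $t$-exactness of restriction implicit (it follows immediately from the Construction defining the Postnikov $t$-structure on $\SynSp_{\bT_\ev}$ via the forgetful functor); you make that step explicit, which is a modest improvement in completeness.
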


\begin{proof}
    Both claims follow from the fact that left adjoints to $t$-exact functors
    are right $t$-exact and right adjoints to $t$-exact functors are left
    $t$-exact.
\end{proof}

\begin{example}
    Suppose that $M_*\in\GrMod_{\bZ[\eta,d]/(2\eta,d^2-\eta d)}$ and that, for
    simplicity, $2$ acts invertibly on $M$. In this case, $M_*$ is
    a cochain complex: $d\colon M_i\rightarrow M_{i+1}$ and $d^2=0$. Let $\F^{\geq\star}M$ denote the
    synthetic spectrum associated to $M_*$ via the inclusion
    $\SynSp_{\bT_\ev}^{\P\heart}\hookrightarrow(\SynSp_{\bT_\ev})^\P_{\geq 0}$.
    Then, $\pi_0^\P((\F^{\geq\star} M)_{\bT_\ev})\iso
    M_*\otimes_{\bZ[1/2,d]/(d^2)}\bZ[1/2]$ is the quotient of $M_*$ by the
    boundaries. On the other hand,
    $\pi_0^\P((\F^{\geq\star}M)^{\bT_\ev})\iso\bMap_{\bZ[1/2,d]/(2d)}(\bZ,M_*)$
    is the sub-object of cycles. The norm map
    $(\F^{\geq\star}M)(1)[1]_{\bT_\ev}\rightarrow(\F^{\geq\star}M)^{\bT_\ev}$
    induces the differential on $\pi_0^{\P}$, which in weight $i$ takes the
    form $d\colon\tfrac{M_{i-1}}{d(M_{i-2})}\rightarrow Z_i(M_*)$, from which we find that
    $\pi_0^\P((\F^{\geq\star}M)^{\t\T_\ev})$ is the graded abelian cohomology of
    $M_*$, while $\pi_1^\P((\F^{\geq\star}M)^{\t T_\ev})$ is also the cohomology,
    up to a shift.
\end{example}

\begin{example}~\label{example: norm on pi_0 postnikov}
    Let $A_*:= \bZ[\eta]/2\eta$
    be the graded ring with $|\eta|=1$,
    and let $M_*$ be a graded $A_*$-module.
    Suppose $d\colon M_*\to M_{*+1}$ is a
    graded $A_*$-module map with $d^2=\eta d$.
    This data gives rise to a $\pi_0^\P\bT_\ev$-module structure, and therefore
    a $\bT_{\ev}$-module structure, on the filtered spectrum $\F^{\geq
    \star}M:=M_\star[\star]$ (with all the maps $\F^{\geq i+1}M\to\F^{\geq
    i}M$ being zero). To describe what the filtered norm map is in this
    case, note that $\F^{\geq \star}M$  is connective in the Postnikov
    t-structre and therefore $(\F^{\geq\star}M)_{C_{p,\ev}}$ is connective
    and $(\F^{\geq\star}M_\bullet)^{C_{p,\ev}}$ is coconnective. Thus the map
    $(\F^{\geq\star}M)_{C_{p,\ev}}\to (\F^{\geq\star}M)^{C^{p,\ev}}$ is
    completely determined by what it does on $\pi_0^\P(-)$.

    We have that 
    \begin{align*}
        \pi_0^\P((\F^{\geq\star}M)_{C_{p,\ev}}) &\simeq M_* \otimes_{\bZ[\eta, d]/(2\eta, d^2-\eta d)}\bZ\left[\eta,\frac{d}{p}\right]/(2\eta, (d/p)^2-\eta(d/p))\\
    &\simeq M_*[d/p]
    \end{align*}
    as a graded abelian group. Similarly one finds that \[\pi_0^\P\left(\F^{\geq
    \star}M^{C_{p,\ev}}\right)\simeq \F^{\geq\star}N=N_\star[\star],\] where
    $N_*$ is the graded object with $N_i=\{(x,y)\in
    M_i\times M_{i+1}:d(x)=py\}$. 
    %\BA{Is this true? I get that in weight $i$ it consists of elements $x\in M_i$ and $y\in M_{i+1}$ where $d(x)=py$. These are equivalent if $M_*$ is $p$-torsion free.} 
    The map $\pi_0^\P(\F^{\geq
    \star}M)_{C_{p,\ev}}\to \pi_0^\P(\F^{\geq\star}M)^{C_{p,\ev}}$ is
    given by sending $m\in M_i$ to $(pm,d(m))\in N_i\subseteq M_i\times M_{i+1}$, which can be
    checked by reducing to the case $M_* =
    \pi_0^\P(\bT_{\ev})$ and noting that the norm map is induced by the map
    $\rho(n)_*\bT_{\ev}\to \bT_{\ev}$ which as $\bS_{\ev}$-modules is the map
    $\bS_{\ev}\oplus \bS_{\ev}[1](1)\to \bS_{\ev}\oplus \bS_{\ev}[1](1)$ which
    is $\id\oplus p(1)[1]$.
\end{example}

\begin{remark}
    Suppose that $\F^{\geq\star}M$ is a rational $\bT_\ev$-module (or
    equivalently
    $\bT_\fil$-module). Then, the $C_{n,\ev}$-norm
    $(\F^{\geq\star}M)_{C_{n,\ev}}\rightarrow(\F^{\geq\star}M)^{C_{n,\ev}}$ is an
    equivalence, so $(\F^{\geq\star}M)^{\t C_{n,\ev}}$ is trivial as a synthetic
    spectrum. Indeed, it is a synthetic $\bQ^{\t C_{n,\ev}}$-module spectrum.
    But, $\bQ^{\t C_{n,\ev}}\we\tau_{\geq 2\star}(\bQ^{\t C_n})\we 0$ as in
    Example~\ref{ex:z_filtered_tate}.
\end{remark}

We conclude the section by analzying how our fixed points and Tate
constructions behave with respect to complete objects and underlying objects.

\begin{lemma}\label{lem:complete_underlying}
    Let $\F^{\geq\star}M$ be a $\bT_\ev$-module in $\SynSp$ with underlying
    object $M$. As the colimit functor is symmetric monoidal, $M$ is naturally
    equipped with a $\bT$-action in $\Sp$. Fix an integer $n\geq 1$.
    \begin{enumerate}
        \item[{\em (i)}] The natural maps
            $|(\F^{\geq\star}M)_{\bT_\ev}|\rightarrow M_{\h S^1}$
            and $|(\F^{\geq\star}M)_{C_{n,\ev}}|\rightarrow M_{\h C_n}$
            are equivalences.
        \item[{\em (ii)}] If $\F^{\geq\star}M$ is connective in the Postnikov
            $t$-structure, then so are $(\F^{\geq\star}M)_{\bT_\ev}$ and
            $(\F^{\geq\star}M)_{C_{n,\ev}}$. In particular, they are complete.
        \item[{\em (iii)}] If $\F^{\geq\star}M$ is complete, then so are
            $(\F^{\geq\star}M)^{\bT_\ev}$ and $(\F^{\geq\star}M)^{C_{n,\ev}}$.
        \item[{\em (iv)}] If $\F^{\geq i}M\rightarrow M$ is
            $i$-truncated for all $i\in\bZ$, then the natural maps $\F^{\geq
            i}((\F^{\geq\star}M)^{\bT_\ev})\rightarrow M^{\h S^1}$ 
            and $\F^{\geq i}((\F^{\geq\star}M)^{C_{n,\ev}})\rightarrow M^{\h C_n}$
            are $i$-truncated for all $i\in\bZ$. In particular, the underlying
            spectra of $(\F^{\geq\star}M)^{\bT_\ev}$ and
            $(\F^{\geq\star}M)^{C_{n,\ev}}$ are $M^{\h S^1}$ and $M^{\h C_n}$,
            respectively.
        \item[{\em (v)}] Suppose now that $\F^{\geq\star}M$ is a
            $B[S^1]_\ev$-module where $B$ is an even $\bE_\infty$-ring
            spectrum. Suppose that $\F^{\geq i}M \rightarrow M$ is
            $2i$-truncated for all $i\in\bZ$. Then, the natural maps $\F^{\geq
            i}((\F^{\geq\star}M)^{\bT_\ev})\rightarrow M^{\h S^1}$ are
            $2i$-truncated for all $i\in\bZ$ and the natural maps $\F^{\geq
            i}((\F^{\geq\star}M)^{C_{n,\ev}})\rightarrow M^{\h C_n}$ are
            $(2i+1)$-truncated for all $i\in\bZ$. In particular, the underlying
            spectra of
            $(\F^{\geq\star}M)^{\bT_\ev}$ and $(\F^{\geq\star}M)^{C_{n,\ev}}$
            are naturally equivalent to $M^{\h S^1}$ and $M^{\h C_n}$, respectively.
        \item[{\em (vi)}] In the situation of (v), if $\F^{\geq
            i}M\we\tau_{\geq 2i}M$ for all $i\in\bZ$, then
            $(\F^{\geq\star}M)^{\bT_\ev}$
            is the double-speed Whitehead tower for $M^{\h S^1}$. If additionally
            $M$ is even, then $(\F^{\geq\star}M)^{C_{n,\ev}}$ is the double-speed
            Whitehead tower of $M^{\h C_n}$.
    \end{enumerate}
\end{lemma}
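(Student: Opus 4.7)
Parts (i) and (ii) are formal. For (i), both $(-)_{\bT_\ev}$ and $(-)_{C_{n,\ev}}$ are relative tensor products, and the colimit functor $|\cdot|=(-)\otimes_{\bS_\ev}\bS\colon\SynSp\to\Sp$ is symmetric monoidal, so $|\bS_\ev\otimes_{\bT_\ev}\F^{\geq\star}M|\simeq\bS\otimes_\bT M\simeq M_{\h S^1}$, and analogously for $\rho(n)_*\bT_\ev$. For (ii), Lemma~\ref{lem:t_exact} gives right $t$-exactness of the orbit functors, and any Postnikov-connective filtered spectrum is automatically complete because $\F^{\geq i}X\in\D(\bS)_{\geq i}$ forces $\pi_n(\lim_i\F^{\geq i}X)=0$ for every $n$ by a Milnor-sequence argument.

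For (iii), the main tool is a cellular resolution of $\bS_\ev\in\SynSp_{\bT_\ev}$, obtained by iterating the fiber sequence $\bS_\ev(1)[1]\to\bT_\ev\to\bS_\ev$ from Lemma~\ref{lem: fiber sequence for Tsyn} (and, in the $B[S^1]_\ev$-module setting required for (v) and (vi), via Construction~\ref{const:cw}). Fixed points are thereby computed as an inverse limit of mapping spectra from the stages of this resolution into $\F^{\geq\star}M$, whose stagewise fibers are shifts of $\F^{\geq\star}M$. Since shifts and small limits preserve completeness of filtered spectra, $(\F^{\geq\star}M)^{\bT_\ev}$ is complete whenever $\F^{\geq\star}M$ is, and the analogous argument with $\rho(n)_*\bS_\ev$ handles the $C_{n,\ev}$-case.

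For (iv) and (v), the same resolutions power connectivity estimates. In (v), Construction~\ref{const:cw} supplies graded pieces $B[S^1]_\ev(s)[2s]$; the contribution at stage $-k$ to $\F^{\geq i}$ of fixed points reads $\F^{\geq i-k}M[2k]\to M[2k]$, and the $2(i-k)$-truncated hypothesis combines with the $[2k]$ shift to yield uniform $2i$-truncatedness, which passes to the inverse limit. In (iv), one carries out the analogous analysis with the iterated fiber sequence directly: the single $[1]$ shift at each step balances the $(i-k)$-truncated hypothesis to yield uniform $i$-truncatedness. For $C_{n,\ev}$, the dualizing equivalence of Lemma~\ref{lem:dualizing} introduces one additional $[1]$ shift and produces $(2i+1)$-truncatedness in the $C_n$-version of (v).

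For (vi), part (v) gives $2i$-truncatedness of $\F^{\geq i}((\F^{\geq\star}M)^{\bT_\ev})\to M^{\h S^1}$. A complementary connectivity estimate using the same resolution and the hypothesis $\F^{\geq i-k}M\simeq\tau_{\geq 2(i-k)}M$ shows each contribution $\tau_{\geq 2(i-k)}M[2k]$ lies in $\D(\bS)_{\geq 2i}$, hence $\F^{\geq i}((\F^{\geq\star}M)^{\bT_\ev})\in\D(\bS)_{\geq 2i}$, which together with (v) identifies it with $\tau_{\geq 2i}(M^{\h S^1})$. The $C_{n,\ev}$-case for $M$ even follows analogously. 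The principal technical obstacle is in (iv): one must verify that the iterated fiber sequence from Lemma~\ref{lem: fiber sequence for Tsyn} produces graded pieces whose shift structure (the non-even analogue of Construction~\ref{const:cw}) is calibrated so that the connectivity bound remains uniform in the stage index $k$ when combined with the weaker $i$-truncated hypothesis, rather than accumulating with $k$ and degenerating upon passage to the inverse limit.
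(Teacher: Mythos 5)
The approach for parts (i), (ii), (v), and (vi) is essentially the paper's. But for parts (iii) and (iv) your proposal leans on an ``iterated fiber sequence'' resolution of $\bS_\ev$ as a $\bT_\ev$-module obtained from Lemma~\ref{lem: fiber sequence for Tsyn}, and this resolution does not exist. The remark immediately after Lemma~\ref{lem: fiber sequence for Tsyn} is explicit: the fiber of $\bT_\ev\to\bS_\ev$ is \emph{not} equivalent to $\bS_\ev(1)[1]$ as a $\bT_\ev$-module (it is only $\bS_\ev$-linearly). Construction~\ref{const:cw} works precisely because, for an \emph{even} $\bE_\infty$-ring $B$, Lemma~\ref{lem:no_nontrivial} identifies the fiber of $B[S^1]_\ev\to B_\ev$ with $B_\ev(1)[1]$ carrying the trivial $B[S^1]_\ev$-action; it is this triviality that allows the step to be spliced and iterated. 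Over $\bT_\ev$ itself one has no such control of the fiber's module structure, so there is no ``non-even analogue of Construction~\ref{const:cw}'' with shifted copies of $\bS_\ev$ as graded pieces. You flag this at the end as a ``principal technical obstacle,'' but it is not a detail to be checked --- the resolution you rely on simply isn't there, so the connectivity accounting for (iv) cannot even get started, and the limit argument for (iii) collapses.

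The paper avoids any resolution in these two parts. For (iii), it uses the purely formal fact that an internal mapping object $\bMap(\F^{\geq\star}M,\F^{\geq\star}N)$ in filtered spectra is complete whenever $\F^{\geq\star}N$ is, since it corepresents $\Map(-\widehat{\otimes}\F^{\geq\star}M,\F^{\geq\star}N)$ on the completion; this applies directly to $\bMap_{\bT_\ev}(\bS_\ev,-)$ and $\bMap_{\bT_\ev}(\rho(n)_*\bT_\ev,-)$. For (iv), it reformulates the hypothesis ``$\F^{\geq i}M\to M$ is $i$-truncated for all $i$'' as ``$\fib(\F^{\geq\star}M\to c(M))$ is Postnikov-coconnective,'' then applies left $t$-exactness of $(-)^{\bT_\ev}$ and $(-)^{C_{n,\ev}}$ (Lemma~\ref{lem:t_exact}) together with the observation that both functors preserve constant filtered objects. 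You should replace your cellular argument in (iii) and (iv) with these two formal observations. (Your treatment of (v) via Construction~\ref{const:cw} is fine, since there $B$ is even; for the $C_{n,\ev}$-case in (v) the paper instead uses the pullback square of Corollary~\ref{cor:nseries}, which is a cleaner route to the $(2i+1)$-truncation bound than tracking shifts through the dualizing equivalence, though your version can be made to work.)
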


\begin{proof}
    Part (i) follows because the colimit functor
    $\F^{\geq-\infty}\colon\SynSp\rightarrow\Sp$ is symmetric monoidal.
    Part (ii) follows because (relative) tensor products of
    Postnikov-connective objects are connective since the Postnikov
    $t$-structure is compatible with the symmetric monoidal structure on
    $\SynSp$; moreover, $\bT_\ev$ and $\bS_\ev$ are connective.
    Part (iii) follows from the following general fact: that if $\F^{\geq\star}M$ and $\F^{\geq\star}N$ are
    filtered spectra and if $\F^{\geq\star}N$ is complete, then the internal
    mapping object $\bMap(\F^{\geq\star}M,\F^{\geq\star}N)$ is complete. This
    follows because it corepresents
    $$\Map_{\FD(\bS)}(-\otimes\F^{\geq\star}M,\F^{\geq\star}N)\we\Map_{\widehat{\FD}(\bS)}(-\widehat{\otimes}\F^{\geq\star}M,\F^{\geq\star}N).$$
    
    By adjunction, there is a canonical map $\F^{\geq\star}M\rightarrow c(M)$
    of $\bT_\ev$-modules,
    where $c(M)$ denotes the constant filtration on $M$. The condition that
    $\F^{\geq i}M\rightarrow M$ is $i$-truncated for $i\in\bZ$ is equivalent to the condition
    that the fiber of $\F^{\geq i}M\rightarrow c(M)$ is coconnective in the
    Postnikov $t$-structure. As $(-)^{\bT_\ev}$ and $(-)^{C_{n,\ev}}$ are left
    $t$-exact in this $t$-structure by Lemma~\ref{lem:t_exact} and as these
    functors preserve constant objects, part (iv) holds.

    For part (v), we first consider the $\bT_\ev$-fixed points.
    We will use the CW filtration of Construction~\ref{const:cw}.
    Since taking $\F^{\geq i}\colon\SynSp\rightarrow\Sp$ commutes with all
    limits and colimits, we can compute $$\F^{\geq
    i}((\F^{\geq\star}M)^{\bT_\ev})\we\lim_{s\to-\infty}\F^{\geq
    i}(\bMap_{B[S^1]_\ev}(\F^{\geq s}_{\mathrm{CW}}B_\ev,\F^{\geq\star}M)).$$
    As $\F^{\geq s}_{\mathrm{CW}}B_\ev$ is made up of a cell $B[S^1]_\ev(t)[2t]$
    for each $0\leq t\leq s$, we see that for fixed $s$ we have that $$\F^{\geq
    i}\bMap_{B[S^1]_\ev}(\F^{\geq s}_{\mathrm{CW}}B_\ev,\F^{\geq\star}M)$$ is
    made an iterated extension of $\F^{\geq
    i}(\F^{\geq\star}M(-t)[-2t])\we\F^{\geq i+t}M[-2t]$ for
    $0\leq t\leq s$. This maps to the corresponding cellular filtration
    $\bMap_{B[S^1]}(\F^{\geq s}_{\mathrm{CW}}B,M)$ which has associated graded
    pieces $M[-2t]$ for $0\leq t\leq s$. As $\F^{\geq i+t}M[-2t]\rightarrow
    M[-2t]$ is $2i$-truncated and as the $2i$-equivalences are closed under
    limits and filtered colimits, part (v) follows for $\bT_\ev$-fixed points.
    Now, we use the commutative square
    $$\xymatrix{
        B_\ev(1)[1]\ar[r]\ar[d]&B_\ev(1)[1]\ar[d]\\
        B[S^1]_\ev\ar[r]&\rho(n)_*B[S^1]_\ev
    }$$ in $\SynSp_{B[S^1]_\ev}$ of Corollary~\ref{cor:nseries}. It follows by mapping to
    $\F^{\geq\star}M$ that there is a
    pullback square
    $$\xymatrix{(\F^{\geq\star}M)^{C_{n,\ev}}\ar[r]\ar[d]&\F^{\geq\star}M\ar[d]\\
    (\F^{\geq\star}M)^{\bT_\ev}(-1)[-1]\ar[r]&(\F^{\geq\star}M)^{\bT_\ev}(-1)[-1]
    }$$
    of synthetic spectra. As taking underlying objects preserves finite limits
    and as $(2i+1)$-truncated maps are closed under limits,
    we see that part (v) for $\bT_\ev$-fixed points implies it for $C_{n,\ev}$-fixed points.

    Under the hypothesis of (vi), we have that $\F^{\geq i}M\rightarrow M$ is
    $(2i-2)$-truncated for every $i\in\bZ$. Thus, $\F^{\geq
    i}((\F^{\geq\star}M)^{\bT_\ev})\rightarrow M^{\h S^1}$ is $(2i-2)$-truncated
    for every $i\in\bZ$. However, the argument of the previous paragraph also
    presents $\F^{\geq
    i}((\F^{\geq\star}M)^{\bT_\ev})\rightarrow M^{\h S^1}$ as a filtered colimit
    of terms which are built out of finitely many extensions of the form
    $\F^{i+t}M[-2t]$ for $t\geq 0$. But, these terms are $2i$-connective, so
    the filtered colimit is too. This proves part (vi) in the case of
    $\bT_\ev$-fixed points. For $C_{n,\ev}$-fixed points, we use the pullback
    square and the result for the $\bT_\ev$-fixed points to conclude that
    $\F^{\geq i}((\F^{\geq\star}M)^{C_{n,\ev}})$ is again $2i$-connective,
    which is enough in light of the fact that the map from this spectrum to
    $M^{\h C_n}$ is $(2i-2)$-truncated (since $\F^{\geq i}M\rightarrow M$ is now
    $(2i-3)$-truncated for each $i\in\bZ$).
\end{proof}

\subsection{A synthetic Tate orbit lemma}

One extremely helpful lemma in the theory of cyclotomic spectra is the Tate
orbit lemma of~\cite{nikolaus-scholze}. We record here a version of this lemma in our setting.

\begin{definition}~\label{def:t_star}
    Fix a $t$-structure $\C=(\SynSp_{\geq 0}^\C,\SynSp_{\leq 0}^\C)$ on $\SynSp$. Consider the following
    conditions:
    \begin{enumerate}
        \item[(a)] $\C$ is compatible with the symmetric monoidal structure;
        \item[(b)] $\C$ is left complete
        \item[(c)] $\C$ is accessible;
        \item[(d)] $\C$ is compatible with filtered colimits;
        \item[(e)] $\C$ is compatible with countable products;
        \item[(f)] $\bT_\ev\in\SynSp_{\geq 0}^\C$.
    \end{enumerate}
    If $\C$ satisfies conditions (a)-(f), we say that $\C$ satisfies condition $\mathbf{(\star)}$.
\end{definition}

\begin{example}
    Both the Postnikov $t$-structure and the neutral $t$-structure satisfy condition $\mathbf{(\star)}$.
\end{example}

\begin{remark}
    If $\C$ is a $t$-structure on $\SynSp$ satisfying $\mathbf{(\star)}$, then there is an induced
    $t$-structure on $\SynSp_{\bT_\ev}$, which we will also call $\C$.
\end{remark}

\begin{lemma}~\label{lem: Tate orbit lemma}
    Let $\F^{\geq\star}X\in \SynSp_{\bT_{\ev}}$ be such that the underlying
    synthetic spectrum is bounded below with respect to a $t$-structure $\C$ satisfiying
    $\mathbf{(\star)}$. Then \[(\F^{\geq\star}X_{C_{p,\ev}})^{\t C_{p,\ev}}\simeq 0.\]
\end{lemma}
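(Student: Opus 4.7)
The plan is to mirror the strategy of the classical Tate orbit lemma of Nikolaus--Scholze~\cite[Lem.~I.2.1]{nikolaus-scholze} and its filtered analogue due to Raksit, reducing to a computation in the heart of $\C$ via standard devissage. The six conditions in $\mathbf{(\star)}$ are precisely what is needed to make this devissage work uniformly.

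First I would establish the requisite $\C$-exactness properties for the functors involved. Condition (f) says that $\bT_\ev$ is $\C$-connective, so the argument of Lemma~\ref{lem:t_exact} adapted to $\C$ shows that $(-)_{C_{p,\ev}}$ is right $\C$-exact while $(-)^{C_{p,\ev}}$ is left $\C$-exact. Combining this with the norm cofiber sequence $(-)_{C_{p,\ev}}(1)[1]\to(-)^{C_{p,\ev}}\to(-)^{\t C_{p,\ev}}$ yields a linear bound on the $\C$-connectivity of $(X_{C_{p,\ev}})^{\t C_{p,\ev}}$ in terms of the lower $\C$-bound of $X$.

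Second, I would use left-completeness (b) together with compatibility with filtered colimits (d) and countable products (e) to reduce to the heart. Write $X$ as the limit of its $\C$-Postnikov tower; the successive fibers are shifts of objects in $\SynSp_{\bT_\ev}^{\C\heart}$. The connectivity bounds from step one, combined with (d) and (e), ensure that $(-)_{C_{p,\ev}}$ and then $(-)^{\t C_{p,\ev}}$ commute with this limit in the bounded-below range. Thus it suffices to show $(Y_{C_{p,\ev}})^{\t C_{p,\ev}}\simeq 0$ for every $Y\in\SynSp_{\bT_\ev}^{\C\heart}$.

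For the heart case, I would exhibit $Y_{C_{p,\ev}}$ as an iterated extension of objects in the $\bT_\ev$-induced subcategory $\SynSp_{\bT_\ev}^{\mathrm{ind}}$ on which Tate vanishes by Lemma~\ref{lem: Tate vanishes on induced things}. The key tool is a CW-type resolution of $\rho(p)_*\bT_\ev$ as a $\bT_\ev$-module, obtained by iterating the split fiber sequence of Lemma~\ref{lem: fiber sequence for Tsyn} together with the pullback square of Corollary~\ref{cor:nseries}, producing an exhaustive filtration whose associated graded pieces are shifts of $\bT_\ev$ itself. Tensoring with $Y$ over $\bT_\ev$ yields an exhaustive filtration of $Y_{C_{p,\ev}}$ by $\bT_\ev$-induced pieces; a convergence argument using the $\C$-connectivity bounds from step one then shows that $(-)^{\t C_{p,\ev}}$ commutes with the resulting filtered colimit, yielding the desired vanishing. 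The main obstacle will be step three: carefully constructing the $\bT_\ev$-induced resolution of $\rho(p)_*\bT_\ev$, and verifying that the $C_{p,\ev}$-Tate construction does indeed commute with the resulting limit in the bounded-below range supplied by step one.
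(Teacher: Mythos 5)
The paper's proof takes a different route from yours, and your key step (the heart case) has a genuine gap.

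Your steps one and two—establishing $\C$-exactness bounds and then reducing to the heart of $\C$ via left-completeness, filtered-colimit compatibility, countable-product compatibility, and Lemma~\ref{lem: orbit/fixedpoints/tate continuity}—are sound and match the implicit devissage in both the paper and the classical argument of Nikolaus--Scholze. The divergence is in how the heart case is handled. The paper's mechanism is Corollary~\ref{cor:ins}: every object of $\SynSp^{\heartsuit\C}$ acquires an $\ins^0\bZ$-module structure, so $(\F^{\geq\star}X_{C_{p,\ev}})^{\t C_{p,\ev}}$ becomes a module over a ring of the form $\bigl((\ins^0\bZ)^{C_{p,\ev}}\bigr)^{\t C_{p,\ev}}$, which is identified via Lemma~\ref{lem: even filtration comparison in the even case} with $\tau_{\geq 2\star}\bigl[(\bZ^{\h C_p})^{\t C_p}\bigr]$, and this vanishes by the classical result quoted from Nikolaus--Scholze. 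Your proposal never invokes Corollary~\ref{cor:ins} and never reduces to a $\bZ$-linear computation; instead it tries to exhibit $Y_{C_{p,\ev}}$ as built from $\bT_\ev$-induced objects using a CW-type resolution.

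That last step does not work as stated. The CW filtration of Construction~\ref{const:cw} filters $B_\ev$ (the trivial module, augmentation target), with graded pieces $B[S^1]_\ev(s)[2s]$; it is not a filtration of $\rho(p)_*\bT_\ev$. More importantly, even if one produces a filtration of $\rho(p)_*\bT_\ev$ by free left $\bT_\ev$-modules $\bT_\ev(s)[2s]$ using Lemma~\ref{lem: fiber sequence for Tsyn} and Corollary~\ref{cor:nseries}, tensoring it against $Y$ over $\bT_\ev$ produces graded pieces of the form $Y\otimes_{\bT_\ev}\bT_\ev(s)[2s]\simeq Y(s)[2s]$, which are shifts of $Y$, \emph{not} $\bT_\ev$-induced objects. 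Lemma~\ref{lem: Tate vanishes on induced things} therefore does not apply. What would be needed is a resolution of $\rho(p)_*\bT_\ev$ by $(\bT_\ev,\rho(p)_*\bT_\ev)$-bimodules that are free as \emph{right} $\rho(p)_*\bT_\ev$-modules, so that tensoring with $Y$ yields $\rho(p)_*\bT_\ev$-free (hence induced, with respect to the residual $\bT_\ev$-action used for the second Tate construction) graded pieces. The CW filtration does not have this bimodule structure, and the distinction between the $\bT_\ev$-action one is resolving against and the residual action one subsequently takes Tate of is exactly where your argument breaks.

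If you want a route to the heart case that avoids Corollary~\ref{cor:ins}, the object to resolve $\rho(p)_*\bT_\ev$ by is the two-sided bar construction $\bT_\ev^{\otimes\bullet+1}\otimes\rho(p)_*\bT_\ev$, whose terms do carry the bimodule structure you need and become $\rho(p)_*\bT_\ev$-free after tensoring with $Y$; one then has to separately justify that $(-)^{\t C_{p,\ev}}$ commutes with the resulting geometric realization in the $\C$-bounded-below range, which needs more than Lemma~\ref{lem: orbit/fixedpoints/tate continuity} as stated (its hypotheses bound the fibers uniformly above or below, whereas the skeletal filtration of a bar construction has fibers of increasing connectivity). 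The paper sidesteps all of this with the $\ins^0\bZ$-module structure, which is the cleaner argument.
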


As in the proof of the Tate orbit lemma for spectra in \cite[Lemma I.2.1]{nikolaus-scholze} we will
need to commute the Tate construction past certain limits. We record the necessary (co)continuity
results below.

\begin{lemma}~\label{lem: orbit/fixedpoints/tate continuity}
    Let $1\leq n\leq \infty$ and let $\C$ be a $t$-structure on $\SynSp$ satisfying condition $\mathbf{(\star)}$.
    \begin{enumerate}
        \item[{\em (1)}] Let
            $\{\F^{\geq\star}X_k\}_k\in \SynSp_{\bT_{\ev}}^{\bN^{op}}$ be a tower of synthetic spectra with
            $\bT_{\ev}$-action such that there is some $K\gg 0$ and some $N\in \bZ$ such that
            $\fib(\F^{\geq\star}X_{i+1}\to\F^{\geq\star}X_{i})\in \SynSp_{\geq N}^\C$ whenever $i\geq K$.
            The natural maps 
        \begin{enumerate}
            \item[{\em (a)}] $(\lim_k\F^{\geq\star}X_k)^{C_{n,\ev}}\xrightarrow{\simeq} \lim_k\F^{\geq\star}X_k^{C_{n,\ev}}$,
            \item[{\em (b)}] $(\lim_k
                \F^{\geq\star}X_k)_{C_{n,\ev}}\xrightarrow{\simeq}\lim_k
                (\F^{\geq\star}X_k)_{C_{n,\ev}}$, and
            \item[{\em (c)}] $(\lim_k\F^{\geq\star}X_k)^{\t C_{n,\ev}}\xrightarrow{\simeq} \lim_k(\F^{\geq\star}X_k)^{\t C_{n,\ev}}$
        \end{enumerate}
        are all equivalences.
    \item[{\em (2)}] Dually, let $\{\F^{\geq\star}X_k\}_k\in
        \SynSp_{\bT_{\ev}}^{\bN}$
        be a tower of synthetic spectra with $\bT_{\ev}$-action such that there
            is some $K\gg 0$ and some $N\in \bZ$
            such that $\fib(\F^{\geq\star}X_{i}\to
            \F^{\geq\star}X_{i+1})\in \SynSp_{\leq N}^\C$ whenever $i\geq K$. The natural maps 
        \begin{enumerate}
            \item[{\em (a)}] $\colim_k
                (\F^{\geq\star}X_k)^{C_{n,\ev}}\xrightarrow{\simeq} (\colim_k
                \F^{\geq\star}X_k)^{C_{n,\ev}}$,
            \item[{\em (b)}] $\colim_k
                (\F^{\geq\star}X_k)_{C_{n,\ev}}\xrightarrow{\simeq}(\colim_k
                \F^{\geq\star}X_k)_{C_{n,\ev}}$, and
            \item[{\em (c)}] $\colim_k (\F^{\geq\star}X_k)^{\t C_{n,\ev}}\xrightarrow{\simeq}
                (\colim_k\F^{\geq\star}X_k)^{\t C_{n,\ev}}$
        \end{enumerate}
        are equivalences.
    \end{enumerate}
\end{lemma}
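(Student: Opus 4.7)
The plan is to split the six statements into formal ones and technical ones. Parts (1)(a) and (2)(b) require no tower hypothesis: $(-)^{C_{n,\ev}}=\bMap_{\bT_\ev}(\rho(n)_*\bT_\ev,-)$ is a right adjoint and hence preserves all limits, while $(-)_{C_{n,\ev}}=\rho(n)_*\bT_\ev\otimes_{\bT_\ev}(-)$ is a left adjoint and hence preserves all colimits. The same reasoning handles the $n=\infty$ case for $\bT_\ev$-orbits and fixed points.

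For the remaining four statements I would follow the strategy from the proof of the Tate orbit lemma of Nikolaus--Scholze~\cite{nikolaus-scholze}. The key input is a cellular presentation $\rho(n)_*\bT_\ev\simeq\colim_s P_s$ in $\SynSp_{\bT_\ev}$ in which each $P_s$ is a finite iterated extension of twists $\bT_\ev(t)[2t]$ of $\bT_\ev$. When $n=\infty$ this is exactly Construction~\ref{const:cw} applied with $B=\bS$, which exhibits $\bS_\ev$ as such a colimit; for finite $n$ it follows from the pullback square of Corollary~\ref{cor:nseries} combined with the $n=\infty$ case. Since each $P_s$ is compact and a finite extension of twists of $\bT_\ev$, both $\bMap_{\bT_\ev}(P_s,-)$ and $P_s\otimes_{\bT_\ev}(-)$ commute with all limits and colimits in their argument, so
\[
    M^{C_{n,\ev}}\simeq\lim_s\bMap_{\bT_\ev}(P_s,M),\qquad M_{C_{n,\ev}}\simeq\colim_s P_s\otimes_{\bT_\ev}M,
\]
and the four remaining statements reduce to interchanges of $\colim_s$ past $\lim_k$ or $\colim_k$. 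Under the bounded-below (resp.\ bounded-above) hypothesis on the tower fibers, this interchange holds by combining left completeness of $\C$ (hypothesis (b)), compatibility with countable products (e), and compatibility with filtered colimits (d), together with the fact that applying $\bMap_{\bT_\ev}(P_s,-)$ or $P_s\otimes_{\bT_\ev}(-)$ to a $\C$-bounded object produces an object with controlled $\C$-bounds (using (a) and (f)). The Tate statements (1)(c) and (2)(c) then follow from the cofiber sequence $M_{C_{n,\ev}}\to M^{C_{n,\ev}}\to M^{\t C_{n,\ev}}$ since cofibers commute with limits and colimits.

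The main technical subtlety is that the individual CW cells $\bT_\ev(t)[2t]$ at negative $t$ appearing in Construction~\ref{const:cw} are not themselves $\C$-connective: they carry a mixed weight-and-degree shift, so naive cell-by-cell connectivity bounds are insufficient to control the orbit interchange of part (1)(b). The resolution is to argue via the total orbit $M_{C_{n,\ev}}$, which remains $\C$-bounded below when $M$ is (by symmetric monoidality and hypothesis (f) applied to $\rho(n)_*\bT_\ev$), and to use the left completeness of $\C$ to reduce the interchange to a finite computation inside any bounded $\C$-window, where it is automatic.
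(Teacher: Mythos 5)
Your observations that $(1)(a)$ and $(2)(b)$ follow formally (right adjoints preserve limits, left adjoints preserve colimits) are correct and match the paper's logic, even if the paper phrases the point for $(1)(a)$ a bit obliquely.

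The gap is in the cellular presentation you invoke for the remaining four statements. You claim that Construction~\ref{const:cw} with $B=\bS$ exhibits $\bS_\ev$ as a filtered colimit of finite $\bT_\ev$-modules built from cells $\bT_\ev(t)[2t]$, and that Corollary~\ref{cor:nseries} does the analogous job for finite $n$. But both of those results are stated only for $B$ an \emph{even} $\bE_\infty$-ring, and $\bS$ is not even. This restriction is not cosmetic: the CW filtration of Construction~\ref{const:cw} is obtained by iterating the identification of $\fib(B[S^1]_\ev\to B_\ev)$ with $B_\ev(1)[1]$ \emph{as a $B[S^1]_\ev$-module with trivial action} (Lemma~\ref{lem:no_nontrivial}), and the remark immediately after Lemma~\ref{lem: fiber sequence for Tsyn} says explicitly that when $B=\bS$ the fiber of $\bT_\ev\to\bS_\ev$ is \emph{not} equivalent to $\bS_\ev(1)[1]$ as a $\bT_\ev$-module. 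Without that identification the iteration does not produce cells of the form $\bT_\ev(t)[2t]$, so the presentation you need does not exist over $\bT_\ev$. (Even for even $B$, the CW filtration lives in $\SynSp_{B[S^1]_\ev}$, not $\SynSp_{\bT_\ev}$, whereas the lemma is about arbitrary $\bT_\ev$-modules; this is a second mismatch.) Your ``technical subtlety'' paragraph addresses the nonconnectivity of the putative cells, but not their nonexistence.

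The paper circumvents this by working with the bar resolution of the relative tensor product, $(\lim_k\F^{\geq\star}X_k)\otimes_{\bT_\ev}\rho(n)_*\bT_\ev\simeq|(\lim_k\F^{\geq\star}X_k)\otimes\bT_\ev^{\otimes\bullet}\otimes\rho(n)_*\bT_\ev|$, and the fact that $\bT_\ev$ is a \emph{compact (perfect, dualizable) $\bS_\ev$-module} (Corollary~\ref{cor:dualizable}), so that each bar stage commutes with the inverse limit over $k$. Compatibility of $\C$ with the monoidal structure and $\C$-connectivity of $\bT_\ev$ give the uniform connectivity bounds needed to commute the geometric realization past the limit, via left completeness and compatibility with countable products. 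Replacing the CW-cellular argument in your proposal by this bar-resolution-plus-compactness argument would repair the gap.
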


\begin{proof}
    We will only prove part (1) since the proof for (2) is dual. For (1), note that it is
    enough to prove that the map (b) is an equivalence since (a) follows from
    the fact that $(-)^{C_{n,\ev}}$ is left $t$-exact with respect to $\C$ since we assume that
    $\bT_\ev$ is $\C$-connective. By assumption, we may
    assume that the terms $\F^{\geq\star}X_k$ are all $(N+1)$-connective with
    respect to $\C$, since we have a cofiber sequence \[(\lim_k \tau_{\geq
    N+1}^C\F^{\geq\star}X_k)_{C_{n,\ev}}\to (\lim_k\F^{\geq\star}X_k)_{C_{n,\ev}}
    \to (\lim_k \tau_{\leq N}^C\F^{\geq\star}X_k)_{C_{n,\ev}}\] and the limit on the
    right is eventually constant by assumption and so will commute with
    $(-)_{C_{n,\ev}}$.

    By left-completness of $\C$,
    it is now enough to show that the fiber of the map
    $(\lim_k\F^{\geq\star}X_k)_{C_{n,\ev}}\to \lim_k(\F^{\geq\star}X_k)_{C_{n,\ev}}$ is $i$-connective for all $i$.
    This follows by using bar constructions to compute the tensor product: 
    \begin{align*}
        (\lim_k\F^{\geq\star}X_k)_{C_{n,\ev}} &:= (\lim_k\F^{\geq\star}X_k)\otimes_{\bT_{\ev}}\rho_* \bT_{\ev}\\
        &\simeq |(\lim_k\F^{\geq\star}X_k)\otimes \bT_{\ev}^{\otimes \bullet}\otimes \rho_*\bT_{\ev}|\\
        &\simeq |\lim_k(\F^{\geq\star}X_k\otimes \bT_{\ev}^{\otimes
        \bullet}\otimes \rho_*\bT_{\ev})|,
    \end{align*}
    where the second equivalence comes from the fact that $\bT_{\ev}$ is a
    compact $\bS_{\ev}$-module. In particular each term in the above geometric
    realization is $(N+1)$-connective with respect to $\C$ by
    compatibility of $C$ with the monoidal structure, and so $\tau_{\leq
    i}^\C((\lim_k\F^{\geq\star}X_k)_{C_{n,\ev}})$ depends only on the
    $(i-N-1)$-skeleton of the above geometric realization.
    More precisely, the maps $|\mathrm{sk}_i(\bullet \mapsto\F^{\geq\star}X_k\otimes
    \bT_\ev^{\otimes \bullet}\otimes \rho(n)_*\bT_\ev)|\to |\bullet \mapsto\F^{\geq
    \star}X_k\otimes \bT_\ev^{\otimes \bullet}\otimes \rho_*\bT_\ev|$ have cofibers, say $Z_{k,i}$,
    which are at least $(i+N)$-connective, and $|\mathrm{sk}_i(\bullet \mapsto\F^{\geq
    \star}X_k\otimes \bT_\ev^{\otimes \bullet}\otimes \rho_*\bT_\ev)|$ is a finite colimit in
    $\SynSp$. By compatiblity of $C$ with countable products we have that $\lim_k Z_{k,i}$ is at
    least $(i+N-1)$-connective.

    Now, $\lim_k\F^{\geq \star}X_k$ is at least $(N-1)$-connective with respect to $\C$, since by
    compatibility with countable products we have that a countable inverse limit can decrease
    connectivity by at most $1$. Consequently, by the same argument as above, the cofiber $Z_i$ of
    the map $|\mathrm{sk}_i(\bullet \mapsto (\lim_k\F^{\geq \star}X_k)\otimes \bT_\ev^{\otimes
    \bullet}\otimes \rho(n)_*\bT_\ev)|\to |\bullet \mapsto (\lim_k\F^{\geq \star}X_k)\otimes
    \bT_\ev^{\otimes \bullet}\otimes \rho(n)_*\bT_\ev|$ is at least $(i+N-1)$-connective. For each
    $i$, there is
    a map of cofiber sequences in synthetic spectra
    \[
    \begin{tikzcd}
     \mid \mathrm{sk}_i(\bullet \mapsto (\lim_k\F^{\geq \star}X_k)\otimes \bT_\ev^{\otimes \bullet}\otimes \rho(n)_*\bT_\ev)| \ar[d,"\simeq"] \ar[r] & (\lim_k\F^{\geq \star}X_k)_{C_{n,\ev}} \ar[d] \ar[r] & Z_i\ar[d]\\
    \lim_k|\mathrm{sk}_i(\bullet \mapsto\F^{\geq \star}X_k\otimes \bT_\ev^{\otimes \bullet}\otimes
        \rho(n)_*\bT_\ev)| \ar[r] & \lim_k(\F^{\geq \star}X_k)_{C_{n,\ev}} \ar[r] & \lim_k Z_{k,i},
    \end{tikzcd}
    \]
    and the left hand vertical map is an equivalence since the functors $(-)\otimes
    \bT_{\ev}^{\otimes \bullet}\otimes \rho_*\bT_\ev$ preserve limits as $\bT_{\ev}$ is a
    perfect $\bS_\ev$-module, and the geometric
    realization of a finite skeleton is a finite colimit which will commute with limits of
    synthetic spectra. Thus, the right hand square above is a pullback square and we have a cofiber
    sequence $(\lim_k\F^{\geq \star}X_k)_{C_{n,\ev}}\to \lim_k(\F^{\geq \star}X_k)_{C_{n,\ev}}\to
    (\lim_{k}Z_{k,i}/Z_i)$ for each $i$. The term $(\lim_{k}Z_{k,i}/Z_i)$ is at least $(i+N-1)$-connective as
    the cofiber of $(i+N-1)$-connective synthetic spectra, and since there is such a cofiber
    sequence for each $i$, result follows.
\end{proof}

\begin{notation}
    Given a graded spectrum $M(*)$, we let $\zeta_*(M(*))$ be the filtered
    spectrum $\cdots\rightarrow M(i+1)\rightarrow M(i)\rightarrow
    M(i-1)\rightarrow\cdots$ where the transition maps are all zero.
\end{notation}

\begin{lemma}
    Let $\C$ be a $t$-structure on $\SynSp$ satisfying condition $\mathbf{(\star)}$.
    For a connected space $X$, the map $\bS_{\ev}\{X\}\to \bS_\ev$ from the free
    $\bS_\ev$-$\bE_\infty$ ring on $\bS_\ev\otimes X$ to $\bS_\ev$ is $1$-connective with respect
    to $\C$.
\end{lemma}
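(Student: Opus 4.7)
The plan is to reduce the claim to a $\C$-connectivity statement about the reduced $\bS_\ev$-module associated to $X$, and then invoke the compatibility of $\C$ with the symmetric monoidal structure.

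First I would identify the fiber of the augmentation. Using the basepoint of $X$ supplied by connectedness, the map $\bS_\ev \otimes X \to \bS_\ev$ is split by $\bS_\ev \to \bS_\ev \otimes X$, yielding an equivalence $\bS_\ev \otimes X \simeq \bS_\ev \oplus M$ of $\bS_\ev$-modules, where $M := \fib(\bS_\ev \otimes X \to \bS_\ev)$. Because $\mathrm{Free}_{\bE_\infty/\bS_\ev}$ sends coproducts of $\bS_\ev$-modules to coproducts in augmented $\bE_\infty$-$\bS_\ev$-algebras, this identifies $\bS_\ev\{X\}$, as an augmented algebra over $\bS_\ev$, with the free augmented $\bE_\infty$-$\bS_\ev$-algebra on the reduced module $M$. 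Its augmentation fiber is therefore $\bigoplus_{n \geq 1} M^{\otimes n}_{h\Sigma_n}$.

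The heart of the argument is to show $M \in \SynSp_{\geq 1}^\C$. I would identify $M$ with the image of the reduced suspension spectrum $\widetilde{\Sigma}^\infty X$ under the symmetric monoidal functor $\Sp \to \SynSp$, $Y \mapsto \bS_\ev \otimes_{\ins^0 \bS} \ins^0 Y$; path-connectedness of $X$ gives $\widetilde{\Sigma}^\infty X \in \Sp_{\geq 1}$, so it is built from cells $\bS[n]$ with $n \geq 1$. Combined with $\bS_\ev \in \SynSp_{\geq 0}^\C$ (a consequence of compatibility of $\C$ with the symmetric monoidal structure, since the unit is always connective in a compatible $t$-structure) and the fact that $\SynSp_{\geq n}^\C$ is closed under colimits, this forces $M \in \SynSp_{\geq 1}^\C$.

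Finally, compatibility of $\C$ with the symmetric monoidal structure gives $M^{\otimes n} \in \SynSp_{\geq n}^\C$ for each $n \geq 1$, and $\Sigma_n$-homotopy orbits preserve $\C$-connectivity since colimits do. Hence each summand $M^{\otimes n}_{h\Sigma_n}$ lies in $\SynSp_{\geq n}^\C \subseteq \SynSp_{\geq 1}^\C$, and so does their coproduct. The main obstacle in the above is the second step: one must identify the reduced module $M$ concretely and transfer the spectrum-level $1$-connectivity of $\widetilde{\Sigma}^\infty X$ to $\C$-$1$-connectivity in $\SynSp$; this is the only place the connectedness hypothesis on $X$ is used.
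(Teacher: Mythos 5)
Your proof is correct and follows the same route as the paper: reduce to showing $M=\bS_\ev\otimes\Sigma^\infty X$ is $1$-connective in $\C$, prove this by pushing the cell decomposition of the $1$-connective spectrum $\Sigma^\infty X$ through the symmetric monoidal functor $\Sp\to\SynSp$ and using that $\SynSp_{\geq 0}^\C$ is closed under colimits, then conclude from compatibility of $\C$ with the tensor product and homotopy orbits.

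The only wobble is in your first paragraph. In the paper's conventions $\bS_\ev\otimes X$ already means the reduced tensoring $\bS_\ev\otimes\Sigma^\infty X$ (witness the smash powers in $\bS_\ev\{X\}\simeq\bigoplus_{n\geq 0}\bS_\ev\otimes(X^{\wedge n})_{\h\Sigma_n}$), so there is nothing to split off; you may set $M=\bS_\ev\otimes X$ directly. If you instead read $\bS_\ev\otimes X$ as the unreduced $\bS_\ev\otimes\Sigma^\infty_+X$, the assertion that the free algebra on $\bS_\ev\oplus M$ coincides with the free algebra on $M$ is false: one gets $\Sym_{\bS_\ev}(\bS_\ev)\otimes_{\bS_\ev}\Sym_{\bS_\ev}(M)$, which has a degree-zero, weight-zero summand coming from the linear part of $\Sym_{\bS_\ev}(\bS_\ev)$, so the lemma would actually fail. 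The reduced reading is the intended one; with it, the rest of your argument is exactly the paper's.
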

\begin{proof}
    We have that the free $\bS_{\ev}$-$\bE_\infty$ ring is given by the formula \[\bigoplus_{n\geq
    0}\bS_{\ev}\otimes (X^{\wedge n})_{\h\Sigma_n}\] and so it is enough to show that for a
    connected space $X$ the synthetic spectrum $\bS_{\ev}\otimes X$ is $1$-connective with respect
    to $\C$. Using compatibility with colimits we may assume that $X$ is compact. Using closure
    under extensions and induction we can then reduce to the case of $X=S^n$, where $n\geq 1$ by
    the assumption that $X$ is connected. Then $\bS_\ev\otimes S^n\simeq \bS_\ev[n]$ by definition
    which is $n$-connective with respect to $\C$.
\end{proof}

\begin{corollary}\label{cor:ins}
    Let $\F^{\geq\star}R$ be an $\bE_\infty$ ring in $\SynSp$ with $\F^{\geq 0}R$ connective. Let
    $\C$ be a $t$-structure
    on $\SynSp_{\F^{\geq\star}R}$ satisfying condition $\mathbf{(\star)}$.
    Then every element in $\SynSp_{\F^{\geq\star}R}^{\heartsuit C}$ is an $\ins^0 \pi_0(\F^{\geq 0}R)$-module.
\end{corollary}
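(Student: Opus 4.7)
The plan is to show that the $\F^{\geq\star}R$-action on any $M\in\SynSp_{\F^{\geq\star}R}^{\heart\C}$ canonically factors through $\ins^0\pi_0(\F^{\geq 0}R)$, thereby endowing $M$ with the required module structure. To set this up, I would use two natural maps. The first is the counit $\ins^0(\F^{\geq 0}R)\to\F^{\geq\star}R$ of the adjunction $\ins^0\dashv\F^{\geq 0}$, which makes every object of $\SynSp_{\F^{\geq\star}R}$ into an $\ins^0(\F^{\geq 0}R)$-module. The second is the map $\ins^0(\F^{\geq 0}R)\to\ins^0\pi_0(\F^{\geq 0}R)$ obtained by applying the symmetric monoidal functor $\ins^0$ to the Postnikov truncation $\F^{\geq 0}R\to\pi_0(\F^{\geq 0}R)$. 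The fiber of this second map is $\ins^0 J$, where $J:=\fib(\F^{\geq 0}R\to\pi_0(\F^{\geq 0}R))$ lies in $\Sp_{\geq 1}$ by the hypothesis that $\F^{\geq 0}R$ is connective.

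Given this setup, the factorization claim reduces to showing that the ideal $\ins^0 J$ acts trivially on $M$: the $\ins^0(\F^{\geq 0}R)$-module structure on $M$ will then factor through $\ins^0\pi_0(\F^{\geq 0}R)$ automatically. Since $M$ is $\C$-connective and $\C$ is compatible with the symmetric monoidal structure (condition (a) of $\mathbf{(\star)}$), the tensor product $\ins^0 J\otimes_{\bS_\ev}M$ will be $\C$-$1$-connective provided $\ins^0 J$ itself is, and any map from a $\C$-$1$-connective synthetic spectrum to an object of the heart is null. The key remaining task is therefore to establish that $\ins^0 J\in\SynSp_{\geq 1}^\C$.

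The hard part will be this last connectivity assertion, and this is where I would invoke the preceding lemma. Viewing $\ins^0\colon\Sp\to\SynSp$ as the colimit-preserving symmetric monoidal composition of $\ins^0\colon\Sp\to\FD(\bS)$ with base change along $\ins^0\bS\to\bS_\ev$, one has $\ins^0(\bS[n])\simeq\bS_\ev[n]$. By monoidal compatibility of $\C$ with the connective unit $\bS_\ev$, each shift $\bS_\ev[n]$ lies in $\SynSp_{\geq n}^\C$ for $n\geq 0$; this is precisely the input used in the proof of the preceding lemma. Since $\Sp_{\geq n}$ is generated under colimits and extensions by $\{\bS[m]\}_{m\geq n}$ and $\ins^0$ preserves colimits, it will follow that $\ins^0$ carries $\Sp_{\geq n}$ into $\SynSp_{\geq n}^\C$ for every $n\geq 0$. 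Applied to $J\in\Sp_{\geq 1}$, this yields $\ins^0 J\in\SynSp_{\geq 1}^\C$, completing the argument.
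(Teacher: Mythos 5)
Your proposal is genuinely different from the paper's, and more direct. The paper constructs an $\bE_\infty$-ring $\F^{\geq\star}S$ by iteratively attaching cells to kill the positive homotopy groups of $\F^{\geq 0}R$, then argues via the resulting $\pi_0^\C$-isomorphism; you instead apply $\ins^0$ to the Postnikov truncation $\F^{\geq 0}R\to\pi_0(\F^{\geq 0}R)$ in one step. Your key connectivity observation --- $\ins^0(\bS[n])\we\bS_\ev[n]\in\SynSp_{\geq n}^\C$ and $\ins^0$ preserves colimits, so $\ins^0$ carries $\Sp_{\geq n}$ into $\SynSp_{\geq n}^\C$ --- is exactly the same input that powers the paper's preceding lemma, so the two proofs rest on the same foundation.

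Two points need tightening, however. First, ``$\ins^0 J$ acts trivially on $M$, so the module structure factors automatically'' is not a valid deduction on its own: vanishing of the action map $\ins^0 J\otimes M\to M$ does not by itself endow $M$ with an $\ins^0\pi_0(\F^{\geq 0}R)$-module structure. The correct packaging is via $\pi_0^\C$: since $\ins^0 J\in\SynSp_{\geq 1}^\C$, the map $\pi_0^\C(\ins^0\F^{\geq 0}R)\to\pi_0^\C(\ins^0\pi_0\F^{\geq 0}R)$ is an isomorphism, and an object of the heart over a $\C$-connective $\bE_\infty$-ring carries a canonical module structure over its $\pi_0^\C$ (this is Lemma~\ref{lem: hearts and modules}). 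Second, the $\ins^0$ you use throughout is the colimit-preserving left adjoint $\bS_\ev\otimes_{\ins^0\bS}\ins^0(-)$ on $\SynSp$, which for a $\D(\bZ)$-object $N$ differs from the paper's $\ins^0 N$: the latter is concentrated in weights $\leq 0$ and carries the $\bS_\ev$-module structure by restriction of scalars along $\bS_\ev\to\ins^0\bZ$, while yours has nonvanishing positive weights. As written, your argument produces a module structure over $\bS_\ev\otimes_{\ins^0\bS}\ins^0\pi_0(\F^{\geq 0}R)$, which is a priori weaker than what the corollary claims. To close this gap you would want to observe that $\F^{\geq 0}\pi_0^\C(\F^{\geq\star}R)$ is coconnective (again by the connectivity of $\bS_\ev[n]$), so that $\F^{\geq 0}R\to\F^{\geq 0}\pi_0^\C(\F^{\geq\star}R)$ factors through $\pi_0\F^{\geq 0}R$, giving by adjunction the desired $\bE_\infty$-map $\ins^0\pi_0(\F^{\geq 0}R)\to\pi_0^\C(\F^{\geq\star}R)$.
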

\begin{proof}
    It is enough to produce a map of $\bE_{\infty}$ rings $\F^{\geq\star}R\to
    \F^{\geq\star}S$ such that $\F^{\geq 0}S\simeq \pi_0(\F^{\geq 0}R)$ and with
    $\pi_0^{C}(\F^{\geq\star}R)\to \pi_0^C(\F^{\geq\star}S)$ and
    equivalence. We will do this in stages. Define
    \[\F^{\geq\star}R_1:=\F^{\geq\star}R\otimes_{\bS_\ev\{\bigoplus_{\alpha\in
    \pi_1(\F^{\geq 0}R)}S^1\}}\bS_{\ev}\] where the map
    $\bS_\ev\{\bigoplus_{\alpha\in \pi_1(\F^{\geq 0}R)}S^1\}\to\F^{\geq\star}R$
    is induced by the maps $\bigoplus_{\alpha \in \pi_1(\F^{\geq 0}R)}S^1\to
    \F^{\geq 0}R$ sending the circle indexed by $\alpha$ to $\alpha$, and the
    map $\bS_\ev\{\bigoplus_{\alpha\in \pi_1(\F^{\geq 0}R)}S^1\}\to \bS_{\ev}$
    is the map induced by the zero map on each of the $S^1$ summands. Then
    there is a natural map of $\bE_\infty$ rings $\F^{\geq\star}R\to
    \F^{\geq\star}R_1$ with fiber
    \[\F^{\geq\star}R\otimes_{\bS_\ev\{\bigoplus_{\alpha\in \pi_1(\F^{\geq
    0}R)}S^1\}}\mathrm{fib}(\bS_\ev\{\bigoplus_{\alpha\in \pi_1(\F^{\geq
    0}R)}S^1\}\to \bS_{\ev})\] which is $1$-connective in the $C$
    t-structure by compatibility with the monoidal structure, filtered
    colimits, and the previous lemma. Consequently the map $\F^{\geq\star}R\to
    \F^{\geq\star}R_1$ is an isomorphism after applying $\pi_0^C$.
    Additionally by using the geometric realization formula for the relative
    tensor product we have that $\pi_0(\F^{\geq 0}R_1)\cong \pi_0(\F^{\geq 0}R)$
    and $\pi_1(\F^{\geq 0}R_1)\cong 0$.

    Inductively, suppose that we have constructed a map $\F^{\geq\star}R\to\F^{\geq\star}R_n$ of $\bE_\infty$ rings such that 
    \begin{enumerate}
        \item[(1)] $\F^{\geq\star}R_n$ is connective with respect to $\C$;
        \item[(2)] the induced map $\pi_0^C(\F^{\geq\star}R)\to \pi_0^C(\F^{\geq\star}R_n)$ is an isomorphism;
        \item[(3)] we have that \[\pi_i^\C(\F^{\geq 0}R_n)\cong \begin{cases}
            \pi_0^\C(\F^{\geq 0}R) & \textrm{ if }i=0,\\
            0 &\textrm{ if }0<i\leq n.
        \end{cases}\]
    \end{enumerate}
    Then we can construct an $\bE_\infty$ ring map $\F^{\geq\star}R_n\to\F^{\geq\star}R_{n+1}$ such that 
    \begin{enumerate}
        \item[(1)] $\F^{\geq\star}R_{n+1}$ is connective with respect to $\C$;
        \item[(2)] the induced map $\pi_0^\C(\F^{\geq\star}R_n)\to \pi_0^\C(\F^{\geq\star}R_{n+1})$ is an isomorphism;
        \item[(3)] we have that \[\pi_i^\C(\F^{\geq 0}R_{n+1})\cong \begin{cases}
            \pi_0^\C(\F^{\geq 0}R) & \textrm{ if }i=0,\\
            0 &\textrm{ if }0<i\leq n+1
        \end{cases}\]
    \end{enumerate}
    by repeating the argument from the first paragraph with $\F^{\geq\star}R$ replaced with
    $\F^{\geq\star}R_n$, $\pi_1^\C$ replaced by $\pi_{n+1}^\C$, and $S^1$ replaced with $S^n$.
    Define $\F^{\geq\star}S:=\colim\F^{\geq\star}R_n$. Then, all the desired properties of
    $\F^{\geq\star}S$ follow from those of $\F^{\geq\star}R_n$ together with the compatibility of
    $\C$ with filtered colimits.  
\end{proof}

We are now ready to prove the Tate orbit lemma.

\begin{proof}[Proof of Lemma~\ref{lem: Tate orbit lemma}]
    Applying Corollary~\ref{cor:ins}, we see that $\pi_0^\C(\bS_{\ev})$ is an
    $\ins^0\bZ$-modules in $\SynSp$. Thus $(\F^{\geq
    *}X_{C_{p,\ev}})^{tC_{p,\ev}}$ will be a module over
    $(\ins^0\bZ^{C_{p,\ev}})^{tC_{p,\ev}}$. By Lemma~\ref{lem: even filtration
    comparison in the even case} we then have that
    \begin{align*}
        (\ins^0\bZ^{C_{p,\ev}})^{\t C_{p,\ev}} &\simeq (\tau_{\geq 2\star}\bZ^{\h C_p})^{\t C_{p,\ev}}\\
        &\simeq \tau_{\geq 2\star}\left[(\bZ^{\h C_p})^{\t C_p}\right]
    \end{align*} and this vanishes by \cite[Lemma I.2.7]{nikolaus-scholze}. Since $(\F^{\geq\star}X_{C_{p,\ev}})^{\t C_{p,\ev}}$ is a module over the zero ring it also vanishes.
\end{proof}

As an application of Lemma~\ref{lem: orbit/fixedpoints/tate continuity},
we include the following lemma which gives some information about compatibility between the
double-speed Postnikov $t$-structure and constructions like $\bT_\ev$-fixed points, even though
$\bT_\ev$ is not connective in the double-speed Postnikov $t$-structure.

\begin{lemma}~\label{lem: fixed points are somehow also left t-exact I guess}
    Let $\F^{\geq\star}X\in \SynSp_{\bT_{\ev}}$ be $n$-connective in the double-speed Postnikov
    $t$-structure and suppose that it is bounded below in the Postnikov $t$-structure. Then each of
    $\F^{\geq\star}X^{\bT_{\ev}}$, $\F^{\geq\star}X^{C_{p,\ev}}$, $\F^{\geq\star}X^{t\bT_{\ev}}$,
    and $\F^{\geq\star}X^{\t C_{p,\ev}}$ are $n$-connective in the double-speed Postnikov
    $t$-structure.
\end{lemma}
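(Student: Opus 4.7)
The key numerical observation is that for any $\bT_\ev$-module $M$ which is $n$-connective in the double-speed Postnikov $t$-structure and any $t\in\bZ$, the shifted module $M(t)[2t]$ remains $n$-connective, since
\[\F^{\geq i}(M(t)[2t])\we \F^{\geq i-t}M[2t]\in\Sp_{\geq 2(i-t)+n+2t}=\Sp_{\geq 2i+n}.\]
Thus every cell of the form $\bT_\ev(-j)[-2j]$ appearing in the cellular filtration of Construction~\ref{const:cw} contributes an $n$-connective term upon either mapping into or tensoring with $X$ over $\bT_\ev$.

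My plan is to handle $X^{\bT_\ev}$ first. Write $\bS_\ev\we\colim_k Q_k$ in $\SynSp_{\bT_\ev}$, where $Q_k$ is an iterated extension of the free cells $\bT_\ev(-j)[-2j]$ for $0\leq j\leq k$. Then $X^{\bT_\ev}\we\lim_k\bMap_{\bT_\ev}(Q_k,X)$ and each partial $\bMap_{\bT_\ev}(Q_k,X)$ is an iterated extension of the $n$-connective shifts $X(j)[2j]$. To pass to the limit, fix $i\in\bZ$ and examine the tower $\{\F^{\geq i}\bMap_{\bT_\ev}(Q_k,X)\}_k$ in $\Sp$: each term is in $\Sp_{\geq 2i+n}$, while the successive fibers $\F^{\geq i-k-1}X[2k+2]$ lie in $\Sp_{\geq i+k+1+N}$ whenever $X\in\SynSp_{\geq N}^{\mathrm{P}}$, becoming arbitrarily connective as $k\to\infty$. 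This kills the Milnor $\lim^1$ contribution and yields $\F^{\geq i}X^{\bT_\ev}\in\Sp_{\geq 2i+n}$. For orbits the argument is strictly easier: $X_{\bT_\ev}\we\colim_k(X\otimes_{\bT_\ev}Q_k)$ is a filtered colimit of iterated extensions of the $n$-connective shifts $X(-j)[-2j]$, and compatibility of the double-speed Postnikov $t$-structure with filtered colimits finishes the job.

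For $X^{C_{p,\ev}}$, I apply the exact restriction-of-scalars functor $\rho(p)_*$ to the cofiber sequence $\bS_\ev(1)[1]\to\bT_\ev\to\bS_\ev$ in $\SynSp_{\bT_\ev}$; since $\bS_\ev$ and $\bS_\ev(1)[1]$ carry the trivial $\bT_\ev$-action and $\rho(p)$ is a bialgebra map, $\rho(p)_*$ fixes them and produces the cofiber sequence $\bS_\ev(1)[1]\to\rho(p)_*\bT_\ev\to\bS_\ev$. Mapping into $X$ yields the fiber sequence $X^{\bT_\ev}\to X^{C_{p,\ev}}\to X^{\bT_\ev}(-1)[-1]$, whose outer terms are respectively $n$- and $(n+1)$-connective in the double-speed Postnikov, so the middle $X^{C_{p,\ev}}$ is $n$-connective. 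Tensoring the same cofiber sequence with $X$ over $\bT_\ev$ gives $X_{\bT_\ev}(1)[1]\to X_{C_{p,\ev}}\to X_{\bT_\ev}$, from which $X_{C_{p,\ev}}$ is at least $(n-1)$-connective in the double-speed Postnikov. Finally, for the Tate constructions, the defining cofiber sequences $X_{\bT_\ev}(1)[1]\xrightarrow{\Nm}X^{\bT_\ev}\to X^{\t\bT_\ev}$ and $X_{C_{p,\ev}}\to X^{C_{p,\ev}}\to X^{\t C_{p,\ev}}$ produce, at each filtration $i\in\bZ$ and each $j<2i+n$, long exact sequences in which both neighbors of $\pi_j\F^{\geq i}X^{\t\bT_\ev}$ or $\pi_j\F^{\geq i}X^{\t C_{p,\ev}}$ vanish, forcing these Tate constructions to be $n$-connective.

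The hard part is the treatment of $X^{\bT_\ev}$: without the Postnikov boundedness of $X$, the sequential limit could carry a nontrivial Milnor $\lim^1$ term that lowers the connectivity by one and invalidates the double-speed Postnikov bound on the limit.
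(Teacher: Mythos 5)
Your proposal takes a genuinely different route from the paper, but it contains a gap that the paper explicitly warns about. You want to build a filtration $\bS_\ev\we\colim_k Q_k$ in $\SynSp_{\bT_\ev}$ with free graded pieces $\bT_\ev(\ast)[2\ast]$, and for the $C_{p,\ev}$-case you further claim that the fiber $\bS_\ev(1)[1]\to\bT_\ev\to\bS_\ev$ "carries the trivial $\bT_\ev$-action" so that $\rho(p)_*$ fixes it. But the Remark immediately following Lemma~\ref{lem: fiber sequence for Tsyn} states that while for $B$ even the fiber of $B[S^1]_\ev\to B_\ev$ is $B_\ev(1)[1]$ \emph{as a $B[S^1]_\ev$-module}, "if $B=\bS$, then it is not." Thus your fiber sequence $X^{\bT_\ev}\to X^{C_{p,\ev}}\to X^{\bT_\ev}(-1)[-1]$ is not established: the last term would instead be $\bMap_{\bT_\ev}(\rho(p)_*\fib,X)$ for a nontrivial module $\fib$. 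Likewise, Construction~\ref{const:cw} builds the CW filtration only for even $B$, precisely because it iterates the trivialization of Lemma~\ref{lem:no_nontrivial}. The paper's proof of the lemma sidesteps both issues by first filtering $X$ through the Postnikov $t$-structure (using Lemma~\ref{lem: orbit/fixedpoints/tate continuity} and a Mittag–Leffler argument) and then the neutral Whitehead/naive filtration of the resulting $\eta$-complex, which lands everything in the heart $\GrMod_{\bZ[\eta,d]/(2\eta,d^2-\eta d)}$, i.e.\ among $\bT_\fil$-modules; only there is the ($\bZ$-linear) CW filtration invoked, where it degenerates to a single term in each weight.

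Separately, your connectivity bookkeeping for the $\lim^1$ has the twist/shift signs backwards. The cells of the $S^1$-cellular filtration have positive shifts $[2j]$ (classically $\cofib(\Sigma^\infty_+S^{2j-1}\to\Sigma^\infty_+S^{2j+1})\we\Sigma^{2j}\Sigma^\infty_+S^1$), so the correct graded pieces of the tower on $X^{\bT_\ev}$ are $\bMap_{\bT_\ev}(\bT_\ev(j)[2j],X)\we X(-j)[-2j]$, not $X(j)[2j]$. Then $\F^{\geq i}(X(-j)[-2j])\we\F^{\geq i+j}X[-2j]$ has single-speed Postnikov connectivity $\geq i-j+N$, which \emph{decreases} with $j$; the Postnikov boundedness of $X$ does not make the successive fibers arbitrarily connective, contrary to your claim. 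What does hold is that each $\F^{\geq i}(X(-j)[-2j])$ is uniformly $(2i+n)$-connective because $(-)(t)[2t]$ is $t$-exact for the double-speed Postnikov $t$-structure — so the transition maps are surjective on $\pi_{2i+n}$ and $\lim^1$ vanishes for that reason. But that correct argument is not the one you give (and notably does not use the Postnikov boundedness hypothesis at all, which should make you suspicious of where you actually needed it). In short: the $\bS_\ev$-linear CW filtration needs to be justified independently of the fiber trivialization, your $C_{p,\ev}$ fiber sequence is not established, and the stated $\lim^1$ bound is wrong even granting the filtration.
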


\begin{proof}
    Suspending or desuspending as needed, we may assume that $n=0$. We will
    also assume that $\F^{\geq\star}X$ is $N$-connective for the
    Postnikov $t$-structure. We will first show this result for $(-)^{\bT_{\ev}}$
    by reducing to showing that each $\pi_i^\P(\F^{\geq \star}X)[i]^{\bT_\ev}$ is
    connective in the double-speed Postnikov $t$-structure, where $\pi_i^\P$ denotes the $i$th
    homotopy object in
    the (single-speed) Postnikov $t$-structure. Since the Postnikov $t$-structure
    satisfies all the conditions of Lemma~\ref{lem: orbit/fixedpoints/tate continuity},
    $(\F^{\geq \star}X)^{\bT_{\ev}}\simeq \lim_k(\tau_{\leq k}^\P\F^{\geq \star}X)^{\bT_\ev}$, so
    it is enough to show the result for $\tau_{\leq k}^\P\F^{\geq \star}X$ for all $k\geq 0$ and that
    the limit of the sequence $(\tau_{\leq k}^\P\F^{\geq \star}X)^{\bT_\ev}$ stays connective.

    Assume for the moment that we have shown that each $(\pi_i^\P\F^{\geq \star}X[i])^{\bT_\ev}$ is
    connective in the double-speed Postnikov $t$-structure. Then, from the fiber sequence
    \[\pi_{k+1}^\P\F^{\geq \star}X[k+1]\to \tau_{\leq k+1}^\P\F^{\geq \star}X\to \tau_{\leq k}^\P\F^{\geq
    \star}X,\] we get that $(\tau_{\leq k+1}^\P\F^{\geq \star}X)^{\bT_\ev}$ is an extension of
    $(\pi_{k+1}^\P\F^{\geq \star}X[k+1])^{\bT_\ev}$ and $(\tau_{\leq k}^\P\F^{\geq \star}X)^{\bT_\ev}$,
    so inductively, since we assumed $\F^{\geq\star}X$ is bounded-below in the Postnikov
    $t$-structure, $(\tau_{\leq k+1}^\P\F^{\geq \star}X)^{\bT_\ev}$ will be connective in the
    double-speed Postnikov $t$-structure. Furthermore, we have an exact sequence
    \[\pi_0^{2\P}\left(\pi_{k+1}^\P\F^{\geq \star}X[k+1]^{\bT_\ev}\right)\to
    \pi_0^{2\P}\left((\tau_{\leq k+1}^\P\F^{\geq \star}X)^{\bT_\ev}\right)\to
    \pi_0^{2\P}\left((\tau_{\leq k}^\P\F^{\geq \star}X)^{\bT_\ev}\right)\to 0\] and so since the
    double-speed Postnikov $t$-structure is compatible with countable products it follows that
    there is no $\lim^1$-term and the inverse limit $\lim_k(\tau_{\leq k}\F^{\geq
    \star}X)^{\bT_\ev}$ will again be connective.\footnote{The Mittag--Leffler condition is enough
    to imply that $\lim^1$ vanishes in this case because the heart of the double-speed Postnikov
    $t$-structure is equivalent to graded abelian groups.} Thus, we have reduced to showing that each
    $(\pi_i^\P\F^{\geq \star}X[i])^{\bT_\ev}$ is connective in the double-speed Postnikov
    $t$-structure.
    
    Note that if $\F^{\geq\star}X$ is connective in the double-speed Postnikov $t$-structure, then
    so is $\pi_i^{\P}\F^{\geq\star}X[i]$ as each $\F^{\geq
    j}(\pi_i^\P\F^{\geq\star}X[i])\we\pi_{i+j}\F^{\geq j}X[i+j]$ already satisfies the condition that,
    if it is nonzero, then $i\geq j$. Moreover, $\F^{\geq j}(\pi_i^\P\F^{\geq\star}X[i])\we 0$ for
    $j>i$. It follows that, under the equivalence between $\SynSp_{\bT_\ev}^{\P\heart}$ and graded
    modules for $\bZ[\eta,d]/(2\eta,d^2-\eta d)$, we can view the object $\pi_i^\P\F^{\geq\star}X$
    as an $\eta$-complex $$\cdots\rightarrow M^{i-2}\rightarrow M^{i-1}\rightarrow M^i\rightarrow
    0\rightarrow\cdots.$$ We can filter this $\eta$-complex in the naive way by letting
    $\G_{\leq n}M^\bullet$ be the subcomplex $$0\rightarrow M^{i-n}\rightarrow\cdots\rightarrow
    M^i\rightarrow 0.$$ This is an exhaustive $\bN$-indexed filtration on $M^\bullet$. We claim
    that $\colim_n(\G_{\leq n}M^\bullet)^{\bT_\ev}\we(M^\bullet)^{\bT_\ev}$;
    indeed, this is a special case of Lemma~\ref{lem: orbit/fixedpoints/tate continuity}(2.a)
    for the {\em neutral} $t$-structure
    applied to the Whitehead tower $\tau_{\geq\star}^{\N}(\pi_i^\P\F^{\geq\star}X[i])$, which agrees
    with the filtration $\G_{\leq\star} M^\bullet$ up to a shift.
    As connective objects are closed under colimits and extensions, we are reduced to showing that the
    $\bT_\ev$-fixed points of
    the synthetic spectrum with $\bT_\ev$ action associated to the $i$-fold suspension of $\gr_n^\G M^\bullet\we M^{i-n}$
    is connective in the double-speed Postnikov $t$-structure for each $n\geq 0$. This object is precisely the
    filtered spectrum $$\cdots\rightarrow 0\rightarrow\pi_{2i-n}\F^{\geq i-n}X[2i-n]\rightarrow
    0\rightarrow\cdots,$$ which is a filtered spectrum with only one, possibly, nonzero term, in
    weight $(i-n)$. Write $\F^{\geq\star}Y$ for this spectrum. As it is naturally a
    $\bT_\fil$-module spectrum, the increasing exhaustive CW filtration on the
    homotopy fixed points shows that $(\F^{\geq\star}Y)^{\bT_\ev}$ is a colimit of iterated
    extensions of terms of the form
    $(\F^{\geq\star}Y)(-k)[-2k]$ for $k\geq 0$. Each of these is connective in the double-speed
    Postnikov $t$-structure, hence so is the colimit.

    To prove that $(\F^{\geq\star}X)^{\t\bT_\ev}$ is connective in the double-speed Postnikov
    $t$-structure, it is necessary and sufficient to show that $(\F^{\geq\star}X)_{\bT_\ev}$,
    using the fiber sequence
    $(\F^{\geq\star}X)^{\bT_\ev}\rightarrow(\F^{\geq\star}X)^{\t\bT_\ev}\rightarrow(\F^{\geq\star}X)_{\bT_\ev}(1)[2]$
    and the fact that the functor $(-)(1)[2]$ is $t$-exact with respect to the double-speed
    Postnikov $t$-structure. We have by Lemma~\ref{lem: orbit/fixedpoints/tate
    continuity}(1.b) that the natural map $(\F^{\geq\star}X)_{\bT_\ev}\rightarrow\lim_i(\tau_{\leq
    i}^{\P}\F^{\geq\star}X)_{\bT_\ev}$ is an equivalence. Moreover, by connectivity of $\bT_\ev$ in
    the Postnikov $t$-structure, the fiber of $(\F^{\geq\star}X)_{\bT_\ev}\rightarrow(\tau_{\leq
    i}^\P\F^{\geq\star}X)_{\bT_\ev}$ is $(i+1)$-connective in the Postnikov $t$-structure for each
    $i$. Thus, the fiber of $\F^{\geq j}(\F^{\geq\star}X)_{\bT_\ev}\rightarrow\F^{\geq j}(\tau_{\leq
    2j}^\P\F^{\geq\star}X)_{\bT_\ev}$ is $(2j+1)$-connective. It follows by induction that if each
    $(\pi_i^\P\F^{\geq\star}X[i])_{\bT_\ev}$ is shown to be connective in the double-speed
    Postnikov $t$-structure, then so is $(\F^{\geq\star}X)_{\bT_\ev}$. Using the notation $\G_{\leq
    \star}M^\bullet$ from the previous paragraph, we have $(\colim_n\G_{\leq
    n}M^\bullet)_{\bT_\ev}\we(M^\bullet)_{\bT_\ev}$. We are reduced to checking that the graded
    pieces $\F^{\geq\star}Y$ of the $i$-fold suspension of the $\G$-filtration have double-speed connective $\bT_\ev$-orbits.
    These are given by an $\bN$-indexed limit of iterated extensions of $(\F^{\geq\star}Y)(k)[2k]$ for $k\geq 0$.
    These are again double-speed connective and the limit stabilizes in each filtration weight at a finite
    stage, so the limit is double-speed connective as well.

    Similar arguments apply to the case of $C_{p,\ev}$-fixed points and Tate by reducing as above to the
    case of $\bZ$-modules and using the square of Corollary~\ref{cor:nseries} to reduce to the
    $\bT_\ev$-case established above. 
\end{proof}

\section{Synthetic cyclotomic spectra}\label{sec:syncycsp}

In this section, we define the $\infty$-category of cyclotomic synthetic spectra and construct
examples of such objects using the BMS and even filtrations on $\THH$.
We work only with the synthetic analogue of $p$-typical cyclotomic spectra as introduced
in~\cite{nikolaus-scholze}. The interested reader
can work out the relevant definition for integral cyclotomic synthetic spectra.

\subsection{\texorpdfstring{Construction of $\mathrm{CycSyn}$}{Construction of CycSyn}}

Here is the main definition.

\begin{definition}[Cyclotomic synthetic spectra]
    We define $\CycSyn$ as the $\infty$-category 
    \[\CycSyn:= \mathrm{LEq}\left(\id, (-)^{\t C_{p,\ev}}\colon \SynSp_{\bT_{\ev}}\rightrightarrows \SynSp_{\bT_{\ev}}\right).\]
\end{definition}

\begin{lemma}~\label{cor: cycsin is a nice category}
    The $\infty$-category $\CycSyn$ is a stable presentable $\infty$-category and
    the forgetful functor $\CycSyn\to \SynSp_{\bT_{\ev}}$ is
    conservative and preserves colimits. Moreover, $\CycSyn$ admits a natural symmetric monoidal structure
    compatible with the forgetful functor.
\end{lemma}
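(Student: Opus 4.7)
The plan is to identify $\CycSyn$ with a pullback of presentable $\infty$-categories along the standard recipe for a lax equalizer and then read off the desired properties. Writing $\Cscr := \SynSp_{\bT_\ev}$ for brevity, the definition unfolds to
$$\CycSyn \simeq \Cscr \times_{\Cscr \times \Cscr} \Fun(\Delta^1, \Cscr),$$
where the left leg is $(\id, (-)^{\t C_{p,\ev}})$ and the right leg is $(\ev_0, \ev_1)$. With this presentation in hand, stability, presentability, and the basic properties of the forgetful functor will follow from general principles about such pullbacks; compare Nikolaus--Scholze, Proposition~II.1.5.

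The hypotheses I need to verify are that both $\id$ and $(-)^{\t C_{p,\ev}}$ are exact and accessible functors from $\Cscr$ to itself. Exactness of $\id$ is automatic, and exactness of $(-)^{\t C_{p,\ev}}$ is built in (in fact it is symmetric monoidal by Proposition~\ref{prop:monoidal}). For accessibility, I would use that $(-)^{\t C_{p,\ev}}$ is by Construction~\ref{const: Tate Construction finite groups.} the cofiber of the synthetic norm $(-)_{C_{p,\ev}} \to (-)^{C_{p,\ev}}$; the orbits functor preserves all colimits (as a left adjoint to $\rho(p)_*$), and the fixed points functor is accessible (as a right adjoint to the colimit-preserving restriction-of-scalars functor $\rho(p)_*$), so their cofiber is accessible. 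Thus $\CycSyn$ is a pullback in $\PrL$ along exact functors, which implies it is stable and presentable.

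The forgetful functor $U\colon \CycSyn \to \Cscr$ is the first projection of the pullback. Conservativity is immediate: a morphism whose underlying map in $\Cscr$ is an equivalence automatically induces an equivalence under $(-)^{\t C_{p,\ev}}$ by functoriality, so its compatibility datum becomes an equivalence of arrows. Colimit preservation follows because $\ev_0 \colon \Fun(\Delta^1, \Cscr) \to \Cscr$ preserves colimits (computed pointwise) and colimits in pullbacks in $\PrL$ are computed coordinatewise, so the colimit in $\CycSyn$ is exactly the colimit of underlying $\bT_\ev$-modules, equipped with the colimit of the corresponding Tate maps.

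For the symmetric monoidal structure, the point is that both $\id$ and $(-)^{\t C_{p,\ev}}$ are naturally symmetric monoidal functors between presentable symmetric monoidal $\infty$-categories---the latter again by Proposition~\ref{prop:monoidal}---so the entire defining pullback diagram lifts to $\CAlg(\PrL)$, and $\CycSyn$ inherits a presentable symmetric monoidal structure for which $U$ is symmetric monoidal. The main obstacle throughout is the symmetric monoidality (and exactness) of $(-)^{\t C_{p,\ev}}$, but that input is already in hand from Proposition~\ref{prop:monoidal}; modulo it, the argument is a formal application of the Nikolaus--Scholze formalism for lax equalizers, mirroring their construction of ordinary $\CycSp$.
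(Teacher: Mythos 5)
Your overall plan is sound and lands on the same reference the paper uses, but two of the supporting claims are not correct as stated.

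First, you assert that $\CycSyn$ is a pullback \emph{in} $\PrL$. That cannot be: the left leg of the defining square is $(\id,(-)^{\t C_{p,\ev}})\colon \Cscr\to\Cscr\times\Cscr$, and $(-)^{\t C_{p,\ev}}$ does not preserve colimits (no Tate construction does -- it kills induced objects but does not commute with infinite direct sums of unbounded-below objects, for instance). So the square is a pullback in $\Cat_\infty$, not in $\PrL$, and the conclusions -- stability, presentability, and the forgetful functor being conservative and colimit-preserving -- are genuinely the content of \cite[Prop.~II.1.5]{nikolaus-scholze}, not a formal consequence of a $\PrL$-pullback. What that proposition actually needs from you is exactness and accessibility of $(-)^{\t C_{p,\ev}}$; your argument for accessibility (orbits is a left adjoint, fixed points is right adjoint to the colimit-preserving $\rho(p)_*$, Tate is the cofiber) is fine and parallels the paper's remark that $\rho_*\bT_\ev$ is $\kappa$-compact for uncountable $\kappa$.

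Second, for the symmetric monoidal structure you write that both $\id$ and $(-)^{\t C_{p,\ev}}$ are ``naturally symmetric monoidal'' and that therefore the pullback ``lifts to $\CAlg(\PrL)$.'' But $(-)^{\t C_{p,\ev}}$ is only \emph{lax} symmetric monoidal -- this is what Proposition~\ref{prop:monoidal} actually provides (via \cite[Thm.~I.3.6]{nikolaus-scholze}), and it is all that can be true in general -- and it is not a $\PrL$-morphism, so the $\CAlg(\PrL)$ lift fails on both counts. The construction of the symmetric monoidal structure on a lax equalizer with a lax monoidal leg is precisely \cite[Const.~IV.2.1(ii)]{nikolaus-scholze}, which is what the paper cites; it does not go through a pullback of commutative-algebra objects in $\PrL$. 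If you replace your final paragraph and your ``pullback in $\PrL$'' framing with an appeal to NS Prop.~II.1.5 and Const.~IV.2.1(ii) -- with the lax monoidality and $\kappa$-accessibility of $(-)^{\t C_{p,\ev}}$ as the inputs -- the proof matches the paper's.
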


\begin{proof}
    See~\cite[Const.~IV.2.1(ii)]{nikolaus-scholze} for the claim about symmetric monoidal structures
    and~\cite[Prop.~II.1.5]{nikolaus-scholze} for the rest. We use that $(-)^{\t C_{p,\ev}}$ is lax
    symmetric monoidal and $\kappa$-accessible for any uncountable cardinal $\kappa$. The former is
    our Proposition~\ref{prop:monoidal}; the latter follows from the fact that $\rho_*\bT_\ev$ is $\kappa$-compact for any
    uncountable $\kappa$.
\end{proof}

\begin{example}[The monoidal unit]
    There is a canonical $\bT_\ev$-equivariant map $\bS_\ev\rightarrow(\bS_\ev)^{\t C_{p,\ev}}$
    arising from symmetric monoidality of the $C_{p,\ev}$-Tate construction. This data is
    equivalent to the unit of $\CycSyn$, which we write as $\bS_\ev^\triv$. 
\end{example}

\begin{example}\label{ex:inserted_cyclotomic}
    For $R$ an Eilenberg--Mac Lane spectrum we can lift $\ins^0R$ further to a synthetic cyclotomic spectrum in the following way.
    The $\bT_{\ev}$-module structure of $\ins^0R$ is induced by the ring maps $\bT_{\ev}\to
    \bS_{\ev}\to\ins^0\bZ$ and the natural $\ins^0\bZ$-module structure on $\ins^0R$.
    Then the identity map $\ins^0R\to\ins^0R$ factors functorially through a map $\ins^0R\to (\ins^0R)^{C_{p,\ev}}$.
    Post-composing with the norm natural transformation then gives a functorial synthetic
    cyclotomic structure on $\ins^0R$, which we will denote by $\ins^0R^{\triv}$. By the
    functoriality and the fact that $\tau_{\leq 0}(\ins^0R\otimes_{\bS_{\ev}}\ins^0S)=\ins^0(R\otimes_{\bZ} S)$, where $\tau_{\leq 0}$
    denotes the level-wise truncation functor, we see that the functor
    $\ins^0(-)^{\triv}\colon\mathrm{Ab}\to \mathrm{CycSyn}$ is lax symmetric monoidal.
\end{example}

\begin{definition}[Synthetic $\TC$]
    We let
    $\TC\colon\CycSyn\to \SynSp$ denote the functor corepresented by $\bS_\ev^\triv$, so that on
    objects $\F^{\geq\star} X\in\CycSyn$ it is given by \[\mathrm{TC}(\F^{\geq\star} X)=
    \bMap_{\CycSyn}(\bS_{\ev}^\triv,\F^{\geq\star}X),\] and similarly for complete cyclotomic
    spectra. Using the formula of~\cite[Prop.~II.1.5(ii)]{nikolaus-scholze}, we see that
    $$\TC(\F^{\geq\star}X)\we\Eq\left((\F^{\geq\star}X)^{\bT_\ev}\rightrightarrows((\F^{\geq\star}X)^{\t C_{p,\ev}})^{\bT_\ev}\right),$$
    which is a version of the formula given in~\cite{nikolaus-scholze}.
    Here, the first map is given by sending $f$ in
    $\bMap_{\bT_\ev}(\bS_\ev,\F^{\geq\star}X)\we(\F^{\geq\star}X)^{\bT_\ev}$ to the composition
    $$\bS_\ev\rightarrow(\bS_\ev)^{\t C_{p,\ev}}\xrightarrow{f^{\t C_{p,\ev}}}(\F^{\geq\star}X)^{\t
    C_{p,\ev}}.$$ The second map takes $f$ to the composition
    $$\bS_\ev\xrightarrow{f}\F^{\geq\star}X\xrightarrow{\varphi_X}(\F^{\geq\star}X)^{\t C_{p,\ev}},$$
    where $\varphi_X$ denotes the synthetic cyclotomic structure map for $\F^{\geq\star}X$.
\end{definition}

\begin{remark}
    We do not address here the question of whether the synthetic analogue of the Segal conjecture
    holds, i.e., whether $\bS_\ev\rightarrow(\bS_\ev)^{\t C_{p,\ev}}$ is a $p$-adic equivalence.
    However, this has been communicated to the authors by Robert Burklund to be true.
\end{remark}

It will also be helpful for us later to have a notion of cyclotomic synthetic spectra with
Frobenius lifts. In the non-synthetic case, these were introduced in~\cite[Sec.~IV.3]{nikolaus-scholze}.

\begin{definition}~\label{defn: cycsyn with fr lift}
    Let $\CycSyn^{\Fr}$ be the $\infty$-category defined by the lax equalizer
    \[
    \CycSyn^{\Fr}:= \LEq\left(\id, (-)^{C_{p,\ev}}\colon\SynSp_{\bT_{\ev}}\rightrightarrows
    \SynSp_{\bT_{\ev}} \right).
    \]
\end{definition}

\begin{corollary}
    The $\infty$-category $\CycSyn^\Fr$ is a stable presentable $\infty$-category and
    the forgetful functor $\CycSyn^\Fr\to \SynSp_{\bT_{\ev}}$ is
    conservative and preserves all limits and colimits. Moreover, $\CycSyn^\Fr$ admits a natural symmetric monoidal structure
    compatible with the forgetful functor.
\end{corollary}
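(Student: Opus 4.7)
The strategy is to mimic the proof of Lemma~\ref{cor: cycsin is a nice category} with $(-)^{\t C_{p,\ev}}$ replaced by $(-)^{C_{p,\ev}}$, adding one observation to handle the extra assertion of limit preservation. Concretely, I apply~\cite[Prop.~II.1.5]{nikolaus-scholze} to the lax equalizer $\LEq(\id,(-)^{C_{p,\ev}})$ defining $\CycSyn^\Fr$, which yields stability, presentability, conservativity of the forgetful functor, and preservation of all small colimits; and~\cite[Const.~IV.2.1(ii)]{nikolaus-scholze}, which supplies the symmetric monoidal structure compatible with the forgetful functor. These two cited results require two inputs on $(-)^{C_{p,\ev}}$: lax symmetric monoidality, and $\kappa$-accessibility for every uncountable cardinal $\kappa$.

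Lax symmetric monoidality is immediate by adjunction. The map $\rho(p)\colon\bT_\ev\to\bT_\ev$ is a map of bicommutative bialgebras, so restriction of scalars $\rho(p)_*\colon\SynSp_{\bT_\ev}\to\SynSp_{\bT_\ev}$ is symmetric monoidal for the coalgebraic tensor product on $\SynSp_{\bT_\ev}$; therefore its right adjoint $(-)^{C_{p,\ev}}$ acquires a canonical lax symmetric monoidal structure. For accessibility, I use the defining formula $(-)^{C_{p,\ev}}=\bMap_{\bT_\ev}(\rho(p)_*\bT_\ev,-)$ together with the $\kappa$-compactness of $\rho(p)_*\bT_\ev$ in $\SynSp_{\bT_\ev}$ for every uncountable $\kappa$, which is exactly the input invoked in the proof of Lemma~\ref{cor: cycsin is a nice category}.

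The only assertion not already formally covered by the argument for $\CycSyn$ is that the forgetful functor additionally preserves all small limits. For this I use the pullback description
\[
    \CycSyn^\Fr\we\SynSp_{\bT_\ev}\times_{\SynSp_{\bT_\ev}\times\SynSp_{\bT_\ev}}\Fun(\Delta^1,\SynSp_{\bT_\ev}),
\]
in which limits are computed factor-wise provided each projection preserves them. The map $(\id,(-)^{C_{p,\ev}})\colon\SynSp_{\bT_\ev}\to\SynSp_{\bT_\ev}\times\SynSp_{\bT_\ev}$ preserves limits because both components do; in particular $(-)^{C_{p,\ev}}$ preserves limits as it is a right adjoint to $\rho(p)_*$. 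Hence limits in $\CycSyn^\Fr$ are computed on underlying synthetic $\bT_\ev$-module spectra. I do not anticipate any genuine obstacle: the argument is a purely formal adaptation of Lemma~\ref{cor: cycsin is a nice category}, with the bonus that limit preservation comes for free from the fact that $C_{p,\ev}$-fixed points, unlike the $C_{p,\ev}$-Tate construction, are a right adjoint.
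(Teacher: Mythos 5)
Your proof is correct and follows essentially the same route as the paper: invoke~\cite[Prop.~II.1.5]{nikolaus-scholze} and~\cite[Const.~IV.2.1(ii)]{nikolaus-scholze} exactly as in Lemma~\ref{cor: cycsin is a nice category}, and deduce the additional limit-preservation from the fact that $(-)^{C_{p,\ev}}$ is a right adjoint (hence limit-preserving); the paper compresses your last paragraph by simply citing~\cite[Prop.~II.1.5(v)]{nikolaus-scholze}, for which the hypothesis is precisely that $(-)^{C_{p,\ev}}$ preserves limits, while you unwind that citation by arguing directly from the pullback description of the lax equalizer.
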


\begin{proof}
    The proof is the same as in Corollary~\ref{cor: cycsin is a nice category}, with the only
    difference being that $(-)^{C_{p,\ev}}\colon\SynSp_{\bT_{\ev}}\to \SynSp_{\bT_{\ev}}$ also preserves
    all limits and so the forgetful functor $\CycSyn^{\Fr}\to \SynSp_{\bT_{\ev}}$ does as well
    by~\cite[Prop.~II.1.5(v)]{nikolaus-scholze}.
\end{proof}

\subsection{Comparison to the equivariant even
filtration}\label{sec:comparison}

Recall that an $\bE_\infty$-ring with $\bT$-action, or an $S^1$-equivariant $\bE_\infty$-ring, is a
$\bT^\vee$-comodule in $\CAlg(\Sp)$. The $\infty$-category $\coMod_{\bT^\vee}(\CAlg(\Sp))$ is
equivalent to the $\infty$-category of $\bE_\infty$-algebras in $\Sp^{\B S^1}$;
see~\cite[Prop.~2.2.10]{raksit}.

As $\bT_\ev$ is a bicommutative bialgebra, we have observed that
$\SynSp_{\bT_\ev}$ admits a symmetric monoidal structure such that the
forgetful functor $\SynSp_{\bT_\ev}\rightarrow\SynSp$ is naturally symmetric
monoidal. Before bootstrapping into the category of synthetic cyclotomic spectra, we will first compare
the filtrations and objects appearing in Section~\ref{sec:even_circle} to those constructed by
Hahn, Raksit, and Wilson in \cite{hrw}. This will both serve as a warm-up to the cyclotomic
case as well as a technical resource.

\begin{definition}
    A synthetic $\bE_\infty$-ring with $\bT_\ev$-action is an object of
    $$\coMod_{\bT_\ev^\vee}(\CAlg(\SynSp))\we\CAlg(\coMod_{\bT_\ev^\vee}(\SynSp)).$$
    We will call these synthetic $\bE_\infty$-rings with $\bT_\ev$-action or with
    $\bT_\ev^\vee$-coaction.
\end{definition}

\begin{remark}[Trivial coactions]\label{rem:trivial_coaction}
    If $\F^{\geq\star}R\in\CAlg(\SynSp)$, then there is a canonical trivial $\bT_\ev^\vee$-coaction
    $(\F^{\geq\star} R)^\triv$ on
    $\F^{\geq\star}R$. This is precisely the corestricted comodule structure on $\F^{\geq\star}R$
    obtained from corestriction along the coalgebra map $\bS_\ev\rightarrow\bT_\ev^\vee$. It follows
    that there is a functor $(-)^\triv\colon\CAlg(\SynSp)\rightarrow\coMod_{\bT_\ev^\vee}(\CAlg(\SynSp))$.
\end{remark}

Suppose that $A$ is an $\bE_\infty$-ring with a circle action.
In~\cite{hrw}, the authors give an approach to constructing filtrations on
$A^{\h S^1}$ and $A^{\t S^1}$, by using $S^1$-equivariant even covers. For example,
they define $$\F^{\geq\star}_{\ev,\bT} A=\lim_{A\rightarrow B\text{
    even, $S^1$-equivariant}}\tau_{\geq 2\star}(B^{\h S^1}),$$
where the limit is over $S^1$-equivariant maps to even $\bE_\infty$-rings with
$S^1$-action, and similarly
$$\F^{\geq\star}_{\ev,\t\bT} B=\lim_{A\rightarrow B\text{ even,
$S^1$-equivariant}}\tau_{\geq
2\star}(B^{\t S^1}).$$
To justify these definitions, recall that if $M$ is an even spectrum with
$S^1$-action, then $M^{\h S^1}$ and $M^{\t S^1}$ are both even. While $C_n$
analogues are not given in~\cite{hrw}, we can take a similar approach and
define
\begin{align*}
    \F^{\geq\star}_{\ev,C_n}A&=\lim_{A\rightarrow B\text{ even,
    $S^1$-equivariant}}\tau_{\geq 2\star} B^{\h C_n}\text{ and}\\
    \F^{\geq\star}_{\ev,\t C_n}A&=\lim_{A\rightarrow B\text{ even,
    $S^1$-equivariant}}\tau_{\geq 2\star} B^{\t C_n}.
\end{align*}
These constructions agree with those using $\bT_\ev$ and $C_{n,\ev}$ in many
cases of interest.

% This subsection is devoted to showing that, for $A\in \coMod_\bT(\CAlg(\Sp))$
% which is $S^1$-equivariantly eff-locally even, the
% filtered spectra $(\F^{\geq\star}_{\ev}A)^{\bT_{\ev}}$, $(\F^{\geq
% \star}_{\ev}A)^{t\bT_{\ev}}$, $(\F^{\geq\star}_{\ev}A)^{C_{n,\ev}}$, and $(\F^{\geq
% \star}_{\ev}A)^{\t C_{n,\ev}}$ match the corresponding filtrations constructed by
% Hahn, Raksit, and Wilson in~\cite{hrw}.
% We will begin by first showing this for
% $S^1$-equivariant even $\bE_\infty$-rings $E$.

% Note that for such $E$ there is
% always a $\bT$-equivariant $\bE_\infty$-map $E^{\h S^1}\to E$ where $E^{\h S^1}$
% has trivial $S^1$-action and is still even.
% 
% By the lax-monoidality of $\F_{\ev}^{\geq\star}(-)$ we then have that for every
% $E\in \CAlg(\Sp^{B\bT})^{\ev}$, $\F^{\geq\star}_{\ev}E$ admits a module
% structure over something which is of the form of the above Lemma. For such $E$
% we may therefore work in the category of
% $\F^{\geq\star}_{\ev}(E^{h\bT})[\bT_{\ev}]$-modules.

% An important consequence of Lemma~\ref{lem: even filtration comparison in the even case} is that $\bE_\infty$-rings with $S^1$-action admit $\bT_\ev$ refinements in a
% canonical way. \BA{The end of the previous proof and the next proof appear to be circular at the
% moment.}

Let $\CAlg(\Sp^{\B S^1})^\lev\subseteq\CAlg(\Sp^{\B S^1})$ be the full subcategory of
$\bE_\infty$-rings $A$ with $S^1$-action admitting an eff $S^1$-equivariant map $A\rightarrow B$ to an even
$\bE_\infty$-ring with $S^1$-action. Let $\CAlg(\Sp^{\B S^1})^\ev\subseteq\CAlg(\Sp^{\B S^1})^\lev$
be the full subcategory of even $\bE_\infty$-rings with $S^1$-action. Note that for
$A\in\CAlg(\Sp^{\B S^1})^\lev$, the underlying $\bE_\infty$-ring of the circle-equivariant even
filtration $$\lim_{\text{$A\rightarrow B$, $S^1$-equivariant}}\tau_{\geq 2\star}B$$ agrees
with $\F^{\geq\star}_\ev A$.

The following unipotence lemma is inspired by the results of~\cite{MNN17}.

\begin{lemma}[Synthetic unipotence]\label{lem:unipotence}
    Let $R\in\CAlg(\Sp)$ be an even $\bE_\infty$-ring and consider  $R_{\ev}^\triv$ the
    associated synthetic $\bE_\infty$-ring with trivial $\bT_\ev$-action. The functor
    \[\Mod_{R_\ev^\triv}(\SynSp_{\bT_\ev})\xrightarrow{(-)^{\bT_{\ev}}}\SynSp_{R_{\ev}^{\bT_\ev}}\] is
    fully faithful with essential image the objects complete with respect to
    $R_\ev^{\bT_\ev}\rightarrow R_\ev$.
\end{lemma}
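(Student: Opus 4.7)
The plan is to interpret the functor $(-)^{\bT_\ev}$ as a Morita-type functor and verify fully faithfulness using the CW filtration of Construction~\ref{const:cw}. By Proposition~\ref{Prop: symmetric monoidal}, there is a symmetric monoidal equivalence $\Mod_{R_\ev^\triv}(\SynSp_{\bT_\ev}) \simeq \SynSp_{R[S^1]_\ev}$ coming from $R_\ev^\triv \otimes_{\bS_\ev} \bT_\ev \simeq R[S^1]_\ev$. Under this identification, $R_\ev^\triv$ corresponds to the $R[S^1]_\ev$-module $R_\ev$ given by the augmentation $R[S^1]_\ev \to R_\ev$ of Lemma~\ref{lem: fiber sequence for Tsyn}, and the functor $(-)^{\bT_\ev}$ is identified with the Morita functor $\bMap_{R[S^1]_\ev}(R_\ev, -)$, since $\bMap_{\bT_\ev}(\bS_\ev,-) \simeq \bMap_{R[S^1]_\ev}(R[S^1]_\ev \otimes_{\bT_\ev} \bS_\ev, -)$ and $R[S^1]_\ev \otimes_{\bT_\ev} \bS_\ev \simeq R_\ev$. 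Because $R_\ev^{\bT_\ev} \simeq \End_{R[S^1]_\ev}(R_\ev)$, this functor lifts to $\SynSp_{R_\ev^{\bT_\ev}}$ and admits a left adjoint $L(N) := N \otimes_{R_\ev^{\bT_\ev}} R_\ev$, where $R_\ev$ is viewed as an $(R_\ev^{\bT_\ev}, R[S^1]_\ev)$-bimodule.

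Next, I would establish fully faithfulness by showing the counit $L(M^{\bT_\ev}) \to M$ is an equivalence for every $M \in \SynSp_{R[S^1]_\ev}$. The case $M = R_\ev$ is immediate, since $R_\ev^{\bT_\ev} \otimes_{R_\ev^{\bT_\ev}} R_\ev \simeq R_\ev$. Construction~\ref{const:cw} exhibits $R_\ev$ as an iterated extension in $\SynSp_{R[S^1]_\ev}$ of the free modules $R[S^1]_\ev(s)[2s]$ for $s \le 0$, and this, combined with Lemma~\ref{lem: fiber sequence for Tsyn} and Lemma~\ref{lem:no_nontrivial}, lets us reduce the verification to the case of free $R[S^1]_\ev$-modules. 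On such modules, one computes $\bMap_{R[S^1]_\ev}(R_\ev, R[S^1]_\ev(s)[2s])$ by dualizing the CW filtration and then checks that tensoring back with $R_\ev$ over $R_\ev^{\bT_\ev}$ recovers $R[S^1]_\ev(s)[2s]$.

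Finally, I would identify the essential image with the complete $R_\ev^{\bT_\ev}$-modules. Any $M^{\bT_\ev} = \bMap_{R[S^1]_\ev}(R_\ev, M)$ is automatically a sequential limit along the CW filtration on $R_\ev$, and this limit coincides with derived completion along $\fib(R_\ev^{\bT_\ev} \to R_\ev)$, placing the image inside the complete modules. Conversely, for a derived complete $N \in \SynSp_{R_\ev^{\bT_\ev}}$, the unit $N \to L(N)^{\bT_\ev}$ is an equivalence: the counit equivalence from the previous step gives $L(N)^{\bT_\ev} \otimes_{R_\ev^{\bT_\ev}} R_\ev \simeq L(N)$, and completeness then forces the unit to be an equivalence. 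The main obstacle is controlling the interaction between the inverse limit implicit in $\bMap_{R[S^1]_\ev}(R_\ev, -)$ and the tensor product $- \otimes_{R_\ev^{\bT_\ev}} R_\ev$: because $R_\ev$ is not compact in $\SynSp_{R[S^1]_\ev}$, limits do not commute with this tensor product in general, so the counit equivalence and the completeness argument must be carried out with care in a suitable $t$-structure, using the continuity results of Lemma~\ref{lem: orbit/fixedpoints/tate continuity}.
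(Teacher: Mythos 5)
The paper's proof invokes the Mathew--Naumann--Noel machinery directly: it verifies that the cofree algebra $\bMap(\bT_\ev,R_\ev)\simeq\bT_\ev^\vee\otimes_{\bS_\ev}R_\ev$ is compact and dualizable in $\Mod_{R_\ev^\triv}(\SynSp_{\bT_\ev})$, that its dual generates as a localizing $\SynSp$-linear subcategory, and that it lies in the thick subcategory generated by $\SynSp^\omega\otimes\{R_\ev^\triv\}$, and then cites~\cite[Prop.~7.13, Thm.~7.35]{MNN17}. Your proposal instead tries to run a hands-on Morita argument through the identification $\Mod_{R_\ev^\triv}(\SynSp_{\bT_\ev})\we\SynSp_{R[S^1]_\ev}$, with $R_\ev$ (via the augmentation) as the generating bimodule. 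The setup in your first paragraph is fine, and the overall shape (counit equivalence for fully faithfulness, completion for the essential image, CW filtration as computational tool) is reasonable; the discrepancy with the paper's route is that they trade the non-compact $R_\ev$ for the \emph{compact} cofree algebra $\bMap(\bT_\ev,R_\ev)$, which is precisely what dissolves the difficulty you flag.

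The gap is in your second step. You claim the CW filtration of Construction~\ref{const:cw} ``lets us reduce the verification to the case of free $R[S^1]_\ev$-modules,'' but this devissage is not available. The CW filtration is a filtration of $R_\ev$, useful for computing $\bMap_{R[S^1]_\ev}(R_\ev,-)$ as a limit of perfect pieces, but it does not express an arbitrary $M$ in terms of free modules, and since $R_\ev$ is not a compact $R[S^1]_\ev$-module the functor $(-)^{\bT_\ev}=\bMap_{R[S^1]_\ev}(R_\ev,-)$ does not commute with colimits. So the counit equivalence does not propagate from free $M$ (or from $M=R_\ev$) to general $M$ by thick-subcategory or colimit closure. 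You correctly identify this as ``the main obstacle'' in your final paragraph, but announcing that it ``must be carried out with care in a suitable $t$-structure'' does not actually close the gap, and Lemma~\ref{lem: orbit/fixedpoints/tate continuity} only gives you limit/colimit commutation under connectivity hypotheses on towers, not the unconditional commutation the reduction step silently assumes. The missing ingredient is exactly what the MNN-style argument supplies: a compact dualizable object (here $\bMap(\bT_\ev,R_\ev)$) that detects the relevant torsion/completion data and for which the nilpotence condition holds, allowing the torsion--completion duality to be set up without ever tensoring through the problematic inverse limit.
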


\begin{proof}
    The proof follows the argument of~\cite[Thm.~7.35]{MNN17}. To be more precise,
    note that $\bMap(\bT_\ev, R_\ev)$ is a compact, dualizable $R_\ev^\triv$-algebra in
    $\Mod_{R_\ev^\triv}(\SynSp_{\bT_\ev})$, the dual is also compact and generates $\SynSp_{\bT_\ev,
    R_\ev}$ as a localizing $\SynSp$-linear subcategory of $\SynSp_{\bT_\ev}$, and belongs to the
    thick subcategory generated by $\SynSp^\omega \otimes \{R_\ev^\triv\}\subseteq
    \Mod_{R_\ev^\triv}(\SynSp_{\bT_\ev})$.
    These are the conditions needed to make the synthetic analogue of \cite[Prop.~7.13]{MNN17} work.
\end{proof}

\begin{construction}
    Let $\CAlg(\SynSp_{\bT_\ev})^\ev\subseteq \CAlg(\SynSp_{\bT_\ev})$ denote the full subcategory
    spanned by objects of the form $\F^{\geq \star}E\simeq\tau_{\geq 2\star}E$ as underlying synthetic
    spectra, where $E$ is some even $\bE_\infty$-ring. Note that the functor $\F^{\geq
    -\infty}\colon\CAlg(\SynSp_{\bT_\ev})\to \CAlg(\Sp^{\B S^1})$ restricts to a functor
    $\CAlg(\SynSp_{\bT_\ev})^\ev\to \CAlg(\Sp^{\B S^1})^{\ev}$.
\end{construction}

\begin{notation}
    Let $E$ be an even $\bE_\infty$-ring spectrum with $S^1$-action.
    Then the homotopy fixed point spectral sequence computing $\pi_*(E^{\h S^1})$ will collapse, so
    $\pi_{-2}(E^{\h S^1})$ will surject onto $\H^2(\B S^1, \pi_0(E))$.
    We will denote by $v$ a fixed choice of lift of the generator of $\H^2(\B S^1, \pi_0(E))$ to $\pi_{-2}(E^{\h S^1})$.
    Similarly, we will denote by $v$ a choice of lift of this element to $\pi_{-2}(\F^{-1}(\tau_{\geq 2\star}E_{\h S^1}))$,
    and later to the same element in $\pi_{-2}(\F^{\geq -1}(E_\ev^{\bT_\ev}))$.
\end{notation}

\begin{lemma}~\label{lem: underling is equivalence on even}
    The functor $\F^{\geq -\infty}\colon\CAlg(\SynSp_{\bT_\ev})^{\ev}\to \CAlg(\Sp^{\B S^1})^\ev$ is an equivalence of $\infty$-categories.
\end{lemma}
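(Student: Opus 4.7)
The plan is to construct an inverse functor $G\colon\CAlg(\Sp^{\B S^1})^\ev \to \CAlg(\SynSp_{\bT_\ev})^\ev$ using the even filtration, and to verify it is inverse to $\F^{\geq-\infty}$. First, I would identify both sides using the standard equivalences $\CAlg(\Sp^{\B S^1}) \simeq \coMod_{\bT^\vee}(\CAlg(\Sp))$ and $\CAlg(\SynSp_{\bT_\ev}) \simeq \coMod_{\bT_\ev^\vee}(\CAlg(\SynSp))$, under which the forgetful functor is induced by the symmetric monoidal colimit functor $\F^{\geq-\infty}$ together with the bialgebra equivalence $\F^{\geq-\infty}\bT_\ev^\vee \simeq \bT^\vee$ (which follows from dualizability, Corollary~\ref{cor:dualizable}).

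Given $E$ an even $\bE_\infty$-ring with coaction $E \to E \otimes_\bS \bT^\vee$, I would apply the lax symmetric monoidal functor $\F^{\geq\star}_\ev$ of Lemma~\ref{lem:even_monoidal} to obtain $\F^{\geq\star}_\ev E \to \F^{\geq\star}_\ev(E \otimes_\bS \bT^\vee)$ in $\CAlg(\SynSp)$. To extract from this a genuine $\bT_\ev^\vee$-coaction, I would invert the natural comparison map
\[
\F^{\geq\star}_\ev E \otimes_{\bS_\ev} \bT_\ev^\vee \longrightarrow \F^{\geq\star}_\ev(E \otimes_\bS \bT^\vee),
\]
which I claim is an equivalence when $E$ is even. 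Dualizing the fiber sequence of Lemma~\ref{lem: fiber sequence for Tsyn} gives $\bS_\ev \to \bT_\ev^\vee \to \bS_\ev(-1)[-1]$, so the left-hand side fits in an extension of $\F^{\geq\star+1}_\ev E[-1]$ by $\F^{\geq\star}_\ev E$. On the right-hand side, Atiyah duality $\bT^\vee \simeq \bS \oplus \bS[-1]$ and evenness of $E$ (so $\tau_{\geq 2\star}(E[-1]) \simeq \tau_{\geq 2\star+2}E[-1]$) give a split decomposition with identical associated graded. One then checks that evenness forces the synthetic extension to split compatibly, so the comparison map is an equivalence.

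The resulting composite produces a $\bT_\ev^\vee$-coaction on $\F^{\geq\star}_\ev E \simeq \tau_{\geq 2\star}E$, defining $G(E)$, and coassociativity/coherence follow from lax monoidality of $\F^{\geq\star}_\ev$ and naturality of the comparison. For the inverse property: $\F^{\geq-\infty}\circ G = \id$ is immediate from $\F^{\geq-\infty}\F^{\geq\star}_\ev E \simeq E$ and the identification of underlying bialgebras. For $G\circ\F^{\geq-\infty} = \id$: given $\F^{\geq\star}R \in \CAlg(\SynSp_{\bT_\ev})^\ev$ with underlying $E$, Lemma~\ref{lem:uniqueness of ring structure on whitehead tower.} identifies the underlying synthetic $\bE_\infty$-ring $\F^{\geq\star}R \simeq \F^{\geq\star}_\ev E$ canonically, and then the space of $\bT_\ev^\vee$-coactions on $\F^{\geq\star}_\ev E$ lifting the fixed $\bT^\vee$-coaction on $E$ is contractible by the comparison of the previous step.

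The main obstacle is the comparison $\F^{\geq\star}_\ev E \otimes_{\bS_\ev} \bT_\ev^\vee \simeq \F^{\geq\star}_\ev(E \otimes_\bS \bT^\vee)$ for $E$ even. The Warning following the definition of $\bT_\ev^\vee$ shows this identification genuinely fails for $E = \bS$, so evenness cannot be relaxed; moreover, one cannot simply invoke the lax monoidal map $\F^{\geq\star}_\ev E \otimes_{\bS_\ev} \F^{\geq\star}_\ev \bT^\vee \to \F^{\geq\star}_\ev(E \otimes_\bS \bT^\vee)$ because $\F^{\geq\star}_\ev \bT^\vee$ differs from $\bT_\ev^\vee$, so the required naturality of the splitting in $E$ requires genuine care.
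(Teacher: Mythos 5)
Your proposal takes a genuinely different route from the paper, but the central technical claim it rests on is false. You assert that the comparison map
\[
\F^{\geq\star}_\ev E \otimes_{\bS_\ev} \bT_\ev^\vee \longrightarrow \F^{\geq\star}_\ev(E \otimes_\bS \bT^\vee)
\]
is an equivalence for $E$ even. This already fails for $E = \bZ$ with the trivial action. On the left, $\bZ\otimes_{\bS_\ev}\bT_\ev^\vee \simeq \bT_\fil^\vee$, which has nontrivial filtration ($\gr^0 = \bZ$, $\gr^{-1} = \bZ[-1]$). On the right, $E\otimes_\bS\bT^\vee = \bZ^{S^1_+}$, and $\bZ^{S^1_+} \to \bZ$ is an eff cover with \v{C}ech complex consisting entirely of \emph{discrete} rings ($\bZ\otimes_{\bZ^{S^1_+}}\bZ \simeq \Gamma_\bZ(\sigma x)$ with $|\sigma x| = 0$, etc.), so Lemma~\ref{lem:local_computation}(b) gives $\F^{\geq\star}_\ev(\bZ^{S^1_+}) \simeq \ins^0\bZ^{S^1_+}$, concentrated in weight $\leq 0$. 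These do not agree. The root cause is that $E\otimes_\bS\bT^\vee \simeq E \oplus E[-1]$ is \emph{never} even, so its even filtration is not the double-speed Postnikov filtration --- you implicitly assume it is when you write $\tau_{\geq 2\star}(E[-1]) \simeq (\tau_{\geq 2\star+2}E)[-1]$ for the ``right-hand side'', but that identification only computes $\tau_{\geq 2\star}(E\otimes\bT^\vee)$, not $\F^{\geq\star}_\ev(E\otimes\bT^\vee)$. The Warning you cite is in fact already telling you this: ``this fails $\bZ$-linearly'' is precisely the statement that the comparison fails at $E = \bZ$.

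The paper avoids this problem entirely by never taking the even filtration of anything with an odd-degree cell. It passes to $\bT_\ev$-\emph{fixed points}, which land back in the world of even spectra: for $R = E^{\h S^1}$ (even with trivial action) the unipotence lemma (Lemma~\ref{lem:unipotence}) identifies $\bT_\ev$-equivariant $R_\ev^\triv$-modules with $R_\ev^{\bT_\ev}$-modules complete along $R_\ev^{\bT_\ev} \to R_\ev$, and Lemma~\ref{lem:complete_underlying}(vi) shows these fixed points are honestly double-speed Whitehead towers. Essential surjectivity then descends the filtered data through the unipotence equivalence, and full faithfulness reduces to Lemma~\ref{lem:uniqueness of ring structure on whitehead tower.}. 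If you want to repair your approach you would need a different mechanism for producing the $\bT_\ev^\vee$-coaction --- one that does not go through $\F^{\geq\star}_\ev$ of a non-even ring --- and the unipotence route is exactly such a mechanism.
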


\begin{proof}
    It is enough to show that $\F^{\geq -\infty}$ is fully faithful and essentially surjective. We
    will begin by showing essential surjectivity, so let $E\in \CAlg(\Sp^{\B S^1})^\ev$. Note first
    that for such an $E$ there is a canonical $\tau_{\geq
    2\star}((E^{\h S^1})^{\h S^1})$-algebra structure on $\tau_{\geq 2\star}(E^{\h S^1})$ coming
    from the lax-monoidality of the double-speed Postnikov filtration. Let $R:=E^{\h S^1}$. It then
    follows from Lemma~\ref{lem:complete_underlying}(vi) that $((R_\ev)^{\triv})^{\bT_\ev}\simeq
    \tau_{\geq 2\star}(R^{\h S^1})$, and this identification is of $\bE_\infty$-rings by
    Lemma~\ref{lem:uniqueness of ring structure on whitehead tower.}. Therefore we have a
    natural lift $\tau_{\geq 2\star}E^{\h S^1}\in
    \CAlg_{R_\ev^{\bT_\ev}}(\SynSp_{\bT_\ev})$
    This algebra is complete with respect to
    the map $(R_\ev^\triv)^{\bT_\ev}\to R_\ev^\triv$, and so Lemma~\ref{lem:unipotence} provides a
    natural object $$R^{\triv}_\ev\otimes_{(R^{\triv})^{\bT_\ev}}(\tau_{\geq 2\star}E^{\h S^1})\in \CAlg(\SynSp_{\bT_\ev})$$ 
    with underlying
    synthetic spectrum
    \[\tau_{\geq 2\star}(E^{\h S^1})\otimes_{\tau_{\geq 2\star}((E^{\h S^1})^{\h S^1})}\tau_{\geq 2\star}E^{\h S^1}
    \simeq (\tau_{\geq 2\star}E^{hS^1})/v
    \simeq \tau_{\geq 2\star}E,
    \]
    so this produces a lift of $\tau_{\geq 2\star}E$, as desired.

    It remains to show that $\F^{\geq -\infty}$ is fully faithful. Let $\tau_{\geq 2\star}E_1$ and
    $\tau_{\geq 2\star} E_2$ be two objects of $\CAlg(\SynSp_{\bT_\ev})^{\ev}$. Then any ring map
    $\tau_{\geq 2\star} E_1\to \tau_{\geq 2\star}E_2$ induces a ring map
    $(\tau_{\geq2\star}E_1)^{\bT_{\ev}}\to (\tau_{\geq2\star}E_2)^{\bT_\ev}$ which sends $v\in
    \pi_{-2}(\F^{\geq -1}(\tau_{\geq2\star}E_1)^{\bT_{\ev}})$ to a choice of $v$ for $E_2$.
    Conversely, any such map of rings $(\tau_{\geq2\star}E_1)^{\bT_{\ev}}\to
    (\tau_{\geq2\star}E_2)^{\bT_\ev}$ induces a ring map $\tau_{\geq 2\star} E_1\to \tau_{\geq
    2\star}E_2$ by Lemma~\ref{lem:unipotence}. For any fixed map $f\colon\tau_{\geq 2\star} E_1\to
    \tau_{\geq 2\star}E_2$, Lemma~\ref{lem:unipotence} also shows that the natural map
    \[(-)^{\bT_{\ev}}:\mathrm{Map}_{\CAlg(\SynSp_{\bT_\ev})}(\tau_{\geq 2\star} E_1,
    \tau_{\geq 2\star}E_2)_f\to\mathrm{Map}_{\CAlg(\SynSp)}(\tau_{\geq 2\star} E_1^{\bT_\ev},
    \tau_{\geq 2\star}E_2^{\bT_\ev})_{f^{\bT_\ev}}\] is an equivalence on the connected components
    of $f$ onto $f^{\bT_\ev}$. The same argument applies
    after applying $\F^{\geq -\infty}$, so we are reduced to showing the lifts of ring maps on the
    double-speed Whitehead filtration on rings is unique. This is Lemma~\ref{lem:uniqueness
    of ring structure on whitehead tower.}.
\end{proof}

\begin{lemma}~\label{lem:tv_coaction}
    The even filtration naturally lifts to a functor $(-)_{\ev}:\CAlg(\Sp^{\B S^1})^\lev\to \CAlg(\SynSp_{\bT_\ev})$.
\end{lemma}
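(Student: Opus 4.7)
The plan is to combine Lemma~\ref{lem: underling is equivalence on even} with a limit construction over equivariant even covers. On the full subcategory of even objects, Lemma~\ref{lem: underling is equivalence on even} gives a canonical inverse
\[
(-)_\ev\colon\CAlg(\Sp^{\B S^1})^\ev\xrightarrow{\simeq}\CAlg(\SynSp_{\bT_\ev})^\ev\hookrightarrow\CAlg(\SynSp_{\bT_\ev}),
\]
whose composite with $\F^{\geq-\infty}$ is naturally the identity. On such an even $B$, the underlying synthetic spectrum of $B_\ev$ is $\tau_{\geq 2\star}B\simeq\F^{\geq\star}_\ev B$, so the lift is compatible with the non-equivariant even filtration on even objects.

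To extend the construction to arbitrary $A\in\CAlg(\Sp^{\B S^1})^\lev$, I would take a limit over the $\infty$-category $\Cscr_A:=(\CAlg(\Sp^{\B S^1})^\ev)_{A/}$ of $S^1$-equivariant maps from $A$ into an even target, setting
\[
A_\ev:=\lim_{(A\to B)\in\Cscr_A}B_\ev\in\CAlg(\SynSp_{\bT_\ev}).
\]
Functoriality in $A$ is automatic from this description: a map $A\to A'$ induces a precomposition functor $\Cscr_{A'}\to\Cscr_A$ and hence a morphism of limits $A_\ev\to A'_\ev$. A mild cardinality cutoff is required to ensure the limit is well-defined, but is standard given that $\F^{\geq\star}_\ev A$ itself is constructed from an analogous limit in~\cite{hrw}.

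Finally, I need to verify that the underlying synthetic spectrum of $A_\ev$ recovers $\F^{\geq\star}_\ev A$. The forgetful functor $\CAlg(\SynSp_{\bT_\ev})\to\CAlg(\SynSp)$ preserves limits, so the underlying object is $\lim_{B\in\Cscr_A}\tau_{\geq 2\star}B$. Choosing an $S^1$-equivariant eff cover $A\to B$ with $B$ even, each $B^{\otimes_A n}$ is even and retains an $S^1$-action by the eff hypothesis, so the \v{C}ech complex $B^\bullet$ is a cosimplicial diagram in $\Cscr_A$. An equivariant version of the Novikov descent argument used for Lemma~\ref{lem:local_computation}(b) then identifies this limit with $\Tot(\tau_{\geq 2\star}B^\bullet)\simeq\F^{\geq\star}_\ev A$.

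The main obstacle is this last descent step: showing that the limit over all equivariant even targets may be computed by the \v{C}ech cosimplicial object of a single equivariant eff cover. This reduces to verifying that for any $A\to C$ in $\Cscr_A$, the iterated pushouts $C\otimes_A B^{\otimes_A n}$ remain objects of $\Cscr_A$, which follows from stability of eff and evenness under equivariant base change. With this descent in hand, both the compatibility with $\F^{\geq\star}_\ev(-)$ on underlying filtered spectra and the functoriality of the lift fall out of the limit definition.
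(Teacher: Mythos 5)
Your proposal is essentially the paper's argument, stated with more detail. The paper's proof invokes the fact (from~\cite{hrw}) that the even filtration on eff-locally even $S^1$-equivariant rings is right Kan extended from the full subcategory of even $S^1$-equivariant rings, uses that the forgetful functor $\CAlg(\SynSp_{\bT_\ev})\to\CAlg(\SynSp)$ preserves limits, and then reduces to the even case, which is Lemma~\ref{lem: underling is equivalence on even}. Your limit over $\Cscr_A=(\CAlg(\Sp^{\B S^1})^\ev)_{A/}$ is precisely this right Kan extension, and your verification that the underlying synthetic spectrum recovers $\F^{\geq\star}_\ev A$ — via identifying both $\lim_{\Cscr_A}\tau_{\geq 2\star}B$ and the (non-equivariant) even filtration with $\Tot(\tau_{\geq 2\star}B^\bullet)$ for a single $S^1$-equivariant eff cover — is the content of the "right Kan extended" assertion the paper leans on without unpacking. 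The one place you are somewhat terse is the equivariant \v{C}ech cofinality statement at the end (that the \v{C}ech complex of a chosen equivariant eff cover is final in $\Cscr_A$ after applying $\tau_{\geq 2\star}$); you gesture at "stability of eff and evenness under equivariant base change," which is necessary but not quite the full cofinality argument in the style of~\cite[Cor.~2.2.14]{hrw}. But that is a known point, and the overall structure of the proof matches the paper's.
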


\begin{proof}
    The even filtration on eff-locally even $\bE_\infty$-rings with $S^1$-action is right Kan
    extended from even $\bE_\infty$-rings with $S^1$-action, and the functor
    $\CAlg(\SynSp_{\bT_\ev})\to \CAlg(\SynSp)$ preserves limits. It is therefore enough to produce,
    for each even $E\in\CAlg(\Sp^{\B S^1})$,
    a functorial lift $\F^{\geq \star}A\in \CAlg(\SynSp_{\bT_\ev})$ of $\tau_{\geq 2\star}E\in
    \SynSp_{\bT_\ev}$. This functorial lift is given by the inverse of $\F^{\geq -\infty}$, which exists in the even case by Lemma~\ref{lem: underling is equivalence on even}. 
\end{proof}

\begin{corollary}\label{cor:contractible}
    Let $E$ be an even $\bE_\infty$-ring with $\bT$-action. The space of $\bT_\ev$-actions on
    $E_\ev$ compatible with the $\bT$-action on $E$ is contractible: the fiber of
    $\CAlg(\SynSp_{\bT_\ev})\rightarrow\CAlg(\Sp_{\bT})$ over $E$ is contractible.
\end{corollary}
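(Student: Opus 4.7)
The plan is to deduce this corollary almost immediately from Lemma~\ref{lem: underling is equivalence on even}, which establishes an equivalence $\F^{\geq -\infty}\colon \CAlg(\SynSp_{\bT_\ev})^\ev \we \CAlg(\Sp^{\B S^1})^\ev$. Since the fiber of an equivalence of $\infty$-categories over any object is contractible, the task reduces to identifying the spaces appearing in the corollary with the fiber of this equivalence over $E$.

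Because $E$ is even, the synthetic $\bE_\infty$-ring $E_\ev$ has underlying synthetic spectrum $\tau_{\geq 2\star}E$ and thus, endowed with any $\bT_\ev$-action lifting the $\bT$-action on $E$, defines an object of $\CAlg(\SynSp_{\bT_\ev})^\ev$ sitting over $E$ under $\F^{\geq -\infty}$. This identifies the space of $\bT_\ev$-actions on $E_\ev$ compatible with the $\bT$-action on $E$ with the fiber of the equivalence of Lemma~\ref{lem: underling is equivalence on even} over $E$, which is contractible. For the reformulation in terms of the full fiber of $\F^{\geq -\infty}\colon \CAlg(\SynSp_{\bT_\ev}) \to \CAlg(\Sp_{\bT})$ over $E$, I would argue that every lift $X$ in this fiber already lies in $\CAlg(\SynSp_{\bT_\ev})^\ev$: since $\F^{\geq -\infty}X \simeq E$ is even, Lemma~\ref{lem:uniqueness of ring structure on whitehead tower.} shows that the $\bS_\ev$-algebra structure on $X$ is essentially uniquely given by the double-speed Whitehead tower on $E$, placing $X$ in the even subcategory.

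The proof faces no serious obstacle beyond bookkeeping; the substantive content is fully contained in Lemma~\ref{lem: underling is equivalence on even} together with the uniqueness statement of Lemma~\ref{lem:uniqueness of ring structure on whitehead tower.}. If one wanted to avoid invoking the latter, one could instead interpret the corollary purely within the even subcategory and simply quote the fact that the fiber of an equivalence is contractible, which is what the first sentence of the statement really pins down.
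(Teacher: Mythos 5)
Your first paragraph captures the paper's proof exactly: the authors write only ``Apply Lemma~\ref{lem: underling is equivalence on even},'' and the content is that the fiber of the equivalence $\CAlg(\SynSp_{\bT_\ev})^\ev\to\CAlg(\Sp^{\B S^1})^\ev$ over $E$ is contractible. Since any object of $\CAlg(\SynSp_{\bT_\ev})^\ev$ whose colimit is identified with $E$ automatically has underlying synthetic spectrum $\tau_{\geq 2\star}E = E_\ev$, this fiber is precisely the space of $\bT_\ev$-actions on the fixed object $E_\ev$ lifting the $\bT$-action on $E$, which is what the first sentence of the statement asserts.

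The middle paragraph, however, contains a real error. You claim that any $X\in\CAlg(\SynSp_{\bT_\ev})$ with $\F^{\geq -\infty}X\simeq E$ already lies in $\CAlg(\SynSp_{\bT_\ev})^\ev$, citing Lemma~\ref{lem:uniqueness of ring structure on whitehead tower.}. That lemma only asserts the uniqueness of the $\bE_\infty$-ring structure on the \emph{fixed} filtered spectrum $\tau_{\geq 2\star}R$ compatible with the one on $R$; it says nothing about which filtered spectrum can underlie an arbitrary lift $X$, and the inference from ``$\F^{\geq -\infty}X\simeq E$'' to ``$X\simeq\tau_{\geq 2\star}E$'' has no support. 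In fact the claim is false: the constant filtration $c(E)$, viewed as a $\bT_\ev$-equivariant $\bS_\ev$-algebra via the natural underlying-spectrum maps $\bS_\ev\to c(\bS)$ and $\bT_\ev\to c(\bT)$, lifts $E$ but is not equivalent to $\tau_{\geq 2\star}E$ as a filtered spectrum whenever $E$ is non-discrete. So the full fiber of $\CAlg(\SynSp_{\bT_\ev})\to\CAlg(\Sp_\bT)$ over $E$ is genuinely larger than the fiber inside the even subcategories. Your third paragraph already offers the correct reading --- interpret the fiber inside the even subcategories, equivalently consider only $\bT_\ev$-actions on the fixed object $E_\ev$ --- which is what the paper's one-line proof establishes and what the first sentence of the corollary really claims. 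The middle paragraph should be dropped.
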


\begin{proof}
    Apply Lemma~\ref{lem: underling is equivalence on even}.
\end{proof}

The previous corollary allows us to identify the homotopy types of $(E_\ev)^{\bT_\ev}$ and
$(E_\ev)^{\t\bT_\ev}$ as synthetic spectra with trivial $\bT_\ev$-action using the construction of
Remark~\ref{rem:trivial_coaction}.

\begin{corollary}\label{cor:two_trivs}
    If $E$ is an even $\bE_\infty$-ring with $\bT$-action, then there are natural equivalences
    $(E_\ev)^{\bT_\ev}\we (\tau_{\geq 2\star}(E^{\h S^1}))^\triv$ and
    $(E_\ev)^{\t\bT_\ev}\we(\tau_{\geq 2\star}(E^{\t S^1}))^\triv$.
\end{corollary}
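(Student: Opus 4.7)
The plan is to reduce both equivalences to Lemma~\ref{lem: underling is equivalence on even}, which identifies $\CAlg(\SynSp_{\bT_\ev})^\ev$ with $\CAlg(\Sp^{\B S^1})^\ev$ via $\F^{\geq-\infty}$. Both $(E_\ev)^{\bT_\ev}$ and $(E_\ev)^{\t\bT_\ev}$ are naturally $\bE_\infty$-algebras in $\SynSp$, hence carry the trivial $\bT_\ev$-action when viewed in $\SynSp_{\bT_\ev}$, and the claimed right-hand sides are tautologically in $\CAlg(\SynSp_{\bT_\ev})^\ev$ with trivial $\bT$-action on the underlying $\bE_\infty$-rings $E^{hS^1}$ and $E^{tS^1}$. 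It therefore suffices to show that $(E_\ev)^{\bT_\ev}$ and $(E_\ev)^{\t\bT_\ev}$ lie in $\CAlg(\SynSp_{\bT_\ev})^\ev$ with $\F^{\geq-\infty}$-images $E^{hS^1}$ and $E^{tS^1}$.

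For the fixed-point equivalence, Lemma~\ref{lem:complete_underlying}(vi) applied with $B=E$ to the $E[S^1]_\ev$-module $E_\ev$ immediately identifies the underlying synthetic spectrum of $(E_\ev)^{\bT_\ev}$ with the double-speed Whitehead tower $\tau_{\geq 2\star}E^{hS^1}$, placing it in $\CAlg(\SynSp_{\bT_\ev})^\ev$; the equivalence then follows from Lemma~\ref{lem: underling is equivalence on even}.

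For the Tate equivalence, the first step is to identify the orbits. Applying the CW filtration of Construction~\ref{const:cw} with $B=E$ yields an exhaustive filtration of $E_\ev\in\SynSp_{E[S^1]_\ev}$ with graded pieces $E[S^1]_\ev(s)[2s]$ for $s\leq 0$. Tensoring over $\bT_\ev$ with $\bS_\ev$ and invoking Proposition~\ref{Prop: symmetric monoidal} to identify $E[S^1]_\ev\otimes_{\bT_\ev}\bS_\ev\simeq E_\ev$ produces an exhaustive filtration of $(E_\ev)_{\bT_\ev}$ with graded pieces $E_\ev(s)[2s]$. Since $\F^{\geq i}(E_\ev(s)[2s])\simeq\tau_{\geq 2(i-s)}E[2s]$ is $(2i)$-connective, so is $\F^{\geq i}((E_\ev)_{\bT_\ev})$; an inspection of homotopy groups against the usual homotopy-orbit spectral sequence then identifies $(E_\ev)_{\bT_\ev}\simeq\tau_{\geq 2\star}E_{hS^1}$.

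With $(E_\ev)^{\bT_\ev}\simeq\tau_{\geq 2\star}E^{hS^1}$ and $(E_\ev)_{\bT_\ev}\simeq\tau_{\geq 2\star}E_{hS^1}$ in hand, I would then analyze the Tate cofiber sequence $(E_\ev)_{\bT_\ev}(1)[1]\to(E_\ev)^{\bT_\ev}\to(E_\ev)^{\t\bT_\ev}$ in each filtration degree $i$: this becomes the cofiber sequence $\tau_{\geq 2i-2}(E_{hS^1})[1]\to\tau_{\geq 2i}E^{hS^1}\to\F^{\geq i}((E_\ev)^{\t\bT_\ev})$. Comparing with the classical norm cofiber sequence $E_{hS^1}[1]\to E^{hS^1}\to E^{tS^1}$ via the long exact sequence, and using evenness of $E^{hS^1}$, $E_{hS^1}$, and $E^{tS^1}$, shows that $\F^{\geq i}((E_\ev)^{\t\bT_\ev})$ is $(2i)$-connective with $\pi_k$ matching $\pi_k E^{tS^1}$ for $k\geq 2i$, so the natural map factors through an equivalence onto $\tau_{\geq 2i}E^{tS^1}$. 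This places $(E_\ev)^{\t\bT_\ev}$ in $\CAlg(\SynSp_{\bT_\ev})^\ev$ with $\F^{\geq-\infty}$-image $E^{tS^1}$, and a final application of Lemma~\ref{lem: underling is equivalence on even} closes the argument. The main obstacle is the double-speed Postnikov analysis of $(E_\ev)_{\bT_\ev}$ via the CW filtration; once the orbits are identified as $\tau_{\geq 2\star}E_{hS^1}$, the Tate case reduces to a routine connectivity calculation.
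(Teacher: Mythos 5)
Your overall strategy matches the paper's: reduce the identification to the equivalence of $\infty$-categories in Lemma~\ref{lem: underling is equivalence on even} (the paper routes through Corollary~\ref{cor:contractible}, which is the same lemma), and supply the identification of the underlying filtered spectra via Lemma~\ref{lem:complete_underlying}(vi), the CW filtration, and the norm cofiber sequence. The Tate analysis you carry out is precisely what the paper defers to the proof of Lemma~\ref{lem: even filtration comparison in the even case}, so the detail level is fine.

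However, there is a genuine error in the choice of base ring $B$, and it matters. In Lemma~\ref{lem:complete_underlying}(v)/(vi) and Construction~\ref{const:cw}, $B$ is an even $\bE_\infty$-ring with no $\bT$-action, and a ``$B[S^1]_\ev$-module'' is a $B_\ev$-module equipped with a commuting $\bT_\ev$-action. You take $B=E$ and declare $E_\ev$ to be ``the $E[S^1]_\ev$-module $E_\ev$''. But since $E$ is the thing carrying the nontrivial $\bT$-action, there is no way to make $E$ an $E$-module whose module structure commutes with that $\bT$-action; the only $E[S^1]_\ev$-module structure on $E_\ev$ available is the one coming from the augmentation $E[S^1]\to E$, and this encodes the \emph{trivial} $\bT_\ev$-action. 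Running Lemma~\ref{lem:complete_underlying}(vi) with $B=E$ would therefore compute $(E_\ev)^{\bT_\ev}$ for the trivially-acted $E_\ev$, which is $\tau_{\geq 2\star}\bigl(E^{\B S^1}\bigr)$, not $\tau_{\geq 2\star}\bigl(E^{\h S^1}\bigr)$ with the given action (these happen to have the same homotopy groups when $E$ is even, which can make the slip easy to miss, but they are different filtered spectra and the claimed natural equivalence is about the latter).

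The fix is the one the paper makes explicit in the proof of Lemma~\ref{lem: even filtration comparison in the even case}: take $B=E^{\h S^1}$. This is even (the homotopy fixed point spectral sequence for even $E$ collapses by parity), it carries the trivial residual $\bT$-action, and $E$ is naturally a $\bT$-equivariant $E^{\h S^1}$-algebra, so $E_\ev$ really is an $E^{\h S^1}[S^1]_\ev$-module compatibly with the $\bT_\ev$-action you care about. With this replacement all of your graded-piece and connectivity computations go through unchanged (the graded pieces of the induced CW filtration of $(E_\ev)_{\bT_\ev}$ are still $E_\ev(s)[2s]$), and the rest of the argument is correct.
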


\begin{proof}
    Both $(E_\ev)^{\bT_\ev}$ and $(\tau_{\geq 2\star}(E^{\h S^1}))^\triv$ are trivial
    $\bT_\ev$-actions on $\tau_{\geq 2\star}(E^{\h S^1})$. Thus, they agree by
    Corollary~\ref{cor:contractible}. As the space of such identifications is contractible, the
    equivalence is natural. The argument for Tate is the same.
\end{proof}

\begin{lemma}~\label{lem: even filtration comparison in the even case}
    Fix $n\geq 1$. There are natural equivalences
    \begin{align*}
        (\F^{\geq\star}_{\ev}E)^{\bT_{\ev}}&\simeq \tau_{\geq 2\star}(E^{\h S^1})\we\F^{\geq\star}_{\ev,\bT}E,\\
        (\F^{\geq\star}_{\ev}E)^{\t\bT_{\ev}}&\simeq \tau_{\geq 2\star}(E^{\t S^1})\we\F^{\geq\star}_{\ev,\t\bT}E.\\
        (\F^{\geq\star}_{\ev}E)^{C_{n,\ev}}&\simeq \tau_{\geq 2\star}(E^{\h
        C_n})\we\F^{\geq\star}_{\ev,C_n}E,\text{ and}\\
        (\F^{\geq\star}_{\ev}E)^{\t C_{n,\ev}}&\simeq \tau_{\geq 2\star}(E^{\t
        C_n})\we\F^{\geq\star}_{\ev,\t C_n}E
    \end{align*}
    of $\bE_\infty$-algebras in synthetic spectra
    for all even $\bE_\infty$-rings $E$ with $S^1$-action.
\end{lemma}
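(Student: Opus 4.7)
The proof falls into two blocks, corresponding respectively to the ``first'' equivalences $(\F^{\geq\star}_\ev E)^{G_\ev}\we\tau_{\geq 2\star}(E^{hG})$ and to the ``second'' equivalences $\tau_{\geq 2\star}(E^{hG})\we\F^{\geq\star}_{\ev,G}E$ (where $G=\bT$, $\t\bT$, $C_n$, or $\t C_n$). The overarching strategy is to reduce the first block to Lemma~\ref{lem:complete_underlying}(vi) and to collapse the second block via an initial-object argument in the indexing diagrams.

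The plan for the first block is as follows. Since $E$ is even, Lemma~\ref{lem: underling is equivalence on even} yields an equivalence $\F^{\geq\star}_\ev E\we\tau_{\geq 2\star}E$ of $\bE_\infty$-algebras in $\SynSp_{\bT_\ev}$. In particular $\F^{\geq\star}_\ev E$ is an $E[S^1]_\ev$-module with $\F^{\geq i}(\F^{\geq\star}_\ev E)\we\tau_{\geq 2i}E$ for all $i$, so we are in the setup of Lemma~\ref{lem:complete_underlying}(v)--(vi) with $B=E$. Applying part~(vi) directly identifies $(\F^{\geq\star}_\ev E)^{\bT_\ev}\we\tau_{\geq 2\star}(E^{\h S^1})$ and, using that $E$ is itself even, $(\F^{\geq\star}_\ev E)^{C_{n,\ev}}\we\tau_{\geq 2\star}(E^{\h C_n})$ as filtered spectra. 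For the Tate versions, the $\bT_\ev$-case is Corollary~\ref{cor:two_trivs}; for the $C_{n,\ev}$-case I plan to use the norm fiber sequence $(\F^{\geq\star}_\ev E)_{C_{n,\ev}}(1)[1]\xrightarrow{\Nm}(\F^{\geq\star}_\ev E)^{C_{n,\ev}}\to(\F^{\geq\star}_\ev E)^{\t C_{n,\ev}}$ of Construction~\ref{const: Tate Construction finite groups.} together with the identification of the filtered orbits as the double-speed Whitehead tower of $E_{\h C_n}$, which follows by the same CW-filtration argument used in the proof of Lemma~\ref{lem:complete_underlying}(vi). The symmetric monoidality of $\F^{\geq-\infty}$ and the functoriality of the norm then imply that the synthetic norm refines the classical norm $E_{\h C_n}[1]\to E^{\h C_n}$, so the cofiber computes the double-speed Whitehead tower of $E^{\t C_n}$. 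All underlying equivalences are upgraded to $\bE_\infty$-algebra equivalences via Corollary~\ref{cor:contractible} and Lemma~\ref{lem:uniqueness of ring structure on whitehead tower.}, which pin down the $\bE_\infty$-structure on a double-speed Whitehead tower uniquely from its underlying ring.

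The second block is immediate once one observes that, since $E$ itself is even and $S^1$-equivariant, the pair $(E,\id_E)$ is an initial object in each of the indexing categories $\CAlg(\Sp^{\B S^1})^\ev_{E/}$ used to define $\F^{\geq\star}_{\ev,\bT}E$, $\F^{\geq\star}_{\ev,\t\bT}E$, $\F^{\geq\star}_{\ev,C_n}E$, and $\F^{\geq\star}_{\ev,\t C_n}E$: indeed any $S^1$-equivariant map $E\to B$ is the unique morphism $(E,\id_E)\to(B,f)$ over $E$. Hence each such limit is computed by evaluation at the initial object, yielding $\tau_{\geq 2\star}(E^{\h S^1})$, $\tau_{\geq 2\star}(E^{\t S^1})$, $\tau_{\geq 2\star}(E^{\h C_n})$, and $\tau_{\geq 2\star}(E^{\t C_n})$ respectively.

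I expect the main obstacle to be the $C_{n,\ev}$-Tate identification: one must carefully track the synthetic $C_{n,\ev}$-norm against the classical $C_n$-norm, and establish the analog of Lemma~\ref{lem:complete_underlying}(vi) for filtered orbits so that the fiber-sequence comparison goes through. The $\bT_\ev$-cases, by contrast, are essentially immediate from the results already recorded in the previous subsections.
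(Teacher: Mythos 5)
Your overall architecture matches the paper's: identify the left-hand sides with double-speed Whitehead towers via the results on $(-)^{\bT_\ev}$ and $(-)^{C_{n,\ev}}$, observe that the right-hand sides collapse to double-speed Whitehead towers because the identity $E\to E$ is initial in the eff cover diagram, and then pin down the $\bE_\infty$-algebra structure by uniqueness of even lifts. The second block (initial-object argument) is exactly the paper's one-line remark, and your citation of Corollary~\ref{cor:two_trivs} for the $\bT_\ev$-fixed-point and $\bT_\ev$-Tate cases is clean.

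However, there is a genuine gap in your first block. You claim ``we are in the setup of Lemma~\ref{lem:complete_underlying}(v)--(vi) with $B=E$,'' but Lemma~\ref{lem:complete_underlying}(v) requires $\F^{\geq\star}M$ to be a module over $B[S^1]_\ev$ for an even $\bE_\infty$-ring $B$ with \emph{no} $S^1$-action (so $B[S^1]$ is the group ring on a trivial $S^1$-action, as in Construction~\ref{const:cw} and Lemma~\ref{lem:no_nontrivial}). For $\tau_{\geq 2\star}E$ to be an $E[S^1]_\ev$-module, the $S^1$-action on $E$ would have to be $E$-linear, which fails whenever $E$ has a nontrivial $S^1$-action. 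The correct choice—and the one the paper uses—is $B=E^{\h S^1}$: this is even, has trivial $S^1$-action, and $E$ is genuinely an $S^1$-equivariant $E^{\h S^1}$-module, so $\tau_{\geq 2\star}E$ is an $(E^{\h S^1})[S^1]_\ev$-module and the lemma applies. Without this correction the application of Lemma~\ref{lem:complete_underlying}(vi) is not justified.

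Two smaller issues in your $C_{n,\ev}$-Tate argument. First, the norm of Construction~\ref{const: Tate Construction finite groups.} is $\Nm_p\colon M_{C_{n,\ev}}\to M^{C_{n,\ev}}$ with no twist; the $(1)[1]$ shift appears only in the $\bT_\ev$-norm of Construction~\ref{const:syn_tate}. Second, you assert that $(\F^{\geq\star}_\ev E)_{C_{n,\ev}}$ is the double-speed Whitehead tower of $E_{\h C_n}$, but $E_{\h C_n}$ is in general \emph{not} even when $E$ is (cf.\ the analogous shift $\tau_{\geq 2\star-1}$ appearing in Lemma~\ref{lem:even_chains}), so this identification cannot hold. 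The paper's suggested route via the pullback square of Corollary~\ref{cor:nseries}, which compares $C_{n,\ev}$-constructions to $\bT_\ev$-constructions using that the square is simultaneously pullback and pushout in a stable $\infty$-category, avoids needing a direct description of the filtered orbits and is the more robust approach here.
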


\begin{proof}
    The right-hand equivalences follow from the definitions since $E$ is even.
    The existence of natural equivalences from the left-hand side to the right-hand
    side follows in the case of fixed points from
    Lemma~\ref{lem:complete_underlying}(vi), using the fact that $E$ is a
    $\bT$-equivariant $\bE_\infty$-ring under $E^{\h S^1}$, which is even and has trivial
    $S^1$-action.

    To prove the result for the Tate constructions, one proves an analogous
    result for the homotopy orbits $(\F^{\geq\star}_\ev E)_{\bT_\ev}$ and
    $(\F^{\geq\star}_\ev E)_{C_{n,\ev}}$ using
    Lemma~\ref{lem:complete_underlying}. The case of $\bT_\ev$-orbits follows
    using the CW filtration of Construction~\ref{const:cw}, while for $C_{n,\ev}$-orbits one uses 
    the pullback square of Corollary~\ref{cor:nseries}. We leave the details to
    the reader.

    To prove the final claim, we assume for example that $E^{\t C_n}$ is even. Then, thanks to its
    residual $\bT$-action, $\tau_{\geq 2\star}(E^{\t C_n})$ admits a natural $\bT_\ev$-action by
    Lemma~\ref{lem:tv_coaction}. We are claiming that this action agrees with the one arising on
    $(\tau_{\geq 2\star}E)^{\t C_{n,\ev}}$. In fact, the space of $\bT_\ev$-actions compatible with
    the double-speed Whitehead filtration on an even $\bE_\infty$-ring with $\bT$-action is
    conctractible. This follows from Lemma~\ref{lem:tv_coaction}.
\end{proof}

\begin{example}\label{ex:even_tate}
    In the setting of Lemma~\ref{lem: even filtration comparison in the even case}, it follows that
    if $E^{\t C_n}$ is even, then the equivalence $(F^{\geq\star}_\ev E)^{\t
    C_{n,\ev}}\we\tau_{\geq 2\star}(E^{\t C_n})$ is naturally (uniquely) $\bT_\ev$-equivariant, where
    $(\F^{\geq\star}_\ev E)^{\t C_{n,\ev}}$ is given the $\bT_\ev$-action arising from taking $\t
    C_{n,\ev})$ fixed points and $\tau_{\geq 2\star}(E^{\t C_{n,\ev}})$ is given the
    $\bT_\ev$-action arising from Lemma~\ref{lem:tv_coaction}.
\end{example}

As another application of Lemma~\ref{lem: even filtration comparison in the even case},
we can give a more precise description of the $E_\ev^{\bT_\ev}$-$\bE_\infty$-algebra
structure on $E_\ev^{C_{n,\ev}}$ and $E_\ev^{\t C_{n,\ev}}$ for arbitrary $n$.

\begin{corollary}~\label{cor: mod n-series is cn fixed points}
    Let $E$ be an $S^1$-equivariant even $\bE_\infty$ algebra. Let $\bS_\ev[-2](1)\to
    \tau_{\geq 2\star}(E^{\h S^1})\simeq E_\ev^{\bT_\ev}$ be the map $[n](v)\in \pi_{-2}(E^{\h S^1})$ given
    by the $n$-series associated to the formal group law of $E$. Then the composition
    $\bS_\ev[-2](1)\to E_\ev^{\bT_\ev}\to E_\ev^{C_{n,\ev}}$ is nullhomotopic and the induced map
    \[E_\ev^{\bT_\ev}/[n](v)\to E_\ev^{C_{n,\ev}}\] is an equivalence of synthetic spectra.
\end{corollary}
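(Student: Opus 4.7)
The plan is to apply $\bMap_{B[S^1]_\ev}(-,E_\ev)$ to the pullback square of Corollary~\ref{cor:nseries}, where $B$ is any even $\bE_\infty$-ring spectrum admitting an $S^1$-equivariant map to $E$ (for instance $B=\pi_0(E)$ with its necessarily trivial $S^1$-action, so that $E_\ev$ is naturally an object of $\SynSp_{B[S^1]_\ev}$). Since this contravariant internal-hom sends pullbacks to pullbacks in the stable $\infty$-category $\SynSp$, I obtain a pullback
\[
\xymatrix{
    E_\ev^{C_{n,\ev}}\ar[r]\ar[d] & E_\ev\ar[d]^{\partial}\\
    E_\ev^{\bT_\ev}(-1)[-1]\ar[r]^-{\psi_n} & E_\ev^{\bT_\ev}(-1)[-1]
}
\]
in $\SynSp$, where $\partial$ is the boundary of the fiber sequence $E_\ev^{\bT_\ev}\to E_\ev\to E_\ev^{\bT_\ev}(-1)[-1]$ obtained by applying $\bMap_{B[S^1]_\ev}(-,E_\ev)$ to the split cofiber sequence $B_\ev(1)[1]\to B[S^1]_\ev\to B_\ev$ of Lemma~\ref{lem: fiber sequence for Tsyn}, and $\psi_n$ is the dual of the top horizontal arrow of the corollary's square.

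The next step is to identify $\psi_n$ with multiplication by $[n](v)$. By the Remark following Corollary~\ref{cor:nseries}, the top horizontal arrow of that pullback is the endomorphism of $B_\ev(1)[1]$ in $\SynSp_{B[S^1]_\ev}$ corresponding to the $n$-series $[n]_\bG(t) \in \tau_{\geq 0}(B^{\h S^1}) \simeq \Map_{B[S^1]_\ev}(B_\ev(1)[1], B_\ev(1)[1])$. Applying $\bMap_{B[S^1]_\ev}(-,E_\ev)$ and using the $E_\ev^{\bT_\ev}$-linearity together with the identification $E_\ev^{\bT_\ev}\simeq \tau_{\geq 2\star}(E^{\h S^1})$ from Lemma~\ref{lem: even filtration comparison in the even case}, $\psi_n$ becomes multiplication by the image of $[n]_\bG(t)$ in $E_\ev^{\bT_\ev}$, namely $[n](v)$ for $v \in \pi_{-2}(E^{\h S^1})$ the Euler class associated to the formal group law.

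Finally, I derive the corollary by taking vertical fibers of the pullback: since $\fib(\partial) \simeq E_\ev^{\bT_\ev}$, the left vertical fiber is also $E_\ev^{\bT_\ev}$, producing a fiber sequence $E_\ev^{\bT_\ev} \to E_\ev^{C_{n,\ev}} \to E_\ev^{\bT_\ev}(-1)[-1]$. Its connecting map, computed via the pullback property, is the composite of $\psi_n$ with the boundary of the original fiber sequence; unwinding this composite produces the fiber sequence
\[
E_\ev^{\bT_\ev}[-2](1) \xrightarrow{[n](v)} E_\ev^{\bT_\ev} \to E_\ev^{C_{n,\ev}},
\]
which simultaneously establishes the nullhomotopy of the composition in the statement and identifies $E_\ev^{C_{n,\ev}}$ with the cofiber $E_\ev^{\bT_\ev}/[n](v)$. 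The main technical step will be pinning down the precise grading shifts in the identification of $\psi_n$ with multiplication by $[n](v)$; once this is done, the rest is a direct manipulation of the pullback square.
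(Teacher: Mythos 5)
Your proposal takes a genuinely different route from the paper, and the route has a real gap rather than just an unfinished calculation.

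\textbf{What the paper does.} The paper's proof never attempts to identify the dualized top arrow $\psi_n$ of Corollary~\ref{cor:nseries} with multiplication by an $n$-series. Instead, for the nullhomotopy it uses the adjunction
$\Map_{\SynSp}(\bS_\ev[-2](1),E_\ev^{C_{n,\ev}})\simeq\Omega^\infty\F^{\geq 0}(E_\ev^{C_{n,\ev}}[2](-1))$
together with the identification $E_\ev^{C_{n,\ev}}\simeq\tau_{\geq 2\star}(E^{\h C_n})$ from Lemma~\ref{lem: even filtration comparison in the even case} to reduce everything to the unfiltered statement that the image of $[n](v)$ in $\pi_{-2}(E^{\h C_n})$ is zero. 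For the equivalence it then compares the two filtered fiber sequences level by level, each being $\tau_{\geq 2i}$ applied to the unfiltered Gysin sequence. No choice of auxiliary even ring $B$ is required, and the $n$-series appears only in the classical, unfiltered form where its properties are known.

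\textbf{The gap in your proposal.} Your framework — apply $\bMap_{B[S^1]_\ev}(-,E_\ev)$ to the square of Corollary~\ref{cor:nseries} and read off the fiber sequence — is structurally fine. But the identification of $\psi_n$ with multiplication by $[n]_{\bG_E}(v)$ is where the argument breaks. You cite the Remark following Corollary~\ref{cor:nseries}, but that Remark only asserts (without proof, and phrased as a forward reference) an identification of the top arrow $f$ with $[n]_{\bG_B}(t)$ under the hypothesis that $B$ is \emph{even periodic}. Your suggested choice $B=\pi_0(E)$ is a discrete ring, which is not even periodic; in that case $\pi_0(B^{\h S^1})=\pi_0 B$ has no room for a nontrivial $n$-series, and the formal group of $\pi_0(E)$ is additive, so even a suitably extrapolated version of the Remark would produce the additive $n$-series $nv$ rather than $[n]_{\bG_E}(v)$ after pushing forward along $\pi_0(E)^{\h S^1}\to E^{\h S^1}$. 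These differ whenever the formal group of $E$ is not additive, so the step is not merely a matter of "pinning down grading shifts" — it outputs the wrong element. To salvage this route you would need to choose $B$ to be an even ring mapping $S^1$-equivariantly to $E$ with trivial $S^1$-action and whose formal group pushes forward to that of $E$ (e.g.\ $B=E^{\h S^1}$, or $B=E$ itself when $E$ has trivial action), and you would additionally need to actually prove the identification of $f$ with the $n$-series, since the paper never does. As written, your step 2 is incomplete in a way that the stated "main technical step" does not resolve.

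Separately, your proposed pullback square already encodes the correct grading: chasing it through gives the cofiber sequence $E_\ev^{\bT_\ev}[-2](-1)\to E_\ev^{\bT_\ev}\to E_\ev^{C_{n,\ev}}$, whose weight shift is the \emph{opposite} sign from what is printed in the corollary statement, suggesting a sign typo in the paper; but this is orthogonal to the substantive gap above.
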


\begin{proof}
    By adjunction, the map of synthetic spectra is determined by the map $\ins^0\bS[-2](1)\to
    E_\ev^{C_{n,\ev}}$ which in turn by Lemma~\ref{lem: even filtration comparison in the even
    case} is determined by the map $\bS[-2]\to E^{\h C_n}$ given by the $n$-series on $v$. This
    element is zero, and therefore the map on synthetic spectra $\bS_\ev[-2](1)\to
    E_\ev^{C_{n,\ev}}$ is also nullhomotopic.

    We now have a cofiber sequence of $E_\ev^{\bT_\ev}$-modules \[E_\ev^{\bT_{\ev}}[-2](1)\to
    E_\ev^{\bT_\ev}\to E_\ev^{\bT_\ev}/[n](v)\] which on $\F^{\geq i}$ is then a fiber sequence
    \[\Sigma^{-2}\tau_{\geq 2i+2}E^{\h S^1}\to \tau_{\geq 2i}E^{\h S^1}\to \F^{\geq
    i}(E_\ev^{\bT_\ev})\] and the map from this fiber sequence to the fiber sequence
    \[\Sigma^{-2}\tau_{\geq 2i+2}E^{\h S^1}\to \tau_{\geq 2i}E^{\h S^1}\to \tau_{\geq 2i}E^{\h C_n}\] shows
    that $E^{\bT_\ev}_\ev/[n](v)\to E_\ev^{C_{n,\ev}}$ is an equivalence, as desired.
\end{proof}

\begin{corollary}
    Let $E$ be an $S^1$-equivariant even $\bE_\infty$-algebra. Then the map
    \[E_\ev^{\t\bT_\ev}\otimes_{E_\ev^{\bT_\ev}}E_{\ev}^{C_{n,\ev}}\to E_\ev^{\t C_{n,\ev}}\] is an
    equivalence of $\bT_\ev$ equivariant $\bE_\infty$-algebras.
\end{corollary}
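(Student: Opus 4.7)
The plan is to reduce this to Corollary \ref{cor: mod n-series is cn fixed points} via a base-change argument. That corollary provides a cofiber sequence of $E_\ev^{\bT_\ev}$-modules
\[E_\ev^{\bT_\ev}[-2](1) \xrightarrow{[n](v)} E_\ev^{\bT_\ev} \to E_\ev^{C_{n,\ev}}.\]
Since relative tensor product preserves cofiber sequences, tensoring with $E_\ev^{\t\bT_\ev}$ over $E_\ev^{\bT_\ev}$ identifies the source of the map in question with $E_\ev^{\t\bT_\ev}/[n](v)$, defined as the cofiber of $[n](v)\colon E_\ev^{\t\bT_\ev}[-2](1)\to E_\ev^{\t\bT_\ev}$. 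Hence it suffices to show that the induced natural map $E_\ev^{\t\bT_\ev}/[n](v) \to E_\ev^{\t C_{n,\ev}}$ is an equivalence in $\SynSp_{\bT_\ev}$.

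To verify this, I pass to the underlying filtered spectra via Lemma \ref{lem: even filtration comparison in the even case}, which identifies $E_\ev^{\t\bT_\ev}$ with $\tau_{\geq 2\star}(E^{\t S^1})$ and $E_\ev^{\t C_{n,\ev}}$ with $\tau_{\geq 2\star}(E^{\t C_n})$. At each filtration level $i$, the map becomes a comparison of cofiber sequences of spectra whose left and middle terms are $\Sigma^{-2}\tau_{\geq 2i+2}(E^{\t S^1})$ and $\tau_{\geq 2i}(E^{\t S^1})$, connected by the identity, and whose right-hand terms are $\F^{\geq i}(E_\ev^{\t\bT_\ev}/[n](v))$ and $\tau_{\geq 2i}(E^{\t C_n})$. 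The two outer vertical maps being equivalences, it suffices to prove that the bottom row is a cofiber sequence, i.e., that the classical map $E^{\t S^1}/[n](v) \to E^{\t C_n}$ is an equivalence. This follows by base-changing the fixed-point equivalence $E^{\h S^1}/[n](v) \simeq E^{\h C_n}$ (the $\F^{\geq -\infty}$ of Corollary \ref{cor: mod n-series is cn fixed points}) along $E^{\h S^1} \to E^{\t S^1}$, using the projection formula $E^{\t S^1}\otimes_{E^{\h S^1}} E^{\h C_n} \simeq E^{\t C_n}$ valid in the even setting.

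The main obstacle is ensuring that the natural map is well-defined and is a map of $\bT_\ev$-equivariant $\bE_\infty$-algebras. In particular, one must check that $[n](v)$ becomes null in $E_\ev^{\t C_{n,\ev}}$ via either composite $E_\ev^{\bT_\ev}\to E_\ev^{C_{n,\ev}}\to E_\ev^{\t C_{n,\ev}}$ or $E_\ev^{\bT_\ev}\to E_\ev^{\t\bT_\ev}\to E_\ev^{\t C_{n,\ev}}$, and that these two routes give the same map; this follows from the naturality of the norm cofiber sequence with respect to the restriction $\rho(n)\colon\bT_\ev\to\rho(n)_*\bT_\ev$. The $\bE_\infty$-algebra and $\bT_\ev$-equivariance of the resulting equivalence are then automatic from the symmetric monoidality of the $C_{n,\ev}$-Tate construction (Proposition \ref{prop:monoidal}) together with standard properties of relative tensor product.
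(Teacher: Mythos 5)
Your proposal is correct and follows essentially the same route as the paper: tensor the cofiber sequence from Corollary~\ref{cor: mod n-series is cn fixed points} with $E_\ev^{\t\bT_\ev}$ to rewrite the source as $E_\ev^{\t\bT_\ev}/[n](v)$, then check the resulting comparison map on underlying filtered spectra via Lemma~\ref{lem: even filtration comparison in the even case}, which identifies both sides with $\tau_{\geq 2\star}(E^{\t C_n})$. You are somewhat more explicit than the paper's terse ``both are the double speed Postnikov filtration on $E^{\t C_n}$'' -- in particular you spell out the cofiber-sequence comparison at each filtration level and the unfiltered base-change identity $E^{\t S^1}\otimes_{E^{\h S^1}}E^{\h C_n}\we E^{\t C_n}$ -- but the underlying argument is the same.
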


\begin{proof}
    It is enough to show that the induced map on underlying synthetic spectra
    $E^{\t\bT_\ev}_\ev/[n](v)\to E_\ev^{\t C_{n,\ev}}$ is an equivalence. Both are the double speed
    Postnikov filtration on $E^{\t C_n}$ by Lemma~\ref{lem: even filtration comparison in the even
    case}, so the result follows.
\end{proof}

By descent, we find that Lemma~\ref{lem: even filtration comparison in the even case} extends to $S^1$-equivariantly
eff-locally even $\bE_\infty$-rings.

\begin{lemma}~\label{lem: even filtration comparison general}
    Let $R$ be a connective $\bE_\infty$-ring with $S^1$-action
    which admits an $S^1$-equivariant eff cover by an even $\bE_\infty$-ring
    $E$ with $S^1$-action. Fix $n\geq 1$. There are natural equivalences
    \begin{align*}
        (\F^{\geq\star}_{\ev}R)^{\bT_{\ev}}&\simeq \F^{\geq\star}_{\ev,\bT}R,\\
        (\F^{\geq\star}_{\ev}R)^{\t\bT_{\ev}}&\simeq\F^{\geq\star}_{\ev,\t\bT}R,\\
        (\F^{\geq\star}_{\ev}R)^{C_{n,\ev}}&\we\F^{\geq\star}_{\ev,C_n}R,\text{ and}\\
        (\F^{\geq\star}_{\ev}R)^{\t C_{n,\ev}}&\we\F^{\geq\star}_{\ev,\t C_n}R.
    \end{align*}
\end{lemma}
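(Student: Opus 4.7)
The plan is to reduce to the even case handled in Lemma~\ref{lem: even filtration comparison in the even case} via $S^1$-equivariant eff descent. Let $E^\bullet$ denote the \v{C}ech complex of $R\to E$ in $\bE_\infty$-algebras with $S^1$-action; since $R\to E$ is $S^1$-equivariant eff, each $E^n$ is an even $\bE_\infty$-ring with $S^1$-action, so Lemma~\ref{lem: even filtration comparison in the even case} applies termwise. By Lemma~\ref{lem:local_computation}(b), together with Lemma~\ref{lem:tv_coaction} to track the $\bT_\ev$-equivariance, we have an equivalence $\F^{\geq\star}_\ev R\simeq\Tot(\tau_{\geq 2\star}E^\bullet)$ of $\bT_\ev$-equivariant synthetic $\bE_\infty$-rings.

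For the $\bT_\ev$-fixed points case, I would use that $(-)^{\bT_\ev}$ is a right adjoint and hence commutes with $\Tot$, giving
\[
(\F^{\geq\star}_\ev R)^{\bT_\ev}\simeq\Tot\bigl((\tau_{\geq 2\star}E^\bullet)^{\bT_\ev}\bigr)\simeq\Tot\bigl(\tau_{\geq 2\star}((E^\bullet)^{\h S^1})\bigr),
\]
where the second equivalence is Lemma~\ref{lem: even filtration comparison in the even case} applied termwise. On the other hand, the definition of $\F^{\geq\star}_{\ev,\bT}R$ as a limit over $S^1$-equivariant even covers of $R$ also computes as $\Tot(\tau_{\geq 2\star}((E^\bullet)^{\h S^1}))$, either by cofinality of the \v{C}ech complex among such covers (this is essentially the equivariant analogue of~\cite[Cor.~2.2.14]{hrw}) or directly by observing that both limits satisfy eff descent and agree on the even objects $E^n$. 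The case of $C_{n,\ev}$-fixed points is identical since $(-)^{C_{n,\ev}}$ is also a right adjoint.

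For the Tate cases, the argument is more delicate because $(-)^{\t\bT_\ev}$ and $(-)^{\t C_{n,\ev}}$ are defined as cofibers and do not automatically commute with limits. The approach is to use Lemma~\ref{lem: orbit/fixedpoints/tate continuity}(1) applied to the sequential totalization tower $\{\Tot_n(\tau_{\geq 2\star}E^\bullet)\}_n$ with respect to the Postnikov $t$-structure, which satisfies condition $\mathbf{(\star)}$. The fibers of $\Tot_{n+1}\to\Tot_n$ are built from the normalized cochains in cosimplicial degree $n+1$; since each $\tau_{\geq 2\star}E^{n+1}$ is connective in the Postnikov $t$-structure on synthetic spectra (each $E^{n+1}$ being connective and even), standard connectivity estimates for totalization of cosimplicial connective objects, combined with the fact that $\Tot$ eventually stabilizes in each filtration degree, give the hypothesis needed to commute Tate past the limit. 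Then Lemma~\ref{lem: even filtration comparison in the even case} applied termwise yields
\[
(\F^{\geq\star}_\ev R)^{\t\bT_\ev}\simeq\Tot\bigl(\tau_{\geq 2\star}((E^\bullet)^{\t S^1})\bigr)\simeq\F^{\geq\star}_{\ev,\t\bT}R,
\]
and analogously for $C_{n,\ev}$-Tate.

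The main obstacle is verifying the connectivity and convergence hypotheses needed to commute the Tate constructions past the totalization, since Tate is neither a limit nor a colimit. Once the connectivity of the \v{C}ech complex in the Postnikov $t$-structure is established (a feature of the even filtration on connective $\bE_\infty$-rings, as in Remark~\ref{rem:bounds}), the formal descent machinery handles the rest.
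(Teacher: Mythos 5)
Your argument for the fixed-point cases is essentially the paper's: $(-)^{\bT_\ev}$ and $(-)^{C_{n,\ev}}$ are right adjoints and preserve the totalization, so Lemma~\ref{lem: even filtration comparison in the even case} applied termwise to the \v{C}ech complex gives the identification. That part is fine.

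The Tate cases have a genuine gap. You propose applying Lemma~\ref{lem: orbit/fixedpoints/tate continuity}(1) to the totalization tower $\{\Tot_n(\tau_{\geq 2\star}E^\bullet)\}_n$, but the hypothesis of that lemma is a \emph{uniform} lower connectivity bound $N$ on the fibers $\fib(\Tot_{n+1}\to\Tot_n)$, and this fails here. Those fibers are $\Omega^{n+1}$ of the normalized cochains in cosimplicial degree $n+1$; since each $\tau_{\geq 2\star}E^{n+1}$ is Postnikov-connective, the fiber is only $(-n-1)$-connective, so the connectivity drops to $-\infty$ as $n$ grows. Your remark that ``$\Tot$ eventually stabilizes in each filtration degree'' is true but is not the same as uniform connectivity of the fibers of the tower, and it does not plug into the lemma's hypotheses. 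The paper's proof works around this in a quite different way: it first reduces Tate-commuting-with-$\Tot$ to orbits-commuting-with-$\Tot$ (using that $(-)^{\bT_\ev}$ already commutes, and the fiber sequence relating orbits, fixed points, and Tate), then shows the filtration on $(\F^{\geq\star}_\ev R)_{\bT_\ev}$ is complete (Postnikov-connectivity via Remark~\ref{rem:bounds} and Lemma~\ref{lem:complete_underlying}(ii)) and checks the statement on the associated graded in each fixed weight $N$. In a fixed weight range $[0,N]$, $\bS_\gr$ (resp.\ $\rho(n)_*\bS_\gr$) is a \emph{perfect} graded $\bT_\gr$-module, and tensoring against a perfect module commutes with arbitrary limits, in particular with $\Tot$. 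This weightwise perfectness argument is the essential finiteness input that your proposal is missing; simply citing Postnikov connectivity of the cosimplicial terms and ``formal descent machinery'' does not supply it.
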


\begin{proof}
    The equivalence on $\bT_\ev$ and $C_{n,\ev}$ fixed points follows from the
    fact that these functors preserve limits and by Lemma~\ref{lem: even filtration
    comparison in the even case}.
%     We give the remaining argument for the
%     $\bT_\ev$-Tate construction; the case of the Tate construction for
%     $C_{n,\ev}$ is similar.

    Let $E^\bullet$ be the \v{C}ech complex of $R\rightarrow E$. This is a
    cosimplicial even $\bE_\infty$-ring with $S^1$-action.
    By functoriality, there is a natural map $$(\F^{\geq\star}_\ev
    R)^{\t S^1}\rightarrow\Tot((\F^{\geq\star}_\ev
    E^\bullet)^{\t S^1})\we\Tot(\tau_{\geq 2\star}((E^\bullet)^{\t S^1})),$$
    where the equivalence is by Lemma~\ref{lem: even filtration comparison in
    the even case}. It suffices to argue that under the hypotheses of the
    lemma, taking the $\bT_\ev$-Tate construction commutes with the limit.
    For this, it is equivalent to checking that the induced map
    $$(\F^{\geq\star}_\ev
    R)_{\bT_\ev}\rightarrow\Tot((\F^{\geq\star}E^\bullet)_{\bT_\ev})$$ is an
    equivalence.

    To prove this,
    we may assume without loss of generality that $E$ is connective since $R$ is.
    We then have that $\F^{\geq\star}_\ev R$ is connective in the Postnikov
    $t$-structure by the results of Burkland and Krause, so
    the filtration on $(\F^{\geq\star}_\ev R)_{\bT_\ev}$
    is complete by Lemma~\ref{lem:complete_underlying}(ii).
    Since $\gr^i_\ev R\we 0$ for $i<0$, we have that $\gr^N(\F^{\geq\star}_\ev R)_{\bT_\ev}$ admits a finite
    filtration with associated graded pieces the weight $N$ piece of $\gr^i_\ev
    R\otimes_{\bT_\gr}\bS_\gr$, for $0\leq i\leq N$.
    It is thus enough to check that
    $\gr^i_\ev R\otimes_{\bT_\gr}\bS_\gr\we\Tot((\pi_{2i}E^\bullet[2i])\otimes_{\bT_\gr}\bS_\gr)$
    for all $i\geq 0$. Since both sides are concentrated in a single weight,
    the $\bT_\gr$-action is trivial, so we must check that $$\gr^i_\ev
    R\otimes_{\bS_\gr}\bS_\gr\otimes_{\bT_\gr}\bS_\gr\rightarrow\Tot((\pi_{2i}E^\bullet(i)[2i])\otimes_{\bS_\gr}\bS_\gr\otimes_{\bT_\gr}\bS_\gr)$$
    is an equivalence of graded spectra. However, the calculation in a single
    weight $j$ involves only $\bS_\gr$ and $\bT_\gr$ in weights $0\leq k\leq
    j$. In these weights, $\bS_\gr$ is a perfect graded $\bT_\gr$-module, and
    hence tensoring (in graded spectra with weights in $[0,j]\cap\bZ$) commutes
    with totalizations. This completes the proof for the $\bT_\ev$-Tate
    construction. The $C_{n,\ev}$-Tate construction is similar, using that
    $\rho(n)_*\bS_\gr$ is perfect over $\bS_\gr$ in any bounded range of
    non-negative weights.
\end{proof}

We conclude with the example of rational synthetic spectra.

\begin{example}[Rational Tate vanishing]\label{ex:rational_vanishing}
    Note that $(\bS_\ev)_\bQ\we\ins^0\bQ\we\bQ_\ev$. If $\F^{\geq\star}M$ is a synthetic $\bQ$-module with
    $\bT_\ev$-action, then $(\F^{\geq\star}M)^{\t C_{p,\ev}}\we 0$ for all primes $p$.
    Indeed, $(\F^{\geq\star}M)^{\t C_{p,\ev}}$ is a synthetic $(\bQ_\ev)^{\t C_{p,\ev}}$-module by
    Proposition~\ref{prop:monoidal},
    but the latter vanishes as it is equivalent to $\tau_{\geq 2\star}(\bQ^{\t C_{p}})$ by
    Lemma~\ref{lem: even filtration comparison in the even case}.
\end{example}

\subsection{Topological Hochschild homology as a synthetic cyclotomic spectrum}

In this section we achive the proof that $\CycSyn$ does indeed make a home for the BMS
and even filtrations.

\begin{definition}[Quasisyntomic rings]
    We recall some definitions from~\cite{bms2,hrw}.
    \begin{enumerate}
        \item[(a)] A commutative ring $R$ is integrally quasisyntomic if it has bounded $p$-torsion
            for all primes $p$ and if it is quasi-lci over $\bZ$ in the sense that $\L_{R/\bZ}$ has Tor-amplitude in $[0,1]$.
            Write $\QSyn$ for the category of integrally quasisyntomic commutative rings.
        \item[(b)] A commutative ring $R$ is $p$-quasisyntomic if it has bounded $p$-torsion and is
            $p$-quasi-lci over $\bZ$ in the sense that $\L_{R/\bZ}$ has $p$-complete Tor-amplitude
            in $[0,1]$. If $R$ is additionally $p$-complete, this definition agrees with the one
            given in~\cite{bms2}. Let $\QSyn_p$ denote the category of $p$-quasisyntomic
            commutative rings.
        \item[(c)] An $\bE_\infty$-ring spectrum $R$ is chromatically quasisyntomic if
            $R\otimes_\bS\MU$ is even and $\oplus_*(\pi_*(R\otimes_\bS\MU))$ is quasi-lci over $\bZ$ and
            has bounded $p$-power torsion for all primes $p$. Let $\CQSyn$ denote the $\infty$-category of
            chromatically quasisyntomic $\bE_\infty$-ring spectra.
        \item[(d)] An $\bE_\infty$-ring spectrum $R$ is chromatically $p$-quasisyntomic if
            $R\otimes_\bS\MU$ is even and $\oplus_*(\pi_*(R\otimes_\bS\MU))$ is $p$-quasisyntomic.
            Let $\CQSyn_p$ denote the $\infty$-category of chromatically $p$-quasisyntomic
            $\bE_\infty$-ring spectra.
    \end{enumerate}
\end{definition}

Motivic filtrations have been constructed on $\THH(R)$ or its $p$-completion as well as on $\TC^-$,
$\TP$, $\TC$ in these cases in~\cite{bhatt-lurie-apc,bms2,hrw,morin}. We will show in this
section that the motivic filtrations agree with those arising from natural
functors from the categories of quasisyntomic rings singled out above to commutative algebras in
cyclotomic synthetic spectra. The real point of our work is the construction with target
$\CAlg(\CycSyn)$. Once this has been carried out, the main work of the comparison results has been
carried out already in~\cite[Sec.~5]{hrw}. 

We will begin by considering the case of producing a cyclotomic synthetic spectrum from an even cyclotomic spectrum.
Let $\CAlg(\CycSp)^\ev$ denote the full subcategory of commutative algebras in ($p$-typical)
cyclotomic spectra whose underlying spectrum is even.

\begin{proposition}
    The functor $\F^{\geq -\infty}\colon\CAlg(\CycSyn)^{\ev}\to \CAlg(\CycSp)^{\ev}$ is an equivalence of $\infty$-categories.
\end{proposition}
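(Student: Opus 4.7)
The plan is to combine the equivalence of underlying even categories (Lemma~\ref{lem: underling is equivalence on even}) with the identification of the $\t C_{p,\ev}$-construction (Lemma~\ref{lem: even filtration comparison in the even case} and Example~\ref{ex:even_tate}) to promote the equivalence on bases to the lax equalizers defining the two cyclotomic $\infty$-categories. Objects of $\CAlg(\CycSyn)^\ev$ are pairs $(M,\varphi_p\colon M\to M^{\t C_{p,\ev}})$ with $M\in \CAlg(\SynSp_{\bT_\ev})^\ev$, and analogously for the classical side. The forgetful functors $\CAlg(\CycSyn)^\ev\to \CAlg(\SynSp_{\bT_\ev})^\ev$ and $\CAlg(\CycSp)^\ev\to \CAlg(\Sp^{\B S^1})^\ev$ have fiber over an object given by the space of Frobenii, $\Map_{\CAlg(\SynSp_{\bT_\ev})}(M,M^{\t C_{p,\ev}})$ and $\Map_{\CAlg(\Sp^{\B S^1})}(E,E^{\t C_p})$ respectively.

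The equivalence on bases is Lemma~\ref{lem: underling is equivalence on even}, so it suffices to show that, for $M = E_\ev$ with $E$ even, the map on fibers
\[
\Map_{\CAlg(\SynSp_{\bT_\ev})}(E_\ev,(E_\ev)^{\t C_{p,\ev}})\to \Map_{\CAlg(\Sp^{\B S^1})}(E,E^{\t C_p})
\]
is an equivalence. To establish this, I would first identify $(E_\ev)^{\t C_{p,\ev}}\we \tau_{\geq 2\star}(E^{\t C_p})$ as $\bT_\ev$-equivariant $\bE_\infty$-algebras using Example~\ref{ex:even_tate}, and then imitate the proof of Lemma~\ref{lem: underling is equivalence on even}: after fixing a candidate Frobenius on fixed points $E^{\h S^1}\to (E^{\t C_p})^{\h S^1}$, one views both $E_\ev$ and $(E_\ev)^{\t C_{p,\ev}}$ as modules over $(E^{\h S^1})_\ev^\triv$ and applies unipotence (Lemma~\ref{lem:unipotence}) to reduce $\bT_\ev$-equivariant $\bE_\infty$-maps between them to $\bE_\infty$-maps on $\bT_\ev$-fixed points, namely on $\tau_{\geq 2\star}(E^{\h S^1})$ and $\tau_{\geq 2\star}((E^{\t C_p})^{\h S^1})$ (computed via Lemma~\ref{lem: even filtration comparison in the even case}); Lemma~\ref{lem:uniqueness of ring structure on whitehead tower.} then matches these with underlying classical $\bE_\infty$-maps, recovering the $\bT$-equivariant classical Frobenius.

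The main obstacle is that the target $\tau_{\geq 2\star}(E^{\t C_p})$ of the Frobenius does not lie in the essential image of the equivalence of Lemma~\ref{lem: underling is equivalence on even}, since $E^{\t C_p}$ is rarely even, so one cannot cite that lemma directly for the fiber equivalence. The argument must therefore be re-run in the setting where only the source is required to be even, exploiting the specific $\bT_\ev$-structure on the target arising from its presentation as a $\t C_{p,\ev}$-construction. Once the fiber equivalence is in hand, the assembly to an equivalence of lax equalizers is formal, since the diagram data for the two lax equalizers match under $\F^{\geq -\infty}$ by Lemma~\ref{lem: even filtration comparison in the even case}.
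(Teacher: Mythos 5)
Your proposal hits the same key technique as the paper's proof: reducing maps of $\bT_\ev$-equivariant $\bE_\infty$-algebras to maps of $\bE_\infty$-algebras on $\bT_\ev$-fixed points via the unipotence Lemma~\ref{lem:unipotence}, and then invoking Lemma~\ref{lem:uniqueness of ring structure on whitehead tower.} to compare with the classical side. You also correctly identify the crux — $E^{\t C_p}$ is rarely even, so the equivalence of Lemma~\ref{lem: underling is equivalence on even} cannot be applied to the target of the Frobenius — and the fix you sketch (exploit the $\bT_\ev$-module structure on $(E_\ev)^{\t C_{p,\ev}}$ over $(E^{\h S^1})_\ev^\triv$ and push the comparison to fixed points) is precisely what the paper does.

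Two imprecisions are worth flagging. First, Example~\ref{ex:even_tate} cannot be invoked for the $\bT_\ev$-equivariant identification of $(E_\ev)^{\t C_{p,\ev}}$ with $\tau_{\geq 2\star}(E^{\t C_p})$: that example explicitly assumes $E^{\t C_p}$ is even, which you yourself note fails in general; only the nonequivariant identification of Lemma~\ref{lem: even filtration comparison in the even case} is available. This is exactly why the paper, in its essential-surjectivity step, does not identify the target abstractly but \emph{constructs} it via unipotence as $(\tau_{\geq 2\star}E^{\t C_p})^{\h S^1}/\varphi_E(v)$ and checks agreement on fixed points. Second, the claim that "the assembly to an equivalence of lax equalizers is formal, since the diagram data match" is too strong: the lax equalizers defining $\CycSyn$ and $\CycSp$ are taken over the full categories $\CAlg(\SynSp_{\bT_\ev})$ and $\CAlg(\Sp^{\B S^1})$, not their even subcategories, and $(-)^{\t C_{p,\ev}}$ does not preserve the even subcategory, so there is no direct comparison of diagrams. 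What actually makes the "base $+$ fibers" heuristic rigorous is the fiber sequence on mapping spaces from~\cite[Prop.~II.1.5(ii)]{nikolaus-scholze}, which is exactly the tool the paper uses in its full-faithfulness step; your fiber-equivalence claim should also be stated for $\Map_{\CAlg(\SynSp_{\bT_\ev})}(M, N^{\t C_{p,\ev}})$ with possibly distinct $M,N$, not just $M=N$, to feed into that fiber sequence. These are repairable gaps in exposition rather than in strategy; the unipotence route is the correct one.
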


\begin{proof}
    Once again it is enough to show that this functor is essentially surjective and fully faithful. Both follow from Lemma~\ref{lem:unipotence} via a very similar argument as in Lemma~\ref{lem: underling is equivalence on even}.

    Let $(E,\varphi_E)\in \CAlg(\CycSp)^{\ev}$. We have that $E_\ev$ lifts to an element of $\CAlg(\SynSp_{\bT_\ev})$, so it is enough to produce a filtered lift of $\varphi_E$. The Frobenius $\varphi_E$ induces a map $\tau_{\geq 2\star}(E^{\h S^1})\rightarrow\tau_{\geq 2\star}((E^{\t C_p})^{\h S^1})$ of synthetic $\bE_\infty$-algebras. Under the equivalence of Lemma~\ref{lem:unipotence} this corresponds to a map $E_\ev\to (\tau_{\geq 2\star}E^{\t C_p})^{\h S^1}/\varphi_E(v)$ of $\bE_\infty$-algebras in $\SynSp_{\bT_\ev}$ for some such structure on $(\tau_{\geq 2\star}E^{\t C_p})^{\h S^1}/\varphi_E(v)$. We claim that this agrees with $E_\ev^{\t C_{p,\ev}}$. 

    Note that on underlying synthetic spectra we have that $(\tau_{\geq 2\star}E^{\t C_p})^{\h S^1}/\varphi_E(v)\simeq \tau_{\geq 2\star}E^{\t C_p}$. Thus $\left((\tau_{\geq 2\star}E^{\t C_p})^{\h S^1}/\varphi_E(v)\right)^{\bT_\ev}\simeq \tau_{\geq 2\star}((E^{\t C_p})^{\h S^1})$ by Lemma~\ref{lem:complete_underlying}(vi). This agrees with $(E_\ev^{\t C_{p,\ev}})^{\bT_{\ev}}$, so by Lemma~\ref{lem:unipotence} we have that $(\tau_{\geq 2\star}E^{\t C_p})^{\h S^1}/\varphi_E(v)\simeq E_\ev^{\t C_{p,\ev}}$ as objects of $\CAlg(\SynSp_{\bT_\ev})$, as desired.

    We will now show full faithfulness. To this end let $(\tau_{\geq 2\star}E, \varphi_1),
    (\tau_{\geq 2\star}E_2,\varphi_2)\in \CAlg(\CycSyn)^\ev$.
    Note first that the map \[\mathrm{Map}_{\CAlg(\CycSyn)}((\tau_{\geq 2\star} E_1,\varphi_1),
    (\tau_{\geq 2\star}E_2, \varphi_2))\to \mathrm{Map}_{\CAlg(\SynSp_{\bT_\ev})}(\tau_{\geq
    2\star}E_1, \tau_{\geq 2\star}E_2)\] is injective on $\pi_0$. This follows from the fact that
    for any $\infty$-category $\mathcal{C}$, the functor
    $\mathcal{C}^{\Delta^1}\xrightarrow{(\ev_0, \ev_1)}\mathcal{C}\times \mathcal{C}$ is a
    categorical fibration, see \cite[Cor.~2.3.2.5, Cor.~2.4.6.5]{htt}.
    The faithfulness of the functor $\F^{\geq -\infty}$ on $\pi_0$ of mapping spaces then follows from Lemma~\ref{lem: underling is equivalence on even}. Surjectivity on $\pi_0$ follows from the same argument as in the proof of essential surjectivity in the previous paragraphs.

    Now fix a map $f\in \mathrm{Map}_{\CAlg(\CycSyn)}((\tau_{\geq 2\star}E_1, \varphi_1),
    (\tau_{\geq 2\star}E_2, \varphi_2))$. We now must show that the induced map
    \[\mathrm{Map}_{\CAlg(\CycSyn)}((\tau_{\geq 2\star}E_1, \varphi_1), (\tau_{\geq 2\star}E_2,
    \varphi_2))_f\to \mathrm{Map}_{\CAlg(\CycSp)}((E_1,\varphi_1), (E_2, \varphi_2))_{\F^{\geq
    -\infty}f}\] on connected components is a weak equivalence. The fiber sequence of
    \cite[Prop.~II.5.1(ii)]{nikolaus-scholze} then reduces this to showing that
    \[\mathrm{Map}_{\CAlg(\SynSp_{\bT_\ev})}(\tau_{\geq 2\star}E_1, \tau_{\geq 2\star}E_2)_f\to
    \mathrm{Map}_{\CAlg(\Sp^{\B S^1})}(E_1, E_2)_{\F^{\geq -\infty}f}\] and
    \[\mathrm{Map}_{\CAlg(\SynSp_{\bT_{\ev}})}(\tau_{\geq 2\star}E_1, (\tau_{\geq 2\star}E_2)^{\t
    C_{p,\ev}})_{\varphi_2\circ f}\to \mathrm{Map}_{\CAlg(\Sp^{\B S^1})}(E_1, E_2^{\t
    C_p})_{\varphi_2\circ \F^{\geq -\infty}f}\] are weak equivalences. The first is a direct
    consequence of Lemma~\ref{lem: underling is equivalence on even}, and the second is a slight
    variation using Lemma~\ref{lem:unipotence}.
\end{proof}

Descending this result then gives us the following.

\begin{corollary}\label{cor:cyclotomic_functor}
    Let $R$ be a cyclotomic $\bE_\infty$-ring. If $R$ is locally even in cyclotomic spectra, then
    $\F^{\geq\star}_\ev R$ naturally admits the
    structure of a commutative algebra object in $\CycSyn$. In other words, there is a functor
    $\CAlg(\CycSp)^\lev\rightarrow\CAlg(\CycSyn)$.
\end{corollary}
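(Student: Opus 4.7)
The plan is to descend the equivalence $\F^{\geq -\infty}\colon\CAlg(\CycSyn)^{\ev}\we\CAlg(\CycSp)^{\ev}$ of the preceding proposition along an eff cover of $R$ by an even cyclotomic $\bE_\infty$-ring. Given $R\in\CAlg(\CycSp)^\lev$, fix an eff map $R\to E$ of cyclotomic $\bE_\infty$-rings with $E$ even, and form the \v{C}ech complex $E^\bullet$, a cosimplicial object of $\CAlg(\CycSp)^\ev$ (each $E^n$ remains even by base change for eff). By the preceding proposition, $E^\bullet$ lifts canonically to a cosimplicial object $\widetilde{E^\bullet}$ of $\CAlg(\CycSyn)^\ev$, and I define $\widetilde R := \Tot(\widetilde{E^\bullet})\in\CAlg(\CycSyn)$, using that $\CAlg(\CycSyn)$ admits small limits.

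To verify that $\widetilde R$ lifts $\F^{\geq\star}_\ev R$, I would compute its underlying synthetic spectrum. The forgetful functors $\CAlg(\CycSyn)\to\CAlg(\SynSp_{\bT_\ev})\to\SynSp$ both preserve limits (the first because $\CycSyn$ is a lax equalizer and limits commute with limits, and the second by construction), so the underlying synthetic spectrum of $\widetilde R$ is $\Tot(\F^{\geq\star}_\ev E^\bullet)$. By Lemma~\ref{lem:local_computation}(a), this is $\F^{\geq\star}_\ev R$ as required. The cyclotomic Frobenius on $\widetilde R$ is then the totalization of the Frobenii $\varphi_{\widetilde{E^n}}$, and its compatibility with the cyclotomic Frobenius on $R$ under $\F^{\geq -\infty}$ follows from symmetric monoidality of the colimit functor once one knows that the $C_{p,\ev}$-Tate construction commutes with the relevant totalization; this is precisely the content of Lemma~\ref{lem: even filtration comparison general}.

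The subtlest point, and the one I would handle with the most care, is showing that the construction is functorial and independent of the chosen eff cover. I would phrase it as a right Kan extension along the inclusion $\CAlg(\CycSp)^\ev\hookrightarrow\CAlg(\CycSp)^\lev$, so that functoriality is automatic from the universal property. Concretely, this uses that the space of lifts of any even cyclotomic $\bE_\infty$-ring to $\CAlg(\CycSyn)^\ev$ is contractible (the content of the preceding proposition), which ensures that any two eff covers $R\to E_1$ and $R\to E_2$ yield canonically equivalent lifts via the common refinement $R\to E_1\otimes_R E_2$, itself eff by base change. Morphisms $R_1\to R_2$ in $\CAlg(\CycSp)^\lev$ are handled by choosing compatible eff covers, with the contractibility again ensuring that the ambiguity in choice does not obstruct functoriality.
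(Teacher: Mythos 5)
Your approach matches the paper's: lift the \v{C}ech complex of an eff cover into $\CAlg(\CycSyn)^\ev$ using the preceding equivalence and then totalize. The right Kan extension framing for functoriality and independence of the chosen cover is a good way to package what the paper handles tersely (``natural lift''), and is consistent with how the equivariant even filtration is treated in Lemma~\ref{lem:tv_coaction}.

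There is, however, a slip in the step where you identify the underlying synthetic spectrum of $\widetilde{R}=\Tot(\widetilde{E^\bullet})$. You assert that the forgetful functor $\CAlg(\CycSyn)\to\CAlg(\SynSp_{\bT_\ev})$ preserves limits ``because $\CycSyn$ is a lax equalizer and limits commute with limits.'' That is not a valid general argument. The forgetful functor out of a lax equalizer $\LEq(\id,T)$ is a \emph{left} adjoint (Lemma~\ref{cor: cycsin is a nice category} records that it is conservative and preserves colimits), and it preserves a given limit only if $T$ preserves the corresponding limit in the underlying category. For $\CycSyn$ the relevant $T$ is $(-)^{\t C_{p,\ev}}$, which does not commute with arbitrary limits---this is precisely why the paper proves Lemma~\ref{lem: orbit/fixedpoints/tate continuity}, and why, by contrast, the forgetful functor from $\CycSyn^{\Fr}$ does preserve all limits (there $T=(-)^{C_{p,\ev}}$ is a right adjoint). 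To repair the step you need the specific statement that $(-)^{\t C_{p,\ev}}$ commutes with the totalization of $\F^{\geq\star}_\ev E^\bullet$. For a connective eff cover each $\F^{\geq\star}_\ev E^n$ is Postnikov-connective by Remark~\ref{rem:bounds}, so the partial-totalization tower has uniformly bounded fibers and Lemma~\ref{lem: orbit/fixedpoints/tate continuity} applies; alternatively, the required commutation is established in the proof of Lemma~\ref{lem: even filtration comparison general}, which you already invoke for the Frobenius compatibility. The paper's proof asserts the same limit-preservation in a single line, so it should be read as shorthand for this connectivity argument rather than as a structural fact about lax equalizers.
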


\begin{proof}
    Let $R\to E$ be an eff cover of $R$ by a cyclotomic spectrum $(E,\varphi_E)$.
    Then the previous Lemma produces a natural lift of $E^\bullet_{\ev}$ to a diagram in
    $\CAlg(\CycSyn)$. Thus taking totalizations and noting the the forgetful functors
    $\CAlg(\CycSyn)\to \CycSyn\to \SynSp_{\bT\ev}$ preserve limits we get a natural lift of $R_\ev$
    to an element of $\CAlg(\CycSyn)$.
\end{proof}

\begin{theorem}[Even filtration comparison]\label{thm:even_comparison}
    There are functors
    \[\F^{\geq\star}_\ev\THH(-)\colon\mathrm{CQSyn}\to \CAlg(\CycSyn)\] and
    \[\F^{\geq\star}_\ev\THH(-;\bZ_p)\colon\mathrm{CQSyn}_p\to \CAlg(\CycSyn_p^\wedge)\] 
    with the following properties.
    \begin{enumerate}
        \item[{\em (a)}] For $R\in\mathrm{CQSyn}$, there are natural equivalences
            \begin{align*}
                \F^{\geq\star}_\ev\THH(R)&\we\Fil^\star_\HRW\THH(R),\\
                (\F^{\geq\star}_\ev\THH(R))^{\bT_\ev}&\we\Fil^\star_\HRW\TC^-(R),\text{ and}\\
                (\F^{\geq\star}_\ev\THH(R))^{\t\bT_\ev}&\we\Fil^\star_\HRW\TP(R),
            \end{align*}
            of filtered $\bE_\infty$-algebras (with circle action for the first equivalence).
        \item[{\em (b)}] For $R\in\mathrm{CQSyn}_p$, there are natural equivalences
            \begin{align*}
                \F^{\geq\star}_\ev\THH(R;\bZ_p)&\we\Fil^\star_\HRW\THH(R;\bZ_p),\\
                (\F^{\geq\star}_\ev\THH(R;\bZ_p))^{\bT_\ev}&\we\Fil^\star_\HRW\TC^-(R;\bZ_p),\\
                ((\F^{\geq\star}_\ev\THH(R;\bZ_p))^{\t\bT_\ev})_p^\wedge &\we\Fil^\star_\HRW\TP(R;\bZ_p),\text{ and}\\
                \TC(\F^{\geq\star}_\ev\THH(R;\bZ_p))&\we\Fil^\star_\HRW\TC(R;\bZ_p)
            \end{align*}
            of filtered $\bE_\infty$-algebras (with circle action for the first equivalence).
    \end{enumerate}
%     \[
%     \begin{tikzcd}
%         & & & \mathrm{CQSyn} \ar[llld,"\F^{\geq\star}_\ev\THH(-)"', bend right] \ar[d, "\F^{\geq\star}_{\ev, S^1}\THH(-)"'] \ar[rrrd, "\THH(-)", bend left]  &  &  & \\
%         \CAlg(\CycSyn) \ar[rrr] & & &\CAlg(\SynSp_{\bT_{\ev}}) \ar[rrr] & & & \CAlg(\Sp)   
%     \end{tikzcd}
%     \]
%     and
%     \[
%     \begin{tikzcd}
%         & & & \mathrm{CQSyn}_p \ar[llld,"\F^{\geq\star}_\ev\THH(-;\bZ_p)"', bend right] \ar[d,
%         "\F^{\geq\star}_{\ev, S^1}\THH(-;\bZ_p)"'] \ar[rrrd, "\THH(-;\bZ_p)", bend left]  &  &  & \\
%         \CAlg(\CycSyn_p^\wedge) \ar[rrr] & & &\CAlg(\SynSp_{\bT_{\ev}}) \ar[rrr] & & & \CAlg(\Sp)   
%     \end{tikzcd}
%     \]
%     commute. Moreover, there are natural identifications
%     \begin{align*}
%         (\F^{\geq\star}_\ev\THH(R))^{\bT_\ev}&\we\F^{\geq\star}_{\ev,S^1}\THH(R),\\
%         (\F^{\geq\star}_\ev\THH(R))^{\t\bT_\ev}&\we\F^{\geq\star}_{\ev,\t S^1}\THH(R),\\
%         \F^{\geq\star}\TC(\F^{\geq\star}_\ev\THH(R;\bZ_p))&\we\F^{\geq\star}_{\ev,p,\TC}\THH(R;\bZ_p).
%     \end{align*}
\end{theorem}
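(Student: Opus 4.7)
The plan is to deduce the theorem from Corollary~\ref{cor:cyclotomic_functor} and Lemma~\ref{lem: even filtration comparison general}. The first step is to verify that $\THH(R)$ (or $\THH(R;\bZ_p)$ in the $p$-complete case) is a locally even cyclotomic $\bE_\infty$-ring, i.e., admits an $S^1$-equivariant cyclotomic-compatible eff cover by an even cyclotomic $\bE_\infty$-ring. For $R\in\CQSyn$, the chromatic quasisyntomic covers $R\to R^\prime$ built in~\cite{hrw} from free $\MU$-algebra resolutions give rise to $\bT$-equivariant maps $\THH(R)\to\THH(R^\prime)$ which are eff in cyclotomic spectra with even target. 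For $R\in\QSyn$ one instead uses quasiregular semiperfectoid covers as in~\cite{bms2}. Applying Corollary~\ref{cor:cyclotomic_functor} then produces the desired functors $\F^{\geq\star}_\ev\THH(-)\colon\CQSyn\to\CAlg(\CycSyn)$ and $\F^{\geq\star}_\ev\THH(-;\bZ_p)\colon\CQSyn_p\to\CAlg(\CycSyn_p^\wedge)$, the $p$-complete variant using Variant~\ref{var:pcompletely}.

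For the comparisons of part (a) and (b), the first three equivalences in each case follow directly from Lemma~\ref{lem: even filtration comparison general}. By construction, $\F^{\geq\star}_\ev\THH(R)$ agrees with the HRW filtration $\Fil^\star_\HRW\THH(R)$, as both are right Kan extended from the even case, where they tautologically coincide with $\tau_{\geq 2\star}\THH(S)$ on an even cover. Passing to $(-)^{\bT_\ev}$ and $(-)^{\t\bT_\ev}$ and applying Lemma~\ref{lem: even filtration comparison general} identifies the resulting filtered spectra with $\F^{\geq\star}_{\ev,\bT}\THH(R)$ and $\F^{\geq\star}_{\ev,\t\bT}\THH(R)$, which by~\cite{hrw} are the HRW filtrations on $\TC^-(R)$ and $\TP(R)$.

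The remaining equivalence involving $\TC$ requires an additional argument. The equalizer formula for $\TC$ of a cyclotomic synthetic spectrum gives
\[
\TC(\F^{\geq\star}_\ev\THH(R;\bZ_p)) \we \Eq\left((\F^{\geq\star}_\ev\THH(R;\bZ_p))^{\bT_\ev} \rightrightarrows ((\F^{\geq\star}_\ev\THH(R;\bZ_p))^{\t C_{p,\ev}})^{\bT_\ev}\right).
\]
The first object has already been identified with $\Fil^\star_\HRW\TC^-(R;\bZ_p)$. For the second, one combines Lemma~\ref{lem: even filtration comparison general} applied to $\t C_{p,\ev}$-fixed points with the classical cyclotomic identification $(E^{\t C_p})^{\h S^1}\we E^{\t S^1}$ after $p$-completion~\cite{nikolaus-scholze} to obtain $\Fil^\star_\HRW\TP(R;\bZ_p)$. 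Matching the two parallel arrows with the canonical and Frobenius maps reduces to checking compatibility on even covers, where both sides unwind to the standard formulas, yielding the identification with $\Fil^\star_\HRW\TC(R;\bZ_p)$.

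The main obstacle is the first step: producing cyclotomic-compatible even eff covers of $\THH(R)$ in sufficient generality. In the discrete case this uses quasisyntomic descent for $\THH$ from~\cite{bms2}, while in the chromatic case it relies on the more delicate chromatic quasisyntomic resolutions and their $\bT$-equivariant enhancements from~\cite{hrw}. Once these covers are in hand, the rest of the argument is formal descent; confirming eff-ness together with cyclotomic functoriality is where the essential technical content of the theorem is concentrated.
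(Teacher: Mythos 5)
Your proposal is correct and follows essentially the same route as the paper: existence of the functor from Corollary~\ref{cor:cyclotomic_functor} applied to an $S^1$-equivariant even eff cover, the first three comparisons from Lemma~\ref{lem: even filtration comparison general} together with the descent results of~\cite[Sec.~4.2]{hrw}, and the $\TC$ comparison from the equalizer formula.

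One small comment on the $\TC$ step. You invoke the classical identification $(E^{\t C_p})^{\h S^1}\we E^{\t S^1}$ after $p$-completion, but what is actually needed is a synthetic (filtered) version: that the natural map $(\F^{\geq\star}_\ev\THH(R;\bZ_p))^{\t\bT_\ev}\to((\F^{\geq\star}_\ev\THH(R;\bZ_p))^{\t C_{p,\ev}})^{\bT_\ev}$ is a $p$-adic equivalence. Transferring the unfiltered statement to the synthetic setting is not automatic — it is exactly the content the paper isolates as Lemma~\ref{lem: Tate then fixed points is Tate}, whose proof rests on the synthetic Tate orbit lemma (Lemma~\ref{lem: Tate orbit lemma}), Lemma~\ref{lem:homotopy_to_tate}, and the continuity estimates of Lemma~\ref{lem: orbit/fixedpoints/tate continuity}, plus the observation that $\colim_n\rho(p^n)_*\bT_\ev\to\bS_\ev$ is a $p$-adic equivalence. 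So the sentence ``combines Lemma~\ref{lem: even filtration comparison general} with the classical identification'' is sketching a proof of that lemma rather than a substitute for it; the approach is right, but you should cite (or prove) the synthetic version rather than reducing to the unfiltered one. Also, the remark about quasiregular semiperfectoid covers in $\QSyn$ belongs to Theorem~\ref{thm:bms_comparison}, not this one, which concerns $\CQSyn$ and $\CQSyn_p$.
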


\begin{proof}
    Existence of the functors and all parts of (a) and (b) not pertaining to $\TC$ follow from
    Corollary~\ref{cor:cyclotomic_functor}, Lemma~\ref{lem: even filtration comparison general},
    and the results of~\cite[Sec.~4.2]{hrw}.

    For $\TC$, we have
    by~\cite[Proposition II.1.5(ii)]{nikolaus-scholze}, an equalizer formula for
    $\TC(\F^{\geq \star}_\ev \THH(-;\bZ_p))$ and the result amounts to identifying
    $\bMap_{\SynSp_{\bT_{\ev}}}(\bS_{\ev}, (\F^{\geq \star}_\ev \THH(-;\bZ_p))^{\t C_{p,\ev}})$ with
    $(\F^{\geq \star}_\ev \THH(-;\bZ_p))^{t\bT_{\ev}}$. This follows from Lemma~\ref{lem: Tate then
    fixed points is Tate} below.
\end{proof}

\begin{variant}[Relative theories]
    More generally, if $k\rightarrow R$ is a chromatically quasi-lci map of connective
    $\bE_\infty$-rings, then there is a natural $\F^{\geq\star}_\ev\THH(R/k)\in\CAlg(\CycSyn)$
    lifting the filtration $\fil^\star\THH(R/k)$ constructed in~\cite[Def.~2.4.1]{hrw}.
    The analogue of property (b) from Theorem~\ref{thm:even_comparison} holds in this relative case
    as well.
\end{variant}

We can also compare to the filtration on $p$-adic $\THH$ for $p$-quasisyntomic rings constructed
in~\cite{bms2} and to the filtration on integral $\THH$ for quasisyntomic rings constructed by
Morin in~\cite[Def.~1.1]{morin} and by Bhatt--Lurie in~\cite[Sec.~6.2]{bhatt-lurie-apc}.

\begin{theorem}[BMS comparison]\label{thm:bms_comparison}
    There are functors
    \[\F^{\geq\star}_\ev\THH(-)\colon\mathrm{QSyn}\to \CAlg(\CycSyn)\] and
    \[\F^{\geq\star}_\ev\THH(-;\bZ_p)\colon\mathrm{QSyn}_p\to \CAlg(\CycSyn_p^\wedge)\] 
    with the following properties.
    \begin{enumerate}
        \item[{\em (a)}] For $R\in\mathrm{QSyn}$, there are natural equivalences
            \begin{align*}
                \F^{\geq\star}_\ev\THH(R)&\we\Fil^\star_\BLM\THH(R),\\
                (\F^{\geq\star}_\ev\THH(R))^{\bT_\ev}&\we\Fil^\star_\BLM\TC^-(R),\text{ and}\\
                (\F^{\geq\star}_\ev\THH(R))^{\t\bT_\ev}&\we\Fil^\star_\BLM\TP(R),
            \end{align*}
            of filtered $\bE_\infty$-algebras (with circle action for the first equivalence).
        \item[{\em (b)}] For $R\in\mathrm{QSyn}_p$, there are natural equivalences
            \begin{align*}
                \F^{\geq\star}_\ev\THH(R;\bZ_p)&\we\Fil^\star_\BMS\THH(R;\bZ_p),\\
                (\F^{\geq\star}_\ev\THH(R;\bZ_p))^{\bT_\ev}&\we\Fil^\star_\BMS\TC^-(R;\bZ_p),\\
                ((\F^{\geq\star}_\ev\THH(R;\bZ_p))^{\t\bT_\ev})_p^\wedge &\we\Fil^\star_\BMS\TP(R;\bZ_p),\text{ and}\\
                \TC(\F^{\geq\star}_\ev\THH(R;\bZ_p))&\we\Fil^\star_\BMS\TC(R;\bZ_p)
            \end{align*}
            of filtered $\bE_\infty$-algebras (with circle action for the first equivalence).
    \end{enumerate}
\end{theorem}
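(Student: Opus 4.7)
The plan is to follow the template of Theorem~\ref{thm:even_comparison}, replacing chromatically even covers with quasisyntomic covers by quasiregular semiperfectoid rings. For the $p$-complete statement (b), recall that any $p$-quasisyntomic ring $R$ admits a quasisyntomic cover $R\to S$ by a quasiregular semiperfectoid $S$, and by the computation of Bhatt--Morrow--Scholze the $p$-completed topological Hochschild homology $\THH(S;\bZ_p)$ is concentrated in even degrees. Therefore $\THH(R;\bZ_p)$ is $p$-completely eff-locally even in cyclotomic spectra in the sense of Variant~\ref{var:pcompletely}, so Corollary~\ref{cor:cyclotomic_functor} (in its $p$-complete variant) provides the desired functor $\F^{\geq\star}_\ev\THH(-;\bZ_p)\colon\QSyn_p\to\CAlg(\CycSyn_p^\wedge)$.

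To identify the underlying filtration with the BMS filtration, I would observe that both filtrations are defined by eff (equivalently, quasisyntomic) descent from the full subcategory of quasiregular semiperfectoids: on such $S$ the BMS filtration is defined as $\tau_{\geq 2\star}\THH(S;\bZ_p)$ by construction in~\cite{bms2}, while $\F^{\geq\star}_\ev\THH(S;\bZ_p)\we\tau_{\geq 2\star}\THH(S;\bZ_p)$ by evenness. The identifications of $\TC^-(R;\bZ_p)$, $\TP(R;\bZ_p)$, and $\TC(R;\bZ_p)$ then follow from Lemma~\ref{lem: even filtration comparison general}, which matches the synthetic $\bT_\ev$-fixed-point and Tate constructions with the equivariant even filtrations; for $\TC$ one invokes Lemma~\ref{lem: Tate then fixed points is Tate} together with the equalizer formula of~\cite[Prop.~II.1.5(ii)]{nikolaus-scholze}, exactly as in Theorem~\ref{thm:even_comparison}. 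The agreement with the HRW analogues of these constructions is then in turn established by~\cite[Sec.~5]{hrw} to agree with the BMS variants on quasiregular semiperfectoids, and again descent propagates the identification to all of $\QSyn_p$.

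For the integral statement (a), one combines the $p$-complete picture with rational information. The Bhatt--Lurie and Morin filtrations on integral $\THH(R)$ are characterized by agreement with the BMS filtrations after $p$-completion at each prime $p$ and with the HKR filtration on $\HH(R/\bZ)$ after rationalization. The even filtration satisfies the same characterization: the $p$-complete agreement is part (b), and rationally, by Example~\ref{ex:rational_vanishing} the $\t C_{p,\ev}$-terms vanish, so the rational part of $\F^{\geq\star}_\ev\THH(R)$ reduces to the double-speed Whitehead filtration on $\HH(R/\bZ)$, which is the HKR filtration. An arithmetic fracture assembly then gives the required identification in $\CAlg(\CycSyn)$.

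The main obstacle, in my view, is bookkeeping the integral comparison with~\cite{bhatt-lurie-apc,morin}, since their constructions of the integral motivic filtration differ in presentation from ours. One must verify that the even filtration enjoys a defining sheaf or universal property identifying it with the BLM filtration rather than merely being abstractly equivalent as filtered spectra. Concretely, this amounts to either exhibiting a natural transformation and checking it is an equivalence on a common cover (e.g., perfectoid polynomial algebras), or adapting Morin's Chern character characterization to the synthetic cyclotomic setting so that the $\bT_\ev$-equivariant structure is automatically matched.
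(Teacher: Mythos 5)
Your proposal is essentially the paper's approach: for (b) unfold from quasiregular semiperfectoids using Corollary~\ref{cor:cyclotomic_functor} in its $p$-complete form, noting that the even and BMS filtrations both reduce to the double-speed Whitehead filtration on those covers, and handle $\TC$ via Lemma~\ref{lem: Tate then fixed points is Tate}; for (a) glue the $p$-complete and rational pictures via the arithmetic fracture square (the paper writes out the pullback of $\THH(R)$, $\prod_p\THH(R;\bZ_p)$, and $\HH(R)_{\bQ}$ explicitly and uses Example~\ref{ex:rational_vanishing} to force the rational Frobenii to be zero). The one point you gloss over is that to apply Corollary~\ref{cor:cyclotomic_functor} one needs $\THH(S;\bZ_p)$ to be \emph{Tate-even}, i.e.\ even together with its $C_{p^n}$-fixed points and Tate constructions for all $n\geq 1$, which is not quite the same as BMS's evenness computation; the paper cites~\cite[Sec.~3]{riggenbach-truncated} (with a $n=1$ alternative via Nygaard-complete Hodge--Tate cohomology) for precisely this input.
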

%     \[

% \begin{definition} Say that an $\bE_\infty$-algebra in cycltomic spectra $R$ is Tate-even if both
%     $R$ and $R^{\t C_p}$ are even. Let $\CAlg(\CycSp)^{\t\ev}\subseteq\CAlg(\CycSp)$ be the full
%     subcategory of Tate-even cyclotomic $\bE_\infty$-algebras. 
%     Similarly, we say that $R$ is $p$-completely Tate even if the $p$-completions of $R$ and of
%     $R^{\t C_p}$ are even.
% \end{definition}
% 
% \begin{lemma}\label{lem:tate_promotion}
%     The even filtration promotes to a functor $\CAlg(\CycSp)^{\t\ev}\rightarrow\CAlg(\CycSyn)$.
% \end{lemma}
% 
% \begin{proof}
%     The $S^1$-equivariant map $\varphi_p\colon R\rightarrow R^{\t C_p}$ induces a $\bT_\ev$-equivariant map
%     $\tau_{\geq 2\star}\varphi_p\colon\tau_{\geq 2\star}R\rightarrow\tau_{\geq 2\star}(R^{\t C_p})$ by Lemma~\ref{lem:tv_coaction} since both $R$
%     and $R^{\t C_p}$ are even by hypothesis. By Corollary~\ref{cor:contractible} and
%     Example~\ref{ex:even_tate}, there is a canonical equivalence $f\colon\tau_{\geq 2\star}(R^{\t
%     C_p})\rightarrow
%     (\tau_{\geq 2\star}R)^{\t C_{p,\ev}}$ of synthetic $\bE_\infty$-algebras with $\bT_\ev$-action.
%     Thus, we can compose $f\circ\varphi_p$ to obtain the desired cylotomic Frobenius.
% \end{proof}

% \begin{theorem}
%     There is a functor $\THH_\ev(-;\bZ_p)\colon\mathrm{QSyn}\rightarrow\CAlg(\CycSyn_p^\wedge)$ making the BMS filtration on
%     $\THH$ canonically into a $p$-complete synthetic cyclotomic $\bE_\infty$-algebra.
% \end{theorem}

\begin{proof}
    For part (b),
    the functors are constructed as in~\cite{bms2} by unfolding from the quasiregular semiperfectoid case.
    If $R$ is a quasiregular semiperfectoid, then $\THH(R;\bZ_p)$ is $p$-completely Tate-even. This
    is proved in~\cite[Sec.~3]{riggenbach-truncated}, together with statements for $C_{p^n}$-fixed
    points and Tate for all $n\geq 1$. In the $n=1$ case, it also follows via a comparison to
    Nygaard-complete Hodge--Tate cohomology. Thus, $\THH(R;\bZ_p)$ canonically
    admits the structure of a $p$-complete synthetic cyclotomic $\bE_\infty$-algebra by
    Corollary~\ref{cor:cyclotomic_functor}.
    Using that the forgetful functor $\CAlg(\CycSyn_p^\wedge)\rightarrow\SynSp_p^\wedge$ is
    conservative, the general case follows by quasisyntomic descent.

    For part (a), we use the pullback square
    $$\xymatrix{
        \THH(R)\ar[r]\ar[d]&\prod_p\THH(R;\bZ_p)\ar[d]\\
        \HH(R)_\bQ\ar[r]&(\prod_p\THH(R;\bZ_p))_\bQ
    }$$
    of spectra with $\bT$-action. Taking even filtrations,
    we obtain a $\bT_\ev$-action on
    $\THH(R)$ by pullback. We make this into a cyclotomic synthetic spectrum by equipping the
    rational synthetic spectra with $\bT_\ev$-action with the zero Frobenii (the only possible
    choice thanks to Example~\ref{ex:rational_vanishing}). We let $\F^{\geq\star}_\ev\THH(R)$ be the 
    pullback in cyclotomic synthetic spectra. The comparison results now follow by construction of
    the Bhatt--Lurie--Morin filtrations in~\cite[Sec.~6.4]{bhatt-lurie-apc} and~\cite[Def.~1.1]{morin}.
\end{proof}

% This also works in the chromatically quasisyntomic case. The point is that there is a sufficient
% population of Tate-even cyclotomic basis, specifically, via the $\mathrm{MW}$ spectra constructed
% in~\cite{hrw}.
% 
% We therefore have that $F^{\geq \star}_{\ev, hS^1}\THH(R)\simeq (F^{\geq \star}_\ev
% \THH(R))^{\bT_{\ev}}$, $F^{\geq \star}_{\ev, tS^1}\THH(R)\simeq (F^{\geq \star}_\ev
% \THH(R))^{t\bT_{\ev}}$, and $F^{\geq \star}_{\ev, \t C_p}\THH(R)\simeq (F^{\geq \star}_\ev
% \THH(R))^{\t C_{p,\ev}}$ by Lemma~\ref{lem: even filtration comparison general}. Hahn, Raksit, and Wilson also construct a filtration on topological cyclic homology for such rings, and this also agrees with our construction.

% \begin{corollary}
%     The functor $\mathrm{TC}(F^{\geq \star}_\ev \THH(-;\mathbb{Z}_p))\colon\mathrm{CQSyn}\to
%     \SynSp$ agrees with the synthetic spectrum $\fil^\star_{\mathrm{mot}}\TC(-;\mathbb{Z}_p)$
%     constructed in \cite[Definition 1.2.4]{hrw}.
% \end{corollary}

There is a natural map of fiber sequences 
\[
\begin{tikzcd}
    \F^{\geq \star}X_{C_{p^{n+1},\ev}} \ar[r] \ar[d] & \F^{\geq \star}X^{C_{p^{n+1},\ev}} \ar[d]
    \ar[r] & \F^{\geq \star}X^{tC_{p^{n+1},\ev}} \ar[d]\\
    (\F^{\geq \star}X_{C_{p,\ev}})^{C_{p^n,\ev}} \ar[r] & \F^{\geq \star}X^{C_{p^{n+1},\ev}} \ar[r]
    & (\F^{\geq \star}X^{tC_{p,\ev}})^{C_{p^n,\ev}}
\end{tikzcd}
\] where the left vertical map is the norm. We used the following lemmas above.

\begin{lemma}\label{lem:homotopy_to_tate}
    Let $\F^{\geq \star} X\in \SynSp_{\bT_\ev}$ be bounded below in the Postnikov $t$-structure.
    Then the natural map $(\F^{\geq \star}X)^{\t C_{p^{n+1},\ev}}\to (\F^{\geq
    \star}X^{\t C_{p,\ev}})^{C_{p^n,\ev}}$ is an equivalence for all $n\geq 0$.
\end{lemma}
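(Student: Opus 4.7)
The plan is to proceed by induction on $n\geq 0$. The base case $n=0$ is immediate: the functor $(-)^{C_{1,\ev}}$ is the identity on $\SynSp_{\bT_\ev}$ since $\rho(1)\colon\bT_\ev\to\bT_\ev$ is the identity, so the natural map reduces to $\id_{(\F^{\geq\star}X)^{\t C_{p,\ev}}}$.

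For the inductive step, suppose the lemma holds for $n-1$ and any Postnikov-bounded-below input. Because $\rho(p^{n+1})=\rho(p^n)\circ\rho(p)$ as endomorphisms of $\bT_\ev$, restriction of scalars and its adjoints compose and yield natural equivalences
\[(\F^{\geq\star}X)^{C_{p^{n+1},\ev}}\simeq ((\F^{\geq\star}X)^{C_{p,\ev}})^{C_{p^n,\ev}}\quad\text{and}\quad (\F^{\geq\star}X)_{C_{p^{n+1},\ev}}\simeq((\F^{\geq\star}X)_{C_{p,\ev}})_{C_{p^n,\ev}}.\]
Applying $(-)^{C_{p^n,\ev}}$ to the Tate fiber sequence $(\F^{\geq\star}X)_{C_{p,\ev}}\to (\F^{\geq\star}X)^{C_{p,\ev}}\to (\F^{\geq\star}X)^{\t C_{p,\ev}}$ produces the bottom row of the diagram displayed before the lemma, and the comparison with the defining fiber sequence for the $C_{p^{n+1},\ev}$-constructions identifies the middle vertical map as an equivalence. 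By the long exact sequence (or the octahedral axiom), the right-hand vertical map is an equivalence if and only if the left-hand vertical map, namely the norm
\[((\F^{\geq\star}X)_{C_{p,\ev}})_{C_{p^n,\ev}}\longrightarrow ((\F^{\geq\star}X)_{C_{p,\ev}})^{C_{p^n,\ev}},\]
is an equivalence; equivalently, it suffices to prove $((\F^{\geq\star}X)_{C_{p,\ev}})^{\t C_{p^n,\ev}}\simeq 0$.

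To establish this vanishing, set $\F^{\geq\star}Y=(\F^{\geq\star}X)_{C_{p,\ev}}$, which is bounded below in the Postnikov $t$-structure by the right $t$-exactness of $(-)_{C_{p,\ev}}$ from Lemma~\ref{lem:t_exact}. The inductive hypothesis applied to $\F^{\geq\star}Y$ (with $n$ replaced by $n-1$) gives a natural equivalence
\[(\F^{\geq\star}Y)^{\t C_{p^n,\ev}}\simeq ((\F^{\geq\star}Y)^{\t C_{p,\ev}})^{C_{p^{n-1},\ev}}.\]
The synthetic Tate orbit lemma (Lemma~\ref{lem: Tate orbit lemma}) applied to $\F^{\geq\star}X$ yields $(\F^{\geq\star}Y)^{\t C_{p,\ev}}=((\F^{\geq\star}X)_{C_{p,\ev}})^{\t C_{p,\ev}}=0$, so the right-hand side vanishes and the induction is complete.

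The only nontrivial verification required is the identification of iterated fixed points and orbits together with the compatibility of the norm maps of Construction~\ref{const: Tate Construction finite groups.}, so that the bottom sequence in the displayed diagram is natural and fits into the claimed map of fiber sequences. This is essentially formal from the factorization $\rho(p^{n+1})=\rho(p^n)\circ\rho(p)$ and the universal properties of the various adjoints in $\SynSp_{\bT_\ev}$, so I do not anticipate any substantive obstacle beyond setting up the naturality carefully.
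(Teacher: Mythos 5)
Your proof is correct and matches the paper's approach exactly: the paper's proof is a one-line reference to the argument of \cite[Lem.~II.4.1]{nikolaus-scholze}, which is precisely the induction on $n$ you carry out (reducing via the displayed map of cofiber sequences to the vanishing of $((\F^{\geq\star}X)_{C_{p,\ev}})^{\t C_{p^n,\ev}}$, and then invoking the inductive hypothesis for $X_{C_{p,\ev}}$ together with the Tate orbit lemma). Your verification of the bounded-below hypothesis for $X_{C_{p,\ev}}$ via the right $t$-exactness of $(-)_{C_{p,\ev}}$ is the correct check, and your remark that the norm compatibilities are formal consequences of $\rho(p^{n+1})=\rho(p^n)\circ\rho(p)$ is the right thing to observe.
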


\begin{proof}
    The proof that the maps $(\F^{\geq \star}X)^{\t C_{p^{n+1},\ev}}\to (\F^{\geq
    \star}X^{\t C_{p,\ev}})^{C_{p^n,\ev}}$ are equivalences follows from Lemma~\ref{lem: Tate orbit
    lemma} in the same way as in \cite[Lemma II.4.1]{nikolaus-scholze}.
\end{proof}

\begin{lemma}~\label{lem: Tate then fixed points is Tate}
    Let $R$ be a connective $\bE_\infty$-ring with $S^1$-action which admits an $S^1$-equivariant
    eff cover by and even $\bE_\infty$-ring $E$ with $S^1$-action. Then the maps in the commutative
    diagram
    \[
    \begin{tikzcd}
        \F^{\geq \star}_\ev R^{\t\bT_\ev} \ar[r] \ar[d] & (\F^{\geq \star}_\ev R^{\t C_{p,\ev}})^{\bT_{\ev}}\ar[d]\\
        \lim \F^{\geq \star}_\ev R^{\t C_{p^{n+1},\ev}} \ar[r] & \lim (\F^{\geq \star}_\ev R^{\t C_{p,\ev}})^{C_{p^n,\ev}}
    \end{tikzcd}
    \]
    are $p$-adic equivalences.
\end{lemma}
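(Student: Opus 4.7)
The plan is to reduce to the case where $R$ is itself even and then identify the diagram with $\tau_{\geq 2\star}$ of a classical diagram where the $p$-adic equivalences are known from~\cite{nikolaus-scholze}.

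First, using the $S^1$-equivariant eff cover $R\to E$ by an even $\bE_\infty$-ring, I would write $\F^{\geq\star}_\ev R\we\Tot(\F^{\geq\star}_\ev E^\bullet)$, where $E^\bullet$ is the \v{C}ech complex, and apply this descent to each corner of the diagram. The fixed-point and Tate functors appearing preserve this totalization on Postnikov-bounded-below synthetic spectra by Lemma~\ref{lem: orbit/fixedpoints/tate continuity}, and $\F^{\geq\star}_\ev E^{\otimes_R n}$ is Postnikov-connective by Remark~\ref{rem:bounds}. The comparison between the resulting constructions and the HRW $S^1$-equivariant even filtrations is provided by Lemma~\ref{lem: even filtration comparison general}. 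Since $p$-adic equivalences are closed under totalizations of uniformly bounded-below cosimplicial spectra, it suffices to prove the lemma for $R=E$ even.

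Second, in the even case, by Lemma~\ref{lem: even filtration comparison in the even case} the four corners of the diagram are identified with
\[\tau_{\geq 2\star}(E^{\t S^1}),\quad \tau_{\geq 2\star}((E^{\t C_p})^{\h S^1}),\quad \lim_n\tau_{\geq 2\star}(E^{\t C_{p^{n+1}}}),\quad \lim_n\tau_{\geq 2\star}((E^{\t C_p})^{\h C_{p^n}}),\]
and the synthetic maps of the diagram correspond to $\tau_{\geq 2\star}$ of the classical maps (interacting with an inverse limit in the bottom row). A map of the form $\tau_{\geq 2\star}f$ is a $p$-adic equivalence iff $f$ is (compare $\pi_i$ for each $i$), and a countable inverse limit of $p$-adic equivalences of uniformly-connective filtrations is still a $p$-adic equivalence, so the problem reduces to showing that each of the four classical maps is a $p$-adic equivalence.

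Third, the four classical statements for bounded-below $E$ with $S^1$-action are all standard. The bottom horizontal map is an equivalence already before $p$-completion, by applying~\cite[Lem.~II.4.1]{nikolaus-scholze} at each $n$ (cf.\ our Lemma~\ref{lem:homotopy_to_tate}). The right vertical map becomes the $p$-adic identification $(E^{\t C_p})^{\h S^1}\we\lim_n (E^{\t C_p})^{\h C_{p^n}}$ arising from the fact that the homotopy-fixed-point tower for $S^1$ is $p$-adically the inverse limit of the $C_{p^n}$-towers. The left vertical map $E^{\t S^1}\to\lim_n E^{\t C_{p^{n+1}}}$ is a $p$-adic equivalence by the same general principle, combined with the Tate-valued Frobenius. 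The top horizontal map is then forced to be a $p$-adic equivalence by commutativity of the square.

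The main obstacle is the descent step: one must juggle four commuting limits (the \v{C}ech totalization, the inverse limit over $n$, the double-speed Postnikov filtration, and $p$-adic completion), which requires careful invocation of Lemma~\ref{lem: orbit/fixedpoints/tate continuity} and the connectivity bounds of Remark~\ref{rem:bounds}. Once the reduction to the even case is justified, the remaining work amounts to citing the already-established identifications and the classical Nikolaus--Scholze inputs.
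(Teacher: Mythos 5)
Your proposal takes a genuinely different route from the paper. The paper handles each map of the square by the most direct available argument: the bottom horizontal map via the synthetic Tate orbit lemma (Lemma~\ref{lem:homotopy_to_tate}), the right vertical map by a clean fiber-sequence computation showing $\colim_n\rho(p^n)_*\bT_\ev\to\bS_\ev$ is a $p$-adic equivalence (using Lemma~\ref{lem: fiber sequence for Tsyn} and the identification of transition maps as multiplication by $p$ on the fiber), and only the left vertical map via the descent result Lemma~\ref{lem: even filtration comparison general} together with~\cite[Lem.~II.4.2]{nikolaus-scholze}. You instead propose a single reduction to the even case by \v{C}ech descent, after which all four maps become $\tau_{\geq 2\star}$ of classical maps whose $p$-adic-equivalence status is known from Nikolaus--Scholze. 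Your approach is more uniform and conceptually tidy, but is less self-contained and introduces the descent step as a load-bearing ingredient for all three maps rather than just one.

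The concrete gap is in your justification of the descent. You claim that ``the fixed-point and Tate functors appearing preserve this totalization on Postnikov-bounded-below synthetic spectra by Lemma~\ref{lem: orbit/fixedpoints/tate continuity}.'' That lemma applies to $\bN^{\op}$-indexed towers $\{\F^{\geq\star}X_k\}$ whose fibers $\fib(\F^{\geq\star}X_{i+1}\to\F^{\geq\star}X_i)$ are \emph{eventually $N$-connective for a fixed $N$}. For the $\Tot$-tower $\Tot^k(\F^{\geq\star}_\ev E^\bullet)$, the fiber of $\Tot^{k+1}\to\Tot^k$ is a $(k+1)$-fold loop of the normalized cochain object, so its connectivity \emph{decreases} with $k$; the hypothesis of Lemma~\ref{lem: orbit/fixedpoints/tate continuity} is therefore not satisfied, and the lemma cannot be invoked to pass the Tate construction through the totalization. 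For the fixed-point functors $(-)^{\bT_\ev}$ and $(-)^{C_{p^n,\ev}}$ no cocontinuity input is needed at all, since these are right adjoints and automatically preserve limits; the real content is the Tate construction, and the correct argument is the one in the proof of Lemma~\ref{lem: even filtration comparison general}: one checks that $(-)_{\bT_\ev}$ commutes with the totalization by using that the orbits filtration is complete (Lemma~\ref{lem:complete_underlying}) and that weight by weight the relative tensor products reduce to perfect module calculations which commute with totalizations. If you replace your appeal to Lemma~\ref{lem: orbit/fixedpoints/tate continuity} with a direct citation of Lemma~\ref{lem: even filtration comparison general} (extended in the obvious way to $(\F^{\geq\star}_\ev R^{\t C_{p,\ev}})^{\bT_\ev}$ and $(\F^{\geq\star}_\ev R^{\t C_{p,\ev}})^{C_{p^n,\ev}}$, which you will need for the right column but which is not literally among the four statements of that lemma), the rest of your argument goes through.
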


\begin{proof}
    We have already seen in Lemma~\ref{lem:homotopy_to_tate} that
    the bottom map is an equivalence even before $p$-completion (or before taking the limit). The
    right hand vertical map is an equivalence $p$-adically since this reduces to the statement that
    the map $\colim \rho(p^n)_*\bT_{\ev}\to \bS_{\ev}$ is a $p$-adic equivalence which follows from
    the fiber sequence of Lemma~\ref{lem: fiber sequence for Tsyn} and the identification of the
    map $\rho(nm)_*\bT_\ev\to \rho(n)_*\bT_{\ev}$ as multiplication by $m$ on the fiber. Finally,
    the left vertical map is a $p$-adic equivalence by Lemma~\ref{lem: even filtration comparison
    general} and the proof of~\cite[Lem.~II.4.2]{nikolaus-scholze}.
\end{proof}

\section{The synthetic cyclotomic $t$-structure}\label{sec:t_structure}

Following \cite{an1}, we will now construct a $t$-structure on $\CycSyn$ starting
with a $t$-structure $\C$ on $\SynSp$ satisfying $\mathbf{(\star)}$ in the sense of
Definition~\ref{def:t_star}.

\subsection{Synthetic $t$-structures}

\begin{theorem}~\label{thm: construction of cyclotomic $t$-structure}
    If $\C$ is a $t$-structure on $\SynSp$ which satisfies $\mathbf{(\star)}$,
    then there exists an accessible $t$-structure on $\CycSyn$ which is
    \begin{enumerate}
        \item[{\em (1)}] compatible with the symmetric monoidal structure on $\CycSyn$;
        \item[{\em (2)}] left separated;
        \item[{\em (3)}] left complete and compatible with countable products.
    \end{enumerate}
    Moreover, the forgetful functor $\CycSyn\rightarrow\SynSp$ is right $t$-exact.
\end{theorem}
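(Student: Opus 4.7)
The plan is to define the connective part of the cyclotomic synthetic $t$-structure as the preimage of the $\C$-connective synthetic $\bT_\ev$-spectra under the forgetful functor $U\colon \CycSyn \to \SynSp_{\bT_\ev}$, and then to verify the listed properties one at a time. Set
\[
\CycSyn_{\geq 0}^{\cyc,\C} := U^{-1}\bigl(\SynSp_{\bT_\ev,\geq 0}^\C\bigr).
\]
By Corollary~\ref{cor: cycsin is a nice category}, the functor $U$ is conservative and preserves all colimits. Combined with the fact that $\SynSp_{\bT_\ev,\geq 0}^\C$ is presentable, accessible, and closed under colimits and extensions (inherited from $\C$), this shows that $\CycSyn_{\geq 0}^{\cyc,\C}$ is an accessible full subcategory of $\CycSyn$ closed under colimits and extensions. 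A standard existence criterion for $t$-structures on presentable stable $\infty$-categories (for instance \cite[Prop.~1.4.4.11]{ha}) then produces an accessible $t$-structure on $\CycSyn$ with this prescribed connective part, and right $t$-exactness of $U$ is immediate.

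For compatibility with the symmetric monoidal structure, I would use that $U$ is symmetric monoidal (Corollary~\ref{cor: cycsin is a nice category}). If $X,Y\in \CycSyn_{\geq 0}^{\cyc,\C}$, then $U(X\otimes Y)\simeq U(X)\otimes U(Y)$ lies in $\SynSp_{\bT_\ev,\geq 0}^\C$ by compatibility of $\C$ with $\otimes$ on $\SynSp_{\bT_\ev}$ (condition $\mathbf{(\star)}$(a)). The unit $\bS_\ev^\triv$ is connective since $\bS_\ev$ is, which handles the unitality half of the compatibility.

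For left completeness and compatibility with countable products, the critical input is Lemma~\ref{lem: orbit/fixedpoints/tate continuity}: for towers $\{\F^{\geq\star}X_k\}_k$ in $\SynSp_{\bT_\ev}$ whose fibers are uniformly $\C$-bounded below, the functors $(-)^{C_{p,\ev}}$, $(-)_{C_{p,\ev}}$, and $(-)^{\t C_{p,\ev}}$ all commute with the inverse limit. Applying this to the Postnikov (or countable-product) tower of a $\C$-coconnective object of $\CycSyn$, whose underlying tower in $\SynSp_{\bT_\ev}$ is uniformly bounded below (above, after shifts) by the hypothesis that $\C$ is compatible with countable products and left complete, one obtains that the limit of the tower, computed in $\SynSp_{\bT_\ev}$, carries a canonical cyclotomic synthetic structure inherited from the towers' Frobenii. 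This gives both left completeness and compatibility with countable products; left separation follows from left completeness and conservativity of $U$.

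The main obstacle is the usual one: the Tate construction $(-)^{\t C_{p,\ev}}$ does not commute with arbitrary limits. However, the boundedness built into the Postnikov tower is precisely the hypothesis of Lemma~\ref{lem: orbit/fixedpoints/tate continuity}(1), whose proof reduces the continuity statement to compactness of $\bT_\ev$ and $\rho(p)_*\bT_\ev$ as $\bS_\ev$-modules combined with the compatibility of $\C$ with countable products. Once this lemma is invoked, the remaining verifications for the cyclotomic $t$-structure all reduce to the corresponding statements for the $t$-structure $\C$ on $\SynSp_{\bT_\ev}$.
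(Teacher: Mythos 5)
Your proposal mirrors the paper's argument in its essential structure: define the connective part as the preimage under the forgetful functor $U\colon\CycSyn\to\SynSp_{\bT_\ev}$ of the $\C$-connective objects, invoke the existence criterion from~\cite[1.4.4.11]{ha}, deduce monoidality from connectivity of the unit and condition $\mathbf{(\star)}$(a), and use Lemma~\ref{lem: orbit/fixedpoints/tate continuity} to control limits. Two places where the paper is tighter and your version is loose enough to be worth flagging:

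First, the presentability of $\CycSyn_{\geq 0}^{\cyc,\C}$ is asserted rather than argued. You say the preimage description "shows that $\CycSyn_{\geq 0}^{\cyc,\C}$ is accessible," but preimages of accessible subcategories under accessible functors require a pullback-of-accessible-categories argument (e.g.\ HTT 5.4.6.6), which you do not cite. The paper instead re-expresses $\CycSyn_{\geq 0}^\C$ as the lax equalizer of $\id$ and $\tau_{\geq 0}^\C(-)^{\t C_{p,\ev}}$ on $\SynSp_{\bT_\ev,\geq 0}^\C$ (using that for $\C$-connective $X$ any Frobenius factors uniquely through the $\C$-connective cover of $X^{\t C_{p,\ev}}$), and reads off presentability from~\cite[Prop.~II.1.5]{nikolaus-scholze}. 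Your route is workable, but it replaces an explicit citation with an unstated general fact.

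Second, the argument for (3) is muddled. You speak of "the Postnikov tower of a $\C$-coconnective object," which is not the relevant tower; left completeness concerns the Postnikov tower $\{\tau^{\C}_{\leq n}X\}$ of an arbitrary object. The paper avoids this entirely by citing~\cite[1.2.1.19]{ha}, which reduces (3) to the single assertion that $\CycSyn_{\geq 0}^\C$ is closed under countable products. That in turn follows from the fact that $U$ commutes with countable products of $\C$-connective objects, a consequence of \cite[Prop.~II.1.5(v)]{nikolaus-scholze} together with Lemma~\ref{lem: orbit/fixedpoints/tate continuity} applied to the partial products $\prod_{i\leq k}Y_i$. Your invocation of the same continuity lemma is the right key, but the chain from there to left completeness should be spelled out (either directly via the tower $\{\tau^\C_{\leq n}X\}$ and increasing connectivity of the fibers, or more cleanly via the reduction to countable products). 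Your deduction of left separatedness from left completeness is fine, though the paper gets it even more directly from conservativity of $U$.

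In short: same strategy, same key lemma, but your write-up skips the lax-equalizer description that makes presentability routine, and the left-completeness paragraph needs to be reorganized around the actual Postnikov tower or the closure-under-countable-products criterion.
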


We will write $\C=(\CycSyn_{\geq 0}^\C,\CycSyn_{\leq 0}^\C)$ for this $t$-structure as well.

\begin{proof}[Proof of Theorem~\ref{thm: construction of cyclotomic $t$-structure}]
    Let $\CycSyn_{\geq 0}^\C\subseteq\CycSyn$ be the full subcategory of objects $(\F^\star
    X,\varphi)$ such that the underlying synthetic spectrum $\F^\star X$ is in $\SynSp_{\geq
    0}^\C$. Note that we can describe $\CycSyn_{\geq 0}^\C$ instead as \[\CycSyn_{\geq 0}^\C\simeq
    \mathrm{Eq}(\id, \tau_{\geq 0}^\C(-)^{\t C_{p,\ev}}:\SynSp^\C_{\bT_{\ev},\geq
    0}\rightrightarrows\SynSp^\C_{\bT_{\ev},\geq 0})\] since any map $\F^{\geq \star}X\to (\F^{\geq
    \star}X)^{\t C_{p,\ev}}$ factors functorially and uniquely through a map $\F^{\geq \star}X\to
    \tau_{\geq 0}^\C\F^{\geq \star}X^{\t C_{p,\ev}}$ for $\F^{\geq \star}X$ connective with respect
    to $\C$. Thus $\CycSyn_{\geq 0}$ is presentable by \cite[Proposition
    II.1.5]{nikolaus-scholze} and closed under extensions in $\SynSp$. By \cite[1.4.4.11]{ha}, there is a unique $t$-structure
    $\C=(\CycSyn_{\geq 0}^\C,\CycSyn_{\leq 0}^\C)$ on $\CycSyn$.

    The forgetful functor $\CycSyn\to \SynSp_{\bT_{\ev}}$ is right $t$-exact by definition. We also
    have that $\C$ is compatible with the symmetric monoidal structure on $\CycSyn$
    since the unit $\bS_\ev^\triv$ is connective and since the tensor product of connectives is
    connective. This proves (1).
    
    Left separatedness follows from the fact that the functor $\CycSyn\to \SynSp_{\bT_{\ev}}$ is
    conservative; this shows (2).
    To show part (3),
    it is enough to show that $\CycSyn_{\geq 0}^\C$ is closed under countable products in $\CycSyn$ by
    \cite[1.2.1.19]{ha}. This follows assuming that the forgetful functor $\CycSyn\to \SynSp_{\bT_{\ev}}$
    commutes with countable products of $\C$-connective objects, which in turn follows from
    \cite[Prop.~II.1.5(v)]{nikolaus-scholze} and the fact that for $\{\F^{\geq \star}Y_k\}_k$
    a family of
    $\C$-connective objects, \[\left(\prod_{k}\F^{\geq \star}Y_k\right)^{\t C_{p,\ev}}\to \prod_k \F^{\geq
    \star}Y_k^{\t C_{p,\ev}}\] is an equivalence by Lemma~\ref{lem: orbit/fixedpoints/tate
    continuity} (taking $\F^{\geq \star}X_k:=\prod_{i=1}^k \F^{\geq \star}Y_i$.).
\end{proof}

\subsection{Synthetic topological Cartier Modules}

In order to analyze the heart of the cyclotomic synthetic $t$-structures, we introduce synthetic
Cartier modules and their associated $t$-structures.

\begin{definition}
    Define the $\infty$-category $\syncart$ to
    be the $\infty$-category of objects $\F^{\geq \star}X\in \SynSp_{\bT_{\ev}}$
    equipped with an equivariant factorization \[(\F^{\geq
    \star}X)_{C_{p,\ev}}\xrightarrow{V}\F^{\geq \star}X\xrightarrow{F}(\F^{\geq
    \star}X)^{C_{p,\ev}}\] of the norm map $\Nm_p\colon\F^{\geq \star}X_{C_{p,\ev}}\to \F^{\geq
    \star}X^{C_{p,\ev}}$ of Construction~\ref{const: Tate Construction finite groups.}. More
    precisely, $\syncart$ is the pullback of the diagram \[
    \begin{tikzcd}
     & \SynSp_{\bT_{\ev}} \ar[d,"(\id{,} \Nm_p)"]\\
     \SynSp_{\bT_{\ev}}^{\Delta^2}\ar[r,"(\ev_{1}{,}\partial^1)"]&\SynSp_{\bT_{\ev}}\times\SynSp_{\bT_{\ev}}^{\Delta^1}
    \end{tikzcd}
    \] in $\mathrm{Cat}_{\infty}$. We call the objects of $\syncart$ synthetic Cartier modules.
\end{definition}

There are two key examples to keep in mind, which we outline now.

\begin{example}
    Let $(\F^{\geq \star}X,\varphi)\in \CycSyn_p$. Define the filtered topological restriction
    homology of $\F^{\geq\star}X$ to be the synthetic spectrum \[\mathrm{TR}(\F^{\geq \star}X):=
    \mathrm{Eq}\left(\prod_{n\geq 0}\F^{\geq \star}X^{C_{p^n,\ev}}\substack{\xrightarrow{\prod
    \can^{C_{p^{n-1},\ev}}}\\\xrightarrow[\prod \varphi_p^{C_{p^n,\ev}}]{\hphantom{\prod
    \can^{C_{p^{n-1},\ev}}}}} \prod_{n\geq 0}(\F^{\geq \star}X^{\t
    C_{p,\ev}})^{C_{p^n,\ev}}\right)\] where the top map is given on the $n$th factor as the
    composition \[\F^{\geq \star}X^{C_{p^n,\ev}}\simeq (\F^{\geq
    \star}X^{C_{p,\ev}})^{C_{p^{n-1},\ev}}\xrightarrow{\can^{C_{p^{n-1},\ev}}} (\F^{\geq \star}X^{\t
    C_{p,\ev}})^{C_{p^{n-1},\ev}}\] and the bottom map is given by $\varphi^{C_{p^n,\ev}}\colon\F^{\geq
    \star}X^{C_{p^n,\ev}}\to (\F^{\geq \star}X^{\t C_{p,\ev}})^{C_{p^{n},\ev}}$.
    
    The filtered topological restriction homology naturally carries a synthetic Cartier module
    structure. To see this, note that $\mathrm{TR}(\F^{\geq \star}X)$ is the equalizer of two
    objects in $\SynSp_{\bT_{\ev}}$, and so naturally has a $\bT_{\ev}$ action. We may then define
    $V\colon\mathrm{TR}(\F^{\geq \star}X)_{C_{p,\ev}}\to \mathrm{TR}(\F^{\geq \star}X)$ to be the map
    induced by \[\left(\prod_{n\geq 0}\F^{\geq \star}X^{C_{p^n,\ev}}\right)_{C_{p,\ev}}\to
    \prod_{n\geq 0}(\F^{\geq \star}X^{C_{p^n,\ev}})_{C_{p,\ev}}\xrightarrow{\prod_{n\geq
    0}\Nm_p}\prod_{n\geq 0}\F^{\geq \star}X^{C_{p^{n+1},\ev}}\] and the same map with $\F^{\geq
    \star}X$ replaced with $\F^{\geq \star}X^{\t C_{p,\ev}}$. The Frobenius map
    $F\colon\mathrm{TR}(\F^{\geq \star}X)\to \mathrm{TR}(\F^{\geq \star}X)^{C_{p,\ev}}$ is the map
    induced by the maps \[\prod_{n\geq 0}\F^{\geq \star}X^{C_{p^n,\ev}}\xrightarrow{\proj}
    \prod_{n\geq 1}\F^{\geq \star}X^{C_{p^{n},\ev}}=\prod_{n\geq 0}\F^{\geq
    \star}X^{C_{p^{n+1},\ev}}\simeq \left(\prod_{n\geq 0}\F^{\geq
    \star}X^{C_{p^n,\ev}}\right)^{C_{p,\ev}}\] and the same map with $\F^{\geq \star}X$ replaced
    with $\F^{\geq \star}X^{\t C_{p,\ev}}$.
\end{example}

\begin{remark}~\label{rem: tc=trf=1}
    Classically topological cyclic homology was defined as a certain equalizer of topological
    restriction homology. One might wonder if such a formula is still true in this synthetic
    language. Indeed, there is a functorial fiber sequence \[\TC(\F^{\geq \star}X)\to \TR(\F^{\geq
    \star}X)\to \TR(\F^{\geq \star}X)\] for any $p$-complete and bounded below $\F^{\geq \star}X\in
    \CycSyn$. To see this, consider the map $F\colon\TR(\F^{\geq \star}X)\to \TR(\F^{\geq \star}X)$
    induced by the restriction maps $(\F^{\geq \star}X)^{C_{p^{n+1},\ev}}\to (\F^{\geq
    \star}X)^{C_{p^n,\ev}}$ and $(\F^{\geq \star}X^{\t C_{p,\ev}})^{C_{p^{n+1},\ev}}\to (\F^{\geq
    \star}X^{\t C_{p,\ev}})^{C_{p^n,\ev}}$. Then taking the equalizer of $F=1$ on $\TR(\F^{\geq
    \star}X)$ produces a fiber sequence \[\TR(\F^{\geq \star}X)^{F=1}\to \lim (\F^{\geq
    \star}X)^{C_{p^n,\ev}}\to \lim (\F^{\geq \star}X^{\t C_{p,\ev}})^{C_{p^n,\ev}}\] which under
    our assumptions on $\F^{\geq \star}X$ is then the same fiber sequence giving $\TC(\F^{\geq
    \star}X)$ by Lemma~\ref{lem: Tate then fixed points is Tate}.
\end{remark}

\begin{example}~\label{ex: discrete syncart objects}
    Let $(M_*,d, F,V)\in \mathrm{DCart}^\eta$, where $\mathrm{DCart}^\eta$ is the abelian category of $\eta$-deformed Cartier complexes of definition~\ref{def:eta_deformed}. Define $\F^{\geq \star}M\in
    \SynSp_{\bT_\ev}^\heartsuit$ by $\F^{\geq i}M\colon =M_i[i]$ with maps $\F^{\geq
    i+1}M\xrightarrow{0}\F^{\geq i}M$ and $\pi_0(\bT_{\ev})=\bZ[\eta,d]/(2\eta, d^2=\eta d)$-module
    structure given by $d$. From example~\ref{example: norm on pi_0 postnikov} we have that the
    maps $F,V:\F^{\geq \star}M_*\to \F^{\geq \star}M_*$ do determine maps $V\colon (\F^{\geq
    \star}M_*)_{C_{p,\ev}}\to \F^{\geq \star}M_*$ and $F\colon \F^{\geq \star}M_* \to \F^{\geq \star}M_*^{C_{p,\ev}}$ (by the first two relations) and factor the norm map (by the third and fourth relation). Thus this determines a Synthetic Cartier module.
\end{example}

In order to access the mapping spectra, it will be more convenient to work with a slightly different model of synthetic Cartier modules. 

\begin{lemma}
    There is a pullback square of $\infty$-categories 
    \[
    \begin{tikzcd}
        \syncart \ar[r] \ar[d] & \SynSp_{\bT_{\ev}}\times \SynSp_{\bT_{\ev}} \ar[d]\\
        \CycSyn^{Fr} \ar[r] & \SynSp_{\bT_{\ev}}^{\Delta^1},
    \end{tikzcd}
    \]
     where 
    \begin{enumerate}
        \item[{\em (i)}] $\CycSyn^{Fr}$ is the $\infty$-category given in Definition~\ref{defn: cycsyn with fr lift};
        \item[{\em (ii)}] the left vertical map is forgetting the $V$ map;
        \item[{\em (iii)}] the top horizontal map sends an object $(\F^{\geq \star}X,F,V,\sigma)$ to the pair $(\F^{\geq \star}X_{C_{p,\ev}}, \F^{\geq \star}X^{C_{p,\ev}}/F)$;\footnote{Here $\sigma$ denotes the 2-cell witnessing that $F\circ V\simeq \Nm_p$.}
        \item[{\em (iv)}] the bottom horizontal functor sends an object $(M,F:M\to M^{C_{p,\ev}})$ to the composition $M_{C_{p,\ev}}\xrightarrow{\Nm_p}M^{C_{p,\ev}}\to M^{C_{p,\ev}}/F$;
        \item[{\em (v)}] the right vertical map sends a pair $(X,Y)$ to the zero map $X\xrightarrow{0}Y$.
    \end{enumerate}
\end{lemma}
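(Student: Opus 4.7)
The plan is to exhibit both $\syncart$ and the proposed pullback as Cartesian fibrations over $\CycSyn^{\Fr}$ and compare them fiberwise. Write $\Cscr = \SynSp_{\bT_{\ev}}$ throughout. Unpacking the defining pullback of $\syncart$, an object is a quadruple $(X, F, V, \sigma)$ consisting of $X \in \Cscr$, a Frobenius $F \colon X \to X^{C_{p,\ev}}$, a Verschiebung $V \colon X_{C_{p,\ev}} \to X$, and a $2$-cell $\sigma$ exhibiting $F \circ V \simeq \Nm_p$. The forgetful functor to $\CycSyn^{\Fr}$ dropping $(V, \sigma)$ is a Cartesian fibration whose fiber over $(X, F)$ is the space of factorizations of $\Nm_p$ through $F$, equivalently the mapping space in $\Cscr_{/X^{C_{p,\ev}}}$ from $(X_{C_{p,\ev}}, \Nm_p)$ to $(X, F)$.

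For the proposed pullback, I would first construct the bottom horizontal arrow $\CycSyn^{\Fr} \to \Cscr^{\Delta^1}$ sending $(X, F)$ to the composite $X_{C_{p,\ev}} \xrightarrow{\Nm_p} X^{C_{p,\ev}} \xrightarrow{q} X^{C_{p,\ev}}/F$, where $q$ is the canonical quotient arising from the cofiber sequence $X \xrightarrow{F} X^{C_{p,\ev}} \to X^{C_{p,\ev}}/F$. The right vertical arrow $\Cscr \times \Cscr \to \Cscr^{\Delta^1}$ assigning $(A,B)$ to the zero morphism $A \xrightarrow{0} B$ is a Cartesian fibration, and pullback preserves this, so the total pullback is a Cartesian fibration over $\CycSyn^{\Fr}$ with fiber over $(X, F)$ equal to the space of null-homotopies of $q \circ \Nm_p$.

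The fiberwise comparison then reduces to a standard stable $\infty$-categorical observation: for any cofiber sequence $X \xrightarrow{F} Y \xrightarrow{q} Z$ and any map $g \colon A \to Y$, the space of pairs $(V \colon A \to X,\ \sigma \colon F \circ V \simeq g)$ is canonically equivalent to the space of null-homotopies of $q \circ g$, since applying $\Map(A, -)$ yields a fiber sequence $\Map(A, X) \to \Map(A, Y) \to \Map(A, Z)$ in which both spaces compute the homotopy fiber of $q_*$ over $q \circ g$. Applied with $A = X_{C_{p,\ev}}$, $Y = X^{C_{p,\ev}}$, and $g = \Nm_p$, this identifies the fibers of the two Cartesian fibrations. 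To promote this to a global equivalence I would construct a natural functor $\syncart \to \CycSyn^{\Fr} \times_{\Cscr^{\Delta^1}} (\Cscr \times \Cscr)$ over $\CycSyn^{\Fr}$, sending $(X, F, V, \sigma)$ to $(X, F)$ together with the pair $(X_{C_{p,\ev}},\ X^{C_{p,\ev}}/F)$ and the null-homotopy of $q \circ \Nm_p$ encoded by $\sigma$ via the cofiber sequence; this is a functor over $\CycSyn^{\Fr}$ which is a fiberwise equivalence, hence an equivalence.

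The main technical obstacle will be assembling the bottom map $\CycSyn^{\Fr} \to \Cscr^{\Delta^1}$ coherently as a map of $\infty$-categories, since it depends functorially on the cofiber of the Frobenius and on the $C_{p,\ev}$-fixed-point functor; concretely, this amounts to viewing the construction $(X, F) \mapsto (\Nm_p,\ F,\ q)$ as a functor into $\Cscr^{\Delta^2}$ and postcomposing with the appropriate face map. Once the coherence is in place, the fibered comparison above produces the desired pullback identification immediately.
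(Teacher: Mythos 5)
Your fiberwise computation is the right key observation: applying $\Map(X_{C_{p,\ev}},-)$ to the cofiber sequence $X \xrightarrow{F} X^{C_{p,\ev}} \xrightarrow{q} X^{C_{p,\ev}}/F$ exhibits the space of factorizations of $\Nm_p$ through $F$ as the space of nullhomotopies of $q\circ\Nm_p$, and that is exactly what the lemma is matching up. The problem is the mechanism you use to promote this to a global statement. Neither of the functors you invoke is a Cartesian fibration. Consider $\Cscr\times\Cscr \to \Cscr^{\Delta^1}$, $(A,B)\mapsto (A\xrightarrow{0}B)$: a morphism in $\Cscr^{\Delta^1}$ from an arrow $f\colon A'\to B'$ to $(A\xrightarrow{0}B)$ is a square, and any lift of it to $\Cscr\times\Cscr$ projects to a square whose top edge is the zero map $A'\to B'$, not $f$; so if $f$ is not literally zero there is no lift at all, and the functor is not even an isofibration, let alone Cartesian. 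Moreover it fails to be fully faithful even over the image: $\Map_{\Cscr^{\Delta^1}}\bigl((A\xrightarrow{0}B),(A'\xrightarrow{0}B')\bigr)$ contains an extra factor $\Omega\Map(A,B')$ that $\Map_{\Cscr\times\Cscr}\bigl((A,B),(A',B')\bigr)$ does not. The same issue afflicts $\syncart\to\CycSyn^{\Fr}$: given a morphism $(X',F')\to(X,F)$ in $\CycSyn^{\Fr}$ and a Verschiebung datum $(V,\sigma)$ over $(X,F)$, there is no canonical pullback of $V$ along $X'\to X$, and indeed the fiber over $(X',F')$ can be empty, so Cartesian lifts need not exist.

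This matters because a functor over a base that induces an equivalence on strict fibers need not be an equivalence unless the projections are (co)Cartesian fibrations or satisfy some comparable hypothesis; a toy example is the fiberwise-equivalence $\partial\Delta^1 \hookrightarrow \Delta^1$ over $\Delta^1$, which is not full. So your proposal as written leaves the passage from the (correct) fiberwise identification to the asserted equivalence of pullbacks unjustified. The paper's proof cites~\cite[Lem.~3.7]{an1} verbatim, and the argument there is a direct manipulation of pullback diagrams of $\infty$-categories (expressing both sides as limits of compatible cospans and invoking the universal property) rather than a Cartesian-fibration fiberwise reduction; to repair your argument you would need either to carry out that diagram chase, or to replace the fiberwise reduction by checking essential surjectivity and full faithfulness of the comparison functor directly, the latter requiring a mapping-space analogue of your nullhomotopy-versus-factorization identification.
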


\begin{proof}
    The proof of~\cite[Lem.~3.7]{an1} applies here.
\end{proof}

As a consequence we get the following two corollaries. The proofs follow those
of~\cite[Prop.~3.8]{an1} and~\cite[Prop.~3.11]{an1}.

\begin{corollary}~\label{cor: fiber sequence for maps in syncart}
    The category $\syncart$ is stable presentable $\infty$-category, and for any two objects
    $(\F^{\geq \star}X,F_X,V_X,\sigma_X)$ and $(\F^{\geq \star}Y,F_Y,V_Y,\sigma_Y)$ in $\syncart$
    there is a fiber sequence of spectra \[\mathrm{Map}_{\syncart}(\F^{\geq \star}X,\F^{\geq
    \star}Y)\to \mathrm{Map}_{\CycSyn^{Fr}}(\F^{\geq \star}X,\F^{\geq \star}Y)\to
    \mathrm{Map}_{\SynSp_{\bT_{\ev}}}(\F^{\geq \star}X_{C_{p,\ev}},\fib(F_Y)).\] Moreover, $\mathrm{Map}_{\SynSp_{\bT_{\ev}}}(\F^{\geq
    \star}X_{C_{p,\ev}},\fib(F_Y))\simeq \mathrm{Map}_{\CycSyn^{Fr}}(\F^{\geq
    \star}X_{C_{p,\ev}},\F^{\geq \star}Y)$, where $\F^{\geq \star}X_{C_{p,\ev}}$ has the zero map as
    its Frobenius lift.
\end{corollary}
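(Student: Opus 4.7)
The plan is to extract both claims from the pullback square of $\infty$-categories in the preceding lemma.

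For stability and presentability, each vertex of that square is a presentable stable $\infty$-category ($\SynSp_{\bT_\ev}$ by construction, $\CycSyn^\Fr$ by the corollary after Definition~\ref{defn: cycsyn with fr lift}, and products/arrow categories of $\SynSp_{\bT_\ev}$ automatically), and each functor in the square is accessible and limit-preserving: the forgetful functor $\CycSyn^\Fr\to\SynSp_{\bT_\ev}^{\Delta^1}$ preserves all limits, the zero-arrow functor $z$ is exact, and $\Psi$ together with the top horizontal functor are exact combinations of norm and cofiber. Pullbacks of presentable stable $\infty$-categories along such functors remain presentable and stable.

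For the fiber sequence, apply the mapping-spectrum functor to the pullback square to obtain a pullback of spectra. Taking fibers in the vertical direction gives
\[\fib\bigl(\Map_{\syncart}(A,B) \to \Map_{\CycSyn^\Fr}(A,B)\bigr) \simeq \fib\bigl(\Map_{\SynSp_{\bT_\ev}\times\SynSp_{\bT_\ev}}(\hat A,\hat B) \to \Map_{\SynSp_{\bT_\ev}^{\Delta^1}}(z\hat A, z\hat B)\bigr).\]
The key computation is that the mapping spectrum between two zero arrows $0\colon U\to V$ and $0\colon U'\to V'$ decomposes as $\Map(U,U')\oplus\Map(V,V')\oplus\Map(U,V')[-1]$, with the third summand recording the 2-cell of commutativity (both pre- and post-composition with zero vanish). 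Under this decomposition $z$ is the inclusion of the first two summands with trivial 2-cell, so its fiber is $\Map(X_{C_{p,\ev}}, Y^{C_{p,\ev}}/F_Y)[-2]$. Using the fiber sequence $\fib(F_Y)\to Y\xrightarrow{F_Y}Y^{C_{p,\ev}}$, we have $Y^{C_{p,\ev}}/F_Y\simeq\fib(F_Y)[1]$, so this fiber becomes $\Map_{\SynSp_{\bT_\ev}}(X_{C_{p,\ev}},\fib(F_Y))[-1]$. Rotating the resulting fiber sequence once yields the claim.

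Finally, the identification $\Map_{\SynSp_{\bT_\ev}}(X_{C_{p,\ev}},\fib(F_Y))\simeq \Map_{\CycSyn^\Fr}(X_{C_{p,\ev}},Y)$ (with zero Frobenius lift on the source) is a direct unfolding of the lax-equalizer definition: a morphism from $(X_{C_{p,\ev}},0)$ to $(Y,F_Y)$ in $\CycSyn^\Fr$ is precisely a map $g$ together with a nullhomotopy of $F_Y\circ g$, i.e., a lift of $g$ through $\fib(F_Y)$. The only real subtlety in the argument is tracking the single shift arising from the 2-cell component in the decomposition of the arrow-category mapping spectrum; with that bookkeeping in hand, everything else is a formal consequence of the pullback description.
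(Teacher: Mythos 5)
Your derivation of the fiber sequence is correct and is substantively more detailed than what the paper itself provides --- the paper defers entirely to [an1, Props.\ 3.8, 3.11] without reproducing the argument. Passing the pullback square of $\infty$-categories to a pullback of mapping spectra, using the decomposition $\Map_{\Cscr^{\Delta^1}}(0{:}\,U\to V,\,0{:}\,U'\to V')\simeq\Map(U,U')\oplus\Map(V,V')\oplus\Map(U,V')[-1]$ (right: it is the fiber of a zero map to $\Map(U,V')$, which contributes the loop summand), the resulting fiber $\Map(X_{C_{p,\ev}},Y^{C_{p,\ev}}/F_Y)[-2]$, the identification $Y^{C_{p,\ev}}/F_Y\simeq\fib(F_Y)[1]$, and the rotation all check out, and the bookkeeping of the shift from the two-cell summand is handled correctly. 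The closing identification is also right: a morphism $(X_{C_{p,\ev}},0)\to(Y,F_Y)$ in the lax equalizer $\CycSyn^{Fr}$ is a map $g$ together with a nullhomotopy of $F_Y\circ g$, equivalently a lift of $g$ through $\fib(F_Y)$.

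The presentability/stability paragraph, however, contains real errors. The assertion that ``each functor in the square is accessible and limit-preserving'' is false: the top and bottom horizontal arrows involve the homotopy orbits $(-)_{C_{p,\ev}}$, which is a relative tensor product and hence a left adjoint (so it preserves colimits, not limits), and the cofiber $(-)^{C_{p,\ev}}/F$, also a colimit. You also mislabel the bottom horizontal arrow as ``the forgetful functor $\CycSyn^{Fr}\to\SynSp_{\bT_\ev}^{\Delta^1}$''; it is actually the functor of item (iv) of the lemma, sending $(M,F)$ to the composite $M_{C_{p,\ev}}\xrightarrow{\Nm_p}M^{C_{p,\ev}}\to M^{C_{p,\ev}}/F$, and it is not a forgetful functor of any kind. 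The symbol $\Psi$ is also never introduced. The conclusion (stable and presentable) is still true, but the justification should run through accessibility of the pullback and cocompleteness, in the style of the lax-equalizer presentability arguments of Nikolaus--Scholze that [an1] uses, rather than through a claimed $\PrR$ pullback --- note the paper only asserts $\rho(p)_*\bT_\ev$ is $\kappa$-compact for uncountable $\kappa$, not compact, so one should not expect the relevant functors to preserve limits.
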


\begin{corollary}
    The category $\syncart$ is stable, presentable, and the forgetful functor $\syncart\to \SynSp_{\bT_{\ev}}$ preserves limits and colimits. 
\end{corollary}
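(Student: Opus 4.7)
The plan is to derive everything from the pullback square of the previous lemma. First I would observe that the three other corners are stable presentable: $\CycSyn^{Fr}$ by the corollary following Definition~\ref{defn: cycsyn with fr lift}, and both $\SynSp_{\bT_{\ev}}\times \SynSp_{\bT_{\ev}}$ and $\SynSp_{\bT_{\ev}}^{\Delta^1}$ inherit these properties termwise from $\SynSp_{\bT_{\ev}}$. Stability of $\syncart$ is then immediate, since pullbacks of stable $\infty$-categories along exact functors are stable, and presentability follows because every functor appearing in the square is built out of manifestly accessible operations.

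To obtain the preservation properties of the forgetful functor, I would first verify that the right vertical functor $(X,Y)\mapsto (0\colon X\to Y)$ and the bottom horizontal functor $(M,F)\mapsto (M_{C_{p,\ev}}\to M^{C_{p,\ev}}/F)$ each preserve all limits and colimits. For the right vertical functor this is automatic, since limits and colimits in $\SynSp_{\bT_{\ev}}^{\Delta^1}$ are computed termwise and $0$ is both initial and terminal. For the bottom functor, the constituent pieces are $(-)_{C_{p,\ev}}$, $(-)^{C_{p,\ev}}$, the norm natural transformation, and the cofiber of $F$; the first two are tensor and cotensor with the dualizable $\bT_{\ev}$-module $\rho(p)_*\bT_{\ev}$ (using Lemma~\ref{lem:dualizing} together with dualizability of $\bT_{\ev}$ from Corollary~\ref{cor:dualizable}), and hence preserve arbitrary limits and colimits; the cofiber is a finite colimit in a stable $\infty$-category and therefore preserves both limits and colimits as well.

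Granted these preservation properties for the right vertical and bottom horizontal functors, the left vertical projection $\syncart\to \CycSyn^{Fr}$ inherits limit- and colimit-preservation as a base change in the $\infty$-category of presentable stable $\infty$-categories (using that pullbacks of diagrams in $\Pr^L$ and $\Pr^R$ live in those sub-$\infty$-categories when all functors are colimit- or limit-preserving, respectively). Composing with the forgetful functor $\CycSyn^{Fr}\to \SynSp_{\bT_{\ev}}$, which preserves all limits and colimits by the corollary cited above, produces the claim for $\syncart\to\SynSp_{\bT_{\ev}}$.

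The main technical input is the limit preservation of $(-)_{C_{p,\ev}}$, which is not automatic since it is a left adjoint; here one reduces via Lemma~\ref{lem:dualizing} to dualizability of $\bT_{\ev}$ as an $\bS_{\ev}$-module, an argument already used implicitly in the proof of Lemma~\ref{lem: orbit/fixedpoints/tate continuity}. Everything else is a formal bookkeeping exercise with the pullback square.
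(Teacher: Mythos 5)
Your stability and presentability argument is fine, but the key step in the (co)limit-preservation claim has a genuine gap: $\rho(p)_*\bT_\ev$ is \emph{not} perfect (equivalently, compact or dualizable) as a module over the ring $\bT_\ev$, so $(-)_{C_{p,\ev}}$ does not preserve all limits and $(-)^{C_{p,\ev}}$ does not preserve all colimits. Lemma~\ref{lem:dualizing} and Corollary~\ref{cor:dualizable} concern duality over $\bS_\ev$ (equivalently, in the bialgebra symmetric monoidal structure on $\SynSp_{\bT_\ev}$, for which the forgetful functor to $\SynSp$ is conservative symmetric monoidal); this is a different and strictly weaker statement than perfection in $\Mod_{\bT_\ev}$, which is what governs the relative tensor $\otimes_{\bT_\ev}$ and internal hom $\bMap_{\bT_\ev}$ used in the orbit and fixed-point functors. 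Concretely, $\bZ$-linearly and unfiltered, Corollary~\ref{cor:nseries} identifies the fiber of $\bZ[S^1]\to\rho(p)_*\bZ[S^1]$ with $\bZ/p$ carrying the trivial $S^1$-action, and $\bZ/p$ (like $\bZ$ itself) is not perfect over $\bZ[S^1]$; if $\rho(p)_*\bT_\ev$ were perfect, the $C_{p,\ev}$-Tate construction would commute with all colimits, which would render the boundedness hypotheses in Lemma~\ref{lem: orbit/fixedpoints/tate continuity} superfluous. Consequently, the bottom functor $(M,F)\mapsto(M_{C_{p,\ev}}\to M^{C_{p,\ev}}/F)$ preserves neither all colimits (the target component fails, as $(-)^{C_{p,\ev}}$ does not commute with colimits) nor all limits (the source component fails), and the $\PrL/\PrR$ base-change argument for the pullback square does not apply.

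The paper's route, following~\cite[Prop.~3.11]{an1}, is deliberately asymmetric. Given a diagram $(M_i,F_i,V_i,\sigma_i)$, one takes $\colim M_i$; then $V_{\colim}$ is constructed directly because $(-)_{C_{p,\ev}}$ preserves colimits, while $F_{\colim}$ is the composite $\colim M_i\to\colim M_i^{C_{p,\ev}}\to(\colim M_i)^{C_{p,\ev}}$, where the second map is the canonical interchange and is merely required to be a map, not an equivalence. The compatibility $\sigma_{\colim}$ follows from naturality of the norm, and the universal property is verified using the mapping-space fiber sequence of Corollary~\ref{cor: fiber sequence for maps in syncart}, whose three terms visibly convert colimits in the first variable to limits (for the third term, because $(-)_{C_{p,\ev}}$ preserves colimits). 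Limits are handled dually, using that $(-)^{C_{p,\ev}}$ preserves limits to build $F_{\lim}$ and the interchange map $(\lim M_i)_{C_{p,\ev}}\to\lim (M_i)_{C_{p,\ev}}$ to build $V_{\lim}$.
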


We end this subsection with the construction of a functor $(-)/V:\syncart\to \CycSyn_p$ which we will later show is an equivalence.

\begin{construction}~\label{const: mod v functor}
    Consider the functor $\syncart\to \SynSp^{\Delta^1}_{\bT_{\ev}}$
    given by the composition \[\syncart\to \SynSp_{\bT_{\ev}}^{\Delta^2}\xrightarrow{p^*}
    \SynSp_{\bT_{\ev}}^{\Delta^1\times
    \Delta^1}\xrightarrow{\mathrm{cofib}}\SynSp_{\bT_{\ev}}^{\Delta^1}\] where the first functor is
    from the definition of $\syncart$, the second is the functor described diagrammatically as
    \[
    \begin{tikzcd}
       \F^{\geq \star}X \ar[dd, "f"] \ar[ddr, "h"] & & &\F^{\geq \star}X \ar[dd, "f"] \ar[r, "\id"]
        &\F^{\geq \star}X \ar[dd, "h"]\\
         & & \mapsto & & \\
         \F^{\geq \star}Y \ar[r, "g"] & \F^{\geq \star}Z & & \F^{\geq \star}Y\ar[r,"g"] & \F^{\geq \star}Z        
    \end{tikzcd}
    \]
    and the last map is taking vertical cofibers. Note that this composition sends a synthetic Cartier module $(X,F,V,\sigma)$ to the map $\F^{\geq \star}X/V\to \F^{\geq \star}X^{\t C_{p,\ev}}$. Postcomposing with the natural tranformation $\F^{\geq \star}X^{\t C_{p,\ev}}\to (\F^{\geq \star}X/V)^{\t C_{p,\ev}}$ then gives a functor $\syncart\to \CycSyn_p$ which sends $(\F^{\geq \star}X,F,V,\sigma)$ to $(\F^{\geq \star}X/V,\F^{\geq \star}X/V\to \F^{\geq \star}X^{\t C_{p,\ev}}\to (\F^{\geq \star}X/V)^{\t C_{p,\ev}})$.
\end{construction}

\subsection{$t$-structures on synthetic Cartier modules}

We will now construct $t$-structures on $\syncart$. Each of these, in turn, will produce a
$t$-structure on synthetic cyclotomic spectra once we have proven the equivalence between these two
categories.

% Let $\tau=(\SynSp_{\geq 0}, \SynSp_{\leq 0})$ be an accessible $t$-structure on synthetic spectra
% compatible with the symmetric monoidal structure and filtered colimits and such that $\bT_{\ev}$ is
% connective. We will also assume from this section onward that $\tau$ is closed under countable
% products. We will associate to $\tau$ a new $t$-structure $\tau^{\textrm{cart}}=(\syncart[,\geq 0],
% \syncart[,\leq 0])$. We will primarily be interested in the case where $\tau$ is the positive
% $t$-structure, but there are several other $t$-structures on synthetic spectra which have
% $\bT_{\ev}$ connective and are quite interesting in their own right.

\begin{construction}
    Let $\C$ be a $t$-structure on $\SynSp$ which satisfies $\mathbf{(\star)}$.
    Let $\syncart[,\geq 0]^\C\subseteq \syncart$ be the full subcategory spanned by objects $(X,F,V,\sigma)$
    where the underlying synthetic spectrum $X$ is in $\SynSp_{\geq 0}^\C$.
    Let $\syncart[,\leq 0]$ be the full subcategory of $\syncart$ spanned by objects
    $(X,F,V,\sigma)$ with $Y$ n $\SynSp_{\leq 0}^\C$.
\end{construction}

\begin{lemma}
    Let $\C$ be a $t$-structure on $\SynSp$ which satisfies $\mathbf{(\star)}$.
    \begin{enumerate}
        \item[{\em (a)}] The subcategories $(\syncart[,\geq 0]^\C,\syncart[,\leq 0]^\C)$ define an accessible $t$-structure on $\syncart$.
        \item[{\em (b)}] The forgetful functors $\syncart\to \SynSp_{\bT_{\ev}}\to \SynSp$ are $t$-exact.
        \item[{\em (c)}] The functor $(-)/V\colon\syncart\to \CycSyn_p$ of Construction~\ref{const: mod v functor} is right $t$-exact.
        \item[{\em (d)}] The $t$-structure on $\syncart$ is left complete and compatible with
            filtered colimits.
        \item[{\em (e)}] If $\SynSp^\C_{\geq 0}$ is right complete, then so is $\syncart[,\geq 0]^\C$.
    \end{enumerate}
\end{lemma}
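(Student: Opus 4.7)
The plan is to leverage the fact recorded in the preceding corollary that the forgetful functor $U\colon\syncart\to\SynSp_{\bT_\ev}\to\SynSp$ is conservative and preserves all limits and colimits, together with the defining observation that $\syncart[,\geq 0]^\C$ and $\syncart[,\leq 0]^\C$ are precisely the $U$-preimages of $\SynSp_{\geq 0}^\C$ and $\SynSp_{\leq 0}^\C$. Every assertion can then be reduced to the corresponding property of the $t$-structure $\C$ on $\SynSp$, which is guaranteed by hypothesis $\mathbf{(\star)}$.

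For (a), I would note that $\SynSp_{\geq 0}^\C$ is presentable (by accessibility, part of $\mathbf{(\star)}$) and closed under colimits and extensions in $\SynSp$. Since $U$ preserves colimits and is conservative, $\syncart[,\geq 0]^\C=U^{-1}(\SynSp_{\geq 0}^\C)$ inherits these closure properties in $\syncart$ and is itself presentable as an accessible pullback of presentables along accessible functors. An application of \cite[Prop.~1.4.4.11]{ha} yields the desired accessible $t$-structure. Part (b) is then immediate: both $\syncart[,\geq 0]^\C$ and $\syncart[,\leq 0]^\C$ are defined purely in terms of conditions on the underlying synthetic spectrum.

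For (c), I would use the explicit description from Construction~\ref{const: mod v functor}, where $(-)/V$ sends $(\F^{\geq\star}X,F,V,\sigma)$ to the cofiber $\F^{\geq\star}X/V=\cofib(V\colon(\F^{\geq\star}X)_{C_{p,\ev}}\to\F^{\geq\star}X)$ equipped with a natural cyclotomic Frobenius. If $\F^{\geq\star}X$ is $\C$-connective, then so is $(\F^{\geq\star}X)_{C_{p,\ev}}$: the proof of Lemma~\ref{lem:t_exact} transports to arbitrary $\C$ satisfying $\mathbf{(\star)}$ because $\bT_\ev\in\SynSp_{\geq 0}^\C$ forces the left-adjoint orbit functor to be right $t$-exact. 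Hence the cofiber $\F^{\geq\star}X/V$ is $\C$-connective as a synthetic spectrum, and since the $t$-structure on $\CycSyn_p$ constructed in Theorem~\ref{thm: construction of cyclotomic $t$-structure} is defined via the underlying synthetic spectrum, right $t$-exactness follows.

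For (d) and (e), $t$-exactness and conservativity of $U$ permit every property to be checked after applying $U$. Closure of $\syncart[,\geq 0]^\C$ under filtered colimits is inherited from $\SynSp^\C$ because filtered colimits in $\syncart$ are created by $U$. For left completeness, the natural map $X\to\lim_n\tau_{\leq n}^\C X$ in $\syncart$ becomes, after $U$, the map $U(X)\to\lim_n\tau_{\leq n}^\C U(X)$, which is an equivalence by left completeness of $\SynSp^\C$ (part of $\mathbf{(\star)}$); conservativity of $U$ then promotes it to an equivalence in $\syncart$. The proof of (e) is identical with ``left'' replaced by ``right'' once one assumes right completeness of $\SynSp^\C_{\geq 0}$. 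The only mild subtlety to flag is that these arguments rely on $U$ commuting with the $t$-truncation functors on $\syncart$, but that is ensured formally by the $t$-exactness established in (b).
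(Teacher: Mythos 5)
Your reduction strategy is sound for parts (c), (d), and (e), and those match the paper's approach in spirit. But there is a genuine gap in part (a), which then infects part (b).

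The application of \cite[Prop.~1.4.4.11]{ha} produces \emph{some} $t$-structure on $\syncart$ whose connective part is $\syncart[,\geq 0]^\C$; call its coconnective part $\mathcal{C}'$. The statement of (a) asserts that the specific pair $(\syncart[,\geq 0]^\C,\syncart[,\leq 0]^\C)$ is a $t$-structure, i.e.\ that $\mathcal{C}'=\syncart[,\leq 0]^\C$. This identification is the substantive content of (a), and taking preimages along a conservative, limit- and colimit-preserving functor does not deliver it for free: nothing formal guarantees that the truncation fiber sequence $\tau_{\geq 0}^\C M\to M\to\tau_{\leq -1}^\C M$ in $\SynSp_{\bT_\ev}$ lifts to $\syncart$, because the truncation $\tau_{\geq 0}^\C M$ must be equipped with compatible $F$, $V$, and $\sigma$ data, and that is where the work is. Your subsequent claim that ``Part (b) is then immediate'' presupposes exactly this unproved identification, so the argument is circular.

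The paper closes the gap explicitly. One direction, $\syncart[,\leq 0]^\C\subseteq\mathcal{C}'$, comes from the fiber sequence for mapping spectra (Corollary~\ref{cor: fiber sequence for maps in syncart}). For the other, the paper shows the truncation lifts: since $\C$ is compatible with the monoidal structure and $\bT_\ev$ is $\C$-connective (both part of $\mathbf{(\star)}$), $(\tau_{\geq 0}^\C M)_{C_{p,\ev}}$ is connective, so $V$ factors through $\tau_{\geq 0}^\C M$; a parallel argument, using that $\tau_{\geq 0}^\C(M^{C_{p,\ev}})\simeq\tau_{\geq 0}^\C((\tau_{\geq 0}^\C M)^{C_{p,\ev}})$, factors $F$; and $\sigma$ is filled in by a horn-filling argument. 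None of this is contained in your appeal to conservativity and exactness of $U$, so you should flag and prove the lift rather than assert that the coconnective part comes out correctly.
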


\begin{proof}
    The proof follows that of~\cite[Prop.~3.15]{an1}.
    Following \cite[Proposition 3.15]{an1}, one shows that $\syncart[,\geq 0]$ is presentable and closed under colimits and extensions since
    the same is true for $\SynSp_{\bT_{\ev},\geq 0}$ and the forgetful functor preserves all limits
    and colimits. Thus there is some $\mathcal{C}\subseteq \syncart$ such that $(\syncart[,\geq
    0],\mathcal{C})$ is a $t$-structure. From Corollary~\ref{cor: fiber sequence for maps in
    syncart} we also have that $\syncart[,\leq 0]\subseteq \mathcal{C}$. To prove (a), it is then enough to show
    that for $(M,F_M,V_M, \sigma_M)\in \syncart$ there is a lift of the fiber sequence $\tau_{\geq
    0}M\to M\to \tau_{\leq -1}M$ in $\SynSp_{\bT_{\ev}}$ to $\syncart$. 

    To this end first note that $(\tau_{\geq 0}M)_{C_{p,\ev}}$ is connective since it is the tensor
    product of connective objects and $\tau$ is compatible with the monoidal structure. Thus the
    composition $(\tau_{\geq 0}M)_{C_{p,\ev}}\to M_{C_{p,\ev}}\xrightarrow{V}M$ factors through the
    connective cover $\tau_{\geq 0}M$. Similarly we get a factorization of the Frobenius through a
    map $\tau_{\geq 0}M\to \tau_{\geq 0}M^{C_{p,\ev}}\simeq \tau_{\geq 0}(\tau_{\geq
    0}M)^{C_{p,\ev}}$. The second equivalence comes from the fiber sequence \[(\tau_{\geq
    0}M)^{C_{p,\ev}}:=\mathrm{Map}_{\SynSp_{\bT_{\ev}}}(\rho(p)_*\bT_{\ev}, \tau_{\geq 0}M)\to
    \mathrm{Map}_{\SynSp_{\bT_{\ev}}}(\rho(p)_*\bT_{\ev}, M)\to
    \mathrm{Map}_{\SynSp_{\bT_{\ev}}}(\rho(p)_*\bT_{\ev}, \tau_{\leq -1}M)\] and the last term in
    this fiber sequence vanishes on connective covers since it is a map from a connective object to
    a $(-1)$-truncated object. As in \cite[Proposition 3.15]{an1}, the above maps, together with the map
    $\tau_{\geq 0}M\to M$ and the $C_{p,\ev}$ norm maps, gives a functor $(\partial\Delta^2)\times
    \Delta^1\to \SynSp_{\bT_{\ev}}$, and it is enough for us to show that this fills in to a
    functor $\Delta^2\times\Delta^1\to \SynSp_{\bT_{\ev}}$. This is done inductively, where the
    base case $\partial \Delta^2\times [1]\in \partial\Delta^2\times \Delta^1$ is filled in by
    $\sigma_M$, and the other 2-simplicies are filled in via decomposing $\partial\Delta^2\times
    \Delta^1\cup \Delta^2\times [1]$ as three tetrahedra and using that fact that $\SynSp_{\bT_{\ev}}$
    is an $\infty$-category.

    The functors $\syncart\to \SynSp_{\bT_{\ev}}$ and $\syncart\to \SynSp$ are then $t$-exact by
    definition. We also have that for $(M, F, V, \sigma)\in \syncart[,\geq 0]$ that
    $M_{C_{p,\ev}}\in \SynSp_{\bT_{\ev},\geq 0}$, and so taking the cofiber of two connective
    objects gives that $M/V\in \SynSp_{\bT_{\ev}, \geq 0}$. Finally the fact that $\syncart[,\geq
    0]^\C$ is
    compatible with filtered colimits and is left or right complete if $\tau$ is follows from the fact
    that $\syncart\to \SynSp_{\bT_{\ev}}$ is conservative and commutes with limits and colimits.
\end{proof}

\subsection{Synthetic lifts of the cyclotomic $t$-structure}

Now that we have $t$-structures on $\syncart$, to study the $t$-structures on $\CycSyn$ it is
enough to identify $\CycSyn$ as the full subcategory of $V$-complete objects in $\syncart$. We
begin by lifting a result of Krause and Nikolaus from \cite{krause-nikolaus} to the
synthetic setting. In fact, the proof they give in \cite[Proposition 10.3]{krause-nikolaus} also
works in our setting.

\begin{lemma}~\label{lem: tr is right adjoint for cycsyn with lifted frob}
    The functor $\mathrm{TR}\colon\CycSyn\to \CycSyn^{Fr}$ is right adjoint to the forgetful functor.
\end{lemma}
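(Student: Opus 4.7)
The plan is to imitate the proof of~\cite[Prop.~10.3]{krause-nikolaus}, recasting their argument in the synthetic setting. The forgetful functor $U\colon\CycSyn^{\Fr}\to\CycSyn$ sends $(M,F\colon M\to M^{C_{p,\ev}})$ to $(M,\varphi)$ where $\varphi$ is the composition of $F$ with the canonical map $M^{C_{p,\ev}}\to M^{\t C_{p,\ev}}$. To produce the adjunction, I will exhibit explicit unit and counit natural transformations and verify the triangle identities directly.

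First I would describe the counit $\epsilon_X\colon U(\TR(X))\to X$ in $\CycSyn$. On underlying $\bT_\ev$-modules, it is the composition of the inclusion $\TR(X)\hookrightarrow\prod_{n\geq 0}X^{C_{p^n,\ev}}$ with projection onto the $n=0$ factor. The compatibility with cyclotomic Frobenii — i.e., that the square with horizontals $\epsilon_X$ and $\epsilon_X^{\t C_{p,\ev}}$ and verticals the cyclotomic structure maps commutes — is precisely the defining equalizer condition when one compares the $n=0$ and $n=1$ strands. Next, I would describe the unit $\eta_Y\colon Y\to\TR(U(Y))$ for $Y=(M,F)\in\CycSyn^{\Fr}$. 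The $n$th component of the map $M\to\prod_n M^{C_{p^n,\ev}}$ is the iterated Frobenius $F^{(n)}\colon M\to M^{C_{p^n,\ev}}$, defined inductively by $F^{(0)}=\id$ and $F^{(n)}=(F^{(n-1)})^{C_{p,\ev}}\circ F$. The two maps $\prod_n M^{C_{p^n,\ev}}\rightrightarrows\prod_n(M^{\t C_{p,\ev}})^{C_{p^n,\ev}}$ agree on the image of $(F^{(n)})_n$ because the "can" side on the $n$th component gives the image of $F^{(n-1)}$ under canonicalization, while the "Frobenius" side gives $\varphi^{C_{p^n,\ev}}\circ F^{(n)}$, and by construction $\varphi\circ F^{(n)} = \can\circ F^{(n-1)}$ after iterating $F$ once more. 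So $\eta_Y$ factors through $\TR(U(Y))$, and its compatibility with the Frobenius lifts on both sides comes from the fact that on $\TR(U(Y))$ the Frobenius lift is implemented by the projection $\prod_{n\geq 0}M^{C_{p^n,\ev}}\to\prod_{n\geq 1}M^{C_{p^n,\ev}}\simeq(\prod_n M^{C_{p^n,\ev}})^{C_{p,\ev}}$, which after precomposition with $\eta_Y$ literally becomes the iterated Frobenius shifted by one, i.e., $F^{C_{p^{n-1},\ev}}\circ(F^{(n-1)})$.

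The triangle identities then reduce to formal verifications: $\epsilon_{U(Y)}\circ U(\eta_Y)=\id_{M}$ because the $n=0$ component of $\eta_Y$ is the identity; and $\TR(\epsilon_X)\circ\eta_{\TR(X)}=\id_{\TR(X)}$ because on each factor $\TR(X)\to X^{C_{p^n,\ev}}$ of $\TR(X)\hookrightarrow\prod_n X^{C_{p^n,\ev}}$ the iterated Frobenius of $\TR(X)$ is the projection to that factor followed by the identification with $\epsilon_X^{C_{p^n,\ev}}$ applied to the projection, so iterating and projecting recovers the original inclusion. To make these manipulations rigorous at the $\infty$-categorical level, I would model $\TR$ using the pullback/lax-equalizer formalism of Corollary~\ref{cor: fiber sequence for maps in syncart} together with the analogous description of $\CycSyn$, so that the adjunction can be built by specifying a compatible map into the lax equalizer and invoking its universal property.

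The main obstacle is not conceptual but bookkeeping: one must carefully track the $\bT_\ev$-equivariance of all constructions and the $\infty$-categorical coherence of the iterated Frobenii. This is handled just as in~\cite{krause-nikolaus}, since their argument is essentially formal and uses only that $(-)^{C_{p,\ev}}$ is a (lax symmetric monoidal) functor commuting with the relevant limits (already established in Lemma~\ref{lem: orbit/fixedpoints/tate continuity}) and that $(-)^{C_{p,\ev}}\to(-)^{\t C_{p,\ev}}$ is a natural transformation of such functors, both of which hold in $\SynSp_{\bT_\ev}$.
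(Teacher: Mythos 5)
The paper's proof is a direct mapping-spectrum computation: it writes $\mathrm{Map}_{\CycSyn^{\Fr}}(Y, \mathrm{TR}(X))$ as an equalizer, substitutes the equalizer-of-products defining $\mathrm{TR}(X)$, commutes the two limits, and recognizes the result as $\mathrm{Map}_{\CycSyn}(U(Y), X)$. This is what \cite[Prop.~10.3]{krause-nikolaus} does and is the clean, $\infty$-categorically rigorous way to exhibit a right adjoint: one never produces a unit or counit explicitly. Your proposal, despite opening with "imitate the proof of Krause--Nikolaus," actually takes a genuinely different route: you construct the counit (projection to the $n=0$ factor), the unit (the iterated Frobenii $F^{(n)}$), and then claim to verify the triangle identities. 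Your identifications of what those maps must be on components are correct, and the conceptual content matches what the mapping-space computation secretly encodes.

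However, there is a real gap in the execution, not just bookkeeping. First, specifying the unit $\eta_Y$ by declaring its $n$th component to be $F^{(n)}$ does not immediately produce a map into the equalizer $\mathrm{TR}(U(Y))$: one must supply a coherent homotopy witnessing that the two composites $Y\to\prod_n M^{C_{p^n,\ev}}\rightrightarrows\prod_n(M^{\t C_{p,\ev}})^{C_{p^{n-1},\ev}}$ agree, and your pointwise argument gives a family of $2$-cells but no higher coherence. Second, "verify the triangle identities directly" is not by itself a complete recipe in an $\infty$-category; one has to upgrade to the unit (or counit) criterion of \cite[5.2.2.8]{htt}, checking that the map $\mathrm{Map}_{\CycSyn^{\Fr}}(Y,\mathrm{TR}(X))\to\mathrm{Map}_{\CycSyn}(U(Y),X)$ induced by a candidate counit is an equivalence --- and checking that amounts to redoing the equalizer-of-equalizers computation of the paper. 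Your closing paragraph gestures at exactly this fallback ("invoking the universal property of the lax equalizer"), which in practice collapses your argument back into the paper's. So the unit/counit scaffolding you build is correct but redundant: it adds work (constructing the $F^{(n)}$ coherently, filling in triangle $2$-cells) that the mapping-space approach avoids entirely. If you want to keep the counit-based presentation, the efficient route is to construct only $\epsilon_X\colon U(\mathrm{TR}(X))\to X$ and then prove the mapping-space criterion, skipping the unit and the second triangle identity.
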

\begin{proof}
    For $(\F^{\geq \star}Y,F_Y)\in \CycSyn^{Fr}$ by definition we have that maps from $Y$ to
    $\mathrm{TR}(X)$ is given by \[\mathrm{Eq}\left(\mathrm{Map}_{\SynSp_{\bT_{\ev}}}(\F^{\geq
    \star}Y,\mathrm{TR}(\F^{\geq
    \star}X))\substack{\xrightarrow[\hphantom{F_Y^*(-)^{C_{p,\ev}}}]{F_{\mathrm{TR}(\F^{\geq
    \star}X),*}}\\
    \xrightarrow[F_Y^*(-)^{C_{p,\ev}}]{}}
    \mathrm{Map}_{\SynSp_{\bT_{\ev}}}(\F^{\geq \star}Y,\mathrm{TR}(\F^{\geq \star}X)^{C_{p,\ev}})\right)\] and by definition $\mathrm{TR}(\F^{\geq \star}X)$ is given by an equilizer itself. In particular we get equivalences 
    \begin{align*}
        &\mathrm{Map}_{\SynSp_{\bT_{\ev}}}(\F^{\geq \star}Y,\mathrm{TR}(\F^{\geq \star}X))\simeq\\
        & \mathrm{Eq}\left(\prod_{n\geq 0}\mathrm{Map}_{\SynSp_{\bT_{\ev}}}(\F^{\geq \star}Y, \F^{\geq \star}X^{C_{p^n,\ev}})\substack{\xrightarrow[\hphantom{\prod (\varphi_{\F^{\geq \star}X}^{C_{p^n,\ev}})_*}]{\prod can_*}\\ \xrightarrow[\prod (\varphi_{\F^{\geq \star}X}^{C_{p^n,\ev}})_*]{}}\prod_{n\geq 0}\mathrm{Map}_{\SynSp_{\bT_{\ev}}}(\F^{\geq \star}Y, (\F^{\geq \star}X^{\t C_{p,\ev}})^{C_{p^{n-1},\ev}})\right)
    \end{align*}
    and the same equivalence with maps to $\mathrm{TR}(\F^{\geq \star}X)^{C_{p,\ev}}$ with an all the $n$s replaced with $n+1$. 

    Since we are taking the equilizer of equilizers and limits commute, we may first find the
    equilizer \[\mathrm{Eq}\left(\prod_{n\geq 0}\mathrm{Map}_{\SynSp_{\bT_{\ev}}}(\F^{\geq \star}Y,
    \F^{\geq \star}X^{C_{p^n,\ev}})\rightrightarrows\prod_{n\geq
    0}\mathrm{Map}_{\SynSp_{\bT_{\ev}}}(\F^{\geq \star}Y,\F^{\geq
    \star}X^{C_{p^{n+1},\ev}})\right)\] and \[\mathrm{Eq}\left(\prod_{n\geq
    0}\mathrm{Map}_{\SynSp_{\bT_{\ev}}}(\F^{\geq \star}Y, (\F^{\geq \star}X^{\t
    C_{p,\ev}})^{C_{p^n,\ev}})\rightrightarrows\prod_{n\geq
    0}\mathrm{Map}_{\SynSp_{\bT_{\ev}}}(\F^{\geq \star}Y,(\F^{\geq \star}X^{\t
    C_{p,\ev}})^{C_{p^{n+1},\ev}})\right)\] where the top map is given by forgetting the bottom
    factor and the bottom map is $(F_{\F^{\geq \star}Y}(-)^{C_{p,\ev}})_*$. These are given by
    $\mathrm{Map}_{\SynSp_{\bT_{\ev}}}(\F^{\geq \star}Y, \F^{\geq \star}X)$ and
    $\mathrm{Map}_{\SynSp_{\bT_{\ev}}}(\F^{\geq \star}Y,\F^{\geq \star}X^{\t C_{p,\ev}})$,
    respectively. Putting this together then gives
    \begin{align*}
        &\mathrm{Map}_{\CycSyn_p^{Fr}}(\F^{\geq \star}Y, \mathrm{TR}(\F^{\geq \star}X))\\
        &\simeq \mathrm{Eq}\left(\mathrm{Map}_{\SynSp_{\bT_{\ev}}}(\F^{\geq \star}Y, \F^{\geq
        \star}X)\rightrightarrows\mathrm{Map}_{\SynSp_{\bT_{\ev}}}(\F^{\geq \star}Y, \F^{\geq \star}X^{\t C_{p,\ev}})\right)\\
        &\simeq \mathrm{Map}_{\CycSyn}(\F^{\geq \star}Y,\F^{\geq \star}X)
    \end{align*}
    as desired.
\end{proof}

From the lemma, we deduce the following.

\begin{corollary}~\label{cor: TR/V=id}
    The functor $\mathrm{TR}(-)\colon\CycSyn\to \syncart$ is right adjoint to
    $(-)/V\colon\syncart\to \CycSyn$. Furthermore, the counit of the adjunction
    \[\mathrm{TR}(\F^{\geq \star}X)/V\to \F^{\geq \star}X\] is an equivalence when the underlying
    synthetic spectrum $\F^{\geq \star}X$ is $\C$-bounded below with respect to a $t$-structure
    satisfying $\mathbf{(\star)}$.
\end{corollary}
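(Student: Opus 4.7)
The strategy has two parts, handled in sequence. For the adjunction, I will match two natural fiber sequences of mapping spaces; for the counit equivalence, I will reduce to a computation using the synthetic Tate orbit lemma and the continuity results for $\bT_\ev$-orbits.

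For the adjunction, fix $(X,F,V,\sigma) \in \syncart$ and $(Y,\varphi) \in \CycSyn$. By Corollary~\ref{cor: fiber sequence for maps in syncart} applied to $(X,F,V,\sigma)$ and $\TR(Y)$ (with its canonical Cartier module structure), there is a fiber sequence
\[
\Map_{\syncart}(X,\TR Y)\longrightarrow \Map_{\CycSyn^{\Fr}}(X,\TR Y)\longrightarrow \Map_{\SynSp_{\bT_\ev}}(X_{C_{p,\ev}},\fib(F_{\TR Y})).
\]
The middle term is equivalent to $\Map_{\CycSyn}(X,Y)$ by Lemma~\ref{lem: tr is right adjoint for cycsyn with lifted frob}, where $X$ is viewed as an object of $\CycSyn$ via the composite $X \xrightarrow{F} X^{C_{p,\ev}} \to X^{\t C_{p,\ev}}$. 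By inspection of the defining equalizer for $\TR(Y)$, the map $F_{\TR Y}$ is induced by the ``forget the $n=0$ factor'' projection on $\prod_{n\geq 0} Y^{C_{p^n,\ev}}$ and its Tate analogue, so $\fib(F_{\TR Y}) \simeq Y$ naturally as $\bT_\ev$-modules; applying Lemma~\ref{lem: tr is right adjoint for cycsyn with lifted frob} once more to $X_{C_{p,\ev}}$ (equipped with the zero Frobenius lift, as in Corollary~\ref{cor: fiber sequence for maps in syncart}) identifies the third term with $\Map_{\CycSyn}(X_{C_{p,\ev}},Y)$, where $X_{C_{p,\ev}}$ now carries the zero cyclotomic structure.

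On the other side, the 2-cell $\sigma$ exhibits $F \circ V \simeq \Nm_p$, and $\Nm_p$ becomes nullhomotopic after postcomposition with $X^{C_{p,\ev}} \to X^{\t C_{p,\ev}}$. Hence $V$ lifts to a map $X_{C_{p,\ev}} \to X$ in $\CycSyn$ (source with zero structure, target with the structure through $F$), and its cofiber is the CycSyn object $X/V$ of Construction~\ref{const: mod v functor}. This yields a fiber sequence
\[
\Map_{\CycSyn}(X/V,Y)\longrightarrow \Map_{\CycSyn}(X,Y)\longrightarrow \Map_{\CycSyn}(X_{C_{p,\ev}},Y).
\]
The plan is to verify that the right-hand map of this fiber sequence agrees, under the two identifications above, with the right-hand map of the syncart fiber sequence. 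Tracing through the descriptions of both maps (the former by precomposition with $V$, the latter by Corollary~\ref{cor: fiber sequence for maps in syncart} and the identification $\fib(F_{\TR Y}) \simeq Y$), this is a diagrammatic check using $\sigma$.

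For the counit, fix a bounded-below $X \in \CycSyn$. Write $A = \prod_{n\geq 0} X^{C_{p^n,\ev}}$ and $B = \prod_{n\geq 0}(X^{\t C_{p,\ev}})^{C_{p^{n-1},\ev}}$, so that $\TR(X) = \Eq(A \rightrightarrows B)$. The operation $V_{\TR}$ is the image, under the equalizer, of shift-up maps $V_A: A_{C_{p,\ev}} \to A$ and $V_B: B_{C_{p,\ev}} \to B$ whose cofibers are $X$ and $X^{\t C_{p,\ev}}$ respectively (extracting the $n=0$ factors). Under the boundedness hypothesis, Lemma~\ref{lem: orbit/fixedpoints/tate continuity}(1.b) commutes $(-)_{C_{p,\ev}}$ past the countable products and the equalizer defining $\TR(X)$, so that $\TR(X)_{C_{p,\ev}} \simeq \Eq(A_{C_{p,\ev}} \rightrightarrows B_{C_{p,\ev}})$. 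An octahedral diagram then presents $\TR(X)/V$ as the fiber of $X \to X^{\t C_{p,\ev}}$ composed with the CycSyn structure map, which is exactly the canonical cyclotomic structure on $X$; this yields the claimed equivalence $\TR(X)/V \simeq X$ in $\CycSyn$.

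The main obstacle is the counit computation: one must carefully identify the cyclotomic structure on $\TR(X)/V$ produced by Construction~\ref{const: mod v functor} with the given structure on $X$, and the interchange of $(-)_{C_{p,\ev}}$ with the equalizer requires the boundedness hypothesis $\mathbf{(\star)}$ in an essential way via Lemma~\ref{lem: orbit/fixedpoints/tate continuity}. The synthetic Tate orbit lemma may enter indirectly through the verification that the shift maps assemble compatibly with the equalizer differentials.
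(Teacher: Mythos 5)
Your treatment of the adjunction part matches the paper's approach (invoking Corollary~\ref{cor: fiber sequence for maps in syncart} together with Lemma~\ref{lem: tr is right adjoint for cycsyn with lifted frob}), though you should verify the two fiber sequences actually align rather than deferring it to a ``diagrammatic check.''

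The counit argument, however, does not work as written, and differs in spirit from the paper's. The central problem is the claim that $V_A\colon A_{C_{p,\ev}}\to A$ is a ``shift-up'' whose cofiber is $X$. Unwinding the definition, $V_A$ is the composite $A_{C_{p,\ev}}\to\prod_n(X^{C_{p^n,\ev}})_{C_{p,\ev}}\xrightarrow{\prod\Nm_p}\prod_{m\geq 1}X^{C_{p^m,\ev}}\hookrightarrow A$. The norm maps $\Nm_p$ are not equivalences (their cofibers are the $C_{p,\ev}$-Tate constructions), so $\cofib(V_A)$ is not $X$; it is $X$ together with contributions from the Tate constructions in higher filtration. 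Relatedly, even if $\cofib(V_A)\simeq X$ and $\cofib(V_B)\simeq X^{\t C_{p,\ev}}$ did hold, your $3\times 3$/octahedral argument would identify $\TR(X)/V$ with $\fib(X\to X^{\t C_{p,\ev}})$, which is not $X$, so the conclusion doesn't follow. Finally, applying Lemma~\ref{lem: orbit/fixedpoints/tate continuity}(1.b) to commute $(-)_{C_{p,\ev}}$, a colimit, past the equalizer and countable products requires more care than the statement of the lemma gives you: that lemma governs towers with eventually connective fibers, not arbitrary small limits.

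The paper avoids these pitfalls entirely. It first observes, by extending the counit to a commutative square mapping to the Tate/Frobenius data, that $\TR(\F^{\geq\star}X)/V\to\F^{\geq\star}X$ is an equivalence if and only if $\pi^{\t C_{p,\ev}}\colon\TR(\F^{\geq\star}X)^{\t C_{p,\ev}}\to\F^{\geq\star}X^{\t C_{p,\ev}}$ is. It then introduces the truncations $\TR^n$, showing each $\TR^{n+1}\to\TR^n$ has fiber $\F^{\geq\star}X_{C_{p^{n+1},\ev}}$ (orbits, hence connective when $X$ is bounded below), invokes the Tate orbit lemma to kill $(\F^{\geq\star}X_{C_{p^{n+1},\ev}})^{\t C_{p,\ev}}$ at each stage, and finally uses Lemma~\ref{lem: orbit/fixedpoints/tate continuity} in its honest tower form to pass to the limit. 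The Tate orbit lemma is therefore an essential, not incidental, ingredient, contrary to your closing remark.
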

\begin{proof}
    The fact that $\mathrm{TR}\colon\CycSyn\to \syncart$ is right adjoint to $(-)/V\colon\syncart\to \CycSyn$
    follows from the fiber sequence of Corollary~\ref{cor: fiber sequence for maps in syncart} and Lemma~\ref{lem: tr is right adjoint for cycsyn with lifted frob}.
    Note that we can describe the counit of this adjunction explicitly: We have a projection map \[\pi\colon \mathrm{TR}(\F^{\geq \star}X)\to \prod_{n\geq 0}\F^{\geq \star}X^{C_{p^n,\ev}}\to \F^{\geq \star}X\] and by definition the composition $\pi\circ V:\mathrm{TR}(\F^{\geq \star}X)_{C_{p,\ev}}\to \F^{\geq \star}X$ is nullhomotopic. The induced map $\mathrm{TR}(\F^{\geq \star}X)/V\to \F^{\geq \star}X$ is the counit and as in \cite{an1} extending this to the commutative diagram \[
    \begin{tikzcd}
        \mathrm{TR}(\F^{\geq \star}X)_{C_{p,\ev}} \ar[d] \ar[r,"V"] & \mathrm{TR}(\F^{\geq \star}X) \ar[d,"F"] \ar[r, "\pi"] & \F^{\geq \star}X \ar[d,"\varphi_{\F^{\geq \star}X}"]\\
        \mathrm{TR}(\F^{\geq \star}X)_{C_{p,\ev}} \ar[r, "\Nm_p"] & \mathrm{TR}(\F^{\geq \star}X)^{C_{p,\ev}} \ar[r] & \F^{\geq \star}X^{\t C_{p,\ev}}
    \end{tikzcd}
    \]
    shows that the counit is an equivalence if and only if the map $\pi^{\t C_{p,\ev}}\colon\mathrm{TR}(\F^{\geq \star}X)^{\t C_{p,\ev}}\to \F^{\geq \star}X^{\t C_{p,\ev}}$ is an equivalence.

    Now suppose that $\F^{\geq \star}X$ is an object of $\CycSyn$ such that the underlying
    synthetic spectrum is bounded below with respect to $\C$. Note that we can define
    $\mathrm{TR}^n(\F^{\geq \star}X)$ as the $\bT_{\ev}$-equivariant spectrum
    \[\mathrm{TR}^n(\F^{\geq \star}X):= \mathrm{Eq}\left(\prod_{0\leq k\leq n}\F^{\geq
    \star}X^{C_{p^k,\ev}}\rightrightarrows\prod_{0\leq k\leq n}\F^{\geq \star}(X^{\t
    C_{p,\ev}})^{C_{p^{k-1},\ev}}\right)\] and doing so gives us maps
    $R\colon\mathrm{TR}^{n+1}(\F^{\geq \star}X)\to \mathrm{TR}^n(\F^{\geq \star}X)$ given by
    projection onto the first $n$-factors in each of the products. It then follows that
    $\mathrm{TR}(\F^{\geq \star}X)\simeq \lim \mathrm{TR}^n(\F^{\geq \star}X)$, that the map
    $\pi\colon\mathrm{TR}(\F^{\geq \star}X)\to \F^{\geq \star}X$ agrees with the map $\mathrm{TR}(\F^{\geq \star}X)\to \mathrm{TR}^0(\F^{\geq \star}X)$, and that the fiber of the map $\mathrm{TR}^{n+1}(\F^{\geq \star}X)\to \mathrm{TR}^n(\F^{\geq \star}X)$ can be identified with $\F^{\geq \star}X_{C_{p^{n+1},\ev}}$.

    Suppose now that $N$ is such that $\F^{\geq \star}X\simeq \tau_{\geq N}\F^{\geq \star}X$ which
    must exist by assumption on $\F^{\geq \star}X$ being bounded below with respect to $\C$.
    From compatibility with the monoidal structure it follows that $\F^{\geq
    \star}X_{C_{p^{n+1},\ev}}\simeq \tau_{\geq N}\F^{\geq \star}X_{C_{p^{n+1},\ev}}$ for all $n$,
    and so inductively $\mathrm{TR}^n(\F^{\geq \star}X)\simeq \tau_{\geq N} \mathrm{TR}^n(\F^{\geq
    \star}X)$ for all $n$. Applying $(-)^{\t C_{p,\ev}}$ to the fiber sequence $\F^{\geq
    \star}X_{C_{p^{n+1},\ev}}\to \mathrm{TR}^{n+1}(\F^{\geq \star}X)\to \mathrm{TR}^n(\F^{\geq
    \star}X)$ then inductively shows that the map $\mathrm{TR}^n(\F^{\geq \star}X)^{\t
    C_{p,\ev}}\to \F^{\geq \star}X^{\t C_{p,\ev}}$ is an equivalence since $(\F^{\geq
    \star}X_{C_{p^{n+1},\ev}})^{\t C_{p,\ev}}\simeq 0$ by Lemma~\ref{lem: Tate orbit lemma}. The
    result then follows from the fact that $(-)^{\t C_{p,\ev}}$ commutes with limits of uniformly
    $\C$-bounded below objects by Lemma~\ref{lem: orbit/fixedpoints/tate continuity}.
\end{proof}

Since the counit of the adjunction only induces an equivalence for $\C$-bounded below objects we will need to restrict our attention to these objects.

\begin{definition}
    Let $\C$ be a $t$-structure on $\SynSp$ which satisfies $\mathbf{(\star)}$
    and let $\CycSyn^{-}$ denote the full subcategory of $\CycSyn$ spanned by bounded below
    objects in the associated $t$-structure on $\CycSyn$.
    Similarly denote $\syncart^{-}$ the full subcategory of $\syncart$ spanned by bounded below objects.
\end{definition}

On these subcategories we have a nice identification of $\mathrm{TR}(\F^{\geq \star}X)/V^n$.

\begin{lemma}~\label{lem: identification of TR/V^n}
    Let $\F^{\geq \star}X\in \CycSyn^{-}$. Then the natural map $\mathrm{TR}(\F^{\geq \star}X)/V^n\to \mathrm{TR}^n(\F^{\geq \star}X)$ is an equivalence.
\end{lemma}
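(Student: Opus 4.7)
The plan is to proceed by induction on $n$, with the base case $n=1$ being exactly Corollary~\ref{cor: TR/V=id}, which identifies $\mathrm{TR}(\F^{\geq\star}X)/V$ with $\F^{\geq\star}X \simeq \mathrm{TR}^1(\F^{\geq\star}X)$. For the inductive step, I would factor the iterated Verschiebung as
\[
V^{n+1} \;=\; V^n \circ V_{C_{p^n,\ev}} \colon \mathrm{TR}(\F^{\geq\star}X)_{C_{p^{n+1},\ev}} \longrightarrow \mathrm{TR}(\F^{\geq\star}X)_{C_{p^n,\ev}} \longrightarrow \mathrm{TR}(\F^{\geq\star}X),
\]
where $V_{C_{p^n,\ev}}$ denotes the map induced on $C_{p^n,\ev}$-orbits by $V$. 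The octahedral axiom (applied to this composition) then yields a cofiber sequence
\[
\mathrm{cofib}(V_{C_{p^n,\ev}}) \longrightarrow \mathrm{TR}(\F^{\geq\star}X)/V^{n+1} \longrightarrow \mathrm{TR}(\F^{\geq\star}X)/V^n.
\]
Because $(-)_{C_{p^n,\ev}}$ preserves cofibers, the first term identifies with $(\mathrm{TR}(\F^{\geq\star}X)/V)_{C_{p^n,\ev}}$, which by Corollary~\ref{cor: TR/V=id} is canonically $(\F^{\geq\star}X)_{C_{p^n,\ev}}$.

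Next, I would compare this cofiber sequence with the finite-stage fiber sequence
\[
(\F^{\geq\star}X)_{C_{p^n,\ev}} \longrightarrow \mathrm{TR}^{n+1}(\F^{\geq\star}X) \longrightarrow \mathrm{TR}^n(\F^{\geq\star}X)
\]
extracted from the tower description in the proof of Corollary~\ref{cor: TR/V=id}. The inductive hypothesis identifies the right-hand terms, so by a five-lemma argument it suffices to check that the natural map on fibers is an equivalence. Since both fibers are canonically $(\F^{\geq\star}X)_{C_{p^n,\ev}}$, this reduces to verifying that the natural comparison map $\mathrm{TR}(\F^{\geq\star}X)/V^{n+1} \to \mathrm{TR}^{n+1}(\F^{\geq\star}X)$ is compatible with the two cofiber sequences. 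The $\C$-boundedness of $\F^{\geq\star}X$ is used to ensure the necessary vanishing arguments (via Lemma~\ref{lem: Tate orbit lemma} and the continuity results of Lemma~\ref{lem: orbit/fixedpoints/tate continuity}) go through at each finite stage.

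I expect the main obstacle to be this last compatibility check for the connecting maps. Concretely, one must verify that $V^{n+1}$ factors through the ``tail'' $\prod_{k \geq n+1} (\F^{\geq\star}X)^{C_{p^k,\ev}}$ of the defining product for $\mathrm{TR}$ after composing with the projection to $\mathrm{TR}^n$, while its residual contribution along the factor $(\F^{\geq\star}X)^{C_{p^n,\ev}}$ reproduces the connecting map extracted from the tower $\{\mathrm{TR}^k\}_k$. This should follow from an explicit diagram chase using that $V^{n+1}$ is realized as iterated $C_{p,\ev}$-norms acting factor-by-factor on $\prod_k (\F^{\geq\star}X)^{C_{p^k,\ev}}$ shifting the index by $n+1$, combined with the identification of the fiber of $\mathrm{TR}^{n+1} \to \mathrm{TR}^n$ via the same norm $(\F^{\geq\star}X)_{C_{p^n,\ev}} \to (\F^{\geq\star}X)^{C_{p^n,\ev}}$ used to define the synthetic $C_{p^n,\ev}$-Tate construction.
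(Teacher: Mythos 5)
Your proof matches the paper's approach exactly: induction grounded in Corollary~\ref{cor: TR/V=id}, the factorization $V^{n+1} = V^n \circ V_{C_{p^n,\ev}}$ combined with the octahedral axiom to produce the cofiber sequence, and comparison with the finite-stage fiber sequence from the tower $\{\mathrm{TR}^m\}_m$ extracted in that corollary's proof, with the outer terms matched by the base case and the inductive hypothesis. You are in fact slightly more careful than the paper in flagging that one must verify the comparison map induces a genuine map of cofiber sequences before invoking two-out-of-three; the only caution is that Corollary~\ref{cor: TR/V=id} identifies $\mathrm{TR}(\F^{\geq\star}X)/V$ with $\F^{\geq\star}X = \mathrm{TR}^0(\F^{\geq\star}X)$ rather than with $\mathrm{TR}^1(\F^{\geq\star}X)$ as you wrote, an off-by-one that the paper's own statement and proof also trip over (the proof calls this ``the case $n=0$'' while the statement is $\mathrm{TR}(X)/V^n \to \mathrm{TR}^n(X)$), so you have inherited an inconsistency from the source rather than introduced one.
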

\begin{proof}
    The case $n=0$ was handled in the proof of Corollary~\ref{cor: TR/V=id}, so suppose that this map is an equivalence for $n$. Then we have a cofiber sequence \[(\F^{\geq \star}X)_{C_{p^n,\ev}}\to \mathrm{TR}(\F^{\geq \star}X)/V^{n+1}\to \mathrm{TR}^n(\F^{\geq \star}X)\] coming from the factorization $V^{n+1}=V^n\circ V_{C_{p^{n},\ev}}$ and using the base case and the inductive hypothesis to identify the first and last terms. This fiber sequence maps to the cofiber sequence \[(\F^{\geq \star}X)_{C_{p^n,\ev}}\to \mathrm{TR}^{n+1}(\F^{\geq \star}X)\to \mathrm{TR}^n(\F^{\geq \star}X)\] and so the result follows.
\end{proof}

\begin{lemma}~\label{lem: TR is t-exact}
    The functor $\mathrm{TR}:\CycSyn^{-}\to \syncart$ is fully faithful and $t$-exact. The essential image is the full subcategory of bounded below $V$-complete objects.
\end{lemma}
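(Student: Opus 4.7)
The plan is to package the work already done into four small pieces: full faithfulness, right $t$-exactness, left $t$-exactness, and the essential image description.

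\textbf{Full faithfulness.} The functor $\mathrm{TR}$ is right adjoint to $(-)/V\colon\syncart\to\CycSyn$ by Corollary~\ref{cor: TR/V=id}. A right adjoint is fully faithful exactly when its counit is an equivalence. Here the counit $\mathrm{TR}(X)/V\to X$ is shown to be an equivalence on bounded-below objects in Corollary~\ref{cor: TR/V=id}, so $\mathrm{TR}\colon\CycSyn^{-}\to\syncart$ is fully faithful.

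\textbf{$t$-exactness.} Since both $t$-structures are defined via connectivity of the underlying synthetic spectrum, I only need to control the underlying synthetic spectrum of $\mathrm{TR}(X)$. For right $t$-exactness, assume $X\in\CycSyn^{-}_{\geq 0}$ and use the tower $\mathrm{TR}^n(X)$ of Corollary~\ref{cor: TR/V=id} with fibers $X_{C_{p^{n+1},\ev}}$. Each such fiber is $\C$-connective because $(-)_{C_{p^{n+1},\ev}}$ is right $t$-exact (Lemma~\ref{lem:t_exact} applied iteratively using the factorization of the norm, or by appealing to compatibility with the symmetric monoidal structure from condition $\mathbf{(\star)}$). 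Inductively each $\mathrm{TR}^n(X)$ is then $\C$-connective, and the inverse limit $\mathrm{TR}(X)=\lim_n \mathrm{TR}^n(X)$ is $\C$-connective by compatibility of $\C$ with countable products, which is part of $\mathbf{(\star)}$. Left $t$-exactness is then automatic: $\mathrm{TR}$ is the right adjoint to $(-)/V$, which was already proved to be right $t$-exact, so $\mathrm{TR}$ preserves coconnective objects.

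\textbf{Essential image.} I will show the essential image consists exactly of the bounded-below $V$-complete objects in $\syncart$. In one direction, $\mathrm{TR}(X)$ is bounded below for $X\in\CycSyn^{-}$ because $\mathrm{TR}$ commutes with shifts (being an additive functor of stable $\infty$-categories) and has just been shown to be right $t$-exact. It is $V$-complete because Lemma~\ref{lem: identification of TR/V^n} identifies $\mathrm{TR}(X)/V^{n+1}$ with $\mathrm{TR}^n(X)$, and by construction $\mathrm{TR}(X)\simeq\lim_n\mathrm{TR}^n(X)\simeq\lim_n\mathrm{TR}(X)/V^{n+1}$. In the other direction, given a bounded-below $V$-complete $M\in\syncart$, I will show the unit $M\to\mathrm{TR}(M/V)$ is an equivalence. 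The strategy is to reduce modulo $V^{n+1}$: by $V$-completeness, $M\simeq\lim_n M/V^{n+1}$, and by the first direction (applied to the bounded-below object $M/V\in\CycSyn^{-}$) we have $\mathrm{TR}(M/V)\simeq\lim_n\mathrm{TR}^n(M/V)$. So it suffices to show that for each $n\geq 0$ the natural map $M/V^{n+1}\to\mathrm{TR}^n(M/V)$ is an equivalence. This I will prove by induction on $n$, the base case $n=0$ being tautological and the inductive step following from a map of the two natural fiber sequences expressing $M/V^{n+2}$ and $\mathrm{TR}^{n+1}(M/V)$ as extensions of the previous stage by $(M/V)_{C_{p^{n+1},\ev}}$; that the fibers of these sequences match uses only the factorization $V^{n+2}=V\circ(V^{n+1})_{C_{p,\ev}}$ and the definition of $\mathrm{TR}^{n+1}$.

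\textbf{Main obstacle.} The delicate point is the inductive identification $M/V^{n+1}\xrightarrow{\simeq}\mathrm{TR}^n(M/V)$ for bounded-below $V$-complete $M$, which requires carefully unwinding how the iterated $V$-quotients on the Cartier side match the truncated $\mathrm{TR}^n$ construction on the cyclotomic side; the rest is a direct application of Corollary~\ref{cor: TR/V=id}, Lemma~\ref{lem: identification of TR/V^n}, Lemma~\ref{lem:t_exact}, and the structural properties of the $t$-structure guaranteed by $\mathbf{(\star)}$.
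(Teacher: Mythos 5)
Your overall decomposition (full faithfulness via Corollary~\ref{cor: TR/V=id}, left $t$-exactness via the adjunction with $(-)/V$, right $t$-exactness via the tower $\mathrm{TR}^n$, essential image via the unit map and Lemma~\ref{lem: identification of TR/V^n}) matches the paper, and three of the four pieces are fine.

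There is, however, a gap in the right $t$-exactness step. You assert that because each $\mathrm{TR}^n(X)$ is $\C$-connective, ``the inverse limit $\mathrm{TR}(X)=\lim_n\mathrm{TR}^n(X)$ is $\C$-connective by compatibility of $\C$ with countable products.'' That is not what compatibility with countable products gives you. The sequential limit is the fiber of a map $\prod_n \mathrm{TR}^n(X)\to\prod_n\mathrm{TR}^n(X)$ between two connective objects, and the fiber of a map between connective objects is a priori only $(-1)$-connective: there is a potential $\pi_{-1}^\C$ coming from the cokernel on $\pi_0^\C$, i.e.\ a $\lim^1$-type term. The paper closes this gap with an additional argument: since all three terms of the fiber sequence $\F^{\geq\star}X_{C_{p^{n+1},\ev}}\to\mathrm{TR}^{n+1}(\F^{\geq\star}X)\to\mathrm{TR}^n(\F^{\geq\star}X)$ are connective, the transition maps $\pi_0^\C(\mathrm{TR}^{n+1}(\F^{\geq\star}X))\to\pi_0^\C(\mathrm{TR}^n(\F^{\geq\star}X))$ are epimorphisms, so the map $\id - p_n$ on $\prod_n\pi_0^\C(\mathrm{TR}^n(\F^{\geq\star}X))$ is an epimorphism and the $\lim^1$ contribution to $\pi_{-1}^\C$ of the limit vanishes. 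Without this Mittag--Leffler step your argument does not actually establish connectivity of $\mathrm{TR}(\F^{\geq\star}X)$.

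Everything else is sound: your treatment of full faithfulness, left $t$-exactness, and the essential image (including the inductive identification of $M/V^{n+1}$ with $\mathrm{TR}^n(M/V)$ via the two fiber sequences and the factorization of $V^{n+1}$) is exactly the paper's argument.
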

\begin{proof}
    The fact that $\mathrm{TR}:\CycSyn^{-}\to \syncart$ is fully faithful follows from Corollary~\ref{cor: TR/V=id}. Similarly $\mathrm{TR}$ is left $t$-exact since $(-)/V$ is right $t$-exact.

    To see that $\mathrm{TR}$ is right $t$-exact, note first that for a connective $\F^{\geq \star}X$, each $\mathrm{TR}^n(\F^{\geq \star}X)$ will inductively be connective.
    Since $\C$ is compatible with countable products we then have that the limit of connective
    objects will be $(-1)$-connective. Furthermore since all terms in the fiber sequences $\F^{\geq
    \star}X_{C_{p^{n+1},\ev}}\to \mathrm{TR}^{n+1}(\F^{\geq \star}X)\to \mathrm{TR}^n(\F^{\geq
    \star}X)$ are connective we have that $p_{n+1}:\pi_0^{\C}(\mathrm{TR}^{n+1}(\F^{\geq
    \star}X))\to \pi_0^{\C}(\mathrm{TR}^{n}(\F^{\geq \star}X))$ is an epimorphism for all $n$. Thus
    $\prod_{n}\pi_0^\C(\mathrm{TR}^{n}(\F^{\geq \star}X))\xrightarrow{id-p_n}\prod_{n}\pi_0^\C(\mathrm{TR}^{n}(\F^{\geq \star}X))$ is an epimorphism.
    From this and the fiber sequence $\mathrm{TR}(\F^{\geq \star}X)\to
    \prod_{n}\mathrm{TR}^{n}(\F^{\geq \star}X)\to \prod_{n}\mathrm{TR}^{n}(\F^{\geq \star}X)$ we
    have that $\mathrm{TR}(\F^{\geq \star}X)$ is connective as desired.

    It remains to identify the essential image. By $t$-exactness we get that $\mathrm{TR}$ lands in
    the full subcategory $\syncart^{-}$. It is now enough to show that for $\F^{\geq \star}M\in
    \syncart^{-}$ the unit map $\F^{\geq \star}M\to \mathrm{TR}(\F^{\geq \star}M/V)$ identifies the
    target as the $V$-completion of the source. We have from Lemma~\ref{lem: identification of
    TR/V^n} that the target is $V$-adically complete, and so it is enough to show that $M/V^n\to
    \mathrm{TR}^n(\F^{\geq \star}M/V)$ is an equivalence for all $n$. This follows inductively by
    looking at the fiber sequences \[(\F^{\geq \star}M/V)_{C_{p^n,\ev}}\to
    \mathrm{TR}^{n+1}(\F^{\geq \star}M/V)\to \mathrm{TR}^n(\F^{\geq \star}M/V)\] and \[(\F^{\geq
    \star}M/V)_{C_{p^n,\ev}}\to \F^{\geq \star}M/V^{n+1}\to \F^{\geq \star}M/V^n\] where the second
    fiber sequence comes from writing $V^{n+1}=V^n\circ V_{C_{p^n,\ev}}$.
\end{proof}

\subsection{Identification of the heart}
We will now restrict our attention to the Postnikov $t$-structure of Construction~\ref{const: Postnikov t-structure on synthetic spectra}. The first part of this section will be devoted to identifying the heart of this $t$-structure in terms of $\eta$-deformed Cartier complexes.

\begin{definition}~\label{def:eta_deformed}
    Let $\mathrm{DCart}^{\eta}$ denote the abelian category of tuples $(M_*, d, F,V)$ where $M_*$
    is a graded $\bZ[\eta]/2\eta$-module with $|\eta|=1$, $d\colon M_* \to M_{* +1}$ a map of
    $\bZ[\eta]/2\eta$-modules such that $d^2=\eta d$, and $F,V\colon M_*\to M_*$ maps such that \[Vd=pdV,\textrm{\hspace{2em}} dF=pFd,\textrm{\hspace{2em}} FdV=d+\eta\hspace{2em} FV=p\] and $F$ and $V$ commute with $\eta$.
\end{definition}

\begin{lemma}
    Let $\syncart[,\geq 0]^\P$ be the $t$-structure on $\syncart$ associated to the Postnikov
    $t$-structure on $\SynSp$. The heart of this $t$-structure is equivalent to $\mathrm{DCart}^\eta$.
\end{lemma}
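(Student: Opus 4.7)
The plan is to construct explicit mutually inverse functors $\Phi\colon \mathrm{DCart}^\eta \to \syncart^{\P\heart}$ and $\Psi\colon \syncart^{\P\heart} \to \mathrm{DCart}^\eta$ exhibiting the equivalence of abelian categories. The main inputs are the symmetric monoidal equivalence $\SynSp_{\bT_\ev}^{\P\heart} \simeq \GrMod_{\bZ[\eta,d]/(2\eta,d^2-\eta d)}$ from Construction~\ref{const: Postnikov t-structure on synthetic spectra} and the explicit description of $\pi_0^\P$ of the synthetic $C_{p,\ev}$-orbits, fixed points, and norm map from Example~\ref{example: norm on pi_0 postnikov}.

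For $\Phi$, I follow the sketch in Example~\ref{ex: discrete syncart objects}. Given $(M_*, d, F, V) \in \mathrm{DCart}^\eta$, set $\F^{\geq i}M := M_i[i]$ with zero transition maps, so that the graded $\bZ[\eta,d]/(2\eta, d^2-\eta d)$-module structure on $M_*$ determines the underlying object of $\SynSp_{\bT_\ev}^{\P\heart}$. Define the synthetic $V^{\mathrm{syn}}\colon (\F^{\geq\star}M)_{C_{p,\ev}} \to \F^{\geq\star}M$ on the heart description $M_*[d/p]$ by $V^{\mathrm{syn}}(m\otimes 1) := Vm$ and $V^{\mathrm{syn}}(m\otimes (d/p)) := dVm$; the DCart relation $Vd = pdV$ is exactly what is required for $\bT_\ev$-equivariance, given the tensor balancing identity $(dm)\otimes 1 = p(m\otimes (d/p))$ in $M_*[d/p]$. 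Dually, define $F^{\mathrm{syn}}\colon \F^{\geq\star}M \to (\F^{\geq\star}M)^{C_{p,\ev}}$ by $m \mapsto (Fm, Fdm) \in N_*$, with the relation $dF = pFd$ ensuring compatibility with the $\bT_\ev$-action on $N_*$. The relations $FV = p$ and $FdV = d + \eta$ then express exactly that $F^{\mathrm{syn}} \circ V^{\mathrm{syn}}$ agrees with the synthetic norm map of Example~\ref{example: norm on pi_0 postnikov} at the first and second coordinates respectively.

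For $\Psi$, given $(X, F^{\mathrm{syn}}, V^{\mathrm{syn}}, \sigma) \in \syncart^{\P\heart}$, set $M_* := \pi_0^\P(\F^{\geq\star} X)$ and define DCart $V$ as the composition $M_* \xrightarrow{m\mapsto m\otimes 1} M_*[d/p] \xrightarrow{V^{\mathrm{syn}}} M_*$, and DCart $F$ as the first coordinate of the induced map $M_* \to N_*$. The relations $Vd = pdV$ and $dF = pFd$ then follow from $\bT_\ev$-equivariance of $V^{\mathrm{syn}}$ and $F^{\mathrm{syn}}$ combined with the tensor balancing identity, while $FV = p$ and $FdV = d + \eta$ come from evaluating the factorization $F^{\mathrm{syn}} \circ V^{\mathrm{syn}} = \Nm$ on the generators $m\otimes 1$ and $m\otimes (d/p)$ respectively. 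Verification that $\Phi$ and $\Psi$ are mutually inverse is then a direct check, since the data $V_0 = V, V_1 = dV, F_1 = F, F_2 = Fd$ reconstruct the full synthetic structure by $\bT_\ev$-equivariance.

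The main obstacle will be the careful bookkeeping of the $\bT_\ev$-action on the heart-level orbits $M_*[d/p]$ and fixed points $N_*$, together with the explicit formula for the synthetic $C_{p,\ev}$-norm from Construction~\ref{const: Tate Construction finite groups.} at $\pi_0^\P$; in particular, the $+\eta$ correction in the relation $FdV = d + \eta$ (in contrast to the classical $FdV = d$ for undeformed Cartier complexes) must be tracked down to a specific $\eta$-contribution arising from the second-coordinate piece of the synthetic norm, which is a consequence of the deformation $d^2 = \eta d$ in place of $d^2 = 0$.
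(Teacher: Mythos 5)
Your overall strategy matches the paper's: identify the heart of $\syncart$ using $t$-exactness of the forgetful functor to $\SynSp_{\bT_\ev}$, interpret the $V$, $F$ data via Example~\ref{example: norm on pi_0 postnikov}, and use Example~\ref{ex: discrete syncart objects} for the construction in the reverse direction. Where the paper is terse and delegates the derivation of the relations $Vd = pdV$, $dF = pFd$, $FV = p$, $FdV = d+\eta$ to the cited Lemma~3.33 of~\cite{an1}, you attempt to re-derive them in coordinates from the heart-level description of the norm. This is a reasonable thing to want to do, and your derivations of $Vd=pdV$, $dF=pFd$, and $FV=p$ from the $\bT_\ev$-equivariance and the factorization are all correct.

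The one place where this goes wrong is the relation $FdV = d + \eta$. You assert that the second-coordinate match against Example~\ref{example: norm on pi_0 postnikov} gives $FdV = d+\eta$, but it does not. Writing $F^{\mathrm{syn}}(m) = (Fm, Fdm)$ (which your equivariance analysis correctly forces) and composing with $V^{\mathrm{syn}}(m\otimes 1) = Vm$, one gets
\[
F^{\mathrm{syn}}(V^{\mathrm{syn}}(m\otimes 1)) = (FVm,\ FdVm),
\]
while Example~\ref{example: norm on pi_0 postnikov} records the norm as $m\otimes 1\mapsto(pm, dm)$. Matching second coordinates yields $FdV = d$, not $FdV = d + \eta$. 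So the $+\eta$ you need does \emph{not} come from the second-coordinate piece as you claim; it has to come from a more delicate source, e.g., from the choices of the duality equivalences $\alpha$, $\beta$ in Construction~\ref{const: Tate Construction finite groups.} and from how the map $\rho(p)_*\bT_\ev\to\bT_\ev$ is identified only $\bS_\ev$-linearly (not $\bT_\ev$-linearly) in the example. Your proposal flags this as ``the main obstacle'' but then offers only a placeholder for how it would be resolved, and the placeholder argument, taken literally, gives the wrong relation. This is a genuine gap. The safest fix is to do what the paper does and invoke~\cite[Lemma~3.33]{an1} for the relations, or else to carry out a genuinely careful identification of the $\eta$-contribution in the synthetic norm at the heart level, which is not automatic from Example~\ref{example: norm on pi_0 postnikov} as stated.
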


\begin{proof}
    Since the functor $\syncart\to \SynSp_{\bT_{\ev}}$ is $t$-exact we have that the heart consists
    of $(M_* ,d)$ with extra structure maps $F,V\colon M_* \to M_*$. The maps $F$ and $V$ satisfy the indicated relations by the same argument as in \cite[Lemma 3.33]{an1}. Thus there is a fully faithful functor $\syncart^{\heartsuit}\hookrightarrow \mathrm{DCart}^\eta$ and all that remains is to show essential surjectivity. From example~\ref{ex: discrete syncart objects} for any element in $(M_*,d, F,V)\in \mathrm{DCart}^{\eta, \wedge}_V$ we can build a synthetic Cartier module $(F^{\geq \star}M_*, F, V)$. 
    Essential surjectivity then follows from the fact that the forgetful functor $\syncart\to \SynSp_{\bT_{\ev}}$ is conservative and $t$-exact so we have that $(\F^{\geq \star}M_*, F,V)$ is in the heart as desired. 
\end{proof}

Combining this result with the identification of bounded below synthetic cyclotomic spectra with bounded below $V$-complete synthetic Cartier modules, we get that the heart of the cyclotomic $t$-structure is given by $\mathrm{DCart}^{\eta,\wedge}_V$ of $V$-complete $\eta$-deformed Cartier complexes. This $V$-completion is in terms of the ambient category a priori, not in terms of the $V$-completion of objects in $\mathrm{DCart}^\eta$. Nevertheless it turns out that these two notions agree.

\begin{lemma}
    Let $(\F^{\geq \star}M, F, V)\in \syncart^{\heartsuit}$. Then $\F^{\geq \star}M$ is $V$-complete if and only if it is derived $V$-complete as an element of $\mathrm{DCart}^\eta$.
\end{lemma}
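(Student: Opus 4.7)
The plan is to compute the tower $\{\F^{\geq\star}M/V^n\}$ in $\syncart$ for a heart object and identify it, together with its inverse limit, with the corresponding construction in $\mathrm{DCart}^\eta$.

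First, I would unwind the definition: $V$-completeness of $\F^{\geq\star}M\in\syncart$ is the condition that the natural map $\F^{\geq\star}M\to\lim_n\F^{\geq\star}M/V^n$ is an equivalence in $\syncart$, where $\F^{\geq\star}M/V^n:=\cofib(V^n\colon\F^{\geq\star}M_{C_{p^n,\ev}}\to\F^{\geq\star}M)$ is built iteratively from the structure map $V$ as in Lemma~\ref{lem: identification of TR/V^n}.

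Next, for $\F^{\geq\star}M\in\syncart^{\heartsuit}\simeq\mathrm{DCart}^\eta$ corresponding to $(M_*,d,F,V)$, I would compute the Postnikov homotopy groups of $\F^{\geq\star}M/V^n$. Since $\F^{\geq\star}M_{C_{p^n,\ev}}$ is Postnikov-connective (orbits are right $t$-exact) and its $\pi_0^\P$ is identified via Example~\ref{example: norm on pi_0 postnikov} and induction with an iterated localization $M_*[d/p^n]$, the associated long exact sequence for the cofiber gives $\pi_0^\P(\F^{\geq\star}M/V^n)\iso M_*/\mathrm{im}(V^n\colon M_*[d/p^n]\to M_*)$ and $\pi_{-1}^\P(\F^{\geq\star}M/V^n)\iso\ker(V^n\colon M_*[d/p^n]\to M_*)$, with all other $\pi_i^\P$ vanishing. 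Using the Cartier relation $Vd=pdV$ (and its iterates), the image and kernel of the localization-level $V^n$ reduce to the image $V^n M_*$ and kernel $\ker(V^n\colon M_*\to M_*)$ of the abelian-category self-map $V^n$.

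Then, applying the Milnor fiber sequence to the tower $\{\F^{\geq\star}M/V^n\}_n$, valid because the Postnikov $t$-structure on $\CycSyn$, and hence on $\syncart$, is compatible with countable products by Theorem~\ref{thm: construction of cyclotomic $t$-structure}, the object $\lim_n\F^{\geq\star}M/V^n$ has $\pi_0^\P$ fitting in a short exact sequence
\[
0\to\lim^1_n\ker(V^n)\to\pi_0^\P\lim_n\F^{\geq\star}M/V^n\to\lim_n M_*/V^n M_*\to 0
\]
and $\pi_{-1}^\P\iso\lim_n\ker(V^n)$, all other $\pi_i^\P$ vanishing. Thus $V$-completeness in $\syncart$ becomes the simultaneous conditions $\lim^1_n\ker(V^n)=0$, $\lim_n\ker(V^n)=0$, and $M_*\iso\lim_n M_*/V^n M_*$, which is precisely derived $V$-completeness of $(M_*,V)$ as a graded $\bZ[V]$-module, i.e., derived $V$-completeness in $\mathrm{DCart}^\eta$.

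The main obstacle will be the identification of the image and kernel of the structure-level $V^n\colon M_*[d/p^n]\to M_*$ with those of the abelian-level self-map $V^n\colon M_*\to M_*$. This requires tracking how $V$ interacts with the divided-power-style generators $d/p$ via the relation $Vd=pdV$, which allows elements of the form $m\cdot(d/p)^k$ to be rewritten modulo the image of $V$ as elements in $V^n M_*$, and symmetrically controls the kernel.
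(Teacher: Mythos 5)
Your proposal takes a genuinely different route from the paper, and the route has a gap that the paper's approach is specifically designed to avoid.

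The paper works directly with the tower $\{\F^{\geq\star}M_{C_{p^n,\ev}}\}$ with $V$-transition maps (whose vanishing limit is the $V$-completeness condition): in the forward direction it rewrites that vanishing as the shift map on $\prod_n\F^{\geq\star}M_{C_{p^n,\ev}}$ being an equivalence, applies $\pi_0^\P$ (using only compatibility of the Postnikov $t$-structure with countable products, \emph{not} any vanishing of higher $\pi_i^\P$), and then uses the factorization of the abelian $V$ through $M_*[d/p]$ to transfer the vanishing to the tower $(M_*,V)$; in the converse direction it uses a double-tower/diagonal argument from \cite[Lem.~3.25]{an1} and the continuity statement Lemma~\ref{lem: orbit/fixedpoints/tate continuity}(1b). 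Your proposal instead tries to compute the cofiber tower $\{\F^{\geq\star}M/V^n\}$ degreewise and run a Milnor sequence.

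The key gap is your claim that $\pi_i^\P(\F^{\geq\star}M/V^n)$ vanishes for all but two values of $i$. From the long exact sequence for the cofiber of $V^n\colon\F^{\geq\star}M_{C_{p^n,\ev}}\to\F^{\geq\star}M$, with the source connective and the target in the heart, one gets $\pi_0^\P(M/V^n)=\mathrm{coker}$, $\pi_1^\P(M/V^n)=\ker$ (not $\pi_{-1}^\P$ — the cofiber of connective objects is connective), and, crucially, $\pi_i^\P(M/V^n)\cong\pi_{i-1}^\P(M_{C_{p^n,\ev}})$ for $i\geq 2$. Your argument needs these to vanish, i.e.\ needs the orbits $\F^{\geq\star}M_{C_{p^n,\ev}}$ to remain in the heart. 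But the orbits functor is only right $t$-exact, and the heart is not preserved. Concretely, $\pi_0^\P(\rho(p)_*\bT_\ev)\iso\bZ[\eta,d/p]/(2\eta,(d/p)^2-\eta(d/p))$ is \emph{not} flat over $\pi_0^\P(\bT_\ev)\iso\bZ[\eta,d]/(2\eta,d^2-\eta d)$ along $d\mapsto p(d/p)$ (already over $\bZ[1/2]$, the cokernel of $\bZ[1/2][d]/d^2\to\bZ[1/2][d/p]/(d/p)^2$ is $\bF_p$, and $\Tor_1\neq 0$ against modules with trivial $d$-action), so even for the simplest heart object one expects $\pi_1^\P(M_{C_{p,\ev}})\neq 0$. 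These higher groups then feed unavoidably into your Milnor computation. This is not a cosmetic omission: it is precisely the failure your proposal must confront, and it is why the paper avoids the cofiber tower entirely.

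Your acknowledged "main obstacle" — matching $\ker/\mathrm{im}$ of $V^n\colon M_*[d/p^n]\to M_*$ with those of the abelian self-map — is also more delicate than you indicate. Since $M_*[d/p]=M_*\oplus M_*\cdot(d/p)$ and $V$ on the $(d/p)$-summand is governed by the relation $Vd=pdV$, the image of the localization-level $V$ picks up $d$-twisted contributions that are not visibly contained in $VM_*$. The paper sidesteps this by only using that the \emph{abelian} $V$ factors through $M_*[d/p]$ and invoking a cofinality of pro-systems, rather than computing the image on the nose.
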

\begin{proof}
    First suppose that $(\F^{\geq \star}M,F,V)$ is $V$-complete as a synthetic Cartier module. Similar to example~\ref{example: norm on pi_0 postnikov} we find that $\pi_0^p(\F^{\geq \star}M_{C_{p^n,\ev}})\simeq \F^{\geq \star}M[d/p^n]$. By assumption \[\lim(\ldots \xrightarrow{V_{C_{p^2,\ev}}}\F^{\geq \star}M_{C_{p^2,\ev}}\xrightarrow{V_{C_{p,\ev}}}\F^{\geq \star}M_{C_{p,\ev}}\xrightarrow{V}\F^{\geq \star}M)\simeq 0\] and so the map $\prod_{n\geq 0}\F^{\geq \star}M_{C_{p^n,\ev}}\to \prod_{n\geq 0}\F^{\geq \star}M_{C_{p^n,\ev}}$ is an equivalence. Applying $\pi_0^p$ then again induces an equivalence and so the system \[\lim (\ldots \to \F^{\geq \star}M[d/p^3]\to \F^{\geq \star}M[d/p^2]\to \F^{\geq \star}M[d/p]\to \F^{\geq \star}M)\simeq 0\] as well. The result follows from the fact that the map $V:\F^{\geq \star}M\to \F^{\geq \star}M$ factors through the map $\F^{\geq \star}M[d/p]$ by definition.

    Conversely suppose that $(\F^{\geq \star}M,F,V)$ is $V$ complete as an $\eta$-deformed Cartier complex. As in the proof of \cite[Lemma 3.25]{an1}, the inverse limit \[\lim(\ldots \F^{\geq \star}M_{C_{p^2,\ev}}\to \F^{\geq \star}M_{C_{p,\ev}}\to \F^{\geq \star}M)\] is given by the diagonal of the diagram 
    \[\begin{tikzcd}
                 & \vdots \arrow[d]                                       & \vdots \arrow[d]                                   & \vdots \arrow[d]           \\
\cdots \arrow[r] & {\F^{\geq \star}M\otimes_{\bT_{\ev}} \rho(p^2)_*\bT_{\ev}} \arrow[d, "{V_{C_{p^2}}}"] \arrow[r] & \F^{\geq \star}M\otimes_{\bT_{\ev}}\rho(p)_*\bT_{\ev} \arrow[d, "{V_{C_{p,\ev}}}"] \arrow[r]    & \F^{\geq \star}M \arrow[d, "V"] \\
\cdots \arrow[r] & {\F^{\geq \star}M\otimes_{\bT_{\ev}}}\rho(p^2)_*\bT_{\ev} \arrow[d, "{V_{C_{p^2,\ev}}}"] \arrow[r] & {\F^{\geq \star}M\otimes_{\bT_{\ev}}\rho(p)_*\bT_{\ev}} \arrow[d, "{V_{C_{p,\ev}}}"] \arrow[r] & \F^{\geq \star}M \arrow[d, "V"] \\
\cdots \arrow[r] & {\F^{\geq \star}M\otimes_{\bT_{\ev}}\rho(p^2)_*\bT_{\ev}} \arrow[r]                         & {\F^{\geq \star}M\otimes_{\bT_{\ev}}\rho(p)_*\bT_{\ev}} \arrow[r]                       & \F^{\geq \star}M               
\end{tikzcd}\] where the maps $\rho(p^n)_*\bT_{\ev}\to \rho(p^{n-1})_*\bT_{\ev}$ are the even filtration applied to the canonical projection $S^1/C_{p^n}\to S^1/C_{p^{n-1}}$. The vertical inverse limits vanish, the rightmost one by assumption and all the others by Lemma~\ref{lem: orbit/fixedpoints/tate continuity}(1b).
\end{proof}

Putting this all together proves the claimed identification of the heart.

\begin{theorem}
    Let $\CycSyn^\P_{\geq 0}$ be the $t$-structure on $\CycSyn$ induced by the Postnikov
    $t$-structure via Theorem~\ref{thm: construction of cyclotomic $t$-structure}. The heart
    of this $t$-structure is equivalent to the abelian category $\CycSyn^{\heartsuit}\simeq
    \mathrm{DCart}^{\eta,\wedge}_V$ of derived $V$-complete $\eta$-deformed Cartier complexes.
\end{theorem}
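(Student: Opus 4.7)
The strategy is to chain together the three results immediately preceding the statement. By the construction of the cyclotomic Postnikov $t$-structure in Theorem~\ref{thm: construction of cyclotomic $t$-structure} and the fact that objects of the heart $\CycSyn^{\P\heart}$ are in particular bounded below, everything takes place inside $\CycSyn^{-}$, where the functor $\mathrm{TR}$ of Lemma~\ref{lem: TR is t-exact} is available.

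First, I would apply Lemma~\ref{lem: TR is t-exact}: $\mathrm{TR}\colon\CycSyn^{-}\to\syncart^{-}$ is fully faithful and $t$-exact with essential image the bounded below $V$-complete objects. Restricting to the hearts, we obtain an equivalence
\[\mathrm{TR}\colon\CycSyn^{\P\heart}\xrightarrow{\we}\syncart^{\P\heart,V\text{-}\mathrm{complete}},\]
where the right-hand side denotes the full subcategory of $\syncart^{\P\heart}$ on those objects that are $V$-complete when viewed as synthetic Cartier modules (that is, in the ambient stable $\infty$-categorical sense).

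Second, I would invoke the preceding lemma identifying $\syncart^{\P\heart}\we\mathrm{DCart}^{\eta}$. Under this equivalence, $V$-completeness of an object of the heart transports to a property of its underlying $\eta$-deformed Cartier complex. Finally, I would apply the lemma asserting that for an object $(\F^{\geq\star}M,F,V)\in\syncart^{\P\heart}$, $V$-completeness as a synthetic Cartier module is equivalent to derived $V$-completeness of the underlying object of $\mathrm{DCart}^{\eta}$. Composing these three identifications yields the desired equivalence
\[\CycSyn^{\P\heart}\we\mathrm{DCart}^{\eta,\wedge}_{V}.\]

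The only real content to verify beyond citing the previous lemmas is that the $t$-structure on $\CycSyn$ produced by Theorem~\ref{thm: construction of cyclotomic $t$-structure} is compatible with the $t$-structure on $\syncart$ in such a way that $\mathrm{TR}$ restricts to an equivalence on hearts; this follows from the $t$-exactness in Lemma~\ref{lem: TR is t-exact} together with the fact that the essential image of $\mathrm{TR}$ consists of $V$-complete objects, and that $(-)/V$ is the left adjoint, so an object of $\syncart^{\P\heart}$ lies in the image of $\mathrm{TR}$ on the heart precisely when it is $V$-complete. I do not anticipate a genuine obstacle: the difficult work has been done in the preceding results, and this theorem is the assembly.
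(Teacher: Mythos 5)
Your proposal is correct and matches the paper's approach; the paper in fact contains no standalone proof for this statement, signalling it with the remark ``Putting this all together proves the claimed identification of the heart,'' and your text is a faithful unpacking of exactly what gets put together. The three ingredients you cite are the right ones — the equivalence $\mathrm{TR}\colon\CycSyn^{-}\we\syncart^{-,V\text{-compl.}}$ and its $t$-exactness, the identification $\syncart^{\P\heart}\we\mathrm{DCart}^\eta$, and the lemma equating the two notions of $V$-completeness — and no new idea is needed. One small point worth spelling out (you gesture at it but don't quite close it): to show that an object $Y\in\syncart^{\P\heart}$ in the essential image of $\mathrm{TR}$ actually comes from the heart of $\CycSyn$, write $Y\we\mathrm{TR}(X)$ and use that $\mathrm{TR}$ is fully faithful (hence conservative) and $t$-exact, so the truncations $\tau^{\P}_{\geq 1}X$ and $\tau^{\P}_{\leq -1}X$ vanish because their images under $\mathrm{TR}$ do. With that remark included, your assembly is complete.
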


\subsection{Bounded objects and the Segal conjecture}

We conclude this section with an analogue of \cite[Lemma 2.25]{an1}.
Unlike the previous subsection we will return to working with $C$ a $t$-structure on $\SynSp$ satisfying condition $\mathbf{(\star)}$ from definition~\ref{def:t_star}.
The result \cite[Lemma 2.25]{an1} states that, in the unfiltered case, an object $M\in
\mathrm{CycSp}_{\leq d}$ has cyclotomic Frobenius $\varphi_p\colon M\to M^{\t C_p}$ which is $d$-truncated.

\begin{theorem}~\label{thm: Segal conjecture full strength}
    Let $\F^{\geq \star}M\in \CycSyn_{\leq d}^\C$ for some $d\in \bZ$. Then the fiber of the map
    $\F^{\geq \star}M\to \F^{\geq \star}M^{\t C_{p,\ev}}$ is in $\SynSp_{\leq d}^\C$.
\end{theorem}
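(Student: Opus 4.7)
The plan is to show $\mathrm{fib}(\varphi)\in\SynSp_{\leq d}^\C$, where $\varphi$ denotes the Frobenius $\F^{\geq\star}M\to\F^{\geq\star}M^{\t C_{p,\ev}}$, by probing with cyclotomic synthetic spectra that carry the zero Frobenius. Fix $Y\in\SynSp_{\bT_\ev,\geq d+1}^\C$ and form the object $(Y,0)\in\CycSyn$ consisting of $Y$ together with the zero map $0\colon Y\to Y^{\t C_{p,\ev}}$ as its Frobenius. By the construction of the cyclotomic $t$-structure in Theorem~\ref{thm: construction of cyclotomic $t$-structure}, connectivity in $\CycSyn$ is detected on the underlying $\bT_\ev$-module, so $(Y,0)\in\CycSyn_{\geq d+1}^\C$.

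The next step is to compute $\Map_{\CycSyn}((Y,0),(\F^{\geq\star}M,\varphi))$. By the lax equalizer description of mapping spectra in $\CycSyn$ used in the $\TC$ formula above (cf.\ \cite[Prop.~II.1.5(ii)]{nikolaus-scholze}), this spectrum is the equalizer of the two maps
$$\Map_{\SynSp_{\bT_\ev}}(Y,\F^{\geq\star}M)\rightrightarrows\Map_{\SynSp_{\bT_\ev}}(Y,\F^{\geq\star}M^{\t C_{p,\ev}})$$
given by $f\mapsto \varphi\circ f$ and $f\mapsto f^{\t C_{p,\ev}}\circ 0$. The second map is identically zero, so the equalizer reduces to the fiber of the first, which is canonically equivalent to $\Map_{\SynSp_{\bT_\ev}}(Y,\mathrm{fib}(\varphi))$.

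Combining these steps, the $t$-structure orthogonality gives $\Map_{\CycSyn}((Y,0),(\F^{\geq\star}M,\varphi))=0$, since the source is $(d+1)$-connective and the target is $d$-coconnective. Hence $\Map_{\SynSp_{\bT_\ev}}(Y,\mathrm{fib}(\varphi))=0$ for every $Y\in\SynSp_{\bT_\ev,\geq d+1}^\C$, so $\mathrm{fib}(\varphi)\in\SynSp_{\bT_\ev,\leq d}^\C$. Since the induced $t$-structure on $\SynSp_{\bT_\ev}$ is characterized so that the forgetful functor $\SynSp_{\bT_\ev}\to\SynSp$ is $t$-exact, the underlying synthetic spectrum of $\mathrm{fib}(\varphi)$ lies in $\SynSp_{\leq d}^\C$, as required. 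The argument is essentially formal; the only point requiring verification is the explicit identification of the mapping spectrum out of $(Y,0)$, which is immediate from the lax equalizer presentation of $\CycSyn$, so I do not anticipate a serious obstacle.
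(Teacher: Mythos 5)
Your proposal is correct and follows essentially the same route as the paper's proof: equip an appropriately connective synthetic spectrum with the zero Frobenius, invoke the lax equalizer formula for mapping spectra to identify $\Map_{\CycSyn}((Y,0),(\F^{\geq\star}M,\varphi))$ with $\Map_{\SynSp_{\bT_\ev}}(Y,\fib(\varphi))$, and conclude by $t$-structure orthogonality. The only cosmetic difference is that the paper tests against induced objects $X\otimes_{\bS_\ev}\bT_\ev$ for $X\in\SynSp_{\geq d+1}^\C$, landing directly in $\Map_{\SynSp}(X,\fib(\varphi))$ by adjunction, whereas you test against arbitrary $Y\in\SynSp_{\bT_\ev,\geq d+1}^\C$ and then pass to the underlying synthetic spectrum via $t$-exactness of the forgetful functor — both are immediate and yield the same conclusion.
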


\begin{proof}
    Let $\F^{\geq \star}X\in \SynSp_{\geq d+1}^\C$. Consider the object $\F^{\geq
    \star}X\otimes_{\bS_{\ev}}\bT_{\ev}$ as an object of $\CycSyn_{\geq d+1}^\C$ by taking the
    induced $\bT_{\ev}$-module structure and filtered cyclotomic Frobenius given by the zero map
    $\F^{\geq \star}X\otimes_{\bS_{\ev}}\bT_{\ev}\xrightarrow{0} (\F^{\geq
    \star}X\otimes_{\bS_{\ev}}\bT_{\ev})^{\t C_{p,\ev}}$. From \cite[Proposition
    II.1.5(ii)]{nikolaus-scholze} we then have a fiber sequence \[
    \begin{tikzcd}
    \mathrm{Map}_{\CycSyn_p}(\F^{\geq \star}X\otimes_{\bS_{\ev}}\bT_{\ev}, \F^{\geq \star}M)\ar[d] & &\\
    \mathrm{Map}_{\SynSp_{\bT_{\ev}}}(\F^{\geq \star}X\otimes_{\bS_{\MU}}\bT_{\ev}, \F^{\geq \star}M)\ar[rr, "(\varphi_{\F^{\geq \star}M})_*"] & &\mathrm{Map}_{\SynSp_{\bT_{\ev}}}(\F^{\geq \star}X\otimes_{\bS_{\MU}}\bT_{\ev}, \F^{\geq \star}M^{tC_{p,\ev}})
    \end{tikzcd}
    \]
    which identifies $\mathrm{Map}_{\CycSyn_p}(\F^{\geq \star}X\otimes_{\bS_{\ev}}\bT_{\ev},
    \F^{\geq \star}M)\simeq \mathrm{Map}_{\SynSp}(\F^{\geq \star}X, \mathrm{fib}(\F^{\geq
    \star}M\xrightarrow{\varphi_{\F^{\geq \star}M}}\F^{\geq \star}M^{\t C_{p,\ev}}))$. The
    (underlying space of the) left hand term is contractible by assumption, so therefore the right
    hand term is as well. Since this is true for all $\F^{\geq \star}X\in \SynSp_{\geq d+1}^\C$, it
    follows that $\mathrm{fib}(\F^{\geq \star}M\xrightarrow{\varphi_{\F^{\geq
    \star}M}}\F^{\geq*}M^{\t C_{p,\ev}})\in \SynSp_{\leq d}^\C$ as desired.
\end{proof}

\begin{example}
    Consider $A$ a smooth $\bF_p$-algebra of dimension $d$. By~\cite[Cor~1.2]{Darrell_Riggenbach},
    we have that $$\mathrm{TR}(\F^{\geq \star}_\ev \THH(A;\bZ_p))\simeq \tau_{\geq
    \star}\mathrm{TR}(A),$$ so $\F^{\geq \star}\THH(A)\in \CycSyn^\N_{\leq d}$ in the neutral
    $t$-structure. On the other hand it will not be truncated in the Postnikov $t$-structure, since
    any truncated object in the Postnikov $t$-structure is the filtration on the zero object.
\end{example}

\subsection{\texorpdfstring{Lifts of Dieudonn{\'e} modules over smooth $\bF_p$-algebras}{Lifts of Dieudonn{\'e} modules over smooth algebras}}

Recall the following standard lemma.

\begin{lemma}~\label{lem: hearts and modules}
    Let $\mathcal{C}$ be a symmetric monoidal $\infty$-category, $\tau=(\mathcal{C}_{\geq
    0},\mathcal{C}_{\leq 0})$ a $t$-structure compatible with the monoidal structure, $R$ a
    connective $\bE_\infty$ ring, and $M$ and $N$ $\pi_0^\tau(R)$-modules in
    $\mathcal{C}^\heartsuit$. Then, the natural map $\mathrm{Map}_R(M,N)\leftarrow
    \mathrm{Hom}_{\pi_0^\tau(R)}(M,N)$ is an equivalence.
\end{lemma}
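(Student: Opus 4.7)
The plan is to pass the $t$-structure on $\mathcal{C}$ to $\Mod_R(\mathcal{C})$ and then apply the general fact that mapping spaces between objects of the heart of any $t$-structure are discrete, with $\pi_0$ recovering the Hom in the abelian heart.

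First I would observe that, since $R$ is connective and the $t$-structure $\tau$ on $\mathcal{C}$ is compatible with the symmetric monoidal structure, $\Mod_R(\mathcal{C})$ inherits a $t$-structure characterized by the forgetful functor $\Mod_R(\mathcal{C}) \to \mathcal{C}$ being $t$-exact. The heart of this induced $t$-structure is equivalent to the ordinary abelian category $\Mod_{\pi_0^\tau(R)}(\mathcal{C}^\heartsuit)$ of $\pi_0^\tau(R)$-modules in $\mathcal{C}^\heartsuit$; the equivalence sends a module in the heart to its underlying heart object with its induced $\pi_0^\tau(R)$-action (coming from $\pi_0^\tau$ of the action map, using again that $\tau$ is compatible with the symmetric monoidal structure). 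Under this equivalence the hypothesis becomes that $M$ and $N$ lie in $\Mod_R(\mathcal{C})^\heartsuit$.

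Next, recall the general fact that for any stable $\infty$-category equipped with a $t$-structure and any $X \in \mathcal{D}_{\geq 0}$, $Y \in \mathcal{D}_{\leq 0}$, the mapping anima $\mathrm{Map}_{\mathcal{D}}(X,Y)$ is discrete: indeed $\pi_n \mathrm{Map}_{\mathcal{D}}(X,Y) = \pi_0 \mathrm{Map}_{\mathcal{D}}(X[n], Y)$, and for $n \geq 1$ the object $X[n]$ lies in $\mathcal{D}_{\geq n}$ while $Y$ lies in $\mathcal{D}_{\leq 0}$, so the mapping space vanishes by the orthogonality axiom of a $t$-structure. Applying this to $\mathcal{D} = \Mod_R(\mathcal{C})$ and $X = M$, $Y = N$ shows that $\mathrm{Map}_R(M,N)$ is discrete.

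Finally, for any $\infty$-category with a $t$-structure, $\pi_0 \mathrm{Map}_{\mathcal{D}}(X,Y)$ on heart objects computes the Hom in $\mathcal{D}^\heartsuit$; this is the defining property of the heart as an abelian category. Thus the natural map
\[
\mathrm{Hom}_{\pi_0^\tau(R)}(M,N) = \mathrm{Hom}_{\Mod_R(\mathcal{C})^\heartsuit}(M,N) \longrightarrow \pi_0 \mathrm{Map}_R(M,N) = \mathrm{Map}_R(M,N)
\]
is an equivalence, completing the proof. The only non-formal input is the lift of the $t$-structure to $\Mod_R(\mathcal{C})$ and the identification of its heart; both are standard consequences of connectivity of $R$ together with compatibility of $\tau$ with the symmetric monoidal structure, and will be the only point to verify carefully.
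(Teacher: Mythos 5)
The paper gives no proof of this lemma, simply labeling it standard, so there is nothing to compare your argument against. Your proof is correct and is indeed the standard argument: you transport the $t$-structure to $\Mod_R(\mathcal{C})$ (using connectivity of $R$ and compatibility of $\tau$ with the symmetric monoidal structure so that the free module functor is right $t$-exact, making the forgetful functor detect connectivity/coconnectivity), identify the heart with $\Mod_{\pi_0^\tau(R)}(\mathcal{C}^\heartsuit)$ via the usual factorization of the action map $R\otimes X\to X$ through $\pi_0^\tau(R)\otimes^{\heartsuit} X$ when $X$ is truncated, and then observe that mapping spaces between heart objects in a $t$-structure are discrete with $\pi_0$ computing the abelian-category Hom. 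This is a complete and correctly reasoned proof; the one point worth making explicit in a final write-up is that $\mathcal{C}$ is implicitly assumed stable (otherwise the $t$-structure notation makes no sense), and that the restriction-of-scalars along $R\to\tau_{\leq 0}R\simeq\pi_0^\tau(R)$ is what endows the given heart modules $M,N$ with the $R$-module structures appearing on the left side of the asserted equivalence.
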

% \begin{proof}
%     The space of maps $\mathrm{Map}_{R}(M,N)$ is given by the totalization $\mathrm{Tot}(\bullet\mapsto \mathrm{Map}_{\mathcal{C}}(M\otimes R^{\otimes \bullet},N))$. Each of the spaces in the totalization diagram are discrete since $M\otimes R^{\otimes \bullet}$ is connective and $N$ is coconnective, so therefore each of these mapping spaces are equivalent to $\mathrm{Hom}_{\mathcal{C}^\heartsuit}(M\otimes \pi_0^\tau(R)^{\otimes \bullet}, N)$, from which the result follows.
% \end{proof}

Now, let $A$ be a smooth $k$-algebra, $k$ a perfect $\bF_p$-algebra. Note that by
Remark~\ref{rem:bounds} $\F^{\geq \star}_{\ev}\THH(A;\bZ_p)$, and in fact $\F^{\geq
\star}_{\ev}\THH(R)$ for any $R\in \mathrm{CQSyn}$, will be connective in the Postnikov cyclotomic
$t$-structure. In particular by Lemma~\ref{lem: hearts and modules} the functor \[\Mod_{\pi_0^{\cyc,
\P}(\F^{\geq \star}_{\ev}\THH(A:\bZ_p))}(\CycSyn^\heartsuit)\hookrightarrow
\CycSyn_{\F^{\geq \star}_{\ev}\THH(A;\bZ_p)}\] induced by the ring map $\F^{\geq
\star}_\ev\THH(A;\bZ_p)\to \pi_0^{\cyc,\P}(\F^{\geq \star}_\ev\THH(A;\bZ_p))$ is fully faithful.
Thus, in order to prove Theorem~\ref{thm: thmA}, it is enough to show that formal $p$-divisible groups
over $A$ embed fully faithfully into the left-hand category.

In order to do this we will need to identify the left hand category. Before stating the identification we will first recall some notation.

\begin{notation}
    For $A$ an $\bF_p$-algebra, denote by $\W\Omega_A$ the de Rham--Witt cochain complex of Illusie~\cite{illusie-derham-witt}.
    Denote by $\W\Omega^{\geq i}_A$ the $i^{th}$ stage of the Hodge filtration.
\end{notation}

\begin{lemma}
    Let $A$ be a smooth algebra over a perfect $\bF_p$-algebra $k$. Then there is a natural
    identification \[\pi_i^{\cyc,\P}(\F^{\geq \star}_\ev\THH(A;\bZ_p))\cong\W\Omega^{*}_A(-i)\] for all $i\geq 0$ in
    $\mathrm{DCart}^{\eta,\wedge}_V$, where the $F$, $V$, and $d$ maps are the usual ones, and
    $\eta=0$.
\end{lemma}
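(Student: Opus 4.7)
The plan is to pass to synthetic Cartier modules, where the identification becomes an unfiltered computation of de Rham--Witt type, then verify that the extra structure ($d$, $F$, $V$, $\eta$) matches on the nose.

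First, by Lemma~\ref{lem: TR is t-exact} the functor $\TR$ embeds $\CycSyn^{-}$ as the full subcategory of bounded-below $V$-complete synthetic Cartier modules and is $t$-exact for the Postnikov $t$-structures. Since $\F^{\geq\star}_\ev\THH(A;\bZ_p)$ is connective and bounded above in some $t$-structure satisfying $\mathbf{(\star)}$ (thanks to $A$ being smooth and Remark~\ref{rem:bounds}), we are reduced to computing $\pi_i^{\mathrm{syncart},\P}\TR(\F^{\geq\star}_\ev\THH(A;\bZ_p))$. By the result of Darrell--Riggenbach quoted in the previous example and Remark~\ref{rem: tc=trf=1}, there is a natural equivalence $\TR(\F^{\geq\star}_\ev\THH(A;\bZ_p))\simeq\tau_{\geq\star}\TR(A;\bZ_p)$ as synthetic spectra with $\bT_\ev$-action, and the Frobenius and Verschiebung structures of the synthetic Cartier module correspond to the usual $F$ and $V$ on $\TR(A;\bZ_p)$ (by construction of the $\TR$-functor in the synthetic setting and compatibility with the classical $R$-limit).

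Next, I compute the graded abelian group in the heart. Since $\tau_{\geq\star}\TR(A;\bZ_p)$ has filtration pieces $\tau_{\geq j}\TR(A;\bZ_p)$, in the Postnikov $t$-structure the weight-$j$ contribution to $\pi_i^\P$ is $\pi_{i+j}(\tau_{\geq j}\TR(A;\bZ_p))=\pi_{i+j}\TR(A;\bZ_p)$ whenever $i+j\geq j$ (i.e.\ $i\geq 0$) and $0$ otherwise. By the Hesselholt--Madsen computation~\cite{illusie-derham-witt} (combined with the convergence of $\TR=\lim\TR^n$ for $A$ smooth over a perfect $\bF_p$-algebra $k$), $\pi_n\TR(A;\bZ_p)\iso\W\Omega^n_A$ naturally for each $n\geq 0$. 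Thus the weight-$j$ part of $\pi_i^{\cyc,\P}(\F^{\geq\star}_\ev\THH(A;\bZ_p))$ is $\W\Omega^{i+j}_A$, which is precisely the weight-$j$ part of $\W\Omega^{*}_A(-i)$ under the twist convention fixed earlier.

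It remains to match the full $\mathrm{DCart}^{\eta,\wedge}_V$-structure. The differential $d$ on the left comes from the $\pi_0^\P(\bT_\ev)$-action (concretely the class of Lemma~\ref{lem:tev_pi0}); on $\tau_{\geq\star}\TR(A;\bZ_p)$, this action is induced by the circle action on $\TR(A;\bZ_p)$, which by Hesselholt's identification of the $d$-operator on $\TR_*(A)$ with the de Rham--Witt differential agrees with the usual $d$ on $\W\Omega^{\bullet}_A$. The $F$ and $V$ operators are identified by tracing through the construction of the synthetic Cartier module structure on $\TR$ in the previous subsection: the map $F$ is the projection shift and the map $V$ is the norm, and these classically descend to the Frobenius and Verschiebung of $\W\Omega^\bullet_A$. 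Finally, to see $\eta$ acts by zero, note that the target of the evaluation map is naturally a module over $\F^{\geq\star}_\ev\THH(\bF_p;\bZ_p)$, which lands (being even and concentrated in even degree) in the subcategory on which $\eta$ acts trivially; alternatively one checks directly that $\eta\in\pi_0^\P\bS_\ev$ maps to $0$ in $\pi_0^{\cyc,\P}\F^{\geq\star}_\ev\THH(A;\bZ_p)\iso\W\Omega^\bullet_A$ because the latter is $p$-torsion-free and $\eta$ is $2$-torsion of weight $1$ while $\W\Omega^\bullet_A$ has no nontrivial weight-$(>0)$ contributions arising from the sphere.

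The main obstacle is the structural matching of $F$, $V$, and $d$ with their classical de Rham--Witt counterparts; the graded identification is essentially bookkeeping once the Darrell--Riggenbach comparison is in place, but pinning down that the synthetic Cartier operations recover Hesselholt's operations on $\TR$ requires care, especially verifying that the norm map in the synthetic setting reduces to the classical Verschiebung after passing to homotopy.
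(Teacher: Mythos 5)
Your proof takes the same route as the paper: reduce via the $t$-exact functor $\mathrm{TR}\colon\CycSyn\to\syncart$ to a computation of $\pi_i^{\P}(\mathrm{TR}(\F^{\geq\star}_\ev\THH(A;\bZ_p)))$ and then invoke the Darrell--Riggenbach identification $\mathrm{TR}(\F^{\geq\star}_\ev\THH(A;\bZ_p))\simeq\tau_{\geq\star}\mathrm{TR}(A;\bZ_p)$. The paper's own proof is shorter only because it attributes the graded computation, the de Rham--Witt identification, and the matching of $F$, $V$, $d$, $\eta$ entirely to the cited reference; your unpacking of these steps (including the observation that $\eta=0$ by $p$-torsion-freeness or evenness) is correct and your closing caveat about carefully matching the synthetic norm with the classical Verschiebung is exactly the content being outsourced to that citation.
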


\begin{proof}
    The identification of $\CycSyn^\heartsuit\cong \mathrm{DCart}^{\eta, \wedge}_V$ goes through
    the $t$-exact functor $\mathrm{TR}\colon\CycSyn\to \syncart$, and so $\pi_i^{\cyc,\P}(\F^{\geq
    \star}_\ev\THH(A;\bZ_p))\cong \pi_i^{\P}(\mathrm{TR}(\F^{\geq \star}_\ev
    \THH(A;\bZ_p)))$. The result then follows from \cite[Corollary 1.2]{Darrell_Riggenbach}.
\end{proof}

\begin{proof}[Proof of Theorem~\ref{thm: thmA}]
    First recall that by \cite[Theorem 3.1]{DeJong_Messing_Deiudonne} the Deiudonn{\'e} functor
    $\mathbf{D}\colon\mathrm{FBT}_A^{op}\to C_A$ is fully faithful, where $C_A$ is the category of
    Deiudonn{\'e} crystals of finite presentation over $A$ together with an $F$ and $V$ operators,
    and lands in those crystals which are $V$-adically complete. Then \cite[Theorem
    1.1]{Bloch_Crystals} produces an embedding from $C_A\hookrightarrow
    \Mod_{\W\Omega^\bullet_A}(\mathrm{DCart}^\eta)$ which sends $V$-complete objects to $V$-complete
    objects. The result follows.
\end{proof}

\begin{conjecture}
    Let $k$ be a perfect $\bF_p$-algebra and let
    $A$ be a smooth $k$-algebra. Let \[\bD_{\CycSyn}:
    \mathrm{FBT}_A^{op}\hookrightarrow \CycSyn_{\F^{\geq \star}_\ev\THH(A;\bZ_p)}\] be the fully
    faithful embedding of Theorem~\ref{thm: thmA}. Note that by taking $\mathrm{gr}^0(-)^{\bT_\ev}$
    of a $\F^{\geq \star}_\ev \THH(A;\bZ_p)$-module in $\CycSyn$ we get a $\mathcal{N}^{\geq
    \star}\prism_{A}$-module with a filtration given by the $v$-adic filtration. Additionally the
    cyclotomic synthetic Frobenius induces a Frobenius map on this module. Then, considered with
    this extra structure, there is an equivalence
    \[\mathrm{gr}^0(\bD_\CycSyn)^{\bT\ev}\simeq \mathrm{R\Gamma}(A; \bD_\prism)\] where $\bD_\prism$ is the prismatic
    Dieudonn{\'e} functor of \cite{anschutz-lebras}.
\end{conjecture}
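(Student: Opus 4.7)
The strategy is to reduce the conjecture to the comparison between the crystalline Dieudonn\'e theory used to construct $\bD_{\CycSyn}$ and the prismatic Dieudonn\'e theory of \cite{anschutz-lebras}, after identifying the functor $\mathrm{gr}^0(-)^{\bT_\ev}$ on the heart with a concrete passage from de Rham--Witt data to Nygaard-filtered prismatic data. The first step is to use the BMS comparison of Theorem~\ref{thm:bms_comparison} together with the identification of $\mathrm{gr}^i_{\BMS}\TC^-(A;\bZ_p)$ with $\mathcal{N}^{\geq i}\prism_A[2i]$ from \cite{bms2} to pin down $\mathrm{gr}^0\bigl(\F^{\geq\star}_{\ev}\THH(A;\bZ_p)\bigr)^{\bT_\ev}$ as $\mathcal{N}^{\geq\star}\prism_A$ with its Nygaard filtration, and to check that the map induced by the synthetic cyclotomic Frobenius coincides with the prismatic Frobenius. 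This step fixes the ring-theoretic target and is essentially bookkeeping, using that multiplication by $v$ on filtered fixed points corresponds to the canonical Nygaard transition maps.

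With the target in place, I would unravel $\mathrm{gr}^0(-)^{\bT_\ev}\circ\bD_{\CycSyn}$ using the explicit construction from Theorem~\ref{thm: thmA}: $\bD_{\CycSyn}(G)$ is the image of the crystalline Dieudonn\'e crystal of \cite{DeJong_Messing_Deiudonne} under Bloch's embedding $C_A\hookrightarrow \Mod_{\W\Omega^\bullet_A}(\mathrm{DCart}^{\eta,\wedge}_V)$, viewed inside $\CycSyn_{\F^{\geq\star}_\ev\THH(A;\bZ_p)}$ via the heart of the cyclotomic Postnikov $t$-structure. What must then be shown is that this composite is naturally equivalent to $\mathrm{R}\Gamma(A;\bD_\prism(-))$ as functors $\mathrm{FBT}_A^{\op}\to \mathcal{N}^{\geq\star}\prism_A\text{-Mod}$ equipped with their filtrations, $F$-, and $V$-structures. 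Since both functors enjoy quasi-syntomic descent in $A$---on the prismatic side by definition, on the synthetic side by the descent properties of $\F^{\geq\star}_\ev\THH(-;\bZ_p)$---I would reduce to the quasiregular semiperfectoid case by choosing a quasi-syntomic cover $A\to A'$. On such a cover both sides admit explicit descriptions in terms of filtered Breuil--Kisin-type modules over the prism of $A'$, and the comparison should follow from \cite{anschutz-lebras} together with the crystalline--prismatic comparison for smooth algebras over a perfect base.

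The main obstacle will be promoting the underlying crystalline--prismatic comparison to an equivalence of \emph{filtered Frobenius modules}: the filtration coming from $v$-adic $\bT_\ev$-fixed points and the synthetic cyclotomic Frobenius must be shown to implement the Nygaard filtration and the prismatic Frobenius on $\bD_\prism(G)$, not merely to agree on underlying objects. A second difficulty is that $\mathrm{FBT}_A$ is not closed under quasi-syntomic descent in a naive sense, so one may need to enlarge the source to admissible prismatic crystals as in \cite{anschutz-lebras} in order to carry out the descent reduction cleanly. A third issue, likely requiring the forthcoming $t$-structure on $F$-gauges of Bhatt--Lurie alluded to in the introduction, is to give an intrinsic description of $\mathrm{gr}^0(-)^{\bT_\ev}\colon \CycSyn^{\P\heart}\to \mathcal{N}^{\geq\star}\prism_A\text{-Mod}$ compatible with the $F$- and $V$-structures before comparing it with the Ansch\"utz--Le Bras construction.
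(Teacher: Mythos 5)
This statement is stated as a \emph{conjecture} in the paper, not a theorem; the paper gives no proof of it and explicitly leaves it open. So there is no ``paper's own proof'' against which to compare your argument. What you have written is a proof plan rather than a proof, and you are candid about this: the three obstacles you list at the end---establishing the filtered, $F$- and $V$-equivariant upgrade of the crystalline--prismatic comparison; handling the failure of $\mathrm{FBT}_A$ to descend naively along quasi-syntomic covers; and giving an intrinsic description of $\mathrm{gr}^0(-)^{\bT_\ev}$ on the cyclotomic Postnikov heart compatible with the Nygaard filtration and Frobenius---are precisely the content of the conjecture. None of them is resolved in your proposal.

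On the substance of the plan: the outline is sensible and aligns with what the paper's surrounding discussion suggests (the introduction mentions a not-yet-available $t$-structure on $F$-gauges due to Bhatt--Lurie as a missing ingredient for exactly this kind of identification). The first reduction step---using Theorem~\ref{thm:bms_comparison} and the BMS identification of $\mathrm{gr}^0\TC^-(A;\bZ_p)$ with Nygaard-filtered prismatic cohomology---is correct at the level of underlying objects. But the passage from the $v$-adic filtration on $\mathrm{gr}^0(-)^{\bT_\ev}$ to the Nygaard filtration, together with the claim that the synthetic cyclotomic Frobenius becomes the prismatic divided Frobenius, is precisely what would need to be checked, and you have not supplied an argument. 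Similarly, the descent reduction you propose requires enlarging the source category (as you note), and the resulting comparison in the quasiregular semiperfectoid case is asserted to ``follow from'' the Anschütz--Le Bras construction plus the crystalline--prismatic comparison without any verification that the $\mathrm{DCart}^{\eta,\wedge}_V$-module structure coming from Bloch's functor (which is through de Rham--Witt complexes) is sent to the correct Breuil--Kisin-type module with Frobenius. In short, the proposal correctly identifies what needs to be done but does not prove the conjecture; since the paper itself does not prove it either, the status is that your plan is a plausible roadmap whose key steps remain open.
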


\section{A filtered Beilinson fiber square}

This section is devoted to proving a filtered version of the Beilinson fiber square of \cite{ammn}
and a filtered version of a forthcoming result of  Devalapurkar and Raksit, which gives a height
$1$ analogue. The utility of our approach is that, after the work we have put in in the previous
sections, we can mimic the proofs of these results given in cyclotomic spectra almost verbatim to
get the filtered result.

\subsection{Filtered $\TC/p$ and colimits}

We begin by proving a filtered version of~\cite[Thm.~G]{CMM}. The proof goes through mostly the
same as in \cite{CMM}, we include a sketch for convenience. Before proving this result we will
establish some preliminary technical results.

\begin{lemma}
    If $\F^{\geq \star}X$ is a cyclotomic synthetic spectrum which is $n$-connective in the
    neutral $t$-structure, then $\mathrm{TC}(\F^{\geq \star}X)$ is $(n-1)$-connective in the
    neutral $t$-structure.
\end{lemma}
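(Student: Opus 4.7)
The plan is to analyze the connectivity of $\TC(\F^{\geq\star}X)$ via the equalizer formula
$$\TC(\F^{\geq\star}X) \simeq \Eq\left((\F^{\geq\star}X)^{\bT_\ev} \rightrightarrows ((\F^{\geq\star}X)^{\t C_{p,\ev}})^{\bT_\ev}\right).$$
For each $i \in \bZ$ the functor $\F^{\geq i}\colon\SynSp\to\Sp$ preserves limits, so $\F^{\geq i}\TC(\F^{\geq\star}X)$ is the corresponding equalizer of spectra. The trouble is that neither $(\F^{\geq\star}X)^{\bT_\ev}$ nor $((\F^{\geq\star}X)^{\t C_{p,\ev}})^{\bT_\ev}$ is bounded below in the neutral $t$-structure in general, because each involves a potentially infinite inverse tower coming from the CW filtration of Construction~\ref{const:cw}; so the bound on $\TC$ must come from cancellation in the equalizer.

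To exploit this cancellation, I would first rewrite the second term using Lemma~\ref{lem:homotopy_to_tate} and Lemma~\ref{lem: Tate then fixed points is Tate} to identify $((\F^{\geq\star}X)^{\t C_{p,\ev}})^{\bT_\ev}$ with $(\F^{\geq\star}X)^{\t\bT_\ev}$ after $p$-completion (and reduce to the $p$-complete case by using that neutral connectivity is detected after such completion combined with rationalization). The Tate orbit lemma (Lemma~\ref{lem: Tate orbit lemma}) is applicable because neutral-$n$-connectivity implies bounded-belowness in the Postnikov sense, which is the hypothesis needed. Under this identification the equalizer becomes the fiber of the difference $\can-\varphi_p^{\bT_\ev}\colon (\F^{\geq\star}X)^{\bT_\ev}\to(\F^{\geq\star}X)^{\t\bT_\ev}$.

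Next I would use the synthetic norm fiber sequence
$$(\F^{\geq\star}X)_{\bT_\ev}(1)[1]\to(\F^{\geq\star}X)^{\bT_\ev}\xrightarrow{\can}(\F^{\geq\star}X)^{\t\bT_\ev}$$
from Construction~\ref{const:syn_tate} to conclude: the fiber of $\can$ alone is $(\F^{\geq\star}X)_{\bT_\ev}(1)[1]$, and an elementary manipulation (cofiber of the difference versus cofiber of one of the two maps) shows that $\TC(\F^{\geq\star}X)$ fits into a fiber sequence with $(\F^{\geq\star}X)_{\bT_\ev}(1)[1]$ and the cofiber of $\varphi_p^{\bT_\ev}$ in the same universe. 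Since the neutral $t$-structure is compatible with the symmetric monoidal structure, $(\F^{\geq\star}X)_{\bT_\ev}=\F^{\geq\star}X\otimes_{\bT_\ev}\bS_\ev$ is $n$-connective, and the Tate twist $(1)[1]$ preserves filtered-level connectivity, so this fiber is $n$-connective. Combined with the corresponding estimate on $\varphi_p^{\bT_\ev}$ (which goes into a target that is at worst $(n-1)$-connective by the synthetic Wirthmüller-type equivalence $M^{C_{p,\ev}}\simeq M_{C_{p,\ev}}(-1)[-1]$ from Lemma~\ref{lem:dualizing}), the fiber sequence forces $\TC(\F^{\geq\star}X)$ to be $(n-1)$-connective.

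The main obstacle is the step of making precise the interchange between $((\F^{\geq\star}X)^{\t C_{p,\ev}})^{\bT_\ev}$ and $(\F^{\geq\star}X)^{\t\bT_\ev}$ in the integral (non-$p$-complete) case, since the cited identifications in Lemma~\ref{lem: Tate then fixed points is Tate} are stated only after $p$-completion. The resolution is that rationally both Tate constructions vanish by Example~\ref{ex:rational_vanishing}, so only the $p$-complete comparison is relevant, and this is sufficient to transport the connectivity bound back to the integral statement.
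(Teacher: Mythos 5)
Your approach is genuinely different from the paper's, and unfortunately it has a gap at the crucial final step. The paper's proof is a two-line consequence of the machinery it has already built: by Remark~\ref{rem: tc=trf=1} there is a functorial fiber sequence $\TC(\F^{\geq\star}X)\to\TR(\F^{\geq\star}X)\to\TR(\F^{\geq\star}X)$, and since $\TR\colon\CycSyn^{-}\to\syncart$ is $t$-exact by Lemma~\ref{lem: TR is t-exact}, $\TR(\F^{\geq\star}X)$ is $n$-connective, so the fiber of a self-map of it is $(n-1)$-connective. All the hard work (Tate orbit lemma, continuity, and the epimorphism tower argument making $\TR$ right $t$-exact) is packaged once and for all in Lemma~\ref{lem: TR is t-exact}. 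You instead attack the equalizer formula $\TC\simeq\fib(\can-\varphi\colon(\F^{\geq\star}X)^{\bT_\ev}\to(\F^{\geq\star}X)^{\t\bT_\ev})$ directly.

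The gap is in the last paragraph. You assert that the target of $\varphi_p^{\bT_\ev}$ is ``at worst $(n-1)$-connective'' by appealing to the Wirthm\"uller-type equivalence of Lemma~\ref{lem:dualizing}. But that lemma concerns $\rho(p)_*\bT_\ev$ and gives $M^{C_{p,\ev}}\simeq M_{C_{p,\ev}}(-1)[-1]$; it says nothing about the $\bT_\ev$-Tate or $C_{p,\ev}$-Tate constructions, which are \emph{not} bounded below in the neutral $t$-structure. (For a trivial discrete $M$, $(\F^{\geq\star}M)^{\t\bT_\ev}\simeq\tau_{\geq 2\star}(M^{\t S^1})$ has nonzero homotopy in arbitrarily negative degrees.) Consequently, after you identify the fiber of $\can$ alone as the $n$-connective object $(\F^{\geq\star}X)_{\bT_\ev}(1)[1]$, the ``elementary manipulation'' you invoke still produces a fiber sequence whose third term involves $(\F^{\geq\star}X)^{\t\bT_\ev}$ (or its quotient), which is unbounded below; no connectivity estimate for $\TC$ falls out without an additional cancellation argument. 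That cancellation is exactly what the $\TR$-formalism encodes: the $\lim^1$/epimorphism argument in Lemma~\ref{lem: TR is t-exact} is what lets one control the inverse limit $\lim_n\TR^n$ and hence $\TC$. If you want to avoid $\TR$, you would need a separate argument that the difference $\can-\varphi$ (and not just $\can$) induces an epimorphism on low homotopy, which is not supplied.
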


\begin{proof}
    By Remark~\ref{rem: tc=trf=1}, there is a functorial fiber sequence \[\mathrm{TC}(\F^{\geq \star}X)\to \mathrm{TR}(\F^{\geq
    \star}X)\to \mathrm{TR}(\F^{\geq \star}X)\] and by Lemma~\ref{lem: TR is t-exact} we have that
    $\mathrm{TR}(\F^{\geq \star}X)$ is $n$-connective.
\end{proof}

We now highlight another interesting consequence of the same proof as the previous lemma.
Compare to~\cite[Thm.~5.1(1)]{ammn} in the discrete $p$-quasisyntomic case.

\begin{proposition}
    Let $R$ be a chromatically quasisyntomic ring. Then $\TC_\ev(R)$ is $(-1)$-connective in the
    Postnikov $t$-structure. In particular, $\bZ_p(i)^{\syn}(R)$ is concentrated in
    cohomological degrees $\leq i+1$.
\end{proposition}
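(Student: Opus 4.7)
The plan is to derive this directly from the preceding lemma plus a Postnikov connectivity statement for $\F^{\geq\star}_\ev\THH(R)$, and then translate the bound on $\TC_\ev(R)$ to a cohomological bound on the graded pieces.

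First I would show that $\F^{\geq\star}_\ev\THH(R)$ is connective in the Postnikov $t$-structure on $\CycSyn$. By Theorem~\ref{thm:even_comparison}, $\F^{\geq\star}_\ev\THH(R)$ is an $\bE_\infty$-algebra in $\CycSyn$, so it suffices to check that its underlying synthetic spectrum is Postnikov-connective. By Remark~\ref{rem:bounds} (the Burklund–Krause bounds, together with the Hahn–Raksit–Wilson analysis of chromatically quasisyntomic rings), $\gr^i_\ev\THH(R)\in\D(\bS)_{[i,2i]}$, so that $\F^{\geq i}_\ev\THH(R)\in\D(\bS)_{\geq i}$ for all $i$, which is exactly Postnikov-connectivity.

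Next, by Lemma~\ref{lem: TR is t-exact}, $\TR\colon\CycSyn^-\to\syncart$ is $t$-exact for the Postnikov $t$-structure, so $\TR(\F^{\geq\star}_\ev\THH(R))$ is Postnikov-connective as a synthetic Cartier module; forgetting the Cartier structure, it is Postnikov-connective as a synthetic spectrum. Now invoke the fiber sequence
\[
\TC(\F^{\geq\star}_\ev\THH(R))\to\TR(\F^{\geq\star}_\ev\THH(R))\xrightarrow{F-1}\TR(\F^{\geq\star}_\ev\THH(R))
\]
of Remark~\ref{rem: tc=trf=1}, valid because $\F^{\geq\star}_\ev\THH(R)$ is bounded below (and $p$-complete after $p$-completion). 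Since the fiber of a map between Postnikov-connective synthetic spectra is $(-1)$-connective, we conclude that $\TC_\ev(R)=\TC(\F^{\geq\star}_\ev\THH(R))$ is Postnikov $(-1)$-connective. (The previous lemma already asserts the analogous connectivity statement; the only new ingredient is the connectivity of $\F^{\geq\star}_\ev\THH(R)$ itself.)

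Finally, I would translate the Postnikov bound into a cohomological bound on $\bZ_p(i)^{\syn}(R)$. By definition $\bZ_p(i)^{\syn}(R)[2i]\simeq \gr^i_\mot\TC(R;\bZ_p)\simeq\gr^i_\ev\TC(R;\bZ_p)$ under Theorem~\ref{thm:even_comparison}. Postnikov $(-1)$-connectivity of $\TC_\ev(R)$ means $\F^{\geq i}\TC_\ev(R)\in\D(\bS)_{\geq i-1}$, and the cofiber sequence $\F^{\geq i+1}\to\F^{\geq i}\to\gr^i$ then forces $\gr^i_\ev\TC(R;\bZ_p)\in\D(\bZ_p)_{\geq i-1}$, i.e.\ $\bZ_p(i)^{\syn}(R)\in\D(\bZ_p)_{\geq -i-1}$, which is exactly concentration in cohomological degrees $\leq i+1$.

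The only subtle point, and the main thing to verify, is that the fiber sequence of Remark~\ref{rem: tc=trf=1} applies here: it requires $\F^{\geq\star}_\ev\THH(R)$ to be $p$-complete and bounded below in some $t$-structure satisfying $\mathbf{(\star)}$. Postnikov-connectivity of $\F^{\geq\star}_\ev\THH(R)$ supplies the boundedness below, and since $\TC$ is computed after $p$-completion the $p$-complete hypothesis is automatic; alternatively, one works with $\F^{\geq\star}_\ev\THH(R;\bZ_p)$ directly via Theorem~\ref{thm:even_comparison}(b) so that the fiber sequence is available on the nose.
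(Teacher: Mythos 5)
Your proposal is correct and follows essentially the same route as the paper: Postnikov-connectivity of $\F^{\geq\star}_\ev\THH(R)$ via Remark~\ref{rem:bounds}, $t$-exactness of $\TR$ (Lemma~\ref{lem: TR is t-exact}), and the fiber sequence $\TC\to\TR\xrightarrow{F-1}\TR$ of Remark~\ref{rem: tc=trf=1}. The only addition is that you spell out the translation from Postnikov $(-1)$-connectivity of $\TC_\ev(R)$ to the cohomological bound on $\bZ_p(i)^{\syn}(R)$, which the paper leaves implicit in the ``in particular'' clause; that translation is carried out correctly.
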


\begin{proof}
    By Remark~\ref{rem:bounds}, $\F^{\geq \star}\THH(R)$ is connective in the Postnikov
    $t$-structure. Then since the Postnikov $t$-structure satisfies condition $\mathbf{(\star)}$ of
    definition~\ref{def:t_star}, we have that $\TR(\F^{\geq \star}_\ev\THH(R))$ is also connective
    in the Postnikov $t$-structure by Lemma~\ref{lem: TR is t-exact}. The result follows from the
    fiber sequence of Remark~\ref{rem: tc=trf=1}.
\end{proof}

The synthetic spectrum $\ins^0\bF_p$, constructed via Example~\ref{ex:inserted_cyclotomic}, will play a particularly large role in the arguments to follow.

\begin{construction}~\label{const: geometric realization of finite type synthetic spectra}
    Let $\F^{\geq \star}X$ be a complete synthetic spectrum such that \[\bigoplus_{n\in \bZ}\pi_i(\mathrm{gr}^nX)\] is finitely generated for all $i\in \bZ$.
    Further assume that $\F^{\geq \star}X$ is connective in the neutral $t$-structure and that $\F^{\geq \star}X$ is constant for $*\leq 0$.
    Then choosing a fixed finite set of these generators, $\F^{\geq \star}X$ can be realized as a
    geometric realization in the following way: take \[\F^{\geq \star}X_0:= \bigoplus_{i\geq 0}\bigoplus_{x \textrm{ a generator of }\pi_0(\mathrm{gr}^iX)}\bS_{\ev}(i)\]
    as the base case of an inductive construction.
    There exists lifts of all the elements of $\pi_0(\mathrm{gr}^iX)$ to elements $\pi_0(\F^{\geq i}X)$ by level-wise connectivity,
    so these elements taken together produce a map $\F^{\geq 0}X_0\to\F^{\geq \star}X$.
    Since $\F^{\geq \star}X$ is complete we have that this map induces a surjection $\pi_0(\F^{\geq
    \star}X_0)\to \pi_0(\F^{\geq i}X)$ for all $i$. 
    
    By the finite generation assumption we have that $\F^{\geq \star}X_0$ is a finite sum, and so
    it also is level-wise connective and has finitely generated associated graded homotopy terms.
    Therefore we also have that $\ker(\bigoplus_{i\geq 0}\pi_0(\F^{\geq i}X_0)\to \bigoplus_{i\geq
    0}\pi_0(\F^{\geq i}X))$ is also finitely generated. We may then inductively define $\F^{\geq
    \star}X_1$ to be the sum of twisted copies of $\bS_{\ev}$, one for each chosen generator of
    $\bigoplus_{n\in \bZ}\pi_1(\mathrm{gr}^nX)$ together with one for each generator of the kernel
    of the previous step, which taken all together is still finite.  This inductively builds a
    system $\F^{\geq \star}X_n$, $n\geq 0$, such that each $\F^{\geq \star}X_n$ is a finite sum of
    spectra of the form $\bS_{\ev}(i)$, $i\geq 0$, and such that $|\F^{\geq \star}X_n|\simeq\F^{\geq
    \star}X$.
\end{construction}

With this we are now able to prove the first step of lifting \cite[Theorem G]{CMM} to the synthetic setting.

\begin{lemma}~\label{lem: TC commutes with tensoring sometimes}
    If $\F^{\geq \star}X\in\CycSyn$ is connective in the neutral $t$-structure, complete, and
    satisfies the finite generation hypothesis of Construction~\ref{const: geometric realization of
    finite type synthetic spectra}, then the natural assembly map \[\mathrm{TC}(-)\otimes
    \F^{\geq \star}X\to \mathrm{TC}(-\otimes \F^{\geq \star}X^{\triv})\] is an
    equivalence.
\end{lemma}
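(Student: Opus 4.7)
The plan is to reduce the statement to the case where $\F^{\geq\star}X$ is a single shifted sphere $\bS_\ev(i)$ with $i\geq 0$ (equipped with the trivial cyclotomic synthetic structure of Example~\ref{ex:inserted_cyclotomic}), and then verify that base case by hand. Both the source and the target of the assembly map are exact functors of $\F^{\geq\star}X$ from the full subcategory of $\CycSyn$ satisfying the hypotheses. First I would observe that the source, $\mathrm{TC}(Y)\otimes\F^{\geq\star}X$, is manifestly a colimit-preserving functor of $\F^{\geq\star}X$ because the tensor product on $\SynSp$ does. For the target, it suffices to know that $\mathrm{TC}\colon\CycSyn\to\SynSp$ commutes with the specific geometric realization $\F^{\geq\star}X\we|\F^{\geq\star}X_\bullet|$ supplied by Construction~\ref{const: geometric realization of finite type synthetic spectra}. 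By construction each $\F^{\geq\star}X_n$ is a finite sum of terms $\bS_\ev(i)$ with $i\geq 0$, hence connective in the neutral $t$-structure with a uniform connectivity bound across $n$; this is preserved after tensoring $Y$ with $\F^{\geq\star}X_n^\triv$ up to a fixed shift depending only on the connectivity of $Y$. Consequently $\mathrm{TR}$ and therefore $\mathrm{TC}$ commute with this specific realization, using Lemma~\ref{lem: TR is t-exact} together with Lemma~\ref{lem: orbit/fixedpoints/tate continuity}(2) applied to the skeletal tower of $Y\otimes \F^{\geq\star}X_\bullet^\triv$.

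Having reduced to the case $\F^{\geq\star}X=\bS_\ev(i)$ for some $i\geq 0$, I would then unwind the definitions. The cyclotomic synthetic structure on $\bS_\ev(i)^\triv$ has its Frobenius factoring through the canonical map $\bS_\ev(i)\to(\bS_\ev(i))^{C_{p,\ev}}$ as in Example~\ref{ex:inserted_cyclotomic}. The point is that for each $n\geq 0$, the operations $(-)_{\bT_\ev}$, $(-)^{\bT_\ev}$, $(-)_{C_{p^n,\ev}}$ and $(-)^{C_{p^n,\ev}}$ all satisfy a projection formula against the dualizable object $\bS_\ev(i)$ (viewed with trivial $\bT_\ev$-action): the natural map
\[Y^{C_{p^n,\ev}}\otimes\bS_\ev(i)\to (Y\otimes\bS_\ev(i)^\triv)^{C_{p^n,\ev}}\]
is an equivalence by dualizability of $\bS_\ev(i)$. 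The analogous equivalence for Tate follows because $(-)^{\t C_{p,\ev}}$ is the cofiber of the norm and the norm is itself natural in tensoring with a dualizable synthetic spectrum carrying the trivial action. Combining these equivalences with the formula for $\mathrm{TC}$ as an equalizer, the assembly map in this case becomes the obvious identification.

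The main obstacle will be the first step: ensuring that $\mathrm{TC}$ commutes with the geometric realization $|\F^{\geq\star}X_\bullet|$. This is not a statement about $\mathrm{TC}$ commuting with all colimits, which fails in general, but only with this particular realization of a connective object. The argument proceeds via the skeletal filtration of the realization: writing $\mathrm{TC}$ through the fiber sequence $\mathrm{TC}\to\mathrm{TR}\xrightarrow{F-1}\mathrm{TR}$ of Remark~\ref{rem: tc=trf=1}, it suffices to treat $\mathrm{TR}$, whose $t$-exactness in Lemma~\ref{lem: TR is t-exact} together with the uniform connectivity of the $\F^{\geq\star}X_n\otimes Y^\triv$ in the neutral $t$-structure allows one to commute $\mathrm{TR}$ past the sequential colimit computing the realization. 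Finally, given the reduction to finite direct sums of shifted spheres and the projection formulas above, the assembly map is exhibited as a natural equivalence, completing the proof.
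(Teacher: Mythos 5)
Your proof follows essentially the same three-step strategy as the paper's: (i) base case where $\F^{\geq\star}X$ is an invertible twist $\bS_\ev(i)$ of the unit, (ii) extension to finite sums by exactness, and (iii) extension to the general case by commuting $\TC$ with the geometric realization supplied by Construction~\ref{const: geometric realization of finite type synthetic spectra} using connectivity bounds. The paper does this in one terse paragraph; you present the reduction first and fill in the base case via projection formulas, but the logic is the same.

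Two small inaccuracies in your write-up, neither of which sinks the argument. First, you attribute the trivial cyclotomic structure on $\bS_\ev(i)$ to Example~\ref{ex:inserted_cyclotomic} and claim its Frobenius factors through $(\bS_\ev(i))^{C_{p,\ev}}$. That example is specifically about $\ins^0 R$ for $R$ an Eilenberg--Mac~Lane spectrum, where the key point is the $\ins^0\bZ$-module structure; it does not cover $\bS_\ev(i)=\ins^i\bS$, and the Tate-diagonal Frobenius of $\bS_\ev^\triv$ has no such factorization in general. Fortunately your actual base-case argument relies on invertibility/dualizability of $\bS_\ev(i)$ and the projection formulas (which is exactly the paper's implicit reason that ``the same is true for $(-)^{\bT_\ev}$ and $(-)^{\t\bT_\ev}$''), not on that factorization. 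Second, Lemma~\ref{lem: orbit/fixedpoints/tate continuity}(2) is a statement about increasing towers whose transition fibers are eventually uniformly \emph{coconnective}; the skeletal tower of a realization instead has transition fibers of increasing \emph{connectivity}. The correct mechanism is the one the paper alludes to (and you independently sketch via $\TR$): $\TR$ is $t$-exact on bounded-below objects, so $\fib(\TR(\mathrm{sk}_k)\to\TR(|X_\bullet|))$ becomes arbitrarily connective, and since the $t$-structure is compatible with filtered colimits and left-separated, $\colim_k\TR(\mathrm{sk}_k)\to\TR(|X_\bullet|)$ is an equivalence. Both your proof and the paper's are implicitly using that the test object $Y$ (and hence $Y\otimes\F^{\geq\star}X_n^\triv$, uniformly in $n$ since the cells have nonnegative weights) is bounded below; you flag this with ``depending only on the connectivity of $Y$,'' which is a reasonable reading given how the lemma is applied in Theorem~\ref{thm: CMM cocontinuity synthetic}.
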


\begin{proof}
    Note that the map in question is an equivalence for $\F^{\geq \star}X\simeq \bS_{\ev}(i)$ for
    some $i\geq 0$ since the same is true for $(-)^{\bT_{\ev}}$ and $(-)^{\t\bT_{\ev}}$. Thus by
    exactness the comparison map is an equivalence for $\F^{\geq \star}X$ a finite sum of synthetic
    spectra which are twists of $\bS_{\ev}$. Finally from the connectivy bounds on $\mathrm{TC}(-)$
    we have that $\mathrm{TC}(-)$ commutes with geometric realizations of uniformly (level-wise)
    bounded below synthetic cyclotomic spectra, which together with the previous construction then
    shows that $\mathrm{TC}(-)\otimes \F^{\geq \star}X\to \mathrm{TC}(-\otimes \F^{\geq
    \star}X^{\triv})$ is an equivalence for $\F^{\geq \star}X$ as in the Lemma statement.
\end{proof}

\begin{theorem}~\label{thm: CMM cocontinuity synthetic}
    The functor $\mathrm{TC}(-)/p:\mathrm{CycSyn}\to \mathrm{FD}(\bS)$ commutes with all colimits
    of synthetic cyclotomic spectra which are connective in the neutral $t$-structure.
\end{theorem}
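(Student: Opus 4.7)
The plan is to reduce, via Lemma~\ref{lem: TC commutes with tensoring sometimes}, to a statement about $\TC$ on $p$-torsion neutral-connective synthetic cyclotomic spectra, where finer amplitude control becomes available from the synthetic Segal conjecture.

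First, I would apply Lemma~\ref{lem: TC commutes with tensoring sometimes} with $\F^{\geq\star}X := (\bS_\ev/p)^{\triv}$, where $\bS_\ev/p$ denotes the cofiber of multiplication by $p$ on $\bS_\ev$ and $(-)^{\triv}$ equips it with the trivial cyclotomic synthetic structure. The three hypotheses are easy to verify: $\bS_\ev/p$ is connective in the neutral $t$-structure as a cofiber of a map between neutral-connective synthetic spectra, complete as a finite colimit of complete objects, and satisfies the finite-generation hypothesis of Construction~\ref{const: geometric realization of finite type synthetic spectra} because $\bS_\ev$ does. The lemma then yields a natural equivalence
$$\TC(Y)/p \;\simeq\; \TC\!\bigl(Y\otimes (\bS_\ev/p)^{\triv}\bigr)$$
for every $Y \in \CycSyn$ connective in the neutral $t$-structure. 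Since $(-)\otimes (\bS_\ev/p)^{\triv}$ preserves arbitrary colimits and sends neutral-connective objects to neutral-connective $p$-torsion objects, the theorem reduces to the claim that $\TC$ preserves colimits on the full subcategory of neutral-connective, $p$-torsion synthetic cyclotomic spectra.

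For this reduced claim, exactness of $\TC$ further reduces to preservation of filtered colimits. Given a filtered diagram $\{X_i\}$ of neutral-connective $p$-torsion objects with colimit $X$, we use the presentation of $\TC$ as a limit built from $(-)^{C_{p^n,\ev}}$ and $(-)^{\t C_{p^n,\ev}}$, together with the identification $\TC \simeq \TR^{F=1}$ of Remark~\ref{rem: tc=trf=1}. The synthetic Segal conjecture (Theorem~\ref{thm: Segal conjecture full strength}) controls the neutral amplitude of the fiber of $\varphi_p$ on each Postnikov truncation $\tau_{\leq n}^{\N} X_i$, which by Lemma~\ref{lem: orbit/fixedpoints/tate continuity}(2) lets $(-)^{C_{p^k,\ev}}$ and $(-)^{\t C_{p^k,\ev}}$ commute with the filtered colimit on each truncation level; one then passes to the limit over both $n$ (the Postnikov truncation degree) and $k$ (the $\TR$ level).

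The main obstacle will be organising this double interchange: the $X_i$ need not be uniformly neutral-bounded above, so one must first exchange the filtered colimit with the neutral Postnikov tower, and then control the fibered $\TR$-tower uniformly in $i$. The amplitude input from the Segal conjecture gives the needed uniform bounds on each individual step, but gluing them together to produce a single colimit-preservation statement requires an organisational argument paralleling the proof of~\cite[Thm.~G]{CMM} in the non-synthetic setting.
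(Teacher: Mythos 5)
Your reduction step and the paper's differ in a way that matters. You tensor with $(\bS_\ev/p)^{\triv}$, whereas the paper tensors with $(\bS_\ev\otimes_{\ins^0\bS}\ins^0\bF_p)^{\triv}$, then observes that the resulting objects are $\ins^0\bF_p^{\triv}$-modules in $\CycSyn$. The distinction is not cosmetic: $\bS_\ev/p$ is a bare module, while $\ins^0\bF_p$ is an $\bE_\infty$-ring, and the paper's whole argument hinges on the ring structure. Specifically, they compute $\pi_{*,*}\bigl((\ins^0\bF_p^{\triv})^{\bT_\ev}\bigr)\cong\bF_p[x]$ and $\pi_{*,*}\bigl((\ins^0\bF_p^{\triv})^{\t\bT_\ev}\bigr)\cong\bF_p[x^{\pm1}]$ with $|x|=(-2,-1)$, and import the CMM observation that $\varphi$ kills $x$. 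For any $\ins^0\bF_p^{\triv}$-module $\F^{\geq\star}X$ this yields a \emph{functorial} nullhomotopy of $x\cdot\varphi_{\F^{\geq\star}X}$, hence a natural fiber sequence
\[
\TC(\F^{\geq\star}X)\longrightarrow\F^{\geq\star}X\longrightarrow(\F^{\geq\star}X)_{\bT_\ev},
\]
and colimit-preservation is then immediate because the identity and $(-)_{\bT_\ev}$ both preserve colimits. This is recorded separately as Proposition~\ref{prop: SBI in synthetic land}. Working with $(\bS_\ev/p)$-modules instead, you would have to understand $(\bS_\ev/p)^{\bT_\ev}$ and $(\bS_\ev/p)^{\t\bT_\ev}$, which carry much more homotopy, and there is no analogous clean $v$-torsion statement.

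The second half of your proposal, where you try to carry out a direct double interchange of a filtered colimit against the Postnikov tower in $n$ and the $\TR$-tower in $k$, contains a genuine gap. The synthetic Segal conjecture (Theorem~\ref{thm: Segal conjecture full strength}) bounds the coconnectivity of $\fib(\varphi_p)$ for \emph{any} bounded-above cyclotomic synthetic spectrum and makes no use of $p$-torsion. If your interchange argument closed using only that input, it would prove $\TC$ itself commutes with filtered colimits of neutral-connective objects, which is false. The $p$-torsion hypothesis has to enter somewhere, and in both CMM and this paper it enters precisely through the $\bF_p[v]$-module structure and the resulting fiber sequence---the very step you defer to ``an organisational argument paralleling the proof of [CMM, Thm.~G].'' That argument \emph{is} the fiber-sequence construction; it is not a bookkeeping patch on top of amplitude bounds. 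Concretely, the interchange of $\colim_i$ with $\lim_n$ over the Postnikov filtration is not controlled by the Segal-conjecture bound: the layers $\TC\bigl(\pi_{n+1}^\N X_i[n+1]\bigr)$ do not become uniformly coconnective as $n\to\infty$ without a $p$-torsion-specific vanishing statement, so the pro-system need not be equivalent to its colimit.

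In short, your reduction idea is in the right spirit but should target $\ins^0\bF_p^{\triv}$-modules (to access the ring structure), and the remainder of the proof should be replaced by the functorial fiber sequence coming from the computation of $\TC^-$ and $\TP$ of $\ins^0\bF_p^{\triv}$ together with the vanishing of $\varphi(x)$, rather than an amplitude/interchange argument.
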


\begin{proof}
    Note that in order to show that $\mathrm{TC}(-)/p$ commutes with all colimits it is enough to
    show this on each $\F^{\geq i}\mathrm{TC}(-)/p$ since evaluation at $i$ is cocontinuous and
    conservative. Then these are all connective spectra by assumption and the connectivity estimates
    on $\mathrm{TC}(-)$, so instead of checking mod $p$ we can check this equivalence after
    tensoring with $\bF_p$. Lifting this back to the level of synthetic spectra, this amounts to
    showing that $\mathrm{TC}(-)\otimes_{\bS_\ev} \bS_\ev\otimes_{\ins^0\bS}\ins^0\bF_p\simeq
    \mathrm{TC}(-\otimes (\bS_{\ev}\otimes_{\ins^0\bS}\ins^0\bF_p)^{\triv})$ commutes with all
    colimits of level-wise connective synthetic cyclotomic spectra.

    We will in fact show something stronger, that $\mathrm{TC}(-)$ commutes with all colimits as a
    functor from  $\mathrm{Mod}_{\ins^0\bF_p^{\triv}}(\mathrm{CycSyn})$. We have that
    \[\pi_{*,*}(\ins^0\bF_p^{\triv})^{\bT_{\ev}}\simeq \pi_{*,*}(\tau_{\geq
    2*}\mathrm{TC}^-(\bF_p^{\triv}))=\bF_p[x]\] and
    \[\pi_{*,*}(\ins^0\bF_p^{\triv})^{\t\bT_{\ev}}\simeq \pi_{*,*}(\tau_{\geq
    2*}\TP(\bF_p^{\triv}))=\bF_p[x^{\pm 1}]\] where $|x|=(-2, -1)$. The Frobenius is given by the
    double speed Postnikov filtration on the usual Frobenius, and so by \cite[Lemma 2.10]{CMM} it
    will also annihilate $x$. There is then, for any $\F^{\geq \star}X\in
    \mathrm{Mod}_{\ins^0\bF_p^{\triv}}(\mathrm{CycSyn})$, a functorial nullhomotopy of the composit
    \[(\F^{\geq \star}X)^{\bT_{\ev}}[-2](-1)\xrightarrow{x}(\F^{\geq
    \star}X)^{\bT_{\ev}}\xrightarrow{\varphi_{\F^{\geq \star}X}}(\F^{\geq \star}X)^{\t\bT_{\ev}}\] and therefore a functorial fiber sequence \[\mathrm{TC}(\F^{\geq \star}X)\to \F^{\geq \star}X\to (\F^{\geq \star}X)_{\bT_{\ev}}\] As the fiber of functors commuting with all colimits, it follows that $\mathrm{TC}(-)$ commutes with all colimits as well.
\end{proof}

During the proof of Theorem~\ref{thm: CMM cocontinuity synthetic} we produced a formula for
$\mathrm{TC}$ of $\ins^0\bF_p^{\triv}$-modules. This formula is also of interest separately and so we record it here.

\begin{proposition}~\label{prop: SBI in synthetic land}
    Let $\F^{\geq \star}X\in \mathrm{Mod}_{\ins^0\bF_p^{\triv}}(\mathrm{CycSyn})$.
    Then there is a natural fiber sequence \[\TC(\F^{\geq\star}X)\to\F^{\geq \star}X\to (\F^{\geq
    \star}X)_{\bT_{\ev}}.\] 
\end{proposition}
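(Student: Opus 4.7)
The plan is to extract the statement directly from the ingredients already assembled in the proof of Theorem~\ref{thm: CMM cocontinuity synthetic}: namely, the functorial nullhomotopy of the composite $(\F^{\geq\star}X)^{\bT_\ev}(-1)[-2] \xrightarrow{x} (\F^{\geq\star}X)^{\bT_\ev} \xrightarrow{\varphi} (\F^{\geq\star}X)^{\t\bT_\ev}$ established there. Given this nullhomotopy, I would combine it with the ``cofiber of $x$'' fiber sequence and the Tate fiber sequence to form a $2\times 2$ commutative square whose total cofiber, computed in two ways, assembles into the asserted fiber sequence.

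More concretely, the first step is to identify $\TC(\F^{\geq\star}X) \simeq \fib\bigl(\varphi - \can\colon (\F^{\geq\star}X)^{\bT_\ev} \to (\F^{\geq\star}X)^{\t\bT_\ev}\bigr)$; this follows from the equalizer formula of~\cite[Prop.~II.1.5(ii)]{nikolaus-scholze} together with the fact that the natural map $X^{\t\bT_\ev} \to (X^{\t C_{p,\ev}})^{\bT_\ev}$ is a $p$-adic, hence honest, equivalence on the $p$-torsion object $X$. The second step is to assemble the commutative square
\[
\begin{tikzcd}[ampersand replacement=\&]
(\F^{\geq\star}X)^{\bT_\ev}(-1)[-2] \ar[r,"x"] \ar[d,"-\can"'] \& (\F^{\geq\star}X)^{\bT_\ev} \ar[d,"\varphi-\can"] \\
(\F^{\geq\star}X)^{\t\bT_\ev}(-1)[-2] \ar[r,"x"] \& (\F^{\geq\star}X)^{\t\bT_\ev}
\end{tikzcd}
\]
by combining the nullhomotopy $\varphi \circ x \simeq 0$ with naturality of $\can$ against the $x$-action (which gives $\can \circ x = x \circ \can$, so that both legs of the square compute $-\can \circ x$).

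The third step is a total-cofiber calculation. Horizontally, the top row has cofiber $\F^{\geq\star}X$ via the fiber sequence obtained by applying $\bMap_{\bT_\ev}(-,\F^{\geq\star}X)$ to Lemma~\ref{lem: fiber sequence for Tsyn}, while the bottom row's cofiber vanishes since $x$ acts invertibly on $X^{\t\bT_\ev}$, which is a module over $(\ins^0\bF_p^\triv)^{\t\bT_\ev} \simeq \bF_p[x^{\pm 1}]$ by the calculation in the proof of Theorem~\ref{thm: CMM cocontinuity synthetic}. Vertically, the left column's cofiber is $(\F^{\geq\star}X)_{\bT_\ev}$, a twist-shift of the Tate cofiber sequence $(\F^{\geq\star}X)^{\bT_\ev} \xrightarrow{\can} (\F^{\geq\star}X)^{\t\bT_\ev} \to (\F^{\geq\star}X)_{\bT_\ev}(1)[2]$ from Construction~\ref{const:syn_tate}, and the right column's cofiber is $\TC(\F^{\geq\star}X)[1]$. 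Equating the two total cofibers yields a cofiber sequence $(\F^{\geq\star}X)_{\bT_\ev} \to \TC(\F^{\geq\star}X)[1] \to \F^{\geq\star}X[1]$, whose rotation and desuspension is precisely the claim.

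The main obstacle I expect is the first step: dispensing with any bounded-below hypothesis when identifying $\TC$ with $\fib(\varphi - \can)$ at the $\bT_\ev$-Tate level, since the comparison between $\bT_\ev$-Tate and iterated $C_{p,\ev}$-Tate (Lemma~\ref{lem: Tate then fixed points is Tate}) generally requires such a hypothesis. Here the $\bF_p$-linearity saves the day, because $p$-torsion objects are automatically $p$-complete. The secondary technical point, already discharged inside the proof of Theorem~\ref{thm: CMM cocontinuity synthetic} via \cite[Lem.~2.10]{CMM}, is propagating the nullhomotopy $\varphi \circ x \simeq 0$ from the unit $\ins^0\bF_p^\triv$ to an arbitrary $\ins^0\bF_p^\triv$-module, which follows by module-linearity of both $\varphi$ and multiplication by $x$.
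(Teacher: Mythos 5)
Your proposal reconstructs exactly the paper's argument: the paper derives the fiber sequence as a byproduct inside the proof of Theorem~\ref{thm: CMM cocontinuity synthetic}, from precisely the same nullhomotopy of $\varphi\circ x$, and the ``therefore'' there is implicitly the total-cofiber calculation you spell out with the $2\times 2$ square. One small inaccuracy: the cofiber sequence $(\F^{\geq\star}X)^{\bT_\ev}(-1)[-2]\xrightarrow{x}(\F^{\geq\star}X)^{\bT_\ev}\to\F^{\geq\star}X$ does not come from Lemma~\ref{lem: fiber sequence for Tsyn} (which only gives the fiber sequence in $\SynSp$, not as $\bT_\ev$-modules, and indeed the remark after it warns the fiber is not trivially acted upon over $\bS_\ev$); the right input is Lemma~\ref{lem:no_nontrivial} applied over $\bF_p$ (or the CW filtration of Construction~\ref{const:cw}), which is available because $\F^{\geq\star}X$ is an $\ins^0\bF_p$-module and $\bF_p$ is even.
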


\subsection{The filtered cyclotomic spectra $\bZ_{C_{p,\ev}}$ and $j_{C_{p,\ev}}$}
We will now repeat the arguments of \cite[Section 2]{ammn} in our setting, which will establish a filtered version of their result. The first step in this analysis is to study the analogues of the cyclotomic spectra $\bZ_{hC_{p}}$ and $j_{hC_p}$ in our setting, where $j$ is the connective cover of the $K(1)$-local sphere. 

\begin{lemma}~\label{lem: filtered tate of j}
    If $j$ denotes the connective cover of $L_{K(1)}\bS$ with respect to an odd prime $p$, then
    $\F^{\geq \star}_{\ev, tC_p}j=\tau_{\geq 2\star-1}j^{tC_p}$.
\end{lemma}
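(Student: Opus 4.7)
The plan is to produce an $S^1$-equivariantly eff cover of $j$ by an even $\bE_\infty$-ring and then descend, exactly in the style of Lemma~\ref{lem: even filtration comparison general}. At an odd prime $p$, the connective cover $j = \tau_{\geq 0} L_{K(1)}\bS$ admits a canonical unit-like map to $ku_p$ (via $L_{K(1)}\bS_p \to KU_p$ and truncation), and I would first verify that this map, equipped with the trivial $S^1$-action on $ku_p$, is an eff cover in the sense of Variant~\ref{var:pcompletely}. This is essentially the claim that $ku_p$ is a faithfully flat even extension of $j$ in the eff sense, which one can reduce to a check on graded $\pi_*$ after smashing with an even test object.

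Second, I would apply Lemma~\ref{lem: even filtration comparison in the even case} pointwise to the \v{C}ech complex $ku_p^\bullet$ of $j \to ku_p$. Because each $ku_p^{\otimes_j n}$ is even and $S^1$-equivariantly so (the $S^1$-action being trivial at each level), we obtain
\[\F^{\geq\star}_{\ev,\t C_p}(ku_p^{\otimes_j n}) \we \tau_{\geq 2\star}\bigl((ku_p^{\otimes_j n})^{\t C_p}\bigr).\]
Totalizing then gives $\F^{\geq\star}_{\ev,\t C_p} j \we \Tot\tau_{\geq 2\star}\bigl(ku_p^{\bullet, \t C_p}\bigr)$.

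Third, I would identify the totalization with a shift of the double-speed Whitehead filtration on $j^{\t C_p}$. The key computational input is that at odd $p$ the Tate construction $j^{\t C_p}$ is not even: its lowest homotopy sits in an odd degree (essentially because $\bS_p^{\t C_p}$ at odd primes, via the Segal conjecture, contributes its lowest class in odd degree $-1$, and this survives in $j^{\t C_p}$ since the fiber of $j \to L_{K(1)}\bS_p$ has bounded-above homotopy). Writing $j^{\t C_p}$ as $\Sigma^{-1}$ of an even underlying object up to a connectivity range, the level-wise identity $\tau_{\geq 2\star} X \we \tau_{\geq 2\star - 1}(\Sigma^{-1}X)$ translates the double-speed filtration on the even shift into the desired $\tau_{\geq 2\star - 1}$ on $j^{\t C_p}$. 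Making this compatible with the \v{C}ech totalization follows because the same identification holds at each cosimplicial level for $ku_p^{\otimes_j n}$ by Example~\ref{ex:even_tate}, and the shift $[-1]$ commutes with $\Tot$.

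The main obstacle will be the third step: carefully pinning down the odd-degree generator of $j^{\t C_p}$ and verifying that the induced shift is natural in the cosimplicial direction so as to pass through $\Tot$. Here one can either argue directly using the $\psi^g - 1$ description of $j_p$ to reduce $j^{\t C_p}$ to a calculation in $ku_p^{\t C_p}$ (which is even and governed by the formal multiplicative group), or one can bootstrap from the analogous statement for $\bZ$ (Lemma~\ref{lem: filtered tate of j} is being used in parallel with a version for $\bZ$) using the fiber sequences relating $j$ to $KU_p$-type spectra. Once the bottom class is located in the correct degree, the remaining homotopy of $j^{\t C_p}$ is $2$-periodic up to torsion, so the shift by $-1$ persists across all weights and the identification $\F^{\geq\star}_{\ev,\t C_p} j \we \tau_{\geq 2\star - 1} j^{\t C_p}$ follows.
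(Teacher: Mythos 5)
Your proposal is structurally quite different from the paper's, and I think it has two genuine gaps.

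First, your plan hinges on the assertion that $j\to ku_p$ (with trivial $S^1$-action) is an eff cover, but you offer no argument for this, and it is not at all obvious. The relevant check (Definition~\ref{def:eff}, or its $p$-complete variant) is that for every even $\bE_\infty$-ring $C$ under $j$, the pushout $C\otimes_j ku_p$ is even with $\pi_*$ faithfully flat over $\pi_* C$. Since $j$ has nontrivial torsion in odd degrees, this is a nontrivial structural claim, and the paper never needs it: it instead works entirely through the already-established Lemma~\ref{lem: even filtration comparison general}, which reduces $\F^{\geq\star}_{\ev,\t C_p}j$ to $j_\ev^{\t C_{p,\ev}}$ with no new descent input.

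Second, your third step, the identification of $\Tot\bigl(\tau_{\geq 2\star}(ku_p^{\bullet,\t C_p})\bigr)$ with $\tau_{\geq 2\star-1}j^{\t C_p}$, is not an argument so much as a sketch, and the specific ingredient you invoke is off. The Segal conjecture identifies $\bS^{\t C_p}$ with the $p$-complete sphere, whose lowest nonzero homotopy is $\pi_0=\bZ_p$, not a class in degree $-1$; so the proposed ``$\Sigma^{-1}$ of an even object up to a connectivity range'' framing for $j^{\t C_p}$ is not supported by what you say. More fundamentally, you do not use the two inputs that actually drive the paper's proof: (i) the Devinatz--Morava computation showing $\gr^i j_\ev$ is concentrated in degrees $[2i-1,2i]$, hence $j_\ev\we\tau_{\geq 2\star-1}j$ and is $(-1)$-connective in the double-speed Postnikov $t$-structure, and (ii) Lemma~\ref{lem: fixed points are somehow also left t-exact I guess}, which propagates that connectivity through $(-)^{\t C_{p,\ev}}$ to produce a map $j_\ev^{\t C_{p,\ev}}\to\tau_{\geq 2\star-1}j^{\t C_p}$. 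The paper then finishes with a connectivity/coconnectivity sandwich using the built-in $2i$-coconnectivity of $\gr^i$ of the even filtration. Your route, even if the eff claim could be salvaged, would effectively have to re-derive that double-speed connectivity control by hand at each cosimplicial level, which is the hard part, and the sketch does not do so.
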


\begin{proof}
    By Lemma~\ref{lem: even filtration comparison general} there is an equivalence $\F^{\geq \star}_{\ev, tC_p}j\simeq j_\ev^{tC_{p,\ev}}$, so we will prove the result for this later term instead. From \cite{Devinatz_Morava_COR}, we have that $\mathrm{gr}^ij_\ev$ is concentrated in degrees $[2i,2i-1]$, so by completeness we must have that $j_\ev\simeq \tau_{\geq 2\star -1}j$ which is $(-1)$-connective in the double speed Postnikov $t$-structure. Thus by Lemma~\ref{lem: fixed points are somehow also left t-exact I guess} we get that $j_\ev^{tC_{p,\ev}}$ is also $(-1)$-connective in the double speed Postnikov $t$-structure and there is therefore a map $j_{\ev}^{tC_{p,\ev}}\to \tau_{\geq 2\star-1}j^{tC_p}$. We also have by the definition of the even filtration that $\mathrm{gr}^i j_\ev^{tC_{p,\ev}}$ is $(2i)$-coconnective and these filtrations are exhaustive, so inductively the map $j_\ev^{tC_{p,\ev}}\to \tau_{2\star -1}j^{tC_p}$ must be an equivalence.
\end{proof}

As a consequence we get the following:

\begin{lemma}
    Let $p$ be an odd prime. The natural maps \[\tau_{\geq 0}^{\cyc,\N}\bZ_{p,\ev}^{tC_{p,\ev}}\to
    \F^{\geq \star}_\ev\mathrm{THH}(\bF_p)\] and \[\tau_{\geq 0}^{\cyc,\N} j_\ev^{tC_{p,\ev}}\to \F^{\geq \star}_\ev \mathrm{THH}(\bZ_p)\] are equivalences of synthetic
    cyclotomic spectra where $\tau^{\cyc,\N}_{\geq 0}$ denotes the $0$-connective cover with
    respect to the neutral $t$-structure on cyclotomic synthetic spectra.
\end{lemma}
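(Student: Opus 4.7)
The plan is to deduce these equivalences from the classical cyclotomic identifications $\tau_{\geq 0}\bZ_p^{\t C_p} \simeq \THH(\bF_p)$ of Nikolaus--Scholze and $\tau_{\geq 0}j^{\t C_p} \simeq \THH(\bZ_p)$ announced by Devalapurkar--Raksit, transported through the even filtration functor. First I would verify that both $\bZ_p^{\t C_p}$ and $j^{\t C_p}$ are eff-locally even as cyclotomic $\bE_\infty$-rings: the former via its map to the even ring $\TP(\bF_p)$, and the latter via Lemma~\ref{lem: filtered tate of j} together with an even cover coming from $\mathrm{KU}_p^{\t C_p}$. By Corollary~\ref{cor:cyclotomic_functor}, the even filtration then lifts to functors valued in $\CAlg(\CycSyn)$, and by Lemma~\ref{lem: even filtration comparison general} we obtain natural identifications $\F^{\geq\star}_\ev\bZ_p^{\t C_p} \simeq \bZ_{p,\ev}^{\t C_{p,\ev}}$ and $\F^{\geq\star}_\ev j^{\t C_p} \simeq j_\ev^{\t C_{p,\ev}}$ in $\CAlg(\CycSyn)$.

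Next, applying $\F^{\geq\star}_\ev$ to the classical equivalences produces equivalences $\F^{\geq\star}_\ev(\tau_{\geq 0}\bZ_p^{\t C_p}) \simeq \F^{\geq\star}_\ev\THH(\bF_p)$ and $\F^{\geq\star}_\ev(\tau_{\geq 0}j^{\t C_p}) \simeq \F^{\geq\star}_\ev\THH(\bZ_p)$. The natural maps induced by the classical inclusions $\tau_{\geq 0}\bZ_p^{\t C_p} \hookrightarrow \bZ_p^{\t C_p}$ and $\tau_{\geq 0}j^{\t C_p} \hookrightarrow j^{\t C_p}$, after applying $\F^{\geq\star}_\ev$, produce maps with level-wise connective sources (by Remark~\ref{rem:bounds}) which are therefore connective in the cyclotomic neutral $t$-structure. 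These factor through the connective covers $\tau_{\geq 0}^{\cyc,\N}\bZ_{p,\ev}^{\t C_{p,\ev}}$ and $\tau_{\geq 0}^{\cyc,\N}j_\ev^{\t C_{p,\ev}}$; combining with the $\F^{\geq\star}_\ev\THH(-)$ equivalences above and inverting yields the natural maps of the lemma statement.

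To show the factorizations are equivalences, I would argue level-by-level using conservativity of the forgetful functor $\CycSyn \to \SynSp$. At filtration weight $i$, the claim reduces to the coconnectivity of $\F^{\geq i}_\ev \tau_{<0}\bZ_p^{\t C_p}$ (and analogously for $j$), which follows from the analysis of the even filtration on the Tate construction, working in modules over $\F^{\geq\star}_\ev \bZ_p^{\t C_p}$ or by direct descent along an even eff cover. The main obstacle is precisely this level-wise coconnectivity check: the even filtration is strictly defined on $\bE_\infty$-rings, so computing it on the coconnective fiber requires either extending to the module setting or descending along an $S^1$-equivariant even cover and analyzing the Tate spectral sequence in negative degrees.
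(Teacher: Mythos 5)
Your route --- comparing $\F^{\geq\star}_\ev$ applied to $\tau_{\geq 0}\bZ_p^{\t C_p}$ and $\tau_{\geq 0}j^{\t C_p}$ with the neutral connective covers of $\bZ_{p,\ev}^{\t C_{p,\ev}}$ and $j_\ev^{\t C_{p,\ev}}$ --- is substantially more involved than the paper's and, as you yourself acknowledge, leaves an unclosed gap in the level-wise coconnectivity check. There is also a second, unstated gap earlier: you identify $\F^{\geq\star}_\ev(\bZ_p^{\t C_p})$ with $\bZ_{p,\ev}^{\t C_{p,\ev}}$ by citing Lemma~\ref{lem: even filtration comparison general}, but that lemma identifies $\bZ_{p,\ev}^{\t C_{p,\ev}}$ with the $S^1$-equivariant even filtration $\F^{\geq\star}_{\ev,\t C_p}\bZ_p$ (taken using $S^1$-equivariant even covers of $\bZ_p$), which is a priori different from the even filtration of the $\bE_\infty$-ring $\bZ_p^{\t C_p}$. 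For $\bZ_p$ this can be patched up because $\bZ_p^{\t C_p}$ happens to be even, but in the $j$ case, where $j^{\t C_p}$ is not even, this identification is a genuine claim and would need its own argument.

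The paper closes all of this at once with a short observation: every filtration appearing in the lemma is already explicitly a (shifted) Postnikov filtration on its underlying spectrum. Since $\bZ_p$ and $\THH(\bF_p)$ are even, Lemma~\ref{lem: even filtration comparison in the even case} gives $\bZ_{p,\ev}^{\t C_{p,\ev}}\simeq\tau_{\geq 2\star}(\bZ_p^{\t C_p})$ and $\F^{\geq\star}_\ev\THH(\bF_p)\simeq\tau_{\geq 2\star}\THH(\bF_p)$. Applying $\tau_{\geq 0}$ levelwise to the former yields $\tau_{\geq\max(0,2\star)}\bZ_p^{\t C_p}$, which agrees levelwise with $\tau_{\geq 2\star}\tau_{\geq 0}\bZ_p^{\t C_p}=\tau_{\geq 2\star}\THH(\bF_p)$ via the Nikolaus--Scholze equivalence. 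For $j$, Lemma~\ref{lem: filtered tate of j} gives $j_\ev^{\t C_{p,\ev}}\simeq\tau_{\geq 2\star-1}j^{\t C_p}$, while the even filtration on $\THH(\bZ_p;\bZ_p)$ is the same shifted Postnikov filtration by the classical B\"okstedt-type computation, so the levelwise comparison after $\tau_{\geq 0}$ is again immediate using the Devalapurkar--Raksit identification. In other words, where you set up a functorial comparison of two filtered objects and then need a new coconnectivity input to finish, the paper simply computes both filtered objects explicitly and sees they coincide.
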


\begin{proof}
    We have equivalences $\tau_{\geq 0}\bZ_p^{tC_p}\to \mathrm{THH}(\bF_p)$ and $\tau_{\geq 0}j^{tC_p}\simeq \mathrm{THH}(\bZ_p)$ of cyclotomic spectra, the first by \cite{nikolaus-scholze} and the second by work of Sanath Devalapurkar and Arpon Raksit (private communications). Since in all cases the filtrations in question are variants of the Postnikov filtrations the result follows.
\end{proof}

We then have fiber sequences \[(\bZ_{p,\ev})_{C_{p,\ev}}\to \bZ_{p,\ev}\to \F^{\geq
\star}_{\ev}\THH(\bF_p)\] and \[(j_\ev)_{C_{p,\ev}}\to \tau^{\cyc, N}_{\geq 0}j_\ev^{C_{p,\ev}}\to \F^{\geq
\star}_\ev\THH(\bZ_p)\] of $\bT_\ev$-modules. From Lemma~\ref{lem: Tate orbit lemma} we then have
that the natural maps \[(\bZ_{p,\ev})^{\t\bT_{ev}}\simeq (F_{\ev}^{\geq
\star}\THH(\bF_p))^{\t\bT_{\ev}}\] and \[(\tau_{\geq 0}^{\cyc, N}j_\ev^{C_{p,\ev}})^{\t\bT_{\ev}}\simeq
(\F^{\geq \star}_{ev}\THH(\bZ_p))^{\t\bT_{\ev}}\] are equivalences, where $\tau^{\cyc, N}$ is the neutral $t$-structure.

\begin{corollary}
    Let $\F^{\geq \star}X\in \CycSyn$. Then $\mathrm{TC}(\F^{\geq \star}X\otimes (\bZ_{p,\ev})_{C_{p,\ev}})\simeq (\F^{\geq \star}X\otimes (\bZ_{p,\ev})_{C_{p,\ev}})_{\bT_\ev}[1](1)\simeq (\F^{\geq \star}X\otimes (\bZ_{p,\ev})_{C_{p,\ev}})^{\bT_{\ev}}$.
\end{corollary}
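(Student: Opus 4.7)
The plan is to reduce both claimed equivalences to vanishing statements for the $C_{p,\ev}$- and $\bT_{\ev}$-Tate constructions applied to $M := \F^{\geq\star}X\otimes(\bZ_{p,\ev})_{C_{p,\ev}}$, and then to read off the result from the equalizer formula for $\TC$ and the synthetic norm fiber sequence.

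First, I would establish $M^{\t C_{p,\ev}}\simeq 0$. Since $\bZ_{p,\ev}$ is bounded below in the Postnikov $t$-structure (by Remark~\ref{rem:bounds}), the Tate orbit lemma (Lemma~\ref{lem: Tate orbit lemma}) applies to give $\bigl((\bZ_{p,\ev})_{C_{p,\ev}}\bigr)^{\t C_{p,\ev}}\simeq 0$. Combining this with the symmetric monoidality of $(-)^{\t C_{p,\ev}}$ from Proposition~\ref{prop:monoidal} yields
\[
M^{\t C_{p,\ev}}\simeq (\F^{\geq\star}X)^{\t C_{p,\ev}}\otimes\bigl((\bZ_{p,\ev})_{C_{p,\ev}}\bigr)^{\t C_{p,\ev}}\simeq 0.
\]
Substituting this into the equalizer formula $\TC(M)\simeq\Eq\bigl(M^{\bT_\ev}\rightrightarrows (M^{\t C_{p,\ev}})^{\bT_\ev}\bigr)$ from \cite[Prop.~II.1.5(ii)]{nikolaus-scholze}, both arrows in the equalizer factor through $0$, so $\TC(M)\simeq M^{\bT_\ev}$, giving the rightmost equivalence of the corollary.

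For the middle equivalence $M^{\bT_\ev}\simeq M_{\bT_\ev}(1)[1]$, I would show that the synthetic $S^1$-norm map of Construction~\ref{const:syn_tate} is an equivalence, equivalently that $M^{\t\bT_\ev}\simeq 0$. Using the bialgebra structure on $\bT_{\ev}$ together with the trivial $\bT_{\ev}$-action on $\bZ_{p,\ev}$, one has a projection-formula identification $M\simeq(\F^{\geq\star}X\otimes\bZ_{p,\ev})_{C_{p,\ev}}$. Iterating Lemma~\ref{lem:homotopy_to_tate} and using the already established $M^{\t C_{p,\ev}}\simeq 0$ shows inductively that $M^{\t C_{p^n,\ev}}\simeq (M^{\t C_{p,\ev}})^{C_{p^{n-1},\ev}}\simeq 0$ for every $n\geq 1$. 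Passing to the inverse limit over $n$ using the argument of Lemma~\ref{lem: Tate then fixed points is Tate} (which identifies $M^{\t\bT_\ev}$ with $\lim_n M^{\t C_{p^n,\ev}}$ in the bounded-below setting) then gives $M^{\t\bT_\ev}\simeq 0$, and the norm fiber sequence
\[
M_{\bT_\ev}(1)[1]\to M^{\bT_\ev}\to M^{\t\bT_\ev}
\]
completes the argument.

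The main obstacle will be verifying the bounded-below hypotheses needed to iteratively apply Lemma~\ref{lem:homotopy_to_tate} and to interchange $\bT_{\ev}$-Tate with the inverse limit of $C_{p^n,\ev}$-Tates for an arbitrary $\F^{\geq\star}X\in\CycSyn$. The cleanest route is to exploit that $(\bZ_{p,\ev})_{C_{p,\ev}}$ is Postnikov-connective, so tensoring with it controls boundedness; if this is insufficient one can filter $\F^{\geq\star}X$ by its Postnikov tower in $\CycSyn$ and use the continuity properties of the Tate constructions recorded in Lemma~\ref{lem: orbit/fixedpoints/tate continuity} to reduce to the bounded-below case.
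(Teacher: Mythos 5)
Your high-level structure — reduce both equivalences to the vanishing of the Tate constructions of $M := \F^{\geq\star}X\otimes(\bZ_{p,\ev})_{C_{p,\ev}}$, then read the result off the equalizer formula and the norm sequence — identifies the right targets. But two steps of the execution are wrong, and the paper gets around both with a shorter argument of a different character.

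The first problem is the displayed equivalence
$M^{\t C_{p,\ev}}\simeq (\F^{\geq\star}X)^{\t C_{p,\ev}}\otimes\bigl((\bZ_{p,\ev})_{C_{p,\ev}}\bigr)^{\t C_{p,\ev}}$.
Proposition~\ref{prop:monoidal} only gives $(-)^{\t C_{p,\ev}}$ a \emph{lax} symmetric monoidal structure (it comes from \cite[Thm.~I.3.6]{nikolaus-scholze}, which produces a lax-monoidal universal approximation, and the natural transformation $(-)^{C_{p,\ev}}\to(-)^{\t C_{p,\ev}}$ is likewise only lax). So you get a comparison map from the tensor of Tates to the Tate of the tensor, not an equivalence, and vanishing of one tensor factor does not pass to the product this way. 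In fact, if it did, the Tate construction would vanish on the entire tensor ideal generated by $(\bZ_{p,\ev})_{C_{p,\ev}}$ for formal reasons, and the Tate orbit lemma would be free. What lax monoidality \emph{does} buy you is the transport of \emph{module} structures: if $M$ is a module over a synthetic ring $R$ with $\bT_\ev$-action, then $M^{\t\bT_\ev}$ is a module over the appropriate Tate/fixed-point construction of $R$. That is the mechanism the paper uses: by the norm, $(\bZ_{p,\ev})_{C_{p,\ev}}$ carries a module structure over a ring built out of $\bZ_{p,\ev}$, and the discussion immediately before the corollary (via the fiber sequence $(\bZ_{p,\ev})_{C_{p,\ev}}\to\bZ_{p,\ev}\to\F^{\geq\star}_\ev\THH(\bF_p)$ and the Tate orbit lemma applied to the Postnikov-connective ring $\bZ_{p,\ev}$) shows the relevant Tate construction of that ring is zero. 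Hence $M^{\t\bT_\ev}$ is a module over the zero ring and vanishes — for \emph{any} $\F^{\geq\star}X$, with no extra hypotheses on $X$.

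The second problem is the bounded-below issue, which you correctly flag but do not resolve. Lemma~\ref{lem: Tate orbit lemma}, Lemma~\ref{lem:homotopy_to_tate}, and Lemma~\ref{lem: Tate then fixed points is Tate} all require boundedness with respect to a $t$-structure satisfying $(\star)$. For an arbitrary $\F^{\geq\star}X\in\CycSyn$ this fails, and tensoring with the Postnikov-connective $(\bZ_{p,\ev})_{C_{p,\ev}}$ does not create boundedness when $\F^{\geq\star}X$ is unbounded. The proposed fall-back of filtering by the Postnikov tower and invoking Lemma~\ref{lem: orbit/fixedpoints/tate continuity} also does not close the gap: that lemma requires the successive fibers in the tower to be \emph{uniformly} bounded, which is exactly what the Postnikov tower of an unbounded object fails to satisfy. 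Your iterative argument mirrors the proof of \cite[Lem.~2.7]{ammn}, where it is run under a bounded-below hypothesis; the module argument is what lets the synthetic statement avoid that hypothesis entirely.
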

\begin{proof}
    This amounts to showing that $(\F^{\geq \star}X\otimes (\bZ_{p,\ev})_{C_{p,\ev}})^{\t\bT_{\ev}}$ vanishes, but this is a module over $(\bZ_{p,\ev}^{C_{p,\ev}})^{\bT_{\ev}}\simeq 0$ and so must vanish as a module over the zero ring. 
\end{proof}

We conclude this subsection with the analogous result for $(\tau_{\geq 0}j_\ev)_{C_{p,\ev}}$.

\begin{construction}
    Let $L_{K(1)}^{\fil}\colon\SynSp\to \SynSp$ denote the functor $L_{K(1)}^{\fil}\F^{\geq \star}X:=(\F^{\geq \star}X\otimes_{\bS_\ev} J_{\ev})^\wedge_p$. Note that this is a filtration on the usual $K(1)$ localization of $\F^{\geq -\infty}X$, hence the name. This same definition also extends to any category which is tensored over $\SynSp$.
    
    Now let $p$ be an odd prime. We may alternatively describe the filtered $K(1)$-localization, at least mod $p$, as follows: let $v_1\in \pi_{2p-2}(\F^{\geq p-1}\bS_\ev/p)$ be a lift of the usual element $v_1\in \pi_{2p-2}(\bS/p)$. Such an element exists in the indicated degree either by directly studying the Adams-Novikov charts or by noting that the map $\bS_\ev\to J_\ev$ is a filtration on the usual map $\bS\to J$, that the element $v_1\in \pi_{2p-2}(J/p)$ lifts to an element $v_1\in \pi_{2p-2}(\F^{\geq p-1}J_\ev/p)$ since the even filtration on $J$ is $\tau_{\geq 2*-1}J$, and so there must exist an element of $\pi_{2p-2}(\F^{\geq p-1}\bS_\ev/p)$ mapping to this element up to higher order filtration degree which we may assume without loss of generality vanishes. We then have that $J_\ev/p\simeq \bS_{\ev}/p[v_1^{-1}]$, and so for a general $\F^{\geq \star}X$ we have that \[L_{K(1)}^{\fil}\F^{\geq \star}X/p\simeq \F^{\geq \star}X/p[v_1^{-1}]\] as in the non-synthetic case.
\end{construction}

\begin{proposition}[Filtered ambidexterity]
    Let $\F^{\geq \star}X\in \mathrm{Mod}_{j_{\ev}}(\SynSp_{\bT_{\ev}})$. Then $L_{K(1)}^{\fil}\F^{\geq \star}X^{tC_{p,\fil}}\simeq 0$. 
\end{proposition}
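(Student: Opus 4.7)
The plan is to reduce the statement to vanishing of the unit. Given any $\F^{\geq \star}X\in \Mod_{j_\ev}(\SynSp_{\bT_\ev})$, the lax symmetric monoidality of the synthetic Tate construction (Proposition~\ref{prop:monoidal}) makes $(\F^{\geq \star}X)^{\t C_{p,\ev}}$ a module over the $\bE_\infty$-algebra $j_\ev^{\t C_{p,\ev}}$. Since $L_{K(1)}^{\fil}=(-\otimes_{\bS_\ev}J_\ev)_p^\wedge$ is a symmetric monoidal (smashing-plus-completion) localization, $L_{K(1)}^{\fil}(\F^{\geq \star}X)^{\t C_{p,\ev}}$ is a module over $L_{K(1)}^{\fil}j_\ev^{\t C_{p,\ev}}$. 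A unital module over a zero ring is itself zero, so it suffices to show $L_{K(1)}^{\fil}j_\ev^{\t C_{p,\ev}}\simeq 0$.

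The next step is to pass mod $p$. Since $L_{K(1)}^{\fil}j_\ev^{\t C_{p,\ev}}$ is $p$-complete by construction, it vanishes iff its mod-$p$ reduction does, and by the construction preceding the proposition mod-$p$ filtered $K(1)$-localization is $v_1$-inversion. Hence the problem becomes showing $(j_\ev^{\t C_{p,\ev}}/p)[v_1^{-1}]\simeq 0$. Using Lemma~\ref{lem: filtered tate of j} together with Lemma~\ref{lem: even filtration comparison general}, I identify $j_\ev^{\t C_{p,\ev}}$ with $\tau_{\geq 2\star -1}\,j^{\t C_p}$, reducing the task to inverting $v_1$ on $\tau_{\geq 2\star - 1}(j^{\t C_p}/p)$ in filtered spectra.

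Finally, I would compute this filtered $v_1$-inversion weight-by-weight. Since $v_1$ is represented by a map $\bS_\ev[2p-2](p-1)/p\to \bS_\ev/p$, multiplication by $v_1$ shifts internal degree by $-(2p-2)$ and weight by $-(p-1)$, giving
\[
\pi_q\,\F^{\geq w}\bigl((\tau_{\geq 2\star -1}(j^{\t C_p}/p))[v_1^{-1}]\bigr)
\;=\;\colim_n\,\pi_{q+n(2p-2)}\bigl(\tau_{\geq 2w+n(2p-2)-1}(j^{\t C_p}/p)\bigr).
\]
For $q<2w-1$ every term is killed by the truncation, while for $q\geq 2w-1$ the truncation is vacuous and the colimit equals $\pi_q(L_{T(1)}(j^{\t C_p})/p)$. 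Classical $K(1)$-local ambidexterity (Greenlees--Sadofsky) gives $L_{K(1)}j^{\t C_p}\simeq 0$ at odd primes, and the telescope conjecture at height $1$ identifies $L_{T(1)}$ with $L_{K(1)}$, so this group vanishes for all $q,w$.

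The only real technical point is the bookkeeping in the third paragraph: one must check that the weight shift of $v_1$ makes the double-speed Postnikov truncations become vacuous in the colimit, so that the filtered $v_1$-inversion collapses to the classical one. Once this is in place, the argument reduces cleanly to the well-known $K(1)$-local ambidexterity for $j^{\t C_p}$.
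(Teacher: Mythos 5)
Your proof takes essentially the same route as the paper's: reduce to vanishing of $L_{K(1)}^{\fil}j_\ev^{\t C_{p,\ev}}$ via the module structure coming from lax monoidality, pass mod $p$ to $v_1$-inversion, identify $j_\ev^{\t C_{p,\ev}}$ with $\tau_{\geq 2\star-1}j^{\t C_p}$ using Lemma~\ref{lem: filtered tate of j}, and invoke classical ambidexterity. Your weight-by-weight colimit computation in the third paragraph correctly spells out what the paper compresses into a single sentence (that the filtered $v_1$-action matches the unfiltered one, hence is nilpotent), so the two proofs agree.
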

\begin{proof}
    We have that $L_{K(1)}^{\fil}X^{tC_{p,\ev}}$ is a module over $L_{K(1)}^{\fil}j_{\ev}^{tC_{p,\ev}}$, so it is enough to show that this term vanishes. This is a $p$-complete filtered spectrum and so it is enough to show that this vanishes mod $p$. Thus we reduce to showing that $(j_{\ev})^{tC_{p,\ev}}/p[v_1^{-1}]\simeq 0$ and so we need only show that $v_1$ acts nilpotently on all the filtered homotopy groups of $j_{\ev}^{tC_{p,\ev}}/p$. By Lemma~\ref{lem: filtered tate of j} the filtration on $j_{\ev}^{tC_{p,\ev}}$ will be $\tau_{\geq 2*-1}j^{tC_p}$, so the $v_1$ action on the filtered homotopy groups  $j_{\ev}^{tC_{p,\ev}}$ is exactly the usual action on the homotopy groups of $j^{tC_p}$ which is nilpotent by classical ambidexterity. 
\end{proof}

\begin{corollary}
    Let $\F^{\geq \star}X\in \CycSyn$. Then $L_{K(1)}^{\fil}(\F^{\geq \star}X\otimes (j_{\ev})_{C_{p,\ev}})^{tC_{p,\ev}}\simeq 0$.
\end{corollary}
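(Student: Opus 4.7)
The plan is to deduce the corollary immediately from the preceding Filtered Ambidexterity proposition by exhibiting a canonical $j_{\ev}$-module structure on $\F^{\geq\star}X\otimes(j_{\ev})_{C_{p,\ev}}$ inside $\SynSp_{\bT_{\ev}}$, and then applying that proposition verbatim.

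First, I would observe that $j_{\ev}$ is naturally an $\bE_{\infty}$-algebra in $\SynSp_{\bT_{\ev}}$: indeed, $j$ is a commutative ring spectrum, and the lax symmetric monoidality of $\F^{\geq\star}_{\ev}(-)$ together with its natural $\bT_{\ev}^{\vee}$-coaction when the input has $S^{1}$-action (which is trivial for $j$) provides the structure, landing it in $\CAlg(\SynSp_{\bT_{\ev}})$. Then $(j_{\ev})_{C_{p,\ev}}=j_{\ev}\otimes_{\bT_{\ev}}\rho(p)_{*}\bT_{\ev}$ inherits a natural $j_{\ev}$-module structure from the left tensor factor, still computed in $\SynSp_{\bT_{\ev}}$ via the equivalence $\SynSp_{\rho(p)_{*}\bT_{\ev}}\simeq\SynSp_{\bT_{\ev}}$.

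Second, since the forgetful functor $\CycSyn\to\SynSp_{\bT_{\ev}}$ is symmetric monoidal by Lemma~\ref{cor: cycsin is a nice category}, the object $\F^{\geq\star}X\otimes(j_{\ev})_{C_{p,\ev}}$, regarded as an object of $\SynSp_{\bT_{\ev}}$, acquires a $j_{\ev}$-module structure by tensoring a $\bT_{\ev}$-module with a $j_{\ev}$-module in $\SynSp_{\bT_{\ev}}$. Thus it lies in $\Mod_{j_{\ev}}(\SynSp_{\bT_{\ev}})$, and the Filtered Ambidexterity proposition applies directly, yielding $L_{K(1)}^{\fil}(\F^{\geq\star}X\otimes(j_{\ev})_{C_{p,\ev}})^{tC_{p,\ev}}\simeq 0$.

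There is no real obstacle here: the entire content is the observation that the tensor product inherits a $j_{\ev}$-module structure via the $C_{p,\ev}$-orbit factor, after which the Filtered Ambidexterity proposition does all the work. The only subtle point worth spelling out, should a reader want extra detail, is that $\SynSp_{\rho(p)_{*}\bT_{\ev}}\simeq\SynSp_{\bT_{\ev}}$ is an equivalence of symmetric monoidal $\infty$-categories, so the $j_{\ev}$-module structure coming from $C_{p,\ev}$-orbits is genuinely a $j_{\ev}$-module structure in $\SynSp_{\bT_{\ev}}$ and not merely a twisted version.
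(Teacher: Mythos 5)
Your proof is correct and is exactly the argument the paper has in mind (the corollary appears without proof precisely because it is immediate from the Filtered Ambidexterity proposition). The essential content is, as you identify, that $(j_{\ev})_{C_{p,\ev}}\simeq\rho(p)_{*}\bT_{\ev}\otimes_{\bT_{\ev}}j_{\ev}$ carries a natural $j_{\ev}$-module structure in $\SynSp_{\bT_{\ev}}$, and tensoring with it lands $\F^{\geq\star}X\otimes(j_{\ev})_{C_{p,\ev}}$ in $\Mod_{j_{\ev}}(\SynSp_{\bT_{\ev}})$, after which the proposition applies.
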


\begin{lemma}\label{lem:j_vanishing}
    Let $\F^{\geq \star}X\in \SynSp_{\bT_{\ev}}$. Then $(\F^{\geq \star}X\otimes (j_\ev)_{C_{p,\ev}})^{tC_{p,\ev}}\simeq 0$.
\end{lemma}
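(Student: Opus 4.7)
The plan is to combine two ingredients: the projection formula for the adjunction $(-)_{C_{p,\ev}}\dashv\rho(p)_*$ and the synthetic Tate orbit lemma (Lemma~\ref{lem: Tate orbit lemma}). Since $\rho(p)\colon\bT_\ev\to\bT_\ev$ is a map of bicommutative bialgebras, restriction of scalars $\rho(p)_*$ is symmetric monoidal, and so its left adjoint $(-)_{C_{p,\ev}}$ enjoys Frobenius reciprocity: for any $\F^{\geq\star}X\in\SynSp_{\bT_\ev}$ and viewing $j_\ev$ as a synthetic spectrum with trivial $\bT_\ev$-action, one obtains a natural equivalence
\[
\F^{\geq\star}X\otimes_{\bS_\ev}(j_\ev)_{C_{p,\ev}}
\simeq
(j_\ev\otimes_{\bS_\ev}\rho(p)_*\F^{\geq\star}X)_{C_{p,\ev}}
\]
of $\bT_\ev$-modules. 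First I would verify this projection formula carefully in the synthetic setting, noting that all the ingredients (the adjunction, the symmetric monoidality of $\rho(p)_*$) are furnished by the constructions of Section~\ref{sec:even_circle}.

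Applying $(-)^{\t C_{p,\ev}}$ to both sides, the claim reduces to
\[
\bigl((j_\ev\otimes\rho(p)_*\F^{\geq\star}X)_{C_{p,\ev}}\bigr)^{\t C_{p,\ev}}\simeq 0,
\]
which is precisely the conclusion of Lemma~\ref{lem: Tate orbit lemma} applied to the $\bT_\ev$-module $j_\ev\otimes\rho(p)_*\F^{\geq\star}X$, provided this is bounded below with respect to some $t$-structure satisfying $\mathbf{(\star)}$. By Remark~\ref{rem:bounds}, $j_\ev$ is connective in the Postnikov $t$-structure, which satisfies $\mathbf{(\star)}$; and compatibility of the Postnikov $t$-structure with the symmetric monoidal structure ensures that tensoring with $j_\ev$ preserves Postnikov boundedness below. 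This settles the case where $\F^{\geq\star}X$ is itself Postnikov-bounded-below.

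The main obstacle is extending to arbitrary $\F^{\geq\star}X$ with no boundedness hypothesis. I expect the reduction proceeds via approximation: write $\F^{\geq\star}X\simeq\colim_n\tau_{\geq-n}^\P\F^{\geq\star}X$, apply the bounded-below case termwise, and pass to the colimit. Since $(-)_{C_{p,\ev}}$ and $(-)\otimes(j_\ev)_{C_{p,\ev}}$ both preserve colimits, the remaining issue is commuting $(-)^{\t C_{p,\ev}}$ past the colimit; here Lemma~\ref{lem: orbit/fixedpoints/tate continuity}(2.c) applies once one checks that the fibers of the resulting tower lie in a uniform coconnectivity range for an appropriate $t$-structure. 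Establishing this uniform bound is where the real technical work lies, and likely exploits the fact that $(j_\ev)_{C_{p,\ev}}$, being built from a Postnikov-connective object, controls the growth of the tensor product from below.
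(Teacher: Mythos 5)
Your argument via the projection formula and the synthetic Tate orbit lemma (Lemma~\ref{lem: Tate orbit lemma}) is correct in the case where $\F^{\geq\star}X$ is bounded below with respect to a $t$-structure satisfying $\mathbf{(\star)}$, and it is a genuinely different route from the paper's in that regime. But the extension to arbitrary $\F^{\geq\star}X$ has a real gap, and the tower argument you sketch cannot close it. For Lemma~\ref{lem: orbit/fixedpoints/tate continuity}(2.c) to apply, the fibers of the transition maps in the tower $\{(\tau_{\geq -n}^{\P}\F^{\geq\star}X)\otimes (j_\ev)_{C_{p,\ev}}\}_n$ must lie in $\SynSp_{\leq N}^{\C}$ for some fixed $N$ and all large $n$. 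These fibers are, up to a shift, $\pi_{-n-1}^{\P}(\F^{\geq\star}X)\otimes (j_\ev)_{C_{p,\ev}}$. The first factor has coconnectivity tending to $-\infty$, but $(j_\ev)_{C_{p,\ev}}$ is Postnikov-\emph{connective} and unbounded above (as is $j_\ev$ itself), so the tensor product has no uniform upper bound in the Postnikov or the neutral $t$-structure. The Postnikov-connectivity of $(j_\ev)_{C_{p,\ev}}$, which you hoped would help, controls the tower from below, which is the wrong direction: the colimit-continuity statement (2.c) requires coconnectivity of the fibers, not connectivity. Since the lemma is stated, and subsequently applied, with no boundedness hypothesis on $\F^{\geq\star}X$, this is a genuine hole rather than a routine verification.

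The paper's own proof avoids boundedness entirely by a different fracture. By the filtered ambidexterity proposition and the corollary immediately preceding the lemma, $L_{K(1)}^{\fil}(\F^{\geq\star}X\otimes(j_\ev)_{C_{p,\ev}})^{\t C_{p,\ev}}\simeq 0$ for all $\F^{\geq\star}X$, so it suffices to verify the vanishing mod $(p,v_1)$. One then uses that $j_\ev/(p,v_1)$ lies in the thick subcategory generated by $\ins^0\bF_p$, and exactness of the functor $(\F^{\geq\star}X\otimes(-)_{C_{p,\ev}})^{\t C_{p,\ev}}$ reduces to the case of $\ins^0\bF_p$, which again holds for \emph{all} $\F^{\geq\star}X$ by a module argument, just as in the preceding corollary about $(\bZ_{p,\ev})_{C_{p,\ev}}$. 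If you want to keep the projection-formula observation, the cleanest way to remove the boundedness hypothesis is to reproduce this kind of module-theoretic step rather than truncate and pass to colimits.
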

\begin{proof}
    It is enough to show that $(\F^{\geq \star}X\otimes j_{C_{p,\ev}})^{tC_{p,\ev}}/(p,v_1)\simeq 0$
    since we already have that this vanishes $K(1)$-locally. We then have that this is equivalent
    to $(\F^{\geq \star}X\otimes (j_\ev/(p,v_1))_{C_{p,\ev}})^{tC_{p,\ev}}$. The result then follows
    from the fact that $j_{\ev}/(p,v_1)$ is in the thick subcategory generated by $\ins^0\bF_p$.
\end{proof}

\begin{corollary}~\label{cor: extended tate orbit lemma for j}
    Let $\F^{\geq \star}X\in \SynSp_{\bT_{\ev}}$. Then $\left((\F^{\geq \star}X\otimes
    j_{C_{p,\ev}})^{\t\bT_{\ev}}\right)^\wedge_p\simeq 0$
\end{corollary}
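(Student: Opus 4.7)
The plan is to exploit the $\bE_\infty$-algebra structure on $A := (j_\ev)_{C_{p,\ev}}$ in $\SynSp_{\bT_\ev}$ to reduce the claim for arbitrary $\F^{\geq\star}X$ to the single vanishing $(A^{\t\bT_\ev})^\wedge_p \simeq 0$. Indeed, $A$ arises as the pushout $\rho(p)_*\bT_\ev \otimes_{\bT_\ev} j_\ev$ of $\bE_\infty$-algebras under $\bT_\ev$ (so it is in particular an $\bE_\infty$-algebra in $\SynSp_{\bT_\ev}$), and hence $Y := \F^{\geq\star}X \otimes A$ is an $A$-module in $\SynSp_{\bT_\ev}$. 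By lax symmetric monoidality of $(-)^{\t\bT_\ev}\colon\SynSp_{\bT_\ev}\to\SynSp$, the synthetic spectrum $Y^{\t\bT_\ev}$ is a module over $A^{\t\bT_\ev}$, and after $p$-completion $(Y^{\t\bT_\ev})^\wedge_p$ is a module over $(A^{\t\bT_\ev})^\wedge_p$; if this algebra vanishes, so does the module.

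To see $(A^{\t\bT_\ev})^\wedge_p \simeq 0$, I would first observe that $A$ is bounded below in the Postnikov $t$-structure: $j_\ev$ is Postnikov connective by Remark~\ref{rem:bounds} (since $j$ is a connective spectrum), and $(-)_{C_{p,\ev}}$ is right $t$-exact by Lemma~\ref{lem:t_exact}. Applying Lemma~\ref{lem:j_vanishing} with $\F^{\geq\star}X$ replaced by $\bS_\ev$ gives $A^{\t C_{p,\ev}} \simeq 0$, and Lemma~\ref{lem:homotopy_to_tate}, which applies because $A$ is Postnikov bounded below, then yields
\[A^{\t C_{p^{n+1},\ev}} \simeq (A^{\t C_{p,\ev}})^{C_{p^n,\ev}} \simeq 0\]
for every $n \geq 0$. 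In particular $\lim_n A^{\t C_{p^{n+1},\ev}} \simeq 0$.

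Finally, I would pass from these $C_{p^n,\ev}$-Tates to the $\bT_\ev$-Tate after $p$-completion by establishing that the natural map $A^{\t\bT_\ev} \to \lim_n A^{\t C_{p^{n+1},\ev}}$ is a $p$-adic equivalence. This is the synthetic analogue of~\cite[Lem.~II.4.2]{nikolaus-scholze} and follows by mimicking the argument for the left vertical map in the proof of Lemma~\ref{lem: Tate then fixed points is Tate}: the essential inputs are the $p$-adic equivalence $\colim_n \rho(p^n)_*\bT_\ev \to \bS_\ev$ (from Lemma~\ref{lem: fiber sequence for Tsyn}, together with the identification of the transition maps as multiplication-by-$p$ on the fiber) and the Tate orbit lemma (Lemma~\ref{lem: Tate orbit lemma}) applied to $A$. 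The main obstacle is that Lemma~\ref{lem: Tate then fixed points is Tate} was stated only for even filtrations $\F^{\geq\star}_\ev R$, so this last step requires either abstracting the Nikolaus--Scholze-style argument to apply to an arbitrary Postnikov bounded-below synthetic $\bT_\ev$-module, or verifying it directly for the particular algebra $A$.
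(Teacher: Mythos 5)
Your proposal takes a genuinely different route from the paper's proof, and in fact is closer in spirit to how the paper handles the analogous $\bZ_p$-statement just above Lemma~\ref{lem:j_vanishing} (where $((\bZ_{p,\ev})_{C_{p,\ev}})^{\t\bT_\ev}$ vanishes and everything else is a module over it). Your observation that $A = (j_\ev)_{C_{p,\ev}} = \rho(p)_*\bT_\ev\otimes_{\bT_\ev} j_\ev^{\triv}$ is an $\bE_\infty$-algebra in $\SynSp_{\bT_\ev}$, and that $(\F^{\geq\star}X\otimes A)^{\t\bT_\ev}$ is therefore a module over $A^{\t\bT_\ev}$ by lax symmetric monoidality of the $\bT_\ev$-Tate construction, is correct and is a clean way to decouple the arbitrary $\F^{\geq\star}X$ from the problem. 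The reduction to showing $(A^{\t\bT_\ev})^\wedge_p\simeq 0$ is valid, and your verification that $A$ is Postnikov-connective (via Remark~\ref{rem:bounds} and right $t$-exactness of $(-)_{C_{p,\ev}}$ from Lemma~\ref{lem:t_exact}(b)) is exactly what is needed to apply Lemma~\ref{lem:homotopy_to_tate} and conclude $A^{\t C_{p^{n+1},\ev}}\simeq (A^{\t C_{p,\ev}})^{C_{p^n,\ev}}\simeq 0$ for all $n$.

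The gap you flag at the end is real. You need the $p$-adic equivalence $A^{\t\bT_\ev}\to\lim_n A^{\t C_{p^{n+1},\ev}}$, and the paper's Lemma~\ref{lem: Tate then fixed points is Tate} is stated and proved only for even filtrations $\F^{\geq\star}_\ev R$ of $S^1$-equivariant eff-locally even $\bE_\infty$-rings, a hypothesis that $A$ does not obviously satisfy. Filling this gap requires rerunning the argument of~\cite[Lem.~II.4.2]{nikolaus-scholze} in the synthetic category for a general Postnikov-bounded-below $\bT_\ev$-module, which is plausibly doable using Lemma~\ref{lem: Tate orbit lemma}, Lemma~\ref{lem: orbit/fixedpoints/tate continuity}, and the $p$-adic equivalence $\colim_n\rho(p^n)_*\bT_\ev\to\bS_\ev$, but is not a single cited reference.

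By contrast, the paper's own proof sidesteps the limit over $C_{p^n}$-Tates entirely. It reduces modulo $p$ and rewrites $\bigl((\F^{\geq\star}X\otimes j_{C_{p,\ev}})^{\t\bT_\ev}\bigr)/p$ as $\bigl((\F^{\geq\star}X\otimes j_{C_{p,\ev}})^{\t C_{p,\ev}}/p\bigr)^{\bT_\ev}$, at which point Lemma~\ref{lem:j_vanishing} kills the $C_{p,\ev}$-Tate before one ever has to worry about boundedness of $\F^{\geq\star}X$ itself. So while your argument is conceptually clean and isolates the boundedness requirement onto the single well-behaved object $A$, it buys this at the cost of needing the full NS-II.4.2 machinery in a generality the paper does not supply; the paper's mod-$p$ shortcut is cheaper once one has the identification of the $\bT_\ev$-Tate mod $p$ in terms of $C_{p,\ev}$-Tate mod $p$ in hand.
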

\begin{proof}
    It is enough to show that $(\F^{\geq \star}X\otimes j_{C_{p,\ev}})^{\t\bT_{\ev}}/p\simeq 0$.
    This is then equivalent to \[\left((\F^{\geq \star}X\otimes
    j_{C_{p,\ev}})^{tC_{p,\ev}}/p\right)^{\bT_{\ev}}\] which vanishes by
    Lemma~\ref{lem:j_vanishing}.
\end{proof}

We also have a map $j_{\ev}\to \tau_{\geq 0}^{\cyc,\N} j_{\ev}^{C_{p,\ev}}$ of synthetic cyclotomic spectra. Let $\F^{\geq \star}C$ denote the cofiber. We will also need the following Lemma.

\begin{lemma}
    Let $\F^{\geq \star}X\in \SynSp_{\bT_{\ev}}$. Then $L_{K(1)}^{\fil}(\F^{\geq \star}X\otimes \F^{\geq \star}C)^{tC_{p,\ev}}\simeq 0$. 
\end{lemma}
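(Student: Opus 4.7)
The plan is a two-step reduction that uses the defining cofiber sequence $j_\ev\to \tau^{\cyc,\N}_{\geq 0}j_\ev^{C_{p,\ev}}\to \F^{\geq\star}C$ for $\F^{\geq\star}C$ together with the fiber sequence $(j_\ev)_{C_{p,\ev}}\to \tau^{\cyc,\N}_{\geq 0}j_\ev^{C_{p,\ev}}\to \F^{\geq\star}_\ev\THH(\bZ_p)$ established earlier, reducing the problem first to a vanishing statement about the middle object $\tau^{\cyc,\N}_{\geq 0}j_\ev^{C_{p,\ev}}$ and then to one about $\F^{\geq\star}_\ev\THH(\bZ_p)$, which is handled by filtered ambidexterity through the $j_\ev$-algebra structure on $\F^{\geq\star}_\ev\THH(\bZ_p)$.

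First I would tensor the defining cofiber sequence for $\F^{\geq\star}C$ with $\F^{\geq\star}X$ and apply $(-)^{tC_{p,\ev}}$. The leftmost term $(\F^{\geq\star}X\otimes j_\ev)^{tC_{p,\ev}}$ is the $C_{p,\ev}$-Tate of an object carrying a $j_\ev$-module structure in $\SynSp_{\bT_\ev}$ via the second tensor factor, so filtered ambidexterity yields $L^{\fil}_{K(1)}(\F^{\geq\star}X\otimes j_\ev)^{tC_{p,\ev}}\simeq 0$, giving an equivalence
\[
L^{\fil}_{K(1)}(\F^{\geq\star}X\otimes \tau^{\cyc,\N}_{\geq 0}j_\ev^{C_{p,\ev}})^{tC_{p,\ev}}\xrightarrow{\sim} L^{\fil}_{K(1)}(\F^{\geq\star}X\otimes \F^{\geq\star}C)^{tC_{p,\ev}}.
\]

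Next I would repeat the procedure with the other fiber sequence, tensoring with $\F^{\geq\star}X$ and applying $(-)^{tC_{p,\ev}}$. The leftmost term $(\F^{\geq\star}X\otimes (j_\ev)_{C_{p,\ev}})^{tC_{p,\ev}}$ vanishes by the preceding lemma on $C_{p,\ev}$-Tate vanishing for $(j_\ev)_{C_{p,\ev}}$. Combining with the previous reduction, the original claim becomes equivalent to the single statement
\[
L^{\fil}_{K(1)}\bigl(\F^{\geq\star}X\otimes \F^{\geq\star}_\ev\THH(\bZ_p)\bigr)^{tC_{p,\ev}}\simeq 0.
\]

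To finish, the Devalapurkar--Raksit identification $\F^{\geq\star}_\ev\THH(\bZ_p)\simeq \tau^{\cyc,\N}_{\geq 0}j_\ev^{tC_{p,\ev}}$ invoked at the start of this subsection supplies a canonical $\bE_\infty$-ring map $j_\ev\to \F^{\geq\star}_\ev\THH(\bZ_p)$ of cyclotomic synthetic spectra, arising from the factorization of the Frobenius $j_\ev\to j_\ev^{tC_{p,\ev}}$ through the connective cover in the neutral cyclotomic $t$-structure (this factorization is available because $j_\ev$ is connective in that $t$-structure). Hence $\F^{\geq\star}X\otimes \F^{\geq\star}_\ev\THH(\bZ_p)$ acquires a $j_\ev$-module structure in $\SynSp_{\bT_\ev}$, and a final application of filtered ambidexterity delivers the desired vanishing. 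The delicate point of the plan is the ring-map nature of $j_\ev\to\F^{\geq\star}_\ev\THH(\bZ_p)$; everything else is a formal manipulation of the two input fiber sequences.
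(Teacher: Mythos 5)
Your proposal is correct, but you have taken a noticeably longer path than the paper. The paper's entire proof is one sentence: observe that $\F^{\geq\star}C$ is itself a $j_\ev$-module in $\SynSp_{\bT_\ev}$ (it is the cofiber of the unit map $j_\ev\to\tau^{\cyc,\N}_{\geq 0}j_\ev^{C_{p,\ev}}$, which is a map of $j_\ev$-modules since $j_\ev^{C_{p,\ev}}$ is a $j_\ev$-algebra and the neutral connective cover preserves the module structure because $j_\ev$ is connective), so $\F^{\geq\star}X\otimes\F^{\geq\star}C$ inherits a $j_\ev$-module structure through the second tensor factor, and filtered ambidexterity applies directly.

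What you do instead is split off the two outer terms of the defining cofiber sequence for $\F^{\geq\star}C$ and the fiber sequence for $\tau^{\cyc,\N}_{\geq 0}j_\ev^{C_{p,\ev}}$, then handle each piece: $j_\ev$ by ambidexterity, $(j_\ev)_{C_{p,\ev}}$ by the earlier Tate-orbit-style corollary, and $\F^{\geq\star}_\ev\THH(\bZ_p)$ by building an explicit $\bE_\infty$-ring map $j_\ev\to\F^{\geq\star}_\ev\THH(\bZ_p)$ from the Devalapurkar--Raksit identification and applying ambidexterity once more. Each step is sound --- in particular your construction of the ring map via connective cover of the Frobenius is the right one --- but the machinery you assemble (the extra corollary, the extra ring map) is not needed if you notice that the intermediate object $\F^{\geq\star}C$ is already a $j_\ev$-module: the same observation that powers your step 2 also powers the whole lemma, because the $j_\ev$-module structure on $j_\ev$ and on $\tau^{\cyc,\N}_{\geq 0}j_\ev^{C_{p,\ev}}$ is transferred to the cofiber $\F^{\geq\star}C$. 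In short, you decomposed when you could have just applied the module structure directly.
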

\begin{proof}
    $\F^{\geq\star}C$ is a $j_{\ev}$-module so we may apply filtered ambidexterity to get the result.
\end{proof}

\begin{corollary}~\label{cor: strong Tate orbit lemma for the fiber of j to thh(zp)}
    Let $\F^{\geq \star}X\in \SynSp_{\bT_{\ev}}$. Then $(\F^{\geq \star}X\otimes \F^{\geq \star}C/p)^{tC_{p,\ev}}\simeq 0$.
\end{corollary}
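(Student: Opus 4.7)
The plan is to follow the same reduction strategy used in Lemma~\ref{lem:j_vanishing} and Corollary~\ref{cor: extended tate orbit lemma for j}: first establish vanishing after inverting $v_1$, then establish vanishing mod $(p,v_1)$, and combine the two to conclude full mod-$p$ vanishing.

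First I will record that $\F^{\geq \star}C$ carries the structure of a module over $j_\ev$ in $\CycSyn$, since it is defined as the cofiber of a map of $j_\ev$-algebras $j_\ev\to \F^{\geq \star}_\ev\THH(\bZ_p)\simeq \tau^{\cyc,\N}_{\geq 0}(j_\ev^{C_{p,\ev}})$. Consequently $\F^{\geq \star}X\otimes \F^{\geq \star}C$ is a $j_\ev$-module, so the preceding filtered ambidexterity lemma yields $L^{\fil}_{K(1)}(\F^{\geq \star}X\otimes \F^{\geq \star}C)^{tC_{p,\ev}}\simeq 0$. Because $(-)/p$ commutes with both $(-)^{tC_{p,\ev}}$ (by exactness) and $L^{\fil}_{K(1)}(-)=(-\otimes J_\ev)^\wedge_p$, this gives that $(\F^{\geq \star}X\otimes \F^{\geq \star}C/p)^{tC_{p,\ev}}$ vanishes after inverting $v_1$.

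Next I will prove the mod-$(p,v_1)$ vanishing $(\F^{\geq \star}X\otimes \F^{\geq \star}C/(p,v_1))^{tC_{p,\ev}}\simeq 0$. Since $\F^{\geq \star}C/(p,v_1)\simeq \F^{\geq \star}C\otimes_{j_\ev}\bigl(j_\ev/(p,v_1)\bigr)$ is a module over $j_\ev/(p,v_1)$, and as recorded in the proof of Lemma~\ref{lem:j_vanishing} the $\bE_\infty$-algebra $j_\ev/(p,v_1)$ sits in the thick subcategory of $\SynSp$ generated by $\ins^0\bF_p$, I will reduce to the case where $\F^{\geq \star}C/(p,v_1)$ is replaced by $\ins^0\bF_p$-modules. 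The remaining step mirrors the final reduction in the proof of Lemma~\ref{lem:j_vanishing}: one exploits that $\ins^0\bF_p$ carries the trivial $\bT_\ev$-action and is neutrally connective (so the neutral $t$-structure, which satisfies $\mathbf{(\star)}$, applies), and appeals to the synthetic Tate orbit lemma (Lemma~\ref{lem: Tate orbit lemma}) to conclude. The necessary orbits arise by unpacking the cofiber sequence $(j_\ev)_{C_{p,\ev}}\to\tau^{\cyc,\N}_{\geq 0}(j_\ev^{C_{p,\ev}})\to \F^{\geq \star}_\ev\THH(\bZ_p)$ that, together with the definition of $\F^{\geq \star}C$, presents $\F^{\geq \star}C$ as a fiber-cofiber combination in which a factor of $(j_\ev)_{C_{p,\ev}}$ is directly visible.

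Finally, combining the two partial vanishings gives the corollary: the $p$-torsion synthetic spectrum $(\F^{\geq \star}X\otimes \F^{\geq \star}C/p)^{tC_{p,\ev}}$ has cofiber of $v_1$ equal to zero (by the mod-$(p,v_1)$ step), so $v_1$ is an equivalence on it; on the other hand $v_1$-inverting gives zero (by the $K(1)$-local step), so $v_1$ also acts nilpotently. These two forces together force the object to be zero. The main obstacle is the third paragraph, namely identifying precisely how to feed the $j_\ev$-module structure on $\F^{\geq\star}C$ and the ``$\ins^0\bF_p$ generates $j_\ev/(p,v_1)$'' input into the Tate orbit lemma, since $(-)^{tC_{p,\ev}}$ of $\ins^0\bF_p$ itself does not vanish (it computes $\tau_{\geq 2\star}(\bF_p^{tC_p})$); the correct reduction must therefore keep track of an underlying $(-)_{C_{p,\ev}}$ visible through the fiber sequence for $\F^{\geq \star}_\ev\THH(\bZ_p)$, rather than attempting a naive monoidal splitting.
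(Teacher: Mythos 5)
Your proposal follows essentially the same two-step plan as the paper's proof: establish $K(1)$-local vanishing via the preceding filtered ambidexterity lemma using the $j_\ev$-module structure on $\F^{\geq\star}C$, then argue the mod-$(p,v_1)$ vanishing by pushing into the thick subcategory generated by $\ins^0\bF_p$, and finally combine the two to force the mod-$p$ object to vanish. The overall structure is right, and your final-paragraph bootstrap ($v_1$ both an equivalence and nilpotent on a $p$-complete object implies vanishing) is the correct way to close.

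Two remarks. First, a small slip in your second paragraph: you write $\F^{\geq\star}_\ev\THH(\bZ_p)\simeq\tau^{\cyc,\N}_{\geq 0}\bigl(j_\ev^{C_{p,\ev}}\bigr)$, but these are not the same object. The paper identifies $\F^{\geq\star}_\ev\THH(\bZ_p;\bZ_p)$ with $\tau^{\cyc,\N}_{\geq 0}\bigl(j_\ev^{\t C_{p,\ev}}\bigr)$ (the filtered \emph{Tate} construction), and the fiber sequence $(j_\ev)_{C_{p,\ev}}\to\tau^{\cyc,\N}_{\geq 0}\bigl(j_\ev^{C_{p,\ev}}\bigr)\to\F^{\geq\star}_\ev\THH(\bZ_p)$, which you yourself invoke later, shows the fixed-points object and the Tate object differ exactly by the orbits term. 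The definition of $\F^{\geq\star}C$ as $\cofib\bigl(j_\ev\to\tau^{\cyc,\N}_{\geq 0}(j_\ev^{C_{p,\ev}})\bigr)$ genuinely uses the $C_{p,\ev}$-fixed points, so your opening identification is internally inconsistent with what you use afterward.

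Second, the subtlety you flag at the end is real, and you are right that a naive appeal to the thick subcategory generated by $\ins^0\bF_p$ does not by itself close the argument: for arbitrary $\F^{\geq\star}X\in\SynSp_{\bT_\ev}$ one has $(\F^{\geq\star}X\otimes\ins^0\bF_p)^{\t C_{p,\ev}}\neq 0$ (take $\F^{\geq\star}X=\bS_\ev$, where it is $\tau_{\geq 2\star}(\bF_p^{\t C_p})$). The reason Lemma~\ref{lem:j_vanishing} works is that the thick-subcategory reduction happens \emph{inside} a $(-)_{C_{p,\ev}}$; the resulting object is then a module over $(\ins^0\bF_p)^{C_{p,\ev}}$, whose $\t C_{p,\ev}$-Tate vanishes by the base case of the Tate orbit lemma, and that module structure is what kills the Tate construction. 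For the corollary one must first make an orbits factor visible. The decomposition you propose --- the octahedron for $j_\ev\to\tau^{\cyc,\N}_{\geq 0}(j_\ev^{C_{p,\ev}})\to\F^{\geq\star}_\ev\THH(\bZ_p)$, producing a fiber sequence $(j_\ev)_{C_{p,\ev}}\to\F^{\geq\star}C\to\cofib(\varphi)$ with $\varphi$ the cyclotomic Frobenius of $j_\ev$ --- is the right move: the orbits piece is handled by Lemma~\ref{lem:j_vanishing}, and the remaining piece $\cofib(\varphi)$ is a $\F^{\geq\star}_\ev\THH(\bZ_p)$-module in $\CycSyn$, so one can repeat the $K(1)$-local plus mod-$(p,v_1)$ bootstrap for it using the residual ring structure. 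You correctly diagnose this as the missing step but leave it unfinished, as you acknowledge; the paper's own two-sentence ``hence the result'' is equally terse about exactly this point, so you have in effect rediscovered the place where the written argument needs to be unpacked.
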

\begin{proof}
    Since we already have that this holds after applying $L_{K(1)}^{\fil}$ it is then enough to
    show this mod $v_1$. We have that $\F^{\geq \star}C/(p,v_1)$ is in the thick subcategory generated by $\ins^0\bF_p$, hence the result.
\end{proof}

\subsection{Pullback squares when topological periodic homology vanishes}
In this subsection we will use the results from the previous subsection to produce pullback squares relating topological cyclic homology and topological negative cyclic homology in the filtered setting, following the proofs in \cite[Section 2.2]{ammn}.

\begin{lemma}~\label{lem: pullback thh(fp)}
    Let $\F^{\geq \star}X\in \CycSyn$. Then is a pullback square
    \[
    \begin{tikzcd}
        \mathrm{TC}(\F^{\geq \star}X)\otimes \bZ_{p,\ev} \ar[r] \ar[d] & \mathrm{TC}(\F^{\geq
        \star}X\otimes F_{\ev}^{\geq *}\THH(\bF_p))\ar[d]\\
        (\F^{\geq \star}X\otimes \bZ_{p,\ev})^{\bT_{\ev}}\ar[r] & (\F^{\geq \star}X\otimes \F^{\geq \star}_{\ev}\THH(\bF_p))^{\bT_{\ev}}
    \end{tikzcd}
    \]
    where all the maps are the natural ones.
\end{lemma}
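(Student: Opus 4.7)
The plan is to realize the desired square as the right-hand square of a map of cofiber sequences obtained by applying the natural transformation $\TC(-) \to (-)^{\bT_\ev}$ (which exists via the equalizer formula for $\TC$) to a cofiber sequence of cyclotomic synthetic spectra obtained by tensoring $\F^{\geq\star}X$ with the already-established fiber sequence
\[
(\bZ_{p,\ev})_{C_{p,\ev}} \to \bZ_{p,\ev} \to \F^{\geq\star}_\ev\THH(\bF_p).
\]
Here $\bZ_{p,\ev}$ is given the synthetic cyclotomic structure $\bZ_{p,\ev}^{\triv}$ of Example~\ref{ex:inserted_cyclotomic}, the right-hand map is the equivalence $\tau_{\geq 0}^{\cyc,\N}\bZ_{p,\ev}^{\t C_{p,\ev}}\we \F^{\geq\star}_\ev\THH(\bF_p)$ postcomposed with the Frobenius projection, and the fiber $(\bZ_{p,\ev})_{C_{p,\ev}}$ acquires its synthetic cyclotomic structure as the fiber in $\CycSyn$.

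Tensoring the above fiber sequence with $\F^{\geq\star}X\in\CycSyn$ (using symmetric monoidality of $\CycSyn$, Corollary~\ref{cor: cycsin is a nice category}) yields a fiber sequence in $\CycSyn$. Applying $\TC(-)$ and $(-)^{\bT_\ev}$ and the natural transformation between them produces a commutative diagram of fiber sequences in $\SynSp$, whose right-hand square is exactly the square of the lemma.

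The diagram is a pullback square if and only if the induced map on vertical fibers
\[
\TC\bigl(\F^{\geq\star}X\otimes (\bZ_{p,\ev})_{C_{p,\ev}}\bigr) \to \bigl(\F^{\geq\star}X\otimes (\bZ_{p,\ev})_{C_{p,\ev}}\bigr)^{\bT_\ev}
\]
is an equivalence, and this is precisely the content of the corollary stated immediately before the lemma: the Tate term $\bigl(\F^{\geq\star}X\otimes (\bZ_{p,\ev})_{C_{p,\ev}}\bigr)^{\t\bT_\ev}$ vanishes because it is a module over $\bZ_{p,\ev}^{\t C_{p,\ev}}$'s vanishing homotopy-fixed Tate, forcing $\TC$ and $(-)^{\bT_\ev}$ to agree via the equalizer formula.

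The only non-routine point is verifying that the square thus produced carries the natural maps described in the statement; this is a matter of tracing through the functoriality of $\TC$ and $(-)^{\bT_\ev}$ along the map $\bZ_{p,\ev}^{\triv}\to \F^{\geq\star}_\ev\THH(\bF_p)$ in $\CycSyn$, which presents no difficulty since the natural transformation $\TC(-)\to(-)^{\bT_\ev}$ is itself natural in the cyclotomic synthetic spectrum.
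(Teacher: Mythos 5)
Your overall strategy---tensor $\F^{\geq\star}X$ with the fiber sequence $(\bZ_{p,\ev})_{C_{p,\ev}} \to \bZ_{p,\ev} \to \F^{\geq\star}_\ev\THH(\bF_p)$ in $\CycSyn$, apply $\TC(-)\to(-)^{\bT_\ev}$, and identify the induced map on horizontal fibers as an equivalence via the vanishing of $(\F^{\geq\star}X\otimes(\bZ_{p,\ev})_{C_{p,\ev}})^{\t\bT_\ev}$---is the same one the paper uses, and the fiber-identification step via the preceding corollary is correct.

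However, there is a gap at the top-left corner. The square you produce by applying $\TC$ to the tensored fiber sequence has top-left entry $\TC(\F^{\geq\star}X\otimes\bZ_{p,\ev}^\triv)$, whereas the statement has $\TC(\F^{\geq\star}X)\otimes\bZ_{p,\ev}$. You silently equate these, but $\TC$ is not a monoidal functor: it is only lax symmetric monoidal via the equalizer formula, and the assembly map $\TC(\F^{\geq\star}X)\otimes\bZ_{p,\ev}\to\TC(\F^{\geq\star}X\otimes\bZ_{p,\ev}^\triv)$ is not an equivalence in general. The identification is precisely the content of Lemma~\ref{lem: TC commutes with tensoring sometimes}, which requires $\bZ_{p,\ev}=\ins^0\bZ_p$ to be complete, connective in the neutral $t$-structure, and to satisfy the finite-generation hypothesis of Construction~\ref{const: geometric realization of finite type synthetic spectra} so that the assembly map can be tested by presenting $\bZ_{p,\ev}$ as a geometric realization of finite sums of twists of $\bS_\ev$ and then using the connectivity estimate on $\TC$ to commute with the realization. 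Without invoking that lemma your construction does not produce the square in the statement, so the argument as written is incomplete; the fix is simply to cite Lemma~\ref{lem: TC commutes with tensoring sometimes} (checking its hypotheses for $\bZ_{p,\ev}$) before concluding.
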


\begin{proof}
    We have that $\mathrm{TC}(\F^{\geq \star}X)\otimes \ins^0\bZ_{p,\ev}\simeq \mathrm{TC}(\F^{\geq \star}X\otimes \bZ_{p,\ev}^{\triv})$ by Lemma~\ref{lem: TC commutes with tensoring sometimes}. The top horizontal fiber is then identified with $\mathrm{TC}(\F^{\geq \star}X\otimes (\bZ_{p,\ev})_{C_p,\ev})$. This agrees with the bottom cofibers since the Tate construction on $\F^{\geq \star}X\otimes (\bZ_{p,\ev})_{C_{p,\ev}}$ vanishes.
\end{proof}

\begin{corollary}~\label{cor: filtered ammn}
    Let $R$ be a chromatically quasisyntomic ring. Then there is a commutative square 
    \[
    \begin{tikzcd}
        \F_{\HRW}^{\geq *}\mathrm{TC}(R;\bZ_p) \ar[r] \ar[d] & \F^{\geq \star}_{\HRW}\mathrm{TC}(R\otimes \bF_p;\bZ_p)\ar[d] \\
        \F^{\geq \star}_{\ev}\HC^{-}(R\otimes \bZ_p/\bZ_p)\ar[r] & \F^{\geq \star}_{\HRW}\mathrm{TP}(R\otimes \bF_p;\bZ_p)
    \end{tikzcd}
    \]
    refining the commutative square of \cite[Theorem 1.2]{ammn}. This square is also pullback upon rationalization.
\end{corollary}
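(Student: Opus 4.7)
The plan is to apply Lemma~\ref{lem: pullback thh(fp)} to the synthetic cyclotomic spectrum $\F^{\geq\star}X = \F^{\geq\star}_{\ev}\THH(R;\bZ_p)$, which exists in $\CAlg(\CycSyn_p^\wedge)$ by Theorem~\ref{thm:even_comparison}(b), and then to identify each of the four corners with the corresponding HRW filtration. The commutativity of the square is then automatic from Lemma~\ref{lem: pullback thh(fp)}, and the pullback property after rationalization will follow from the integral pullback in Lemma~\ref{lem: pullback thh(fp)} combined with the rational identifications.

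First, I would identify the top-left corner: since $\TC$ of a $p$-complete synthetic cyclotomic spectrum is $p$-complete, $\TC(\F^{\geq\star}_{\ev}\THH(R;\bZ_p))\otimes\bZ_{p,\ev}$ is (up to $p$-completion) just $\TC(\F^{\geq\star}_{\ev}\THH(R;\bZ_p))$, which equals $\F^{\geq\star}_{\HRW}\TC(R;\bZ_p)$ by Theorem~\ref{thm:even_comparison}(b). For the top-right corner, using lax symmetric monoidality of the even filtration (Lemma~\ref{lem:even_monoidal}), there is a natural map
\[
\F^{\geq\star}_{\ev}\THH(R;\bZ_p)\otimes_{\bS_\ev}\F^{\geq\star}_{\ev}\THH(\bF_p) \longrightarrow \F^{\geq\star}_{\ev}\THH(R\otimes\bF_p;\bZ_p)
\]
of $\bE_\infty$-algebras in $\CycSyn_p^\wedge$, using that $\THH(R;\bZ_p)\otimes_\bS\THH(\bF_p)\simeq\THH(R\otimes\bF_p;\bZ_p)$. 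After applying $\TC$ and comparing to Theorem~\ref{thm:even_comparison}(b), one obtains the top-right corner, at least after rationalization where both sides can be checked to agree by comparing graded pieces.

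For the bottom row, the key observation is that $(\F^{\geq\star}_{\ev}\THH(R;\bZ_p)\otimes\bZ_{p,\ev})^{\bT_\ev}$ identifies with $\F^{\geq\star}_{\ev}\HC^-(R\otimes\bZ_p/\bZ_p;\bZ_p)$ using the comparison $\THH(R)\otimes_\bS\bZ\simeq\HH(R\otimes\bZ/\bZ)$ together with Lemma~\ref{lem: even filtration comparison general} to identify $(-)^{\bT_\ev}$ with the equivariant even filtration, which for rings over $\bZ$ reduces to the HKR filtration. The bottom-right corner $(\F^{\geq\star}_{\ev}\THH(R;\bZ_p)\otimes\F^{\geq\star}_{\ev}\THH(\bF_p))^{\bT_\ev}$ is identified with $\F^{\geq\star}_{\HRW}\TP(R\otimes\bF_p;\bZ_p)$ similarly, using the $p$-completion after taking $\bT_\ev$-fixed points together with the identification $\F^{\geq\star}_{\ev}\THH(R\otimes\bF_p;\bZ_p)^{\t\bT_\ev,\wedge}_p \simeq \F^{\geq\star}_{\HRW}\TP(R\otimes\bF_p;\bZ_p)$ from Theorem~\ref{thm:even_comparison}(b).

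The main obstacle will be the precise identification of the map
\[
\F^{\geq\star}_{\ev}\THH(R;\bZ_p)\otimes_{\bS_\ev}\F^{\geq\star}_{\ev}\THH(\bF_p) \longrightarrow \F^{\geq\star}_{\ev}\THH(R\otimes\bF_p;\bZ_p),
\]
which need not be an equivalence integrally because the even filtration is only laxly, not strictly, monoidal. However, after rationalization the relevant Tate constructions vanish (Example~\ref{ex:rational_vanishing}), so the only contributions are from $\TC^-$-type pieces where both sides agree by Theorem~\ref{thm:even_comparison}(b). Finally, I would cite \cite[Theorem 1.2]{ammn} to note that the underlying (unfiltered) rationalized square is the Beilinson fiber square, which is already known to be cartesian, thereby recovering the pullback property for the filtered square after passage to associated graded and rationalization.
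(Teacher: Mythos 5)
Your starting point is correct — Lemma~\ref{lem: pullback thh(fp)} applied to $\F^{\geq\star}_\ev\THH(R;\bZ_p)$ — and your identification of the bottom-left corner via $\THH(-)\otimes_\bS\bZ\simeq\HH(-\otimes\bZ/\bZ)$ matches the paper. But there are two gaps, the second of which is substantive.

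First, you claim that $\TC(\F^{\geq\star}_{\ev}\THH(R;\bZ_p))\otimes\bZ_{p,\ev}$ agrees with $\TC(\F^{\geq\star}_{\ev}\THH(R;\bZ_p))$ ``up to $p$-completion''. This is not correct: base change along $\bS_\ev\to\bZ_{p,\ev}$ is not $p$-completion; it kills, for instance, the $\eta$-classes and the image of $J$. What is true — and what the paper uses — is that $\bS_\ev\to\bZ_{p,\ev}$ is a $\bQ_p$-equivalence, so the map becomes an equivalence only after rationalization. This is consistent with the corollary asserting the square is a pullback only after rationalizing.

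Second, and more seriously, you have misread the bottom-right corner of Lemma~\ref{lem: pullback thh(fp)}. That corner is $(\F^{\geq\star}X\otimes\F^{\geq\star}_\ev\THH(\bF_p))^{\bT_\ev}$ — a $\bT_\ev$-\emph{fixed points}, which by Theorem~\ref{thm:even_comparison}(b) corresponds to a filtered version of $\TC^-(R\otimes\bF_p;\bZ_p)$, \emph{not} to $\F^{\geq\star}_{\HRW}\TP(R\otimes\bF_p;\bZ_p)$. You quietly swap fixed points for Tate when identifying this corner with $\TP$, which is the crux of what needs to be proved. The paper addresses this with two intermediate steps your proposal omits: (i) post-compose the square along the norm map $(-)^{\bT_\ev}\to(-)^{\t\bT_\ev}$ in the bottom-right, which keeps commutativity but a priori destroys the pullback property; and (ii) argue that the pullback property survives rationally by showing the cofiber of the norm — the shifted $\bT_\ev$-orbits $(\F^{\geq\star}X\otimes\F^{\geq\star}_\ev\THH(\bF_p))_{\bT_\ev}$ — vanishes rationally, since $\pi_i^\P$ of it is $p^i$-torsion. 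A second rationalization is then used to replace $\THH(\bF_p)_\ev$ by $\bZ_{p,\ev}$ in the bottom-right. Without steps (i) and (ii), there is no argument connecting the pullback of Lemma~\ref{lem: pullback thh(fp)} to the square with $\TP$ in the corollary statement, and simply observing that the underlying unfiltered square is the AMMN square does not transfer the cartesian property to the filtered one.

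Your worry that the lax map $\F^{\geq\star}_\ev\THH(R;\bZ_p)\otimes\F^{\geq\star}_\ev\THH(\bF_p)\to\F^{\geq\star}_\ev\THH(R\otimes\bF_p;\bZ_p)$ might fail to be an equivalence is, by contrast, overly cautious: the paper asserts $\F^{\geq\star}_\ev\THH(-;\bZ_p)$ is symmetric monoidal on chromatically quasisyntomic rings, deducing this from the analogous statement about prismatic cohomology on associated graded.
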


\begin{proof}
    We will proceed in three steps starting from Lemma~\ref{lem: pullback thh(fp)}. Let $\F^{\geq \star}X$ be connective in the Postnikov $t$-structure, we will reduce later to the case of $\F^{\geq \star}X=\F^{\geq \star}_{\ev}\THH(R;\bZ_p)$. We may extend the pullback square of Lemma~\ref{lem: pullback thh(fp)} along the norm maps to get a commutative square 
    \[
    \begin{tikzcd}
        \TC(\F^{\geq \star}X)\otimes\bZ_{p,\ev} \ar[r] \ar[d] & \TC(\F^{\geq \star}X\otimes \F^{\geq \star}_{\ev}\THH(\bF_p)) \ar[d]\\
        (\F^{\geq \star}X\otimes \bZ_{p,\ev}^{\triv})^{\bT_{\ev}} \ar[r] & (\F^{\geq \star}X\otimes
        \F^{\geq \star}_{\ev}\THH(\bF_p))^{\t\bT_{\ev}}
    \end{tikzcd}
    \] and we have that $(\F^{\geq \star}X\otimes \F^{\geq \star}_{\ev}\THH(\bF_p))_{\bT_{\ev}}$
    rationally vanishes since inductively \[\pi_i^{\P}(\F^{\geq \star}X\otimes \F^{\geq
    \star}_{\ev}\THH(\bF_p))_{\bT_{\ev}}\] is $p^i$-torsion. We then have that $(\F^{\geq
    \star}X\otimes \bZ_{p,\ev})^{\t\bT_{\ev}}\to (\F^{\geq \star}X\otimes \F^{\geq
    \star}_{\ev}\THH(\bF_p))^{\t\bT_{\ev}}$ is an equivalence and therefore we get a commutative square 
    \[
        \begin{tikzcd}
            \TC(\F^{\geq \star}X)\otimes \bZ_{p,\ev} \ar[r] \ar[d] & \TC(\F^{\geq \star}X\otimes \F^{\geq \star}_{\ev}\THH(\bF_p))\ar[d]\\
            (\F^{\geq \star}X\otimes \bZ_{p,\ev})^{\bT_{\ev}}\ar[r] & (\F^{\geq
            \star}_{\ev}X\otimes \bZ_{p,\ev})^{\t\bT_{\ev}}
        \end{tikzcd}
    \]
    which when $\F^{\geq \star}X$ is connective in the Postnikov $t$-structure is also a rational pullback.

    The third and final step is that for $\F^{\geq \star}X=\F^{\geq \star}_{\ev}\THH(R;\bZ_p)$, we
    have rational equivalences $$\TC(\F^{\geq \star}_{\ev}\THH(R;\bZ_p))\to \TC(\F^{\geq
    \star}_{\ev}\THH(R;\bZ_p))\otimes \bZ_{p,\ev}$$ (which follows from the fact that $\bS_{\ev}\to
    \bZ_p$ is a $\bQ_p$-equivalence), the $\F^{\geq \star}_{\ev}\THH(-;\bZ_p)$ is symmetric
    monoidal (which follows from the same statement on $\THH(-)$ and on prismatic cohomology), and
    that $\F^{\geq \star}_{\ev}\THH(-)\otimes \bZ_{\ev}\simeq \F^{\geq \star}\HH(-)$ (which follows
    from the same statement on $\THH(-)$).
\end{proof}

\begin{example}
    Consider now the case of $R=\bS$ in Corollary~\ref{cor: filtered ammn}. This produces a pullback square 
    \[
    \begin{tikzcd}
        \F^{\geq \star}_{\HRW}\TC(\bS;\bQ_p) \ar[r] \ar[d] & \ins^0(\bQ_p\oplus \bQ_p[-1]) \ar[d]\\
        \tau_{\geq 2\star} \bQ_p[v] \ar[r] & \bQ[v^{\pm 1}]
    \end{tikzcd}
    \] where $|v|=(-2,-1)$. Consequently we find that $\F^{\geq \star}\TC(\bS;\bQ_p)\simeq \bQ_p(0) \oplus \bigoplus_{i=0}^\infty \bQ_p[2i-1](i)$.
\end{example}

We now turn our attention to the case of $\F^{\geq \star}_{\ev}\THH(\bZ_p)$.

\begin{lemma}~\label{lem: pullback thh(zp)}
    Let $\F^{\geq \star}X\in \CycSyn$ and let $p$ be an odd prime. Then there is a pullback square
    \[
    \begin{tikzcd}
        \TC(\F^{\geq \star}X;\bZ_p)\otimes j_{\ev} \ar[r] \ar[d] & \TC(\F^{\geq \star}X\otimes \F^{\geq \star}_{\ev}\THH(\bZ_p);\bZ_p) \ar[d]\\
        (\F^{\geq \star}X\otimes j_{\ev})^{\bT_{\ev}} \ar[r] & (\F^{\geq \star}X\otimes \F^{\geq \star}_{\ev}\THH(\bZ_p))^{\bT_{\ev}}
    \end{tikzcd}
    \]
    where all the maps are the usual ones. 
\end{lemma}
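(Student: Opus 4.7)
The argument follows the template of the proof of Lemma~\ref{lem: pullback thh(fp)}, with $j_\ev$ and $\F^{\geq\star}_\ev\THH(\bZ_p)$ playing the roles of $\bZ_{p,\ev}$ and $\F^{\geq\star}_\ev\THH(\bF_p)$. The substantive difference is that the absolute Tate vanishing exploited there is replaced by the $p$-adic Tate vanishing provided by Corollaries~\ref{cor: extended tate orbit lemma for j} and~\ref{cor: strong Tate orbit lemma for the fiber of j to thh(zp)}.

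First, I would verify that $j_\ev$ satisfies the hypotheses of Lemma~\ref{lem: TC commutes with tensoring sometimes}: level-wise connectivity in the neutral $t$-structure, completeness, and finite generation of the associated graded homotopy groups. These follow from the identification $j_\ev\simeq\tau_{\geq 2\star-1}j$ of Lemma~\ref{lem: filtered tate of j} together with standard facts about $\pi_*j$. This rewrites the top-left entry of the square as $\TC(\F^{\geq\star}X\otimes j_\ev^\triv;\bZ_p)$, where $j_\ev^\triv$ denotes $j_\ev$ equipped with the trivial synthetic cyclotomic structure (legitimate because $j_\ev$ has trivial $\bT_\ev$-action). The top horizontal map of the square is induced by a map $j_\ev^\triv\to\F^{\geq\star}_\ev\THH(\bZ_p)$ in $\CycSyn$ obtained from the composite $j_\ev\to\tau_{\geq 0}^{\cyc,\N}j_\ev^{C_{p,\ev}}\to\F^{\geq\star}_\ev\THH(\bZ_p)$, using the map into $\tau_{\geq 0}^{\cyc,\N}j_\ev^{C_{p,\ev}}$ already constructed and the fiber sequence $(j_\ev)_{C_{p,\ev}}\to\tau_{\geq 0}^{\cyc,\N}j_\ev^{C_{p,\ev}}\to\F^{\geq\star}_\ev\THH(\bZ_p)$ of $\bT_\ev$-modules in hand.

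Let $\F^{\geq\star}D=\fib(j_\ev^\triv\to\F^{\geq\star}_\ev\THH(\bZ_p))$ in $\CycSyn$. An octahedral-axiom argument applied to the two-step factorization above produces a fiber sequence
$$\F^{\geq\star}C[-1]\to\F^{\geq\star}D\to(j_\ev)_{C_{p,\ev}},$$
where $\F^{\geq\star}C$ is as defined just before Corollary~\ref{cor: strong Tate orbit lemma for the fiber of j to thh(zp)}. Applying $\TC(\F^{\geq\star}X\otimes-;\bZ_p)$ and $(\F^{\geq\star}X\otimes-)^{\bT_\ev}$ to the defining map of $\F^{\geq\star}D$ produces the top and bottom rows of the square; the pullback property is equivalent to asserting that the natural comparison map $\TC(\F^{\geq\star}X\otimes\F^{\geq\star}D;\bZ_p)\to((\F^{\geq\star}X\otimes\F^{\geq\star}D)^{\bT_\ev})^\wedge_p$ is an equivalence.

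Finally, this equivalence is established piecewise along the fiber sequence for $\F^{\geq\star}D$. For $(j_\ev)_{C_{p,\ev}}$, Corollary~\ref{cor: extended tate orbit lemma for j} yields $p$-adic vanishing of $(\F^{\geq\star}X\otimes(j_\ev)_{C_{p,\ev}})^{\t\bT_\ev}$. For $\F^{\geq\star}C$, Corollary~\ref{cor: strong Tate orbit lemma for the fiber of j to thh(zp)} implies that $(\F^{\geq\star}X\otimes\F^{\geq\star}C)^{\t C_{p,\ev}}$ is $p$-divisible, hence its $\bT_\ev$-fixed points vanish after $p$-completion. In both cases the $\TC$-defining equalizer collapses $p$-adically to $\bT_\ev$-fixed points, yielding the desired equivalence. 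The main obstacle will be the careful bookkeeping of the trivial versus natural cyclotomic structures on $j_\ev$ in the construction of $\F^{\geq\star}D$ and the verification that the comparison maps appearing in the statement agree with those produced by this fiber-sequence analysis; once this matching is in place, the Tate-vanishing inputs do the remaining work.
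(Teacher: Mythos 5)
Your proposal is correct and follows essentially the same route as the paper's (very terse) proof, which simply says to mimic the argument of Lemma~\ref{lem: pullback thh(fp)} using the two Tate orbit lemmas for $j_\ev$. You have unpacked that parallel accurately: apply Lemma~\ref{lem: TC commutes with tensoring sometimes} to rewrite the top-left term, identify the horizontal fiber with $\TC(\F^{\geq\star}X\otimes\F^{\geq\star}D;\bZ_p)$ where $\F^{\geq\star}D=\fib(j_\ev\to\F^{\geq\star}_\ev\THH(\bZ_p))$, decompose $\F^{\geq\star}D$ via the octahedron into $\F^{\geq\star}C[-1]$ and $(j_\ev)_{C_{p,\ev}}$, and then use the respective $C_{p,\ev}$-Tate vanishing results to collapse the $\TC$-equalizer on each piece. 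One small bookkeeping point: for the $(j_\ev)_{C_{p,\ev}}$ piece, the result that actually collapses the equalizer formula for $\TC$ (which involves $((-)^{\t C_{p,\ev}})^{\bT_\ev}$) is the unconditional vanishing $(\F^{\geq\star}X\otimes(j_\ev)_{C_{p,\ev}})^{\t C_{p,\ev}}\simeq 0$ of Lemma~\ref{lem:j_vanishing}; citing only Corollary~\ref{cor: extended tate orbit lemma for j} (about $\t\bT_\ev$) leaves a small bridge to fill, although that bridge is crossed in the paper as well and is harmless.
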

\begin{proof}
    The proof of this result follows in the exact same way as Lemma~\ref{lem: pullback thh(fp)}, using Corollary~\ref{cor: extended tate orbit lemma for j} and Corollary~\ref{cor: strong Tate orbit lemma for the fiber of j to thh(zp)} to identify the horizontal fibers.
\end{proof}

In~\cite{devalapurkar-raksit}, Devalapurkar and Raksit prove that $\tau_{\geq 0}j^{\t
C_p}\we\THH(\bZ_p;\bZ_p)$ as cyclotomic spectra. We obtain a synthetic version of their result.

\begin{corollary}~\label{cor: filtered DR}
    Let $\F^{\geq \star}X\in \CycSyn_p$ where $p$ is an odd prime. Then there is a fiber sequence \[L_{K(1)}^\fil \F^{\geq \star}X_{\bT_\ev}(1)[1] \to L_{K(1)}^\fil \TC(\F^{\geq \star}X)\to L_{K(1)}^\fil\TC(\F^{\geq \star}X\otimes \F^{\geq \star}_\ev\THH(\bZ_p))\] of synthetic spectra.
\end{corollary}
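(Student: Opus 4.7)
The plan is to apply the filtered $K(1)$-localization $L^\fil_{K(1)}$ to the pullback square of Lemma~\ref{lem: pullback thh(zp)}. Since $L^\fil_{K(1)} = (-\otimes j_\ev)^\wedge_p$ is exact on $p$-complete synthetic spectra (a smashing localization followed by $p$-completion of already-$p$-complete objects), the resulting diagram is again a pullback square. Under this localization, the unit map $\bS_\ev \to j_\ev$ becomes an equivalence---because $j$ is the connective cover of the $K(1)$-local sphere and $L_{K(1)}j$ is idempotent up to $p$-completion---so the upper-left corner simplifies to $L^\fil_{K(1)}\TC(\F^{\geq\star}X)$. The pullback property then identifies the fiber of the top row (which is precisely the fiber sequence we wish to produce) with the fiber of the bottom row.

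The main computational step is to identify this common fiber with $L^\fil_{K(1)}\F^{\geq\star}X_{\bT_\ev}(1)[1]$. Using the fiber sequence of $\bT_\ev$-modules $(j_\ev)_{C_{p,\ev}} \to \tau^{\cyc,\N}_{\geq 0}j_\ev^{\t C_{p,\ev}} \to \F^{\geq\star}_\ev\THH(\bZ_p)$ from Section~\ref{sec:syncycsp}, and following the template of Lemma~\ref{lem: pullback thh(fp)}, one identifies the common fiber with $(\F^{\geq\star}X \otimes (j_\ev)_{C_{p,\ev}})^{\bT_\ev}$ (equipped with the induced cyclotomic synthetic structure, whose Frobenius is forced to vanish): indeed, the synthetic Tate orbit lemma (Lemma~\ref{lem: Tate orbit lemma}) applied to the bounded-below object $\F^{\geq\star}X \otimes j_\ev$ gives $(\F^{\geq\star}X \otimes (j_\ev)_{C_{p,\ev}})^{\t C_{p,\ev}} \simeq 0$, so $\TC$ of this induced object collapses onto its $\bT_\ev$-fixed points.

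To conclude, I would invoke Corollary~\ref{cor: extended tate orbit lemma for j}, which yields the $p$-adic vanishing of $(\F^{\geq\star}X\otimes(j_\ev)_{C_{p,\ev}})^{\t\bT_\ev}$; consequently the synthetic $\bT_\ev$-norm of Construction~\ref{const:syn_tate} supplies an equivalence $(\F^{\geq\star}X\otimes(j_\ev)_{C_{p,\ev}})_{\bT_\ev}(1)[1] \xrightarrow{\simeq} (\F^{\geq\star}X\otimes(j_\ev)_{C_{p,\ev}})^{\bT_\ev}$. Using that $\rho(p)_*\bS_\ev \simeq \bS_\ev$ as $\bT_\ev$-modules (both are the augmentation module, since multiplication by $p$ is compatible with the augmentation), the composite functor $((-)_{C_{p,\ev}})_{\bT_\ev}$ reduces to $(-)_{\bT_\ev}$; combined with the $L^\fil_{K(1)}$-equivalence $j_\ev \simeq \bS_\ev$, this yields $L^\fil_{K(1)}(\F^{\geq\star}X\otimes(j_\ev)_{C_{p,\ev}})_{\bT_\ev} \simeq L^\fil_{K(1)}\F^{\geq\star}X_{\bT_\ev}$, completing the identification. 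The main obstacle will be assembling the induced cyclotomic structures and verifying the Frobenius-vanishing argument in the synthetic setting: this follows the pattern of Devalapurkar--Raksit's unfiltered proof but requires careful tracking of twisted $\bT_\ev$-actions (via $\rho(p)$) and filtered compatibilities throughout.
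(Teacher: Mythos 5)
Your overall strategy matches the paper's—apply $L^\fil_{K(1)}$ to the pullback square of Lemma~\ref{lem: pullback thh(zp)}, identify the common horizontal fiber, and use the Tate vanishing results together with the $\bT_\ev$-norm to express this fiber as a shifted $\bT_\ev$-orbits term—but your identification of the common fiber is incorrect, and this is a real gap rather than a notational slip.

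The bottom row of the pullback square is $(\F^{\geq\star}X\otimes j_\ev)^{\bT_\ev}\to(\F^{\geq\star}X\otimes\F^{\geq\star}_\ev\THH(\bZ_p))^{\bT_\ev}$, induced by the composite $j_\ev\to\tau^{\cyc,\N}_{\geq 0}j_\ev^{C_{p,\ev}}\to\F^{\geq\star}_\ev\THH(\bZ_p)$. Its fiber is therefore $(\F^{\geq\star}X\otimes\F^{\geq\star}Y)^{\bT_\ev}$ with $\F^{\geq\star}Y:=\fib(j_\ev\to\F^{\geq\star}_\ev\THH(\bZ_p))$. You replace $\F^{\geq\star}Y$ by $(j_\ev)_{C_{p,\ev}}$; this would follow if $j_\ev\simeq\tau^{\cyc,\N}_{\geq 0}j_\ev^{C_{p,\ev}}$, i.e.\ if the filtered Segal conjecture held for $j$—and indeed this is exactly what makes the analogous step in Lemma~\ref{lem: pullback thh(fp)} work, since there $\bZ_{p,\ev}\simeq\tau^{\cyc,\N}_{\geq 0}\bZ_{p,\ev}^{C_{p,\ev}}$. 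For $j$, however, the map $j_\ev\to\tau^{\cyc,\N}_{\geq 0}j_\ev^{C_{p,\ev}}$ has a nontrivial cofiber $\F^{\geq\star}C$ (introduced in the paper precisely for this purpose), and an octahedron shows $\F^{\geq\star}Y$ and $(j_\ev)_{C_{p,\ev}}$ differ by a shift of $\F^{\geq\star}C$. You carry over the $\THH(\bF_p)$ template too literally. (You also wrote $\tau^{\cyc,\N}_{\geq 0}j_\ev^{\t C_{p,\ev}}$ where the middle term of the $\bT_\ev$-module fiber sequence is $\tau^{\cyc,\N}_{\geq 0}j_\ev^{C_{p,\ev}}$—a separate, minor slip.)

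Because of this, the final step cannot be completed as you describe. You need to show $L^\fil_{K(1)}(\F^{\geq\star}X\otimes\F^{\geq\star}Y)_{\bT_\ev}\simeq L^\fil_{K(1)}\F^{\geq\star}X_{\bT_\ev}$, and the clean argument using $((-)_{C_{p,\ev}})_{\bT_\ev}\simeq(-)_{\bT_\ev}$ is no longer available once $(j_\ev)_{C_{p,\ev}}$ is replaced by $\F^{\geq\star}Y$. The paper instead proves directly that both $\F^{\geq\star}X\otimes\bS_\ev\to\F^{\geq\star}X\otimes j_\ev$ and $\F^{\geq\star}X\otimes\F^{\geq\star}Y\to\F^{\geq\star}X\otimes j_\ev$ are $L^\fil_{K(1)}$-equivalences: the second because its cofiber $\F^{\geq\star}X\otimes\F^{\geq\star}_\ev\THH(\bZ_p)$ is a module over $L^\fil_{K(1)}\F^{\geq\star}_\ev\THH(\bZ_p)\simeq 0$, and the first by a mod-$p$ check that inverting $v_1$ only sees the connective cover. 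Your proposal establishes neither of these; the statement that $L^\fil_{K(1)}\F^{\geq\star}_\ev\THH(\bZ_p)\simeq 0$, which is the crux, does not appear.
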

\begin{proof}
    The pullback square of Lemma~\ref{lem: pullback thh(zp)} induces a fiber sequence of the form $(\F^{\geq \star}X\otimes \F^{\geq \star}Y)^{\bT_\ev}\to \TC(\F^{\geq \star}X)\otimes j_\ev\to \TC(\F^{\geq \star}X\otimes\THH_{\ev}(\bZ_p);\bZ_p)$ where $\F^{\geq \star}Y:= \mathrm{fib}(j_\ev\to \THH_\ev(\bZ_p;\bZ_p))$. The result then follows if we can show that $L_{K(1)}^\fil (\F^{\geq \star}X\otimes \F^{\geq \star}Y)^{\bT_\ev}\simeq \F^{\geq \star}X_{\bT_{\ev}}[1](1)$.

    We will prove this in two steps. The first is that $L_{K(1)}^\fil(\F^{\geq \star}X\otimes
    \F^{\geq \star}Y)^{\t\bT_\ev}\simeq 0$. This follows from Corollary~\ref{cor: extended tate orbit lemma for j} and Corollary~\ref{cor: strong Tate orbit lemma for the fiber of j to thh(zp)}. Thus the norm map \[L_{K(1)}^\fil(\F^{\geq \star}X\otimes \F^{\geq \star}Y)_{\bT_{\ev}}[1](1)\to L_{K(1)}^\fil (\F^{\geq \star}X\otimes \F^{\geq \star}Y)^{\bT_{\ev}}\] is an equivalence. 

    Note that since $L_{K(1)}^\fil$ is given by a $p$-complete base-change, it will commute with $(-)_{\bT_\ev}$. Thus the result will follow if we can show that the maps 
    \[
    \begin{tikzcd}
         & \F^{\geq \star}X\otimes \bS_\ev \ar[d]\\
         \F^{\geq \star}X\otimes \F^{\geq \star}Y \ar[r] & \F^{\geq \star}X\otimes j_\ev
    \end{tikzcd}
    \]
    are equivalences after applying $L_{K(1)}^\fil(-)$. The bottom map is an equivalence since the cofiber is $\F^{\geq \star}X\otimes \THH_\ev(\bZ_p;\bZ_p)$ which after applying $L_{K(1)}^\fil(-)$ will be a $L_{K(1)}^\fil \THH_\ev(\bZ_p;\bZ_p)\simeq 0$-module. The vertical map is an equivalence since we may check this modulo $p$ where this becomes the statement that $\bS_{\ev}/p[v_1^{-1}]\simeq (\tau_{\geq 0}^{\cyc, N}\bS_\ev/p[v_1^{-1}])[v_1^{-1}]$ which follows from the fact that inverting an element in positive homotopical degree and weight only depends on the connective cover in the neutral $t$-structure.
\end{proof}

\begin{example}
    Consider the case of $\F^{\geq \star}X=\F^{\geq \star}_{\ev}\THH(\bS)\simeq \bS_{\ev}$. Then Corollary~\ref{cor: filtered DR} gives a fiber sequence \[L_{K(1)}^{\fil}\bS_\ev[\B\bT_\ev][1](1)\to L_{K(1)}^\fil\TC(\bS_\ev)\to L_{K(1)}^\fil\TC(\THH_\ev(\mathbb{Z}_p))\] where $\B\bT_\ev:= \bS_\ev\otimes_{\bT_\ev}\bS_\ev$. 
\end{example}

\small
\bibliographystyle{amsplain}
\bibliography{cycsyn}

\medskip
\noindent
\textsc{Department of Mathematics, Northwestern University}\\
\textsc{Max-Planck-Institut für Mathematik Bonn}\\
{\ttfamily antieau@northwestern.edu}

\medskip
\noindent
\textsc{Department of Mathematics, Northwestern University}\\
{\ttfamily noah.riggenbach@northwestern.edu}

\end{document}